
\documentclass{amsart}
\usepackage{amssymb}
\usepackage{amsfonts}
\usepackage{amsmath}
\usepackage{amsthm, amsmath, amssymb, enumerate}
\usepackage{amssymb, enumitem}
\usepackage{bbm}
\usepackage{amscd}
\usepackage[all]{xy}
\usepackage[hidelinks]{hyperref}
\usepackage{amsmath}
\usepackage{amssymb}
\usepackage{amsthm}
\usepackage[sort&compress,numbers]{natbib}
\usepackage[left=2cm,right=2cm,bottom=3cm,top=3cm]{geometry}
\usepackage{tikz}
\usepackage{tikz-cd}
\usepackage{wrapfig}

\setcounter{MaxMatrixCols}{10}

\newtheorem{theorem}{Theorem}[section]

\newtheorem{proposition}[theorem]{Proposition}
\newtheorem{lemma}[theorem]{Lemma}
\newtheorem{corollary}[theorem]{Corollary}
\theoremstyle{definition}
\newtheorem*{claim*}{Claim}
\newtheorem{definition}[theorem]{Definition}

\newtheorem{remark}[theorem]{Remark}
\newtheorem{problem}[theorem]{Problem}

\newtheorem{example}[theorem]{Example}

\AtBeginDocument{   \def\MR#1{}}

\begin{document}
\title{Homological algebra of pro-Lie Polish abelian groups}
\author{Matteo Casarosa}
\address{Institut de Math\'{e}matiques de Jussieu - Paris Rive Gauche
(IMJ-PRG), Universit\'{e} Paris Cit\'{e}, B\^{a}timent Sophie Germain, 8
Place Aur\'{e}lie Nemours, 75013 Paris, France, and Dipartimento di
Matematica, Universit\`{a} di Bologna, Piazza di Porta S. Donato, 5, 40126
Bologna, Italy}
\email{matteo.casarosa@unibo.it}
\email{matteo.casarosa@imj-prg.fr}
\urladdr{https://webusers.imj-prg.fr/~matteo.casarosa/}
\urladdr{https://www.unibo.it/sitoweb/matteo.casarosa/en}
\author{Alessandro Codenotti}
\address{Dipartimento di Matematica, Universit\`{a} di Bologna, Piazza di
Porta S. Donato, 5, 40126 Bologna,\ Italy}
\email{alessandro.codenotti@unibo.it}
\urladdr{}
\author{Martino Lupini}
\address{Dipartimento di Matematica, Universit\`{a} di Bologna, Piazza di
Porta S. Donato, 5, 40126 Bologna,\ Italy}
\email{martino.lupini@unibo.it}
\urladdr{http://www.lupini.org/}
\thanks{The authors were partially supported by the Starting Grant
101077154\textquotedblleft Definable Algebraic Topology\textquotedblright\
from the European Research Council.}
\subjclass[2000]{Primary 54H05 , 20K45, 18F60; Secondary 26E30 , 18G10, 46M15%
}
\keywords{Polish group, pro-Lie group, non-Archimedean group, abelian group,
topological torsion group, abelian category, quasi-abelian category, derived
functor, extensions}
\date{\today }

\begin{abstract}
In this paper, we initiate the study of pro-Lie Polish abelian groups from
the perspective of homological algebra. We extend to this context the
type-decomposition of locally compact Polish abelian groups of Hoffmann and
Spitzweck, and prove that the category $\mathbf{proLiePAb}$ of pro-Lie
Polish abelian groups is a thick subcategory of the category of Polish
abelian groups. We completely characterize injective and projective objects
in $\mathbf{proLiePAb}$. We conclude that $\mathbf{proLiePAb}$ has enough
projectives but not enough injectives and homological dimension $1$. We also
completely characterize injective and projective objects in the category of
non-Archimedean Polish abelian groups, concluding that it has enough
injectives and projectives and homological dimension $1$. Injective objects
are also characterized for the categories of topological torsion Polish
abelian groups and for Polish abelian topological $p$-groups, showing that
these categories have enough injectives and homological dimension $1$.
\end{abstract}

\maketitle

\renewcommand{\themaintheorem}{\Alph{maintheorem}}

\section{Introduction}

In this paper, we initiate the study of pro-Lie Polish abelian groups from
the viewpoint of homological algebra. \emph{Pro-Lie groups }\cite%
{hofmann_lie_2007} are the topological groups that can be written as limits
of an inverse system of Lie groups. We will focus on pro-Lie groups that are
furthermore \emph{Polish}, in which case they can written as limits of an
inverse sequence of Lie groups. The class of pro-Lie Polish groups is a
natural extension of the class of Lie groups, and it satisfies desirable
closure properties, as it is closed under countable products, closed
subgroups, and quotients by closed subgroups within the class of Polish
groups.

We will focus on the case of \emph{abelian }pro-Lie Polish groups, as they
form a category enriched over abelian groups, where the machinery from
homological algebra can be applied. The category of abelian Polish pro-Lie
groups contains all the\emph{\ locally compact} Polish abelian groups, as
well as the \emph{non-Archimedean }Polish abelian groups (which are inverse
limits of countable discrete groups). In fact, the class of pro-Lie Polish
abelian groups is the smallest class of Polish abelian groups that contains
all locally compact Polish abelian groups and non-Archimedean Polish abelian
groups and it is closed under extensions.

The study of homological invariants in the context of topological abelian
groups followed rapidly their introduction in the purely algebraic setting,
going as far back as the 1940s \cite%
{chevalley_cohomology_1948,nagao_extension_1949}; see also \cite%
{calabi_sur_1951,mackey_ensembles_1957,brown_extensions_1971,fulp_extensions_1971,fulp_splitting_1972,fulp_homological_1970,moskowitz_homological_1967}%
. This study has been most commonly undertaken for \emph{locally compact }%
topological Hausdorff (Polish) abelian groups. Injective and projective
objects in this category were characterized by\ Moskowitz in \cite%
{moskowitz_homological_1967}, showing in particular that this category does 
\emph{not }have enough injective and projective objects. The Yoneda $\mathrm{%
Ext}$ groups in terms of extensions were introduced and studied by Fulp and
Griffith in \cite{fulp_extensions_1971}, where it is proved that $\mathrm{Ext%
}^{2}$ vanishes. The functor $\mathrm{Ext}$ for locally compact Hausdorff
topological abelian groups was later studied from the viewpoint of derived
categories by Hoffmann and Spitzweck \cite{hoffmann_homological_2007}, who
recognized it as the cohomological right derived functor of $\mathrm{Hom}$
in the sense of Verdier \cite{verdier_categories_1977}, despite the category
of locally compact Hausdorff topological groups not having enough injectives
or projectives.

Derived categories and functors had been traditionally studied in the
context of \emph{abelian categories}. However, this framework does not
include the category of locally compact topological Hausdorff (Polish)
abelian groups, as the inclusion map $\mathbb{Q}\rightarrow \mathbb{R}$ of
the rationals with the discrete topology into the reals with the Euclidean
topology is an arrow that is both monic and epic but not an isomorphism.
This and other categories of algebraic structures endowed with a topology
can be seen as \emph{quasi-abelian categories}, where the axioms of abelian
categories (such as the requirement that every epimorphism be the cokernel
of its kernel) are replaced with suitable relaxations. Most of the notions
and constructions usually considered in the context of abelian categories
can be generalized to the quasi-abelian case.\ Furthermore, Schneiders
describes in \cite{schneiders_quasi-abelian_1999}, building on work of
Beilinson--Bernstein--Deligne \cite{beilinson_faisceaux_1982}, a canonical
way to enlarge a given quasi-abelian category $\mathcal{A}$ to an abelian
category $\mathrm{LH}\left( \mathcal{A}\right) $ called its \emph{left heart}%
.

The quasi-abelian viewpoint affords Hoffmann and Spitzwek to \emph{refine }%
the functor $\mathrm{Ext}$ for locally compact topological (Polish) abelian
groups, by seeing it as a functor to the \emph{left heart }of the category
of topological (Polish) abelian groups. The category $\mathbf{PAb}$ of
Polish abelian groups, or any of its full subcategories closed by taking
closed subgroups, quotients by closed subgroups, and extensions (\emph{thick
subcategories}), is easily seen to be quasi-abelian. Using tools from
descriptive set theory, an explicit description of its left heart as
concrete category was recently provided in \cite{lupini_looking_2022} in
terms of \emph{groups with a Polish cover }and \emph{Borel-definable group
homomorphisms}; see also \cite{bergfalk_definable_2020}.

Building on the work of Hoffmann and Spitzweck, Bergfalk, Sarti, and the
third-named author \cite{bergfalk_applications_2023} gave a short purely
homological proof of the fact that the functor $\mathrm{Hom}$ on the
category $\mathbf{LCPAb}$ of locally compact Polish abelian groups has a 
\emph{total }right derived functor, from which $\mathrm{Ext}^{n}$ for $n\geq
0$ is recovered by taking cohomology, and the homological dimension of $%
\mathbf{LCPAb}$ is $1$. The latter result recovers the theorem that $\mathrm{%
Ext}^{n}=0$ for $n\geq 2$ that had previously been obtained by Fulp and
Griffith via a much more laborious argument. The enrichment of $\mathrm{Ext}$
as group with a Polish cover is used in \cite{bergfalk_applications_2023} to
completely characterize injective and projective objects in the left heart
of $\mathbf{LCPAb}$ and many of its salient thick subcategories, showing in
particular that $\mathrm{LH}\left( \mathbf{LCPAb}\right) $ has no nonzero
injectives.

In this paper, we extend the applications of Borel-definable methods to the
category of pro-Lie Polish abelian groups. We begin with showing that the
category $\mathbf{proLiePAb}$ of pro-Lie Polish abelian groups is
quasi-abelian, and in fact a thick subcategory of $\mathbf{PAb}$. In order
to prove that an extension of pro-Lie Polish abelian groups is pro-Lie, we
use recently developed tools from the theory of groups with a Polish cover
and Borel and continuous cocycles, including a continuous selection theorem
from \cite{bergfalk_definable_2020}. We then completely characterize
projective and injective objects in $\mathbf{proLiePAb}$, showing in
particular that $\mathbf{proLiePAb}$ has enough projectives (but not enough
injectives) and homological dimension $1$. Thus $\mathbf{proLiePAb}$ is in
some ways better-behaved than $\mathbf{LCPAb}$ from the viewpoint of
homological algebra, and the total derived functor of $\mathrm{Hom}$ on $%
\mathbf{proLiePAb}$ can be defined in terms of projective resolutions, as in
the case of discrete groups. In the case when the first argument is locally
compact, and the second one merely pro-Lie, we prove that $\mathrm{Ext}$ can
be regarded as a functor to the left heart of $\mathbf{PAb}$, and hence
enriched with the structure of group with a Polish cover, extending from the
locally compact case the analysis in \cite{bergfalk_applications_2023}.

We perform a similar analysis on the category $\mathbf{PAb}_{\mathrm{nA}}$
of \emph{non-Archimedean }Polish abelian groups, which is also a thick
subcategory of $\mathbf{PAb}$. In this case, we completely characterize
injective and projective objects, and show that $\mathbf{PAb}_{\mathrm{nA}}$
has enough injectives and projectives and homological dimension $1$. We also
completely characterize injective and projective objects in the categories
of pro-$p$ pro-Lie Polish abelian groups and topological torsion pro-Lie
Polish abelian groups, and show that these categories have enough injectives
and homological dimension $1$.

We also extend from the locally compact case---considered by Hoffmann and
Spitzweck in \cite{hoffmann_homological_2007}---to pro-Lie Polish abelian
groups the notions of type $\mathbb{Z}$, type $\mathbb{A}$, and type $%
\mathbb{S}^{1}$ groups, as well as the decomposition of an arbitrary pro-Lie
Polish abelian groups in terms of groups of these types.

Finally, we show that the functor $\mathrm{Hom}:\mathbf{LCPAb}\times \mathbf{%
PAb}\rightarrow \mathbf{PAb}$ admits a total right derived functor,
extending a result from \cite{bergfalk_applications_2023} in the locally
compact case.

The rest of this paper is divided into three sections. In Section \ref%
{Section:categories} we recall some preliminary notions from category
theory, including the notions of exact and quasi-abelian category and the
corresponding notions of derived category and functor. In Section \ref%
{Section:pro-Lie} we obtain our main results outlined above concerning the
category of pro-Lie Polish abelian groups. Finally, in Section \ref%
{Section:thick} we consider several natural thick (or just fully exact)
subcategories of $\mathbf{proLiePAb}$, characterizing injective and
projective objects in each of them, and determining whether they have enough
injectives or projectives.

\subsubsection*{Acknowledgments}

We thank Jeffrey Bergfalk, Andr\'{e} Nies, and Joe Zielinski for many useful
conversations, and Luca Marchiori for a careful reading of a preliminary
version of this paper and a large number of helpful suggestions. 

\section{Category theory background\label{Section:categories}}

\subsection{Exact categories}

An\emph{\ additive category} \cite[Section VIII.2]{mac_lane_categories_1998}
is a category enriched over the category $\mathbf{Ab}$ of abelian groups
that also has a terminal object (which is necessarily also initial, and
called the \emph{zero object}) and binary products (which are necessarily
also coproducts, and called \emph{biproducts}). A kernel-cokernel pair in an
additive category $\mathcal{A}$ is a pair $\left( f,g\right) $ of arrows in $%
\mathcal{A}$ such that $f$ is the kernel of $g$ and $g$ is the cokernel of $%
f $. This can be seen as an object in the double arrow category $\mathcal{A}%
^{\cdot \rightarrow \cdot \rightarrow \cdot }$ of $\mathcal{A}$.

An \emph{exact structure }$\mathcal{E}$ on $\mathcal{A}$ \cite[Definition 2.1%
]{buhler_exact_2010} is a collection of short exact sequences closed under
isomorphism in $\mathcal{A}^{\cdot \rightarrow \cdot \rightarrow \cdot }$
that satisfies the following axioms as well as their duals obtained by
reversing all the arrows.\ In order to formulate the axioms, we say that a
morphism $f$ is an admissible monic (or inflation) or an admissible epic (or
deflation) if it appears in an exact sequence in $\mathcal{E}$:

\begin{enumerate}
\item for every object $A$ of $\mathcal{A}$, the corresponding identity
arrow $1_{A}$ is an admissible monic;

\item the collection of admissible monics is closed under composition;

\item the push-out of an admissible monic along an arbitrary morphism exists
and it is an admissible monic.
\end{enumerate}

An \emph{exact category }is a pair $\left( \mathcal{A},\mathcal{E}\right) $
where $\mathcal{A}$ is an additive category and $\mathcal{E}$ is an exact
structure on $\mathcal{A}$. A short exact sequence in $\left( \mathcal{A},%
\mathcal{E}\right) $ is by definition an element of $\mathcal{E}$. In an
exact category, an isomorphism is an admissible monic and an admissible
epic; see \cite[Remark 2.3]{buhler_exact_2010}. Furthermore, by \cite[Remark
2.3, Proposition 2.15]{buhler_exact_2010} we have the following---see \cite[%
Sections 5.2, 5.3]{awodey_category_2006} for the notion of pull-back along a
morphism:

\begin{lemma}
\label{Lemma:pull-back-admissible-monic}In an exact category, the pull-back
of an admissible monic along an admissible epic is an admissible monic.
\end{lemma}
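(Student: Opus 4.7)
The plan is to realize the pullback projection as the kernel of an admissible epic obtained by composing the admissible epic with the cokernel of the admissible monic.

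In detail, let $i:A\rightarrowtail B$ be an admissible monic and $p:C\twoheadrightarrow B$ an admissible epic. First I would observe that the pullback $P:=A\times_{B}C$ exists: the dual of axiom (3) guarantees that pullbacks of admissible epics along arbitrary morphisms exist (and produce admissible epics), so pulling $p$ back along $i$ yields the desired square. Denote the two projections by $\pi_{A}:P\to A$ and $\pi_{C}:P\to C$; the goal is to show that $\pi_{C}$ is an admissible monic.

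Next, let $q:B\twoheadrightarrow D$ be the cokernel of $i$, which is an admissible epic since $(i,q)\in\mathcal{E}$. By the dual of axiom (2) (composition of admissible epics is admissible), the composite $qp:C\to D$ is again an admissible epic. The heart of the argument is then to verify that $\pi_{C}$ is the kernel of $qp$. On the one hand, $qp\circ\pi_{C}=q\circ i\circ\pi_{A}=0$. On the other hand, any $f:X\to C$ with $qpf=0$ factors, by the cokernel property of $q$, as $pf=ig$ for a unique $g:X\to A$; the universal property of the pullback then produces a unique arrow $X\to P$ whose composition with $\pi_{C}$ recovers $f$. Hence $\pi_{C}$ is the kernel of the admissible epic $qp$, and therefore an admissible monic, as required.

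I do not expect any serious obstacle: every step invokes either an axiom of an exact category or its dual, or a standard compatibility between kernels, cokernels and pullbacks. The only point that requires a little care is confirming that the kernel of $qp$ literally coincides (rather than merely abstractly) with the pullback $A\times_{B}C$; this is a short diagram chase using that $q$ is the cokernel of $i$ and that $i$ is monic.
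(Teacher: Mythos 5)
Your proof is correct, and it is essentially the argument behind the result the paper cites (the paper gives no proof of its own, deferring to B\"uhler's Proposition 2.15, whose proof is exactly this: form the pullback via the dual of axiom (3), and identify the projection $\pi_{C}$ as the kernel of the admissible epic $q\circ p$ with $q=\operatorname{coker}(i)$). One small wording point: the factorization $pf=ig$ comes from the fact that $i$ is the \emph{kernel} of $q$ (since $(i,q)$ is a kernel--cokernel pair), rather than from "the cokernel property of $q$", and one should also note that any kernel of an admissible epic is an admissible monic because $\mathcal{E}$ is closed under isomorphisms --- both standard and in no way a gap.
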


Suppose that $\mathcal{A}$ and $\mathcal{B}$ are exact categories, and $F:%
\mathcal{A}\rightarrow \mathcal{B}$ is a functor. One says that $F$ is \emph{%
exact }if it maps short exact sequences in $\mathcal{A}$ to short exact
sequences in $\mathcal{B}$ \cite[Remark 2.3, Proposition 2.15]%
{buhler_exact_2010}. By \cite[Proposition 5.2]{buhler_exact_2010}, this
implies that $F$ preserves push-outs along admissible monics, and pull-backs
along admissible epics.

\begin{remark}
In the context of finitely complete and finitely cocomplete categories, a
stronger notion of exact functor is considered. If $\mathcal{C}$ and $%
\mathcal{D}$ are categories that are finitely complete and finitely
cocomplete (i.e., have finite limits and finite colimits), then a functor $F:%
\mathcal{C}\rightarrow \mathcal{D}$ is called \emph{exact }if it is finitely
continuous and finitely cocontinuous (i.e., commutes with finite limits and
finite colimits); see \cite[Section 3.3]{kashiwara_categories_2006}. An
exact functor in the sense of \cite[Proposition 5.2]{buhler_exact_2010}
between finitely complete and finitely cocomplete exact categories is not
necessarily exact in the sense of \cite[Section 3.3]%
{kashiwara_categories_2006}.
\end{remark}

Suppose that $\mathcal{A}$ is an exact category. A\emph{\ fully exact}
subcategory \cite[Definition 10.21]{buhler_exact_2010} of $\mathcal{A}$ is a
full subcategory $\mathcal{B}$ of $\mathcal{A}$ such that, whenever $%
0\rightarrow B^{\prime }\rightarrow A\rightarrow B^{\prime \prime
}\rightarrow 0$ is a short exact sequence in $\mathcal{A}$ with $B^{\prime }$
and $B^{\prime \prime }$ in $\mathcal{B}$, one also has that $A$ is
isomorphic to an object of $\mathcal{B}$. In this case, we have that $%
\mathcal{B}$ is an exact category, where a short exact sequence in $\mathcal{%
B}$ is a short exact sequence in $\mathcal{A}$ whose morphisms are in $%
\mathcal{B}$ \cite[Definition 10.20]{buhler_exact_2010}.

Let $\mathcal{A}$ be an exact category and $f$ be a morphism in $\mathcal{A}$%
. Then we say that $f$ is \emph{admissible }\cite[Definition 8.1]%
{buhler_exact_2010} if $f=m\circ e$ for some admissible epic $e$ and
admissible monic $m$. The pair $\left( e,m\right) $ is called an admissible
factorization of $f$ and its essentially unique \cite[Lemma 8.4]%
{buhler_exact_2010}. The notion of admissible morphism recovers the notion
of admissible monic (respectively, epic) in the particular case when the
given morphism is monic (respectively, epic) \cite[Remark 8.3]%
{buhler_exact_2010}. The class of admissible morphisms is closed under
push-out along admissible monics and pull-back along admissible epics \cite[%
Lemma 8.7]{buhler_exact_2010}.

\begin{definition}
\label{Definition:exact}A sequence of admissible morphisms%
\begin{equation*}
A\overset{f}{\rightarrow }B\overset{g}{\rightarrow }A^{\prime }
\end{equation*}%
in an exact category $\mathcal{A}$ is exact if%
\begin{equation*}
\bullet \overset{m}{\rightarrow }A\overset{e^{\prime }}{\rightarrow }\bullet
\end{equation*}%
is short exact, where $f=m\circ e$ and $g=m^{\prime }\circ e^{\prime }$ are
admissible factorizations.
\end{definition}

\subsection{Derived categories}

Let $\mathcal{A}$ be an additive category. One can then define the category $%
\mathrm{Ch}\left( \mathcal{A}\right) $ of complexes, and the homotopy
category $\mathrm{K}\left( \mathcal{A}\right) $; see \cite[Section 9]%
{buhler_exact_2010}. We have that $\mathrm{K}\left( \mathcal{A}\right) $ has
a canonical structure of \emph{triangulated category }\cite[Remark 9.8]%
{buhler_exact_2010}, graded by the translation functor $T$ defined by $%
TA=A[1]$ where $A[k]^{n}=A^{n+k}$ for $n,k\in \mathbb{Z}$. The distinguished
triangles in $\mathrm{K}\left( A\right) $ are those isomorphic to a strict
triangle, which is one of the form%
\begin{equation*}
\left( A,B,\mathrm{cone}\left( f\right) ,f,i,j\right)
\end{equation*}%
for some morphism of complexes $f:A\rightarrow B$, where $i:B\rightarrow 
\mathrm{cone}\left( f\right) $ and $j:\mathrm{cone}\left( f\right)
\rightarrow TA$ are the canonical morphisms of complexes.

A morphism $f:A\rightarrow A$ in $\mathcal{A}$ is idempotent if $f=f\circ f$%
. The category $\mathcal{A}$ is idempotent-complete if every idempotent
morphism has a kernel. Suppose now that $\mathcal{A}$ is an \emph{%
idempotent-complete exact category}. A complex $A$ over $\mathcal{A}$ is 
\emph{acyclic }\cite[Section 10]{buhler_exact_2010}\emph{\ }if it has
admissible differentials and it is exact at each degree as in Definition \ref%
{Definition:exact}. We let $\mathrm{N}\left( \mathcal{A}\right) $ be the
full subcategory of $\mathrm{K}\left( \mathcal{A}\right) $ spanned by
acyclic complexes. Then we have that $\mathrm{N}\left( \mathcal{A}\right) $
is a \emph{thick subcategory }\cite[Definition 2.1.6]%
{neeman_triangulated_2001} of the triangulated category $\mathrm{K}\left( 
\mathcal{A}\right) $ \cite[Corollary 10.11]{buhler_exact_2010}. This means
that $\mathrm{N}\left( \mathcal{A}\right) $ is a \emph{triangulated
subcategory }of $\mathrm{K}\left( \mathcal{A}\right) $ \cite[Definition 1.5.1%
]{neeman_triangulated_2001} and contains all the direct summands of its
objects.

One can thus define the corresponding Verdier quotient $\mathrm{D}\left( 
\mathcal{A}\right) :=\mathrm{K}\left( \mathcal{A}\right) /\mathrm{N}\left( 
\mathcal{A}\right) $ \cite[Theorem 2.1.8]{neeman_triangulated_2001}, which
is called the \emph{derived category }of $\mathcal{A}$ \cite[Section 10]%
{buhler_exact_2010}. By definition, this is the localization $\mathrm{K}%
\left( \mathcal{A}\right) [\Sigma ^{-1}]$ where $\Sigma $ is the
multiplicative system in $\mathrm{K}\left( \mathcal{A}\right) $ consisting
of homotopy classes of chain maps whose mapping cone is acyclic, called 
\emph{quasi-isomorphisms }\cite[Definition 10.16]{buhler_exact_2010}. If $Q_{%
\mathcal{A}}:\mathrm{K}\left( \mathcal{A}\right) \rightarrow \mathrm{D}%
\left( \mathcal{A}\right) $ is the quotient functor, then a morphism in $%
\mathrm{K}\left( \mathcal{A}\right) $ is a quasi-isomorphism if and only if
its image in $\mathrm{D}\left( \mathcal{A}\right) $ is invertible, i.e.\ the
multiplicative system $\Sigma $ is \emph{saturated }\cite[Remark 10.19]%
{buhler_exact_2010}.

The full subcategories $\mathrm{K}^{+}\left( \mathcal{A}\right) $, $\mathrm{K%
}^{-}\left( \mathcal{A}\right) $, and $\mathrm{K}^{\mathrm{b}}\left( 
\mathcal{A}\right) $ of $\mathrm{K}\left( \mathcal{A}\right) $ and the
corresponding full subcategories $\mathrm{D}^{+}\left( \mathcal{A}\right) $, 
$\mathrm{D}^{-}\left( \mathcal{A}\right) $, and $\mathrm{D}^{\mathrm{b}%
}\left( \mathcal{A}\right) $ of $\mathrm{D}\left( \mathcal{A}\right) $ are
defined in a similar fashion, by considering only left-bounded,
right-bounded, and bounded complexes respectively.

\subsection{Abelian and quasi-abelian categories}

A \emph{quasi-abelian category }\cite[Definition 4.1]{buhler_exact_2010} is
a finitely complete and finitely cocomplete additive category such that the
push-out of a kernel along an arbitrary morphism is a kernel, and dually the
pull-back of a cokernel along an arbitrary morphism is a cokernel. If $%
\mathcal{A}$ is a quasi-abelian category, then $\left( \mathcal{A},\mathcal{E%
}\right) $ is an exact category, where $\mathcal{E}$ is the collection of
all the kernel-cokernel pairs in $\mathcal{A}$ \cite[Proposition 4.4]%
{buhler_exact_2010}. In what follows, we will regard every quasi-abelian
category as an exact category with respect to such a canonical (maximal)
exact structure. An \emph{abelian category }is a quasi-abelian category
where every morphism is admissible or, equivalently, every monomorphism is a
kernel and every epimorphism is a cokernel \cite[Remark 4.7]%
{buhler_exact_2010}.

Let $\mathcal{A}$ be a quasi-abelian category, and $\mathcal{B}$ be a (not
necessarily full) subcategory of $\mathcal{A}$. Then we say that $\mathcal{B}
$ is a quasi-abelian subcategory of $\mathcal{A}$ if $\mathcal{B}$ is a
quasi-abelian category and the inclusion functor $\mathcal{B}\rightarrow 
\mathcal{A}$ is finitely continuous and finitely cocontinuous. We say that $%
\mathcal{B}$ is a\emph{\ fully exact quasi-abelian subcategory }if it is a
quasi-abelian subcategory that is also a fully exact subcategory (where $%
\mathcal{A}$ is endowed with its canonical exact structure). In this case,
we also say that $\mathcal{B}$ is a \emph{thick} subcategory of $\mathcal{A}$%
; see \cite[Definition 8.3.21]{kashiwara_categories_2006}. (Notice that this
is different from the notion of thick subcategory of a triangulated category
from \cite[Definition 2.1.6]{neeman_triangulated_2001}.)

Let $\mathcal{A}$ be a quasi-abelian category and $\mathrm{D}\left( \mathcal{%
A}\right) $ be its derived category. The \emph{left heart }of $\mathcal{A}$
is by definition the heart of $\mathrm{D}\left( \mathcal{A}\right) $ with
respect to its canonical left truncation structure \cite[Definition 1.2.18]%
{schneiders_quasi-abelian_1999}. Concretely, $\mathrm{LH}\left( \mathcal{A}%
\right) $ is the full subcategory spanned by complexes $A$ with $A^{n}=0$
for $n\in \mathbb{Z}\setminus \left\{ -1,0\right\} $ and such that the
differential $A^{-1}\rightarrow A^{0}$ is monic. Then $\mathrm{LH}\left( 
\mathcal{A}\right) $ is an abelian category that contains $\mathcal{A}$ as a
thick subcategory \cite[Proposition 1.2.29]{schneiders_quasi-abelian_1999}.
Furthermore, the inclusion $\mathcal{A}\rightarrow \mathrm{LH}\left( 
\mathcal{A}\right) $ satisfies the following universal property: for every
abelian category $\mathcal{M}$ and finitely continuous and exact functor $F:%
\mathcal{A}\rightarrow \mathcal{M}$ there exists an essentially unique
functor $\mathrm{LH}\left( \mathcal{A}\right) \rightarrow \mathcal{M}$ whose
restriction to $\mathcal{A}$ is isomorphic to $F$ \cite[Proposition 1.2.34]%
{schneiders_quasi-abelian_1999}. The inclusion $\mathcal{A}\rightarrow 
\mathrm{LH}\left( \mathcal{A}\right) $ extends to an equivalence of
categories $\mathrm{D}\left( \mathcal{A}\right) \rightarrow \mathrm{D}\left( 
\mathrm{LH}\left( \mathcal{A}\right) \right) $ \cite[Proposition 1.2.32]%
{schneiders_quasi-abelian_1999}, and it has a left adjoint $\kappa :\mathrm{%
LH}\left( \mathcal{A}\right) \rightarrow \mathcal{A}$ \cite[Definition
1.2.26 and Proposition 1.2.27]{schneiders_quasi-abelian_1999}.

The canonical left truncation structure on $\mathrm{D}\left( \mathcal{A}%
\right) $ yields the \emph{cohomology functors }$\mathrm{H}^{n}:\mathrm{D}%
\left( \mathcal{A}\right) \rightarrow \mathrm{LH}\left( \mathcal{A}\right) $
for $n\in \mathbb{Z}$ \cite[Definition 1.2.18]{schneiders_quasi-abelian_1999}%
. We have that a morphism $f$ in $\mathrm{D}\left( \mathcal{A}\right) $ is
an isomorphism if and only if $\mathrm{H}^{n}\left( f\right) $ is an
isomorphism for every $n\in \mathbb{Z}$.

Recall that a \emph{torsion pair }\cite[Definition 2.4]{tattar_torsion_2021}
in a quasi-abelian category $\mathcal{M}$ is a pair $\left( \mathcal{T},%
\mathcal{F}\right) $ of full subcategories, where $\mathcal{T}$ is called a
torsion class and $\mathcal{F}$ called a torsion-free class, such that:

\begin{enumerate}
\item for all objects $T$ of $\mathcal{T}$ and $F$ of $\mathcal{F}$, $%
\mathrm{Hom}\left( T,F\right) =0$;

\item for all objects $M$ of $\mathcal{M}$ there exists a short exact
sequence%
\begin{equation*}
0\rightarrow {}_{\mathcal{T}}M\rightarrow M\rightarrow M_{\mathcal{F}%
}\rightarrow 0
\end{equation*}%
where $_{\mathcal{T}}M$ is in $\mathcal{T}$ and $M_{\mathcal{F}}$ is in $%
\mathcal{F}$.
\end{enumerate}

In this case, we have that an object $X$ of $\mathcal{M}$ is in $\mathcal{T}$
if and only if $\mathrm{Hom}\left( X,C\right) =0$ for all $C$ in $\mathcal{F}
$, and it is in $\mathcal{F}$ if and only if $\mathrm{Hom}\left( T,X\right)
=0$ for all $T$ in $\mathcal{T}$. If $\left( \mathcal{T},\mathcal{F}\right) $
is a torsion pair for $\mathcal{M}$, then $\mathcal{T}$ and $\mathcal{F}$
are full subcategories of $\mathcal{M}$ (essentially) closed under
extensions \cite[Theorem 2]{rump_almost_2001}. Conversely, if $\mathcal{A}$
is a quasi-abelian category, let $\kappa :\mathrm{LH}\left( \mathcal{A}%
\right) \rightarrow \mathcal{A}$ be the left adjoint of the inclusion $%
\mathcal{A}\rightarrow \mathrm{LH}\left( \mathcal{A}\right) $. Thus, we have
that%
\begin{equation*}
\mathrm{Hom}_{\mathcal{A}}\left( \kappa \left( X\right) ,B\right) \cong 
\mathrm{Hom}_{\mathrm{LH}\left( \mathcal{A}\right) }\left( X,B\right)
\end{equation*}%
for all objects $B$ of $\mathcal{A}$. Define $\mathcal{T}$ to be the full
subcategory of $\mathrm{LH}\left( \mathcal{A}\right) $ spanned by the
objects $X$ such that $\kappa \left( X\right) =0$, and define $\mathcal{F}$
to be equal to $\mathcal{A}$. Then we have that $\left( \mathcal{T},\mathcal{%
F}\right) $ is a torsion pair in $\mathrm{LH}\left( \mathcal{A}\right) $ 
\cite[Theorem 2]{rump_almost_2001}.

Recall that if $\mathcal{C}$ is a category with finite products and initial
object $1$, then an (abelian) group in a category is a tuple $\left(
G,m,i,u\right) $ where $G$ is an object of $\mathcal{C}$, $m:G\times
G\rightarrow G$, $i:G\rightarrow G$, and $u:1\rightarrow G$ are morphisms in 
$\mathcal{C}$ that satisfy the \textquotedblleft group
axioms\textquotedblright\ (and the commutativity of the operation),
expressed in terms of morphisms of $\mathcal{C}$ \cite[Definition 4.1]%
{awodey_category_2006}. A homomorphism between group objects in $\mathcal{C}$
is a morphism in $\mathcal{C}$ between group objects that commutes with the
\textquotedblleft group operations\textquotedblright\ \cite[Definition 4.2]%
{awodey_category_2006}. In the category of (abelian) groups, every object
has a unique group structure, which is automatically abelian \cite[%
Proposition 4.5]{awodey_category_2006}.

\begin{definition}
A category of abelian groups is a category $\mathcal{C}$ with finite
products such that every object of $\mathcal{C}$ has a unique abelian group
structure.
\end{definition}

We regard an object of a category of abelian groups as an abelian group
object in $\mathcal{C}$ with respect to its unique abelian group structure.
Notice that a morphism in $\mathcal{C}$ is automatically a homomorphism
between group objects, with respect to such a unique abelian group structure.

For example, the category of abelian Polish groups is a category of abelian
groups. Likewise, any quasi-abelian subcategory of the category of abelian
Polish groups is a category of abelian groups.\ If $R$ is a ring, then the
category of (left) $R$-modules is a category of abelian groups.

Recall that an $\mathbf{Ab}$-category is a category enriched over the
(monoidal) category $\mathbf{Ab}$ of abelian groups. An additive category is
an $\mathbf{Ab}$-category with that has a \emph{zero object} and \emph{%
biproducts }\cite[Chapter VIII]{mac_lane_categories_1998}.

Suppose that $\mathcal{M}$ is a category of abelian groups. For objects $A,B$
of $\mathcal{M}$ and morphisms $\varphi ,\psi :A\rightarrow B$ define $%
\varphi +\psi :A\rightarrow B$ to be the morphism $m_{B}\circ \left( \varphi
\times \psi \right) \circ \Delta _{A}$ where $\Delta _{A}:A\rightarrow
A\times A$ is the canonical diagonal morphism, $m_{B}$ is the group
operation in $B$.

\begin{proposition}
Let $\mathcal{M}$ be a locally small category of abelian groups. Then $%
\mathcal{M}$ is an additive category with respect to the operation on $%
\mathrm{Hom}$-sets defined above.
\end{proposition}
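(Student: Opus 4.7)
The plan is to verify, in sequence, the defining ingredients of an additive category: (i) each hom-set is an abelian group under the operation just defined, (ii) composition is $\mathbb{Z}$-bilinear, (iii) $\mathcal{M}$ has a zero object, and (iv) binary products are biproducts. First I would check that $(\mathrm{Hom}(A,B), +)$ is an abelian group. Writing out the definition $\varphi + \psi = m_B \circ (\varphi \times \psi) \circ \Delta_A$, the group axioms for $(\mathrm{Hom}(A,B), +)$ reduce directly to those of the abelian group object $(B, m_B, i_B, u_B)$; the neutral element is $0_{A,B} := u_B \circ t_A$ (with $t_A \colon A \to 1$ the unique morphism to the terminal object), and the inverse of $\varphi$ is $i_B \circ \varphi$. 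This is a standard Yoneda-style verification.

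Next I would establish $\mathbb{Z}$-bilinearity of composition. For precomposition with $f \colon A' \to A$, naturality of the diagonal, $\Delta_A \circ f = (f \times f) \circ \Delta_{A'}$, gives
\[
(\varphi + \psi) \circ f = m_B \circ (\varphi \times \psi) \circ (f \times f) \circ \Delta_{A'} = m_B \circ ((\varphi f) \times (\psi f)) \circ \Delta_{A'} = \varphi f + \psi f.
\]
For postcomposition with $g \colon B \to B'$ one needs $g \circ m_B = m_{B'} \circ (g \times g)$, i.e., that $g$ is a homomorphism of group objects; this is exactly the observation recorded just before the proposition, a consequence of the uniqueness of the abelian group structure on each object of $\mathcal{M}$. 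With bilinearity in hand, a zero object appears naturally: the terminal object $1$ carries its unique abelian group structure, and any arrow $1 \to A$ is a group homomorphism, hence must coincide with $u_A$ (since the only map $1 \to 1$ is $1_1 = u_1$); thus $1$ is also initial.

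Finally I would verify that $A \times B$ is a biproduct of $A$ and $B$. By uniqueness of the group structure on $A \times B$, it must agree with the product group structure, so the projections $p_A, p_B$ are homomorphisms of group objects. Defining $\iota_A := \langle 1_A, 0_{A,B} \rangle$ and $\iota_B := \langle 0_{B,A}, 1_B \rangle$, the identities $p_A \iota_A = 1_A$, $p_B \iota_B = 1_B$, and $p_A \iota_B = 0 = p_B \iota_A$ are immediate from the universal property of the product, while the crucial relation $\iota_A p_A + \iota_B p_B = 1_{A \times B}$ follows by applying $p_A$ and $p_B$ to both sides and invoking bilinearity together with the previous identities. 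This promotes the product to a biproduct and concludes the argument.

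I anticipate that the only genuinely delicate step is the postcomposition half of bilinearity, which rests entirely on the claim that every morphism of $\mathcal{M}$ is automatically a homomorphism of group objects---precisely the point isolated in the remark preceding the proposition. All other verifications are formal diagrammatic manipulations that mirror the classical proof showing $\mathbf{Ab}$ is additive.
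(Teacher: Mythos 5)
Your proof is correct and takes essentially the same approach as the paper: equip each $\mathrm{Hom}$-set with the group structure coming from $m_B$, $i_B$ and the zero morphism, observe that bilinearity of composition rests on every morphism being automatically a homomorphism of group objects, and conclude additivity from the existence of finite products. The only difference is that the paper simply cites Mac Lane (Chapter VIII, Section 2, Proposition 1 and Theorem 2) for what you verify by hand, namely that in an $\mathbf{Ab}$-category the terminal object is a zero object and binary products are biproducts.
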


\begin{proof}
Associativity of the operation follows from associativity of $m_{B}$. It is
easily seen that the unique morphism $0:A\rightarrow B$ that factors through
the initial object of $\mathcal{M}$ is the neutral element with respect to
the operation, while $i_{B}\circ \varphi $ is the opposite of a morphism $%
\varphi $. Since by assumption $\mathcal{M}$ has all finite products, it is
an additive category \cite[Chapter VIII, Section 2, Proposition 1 and
Theorem 2]{mac_lane_categories_1998}.
\end{proof}

We define a (quasi-)abelian category of abelian groups to be a locally small
category of abelian groups that is (quasi-)abelian with respect to the
additive category structure as in the previous proposition.

We would like to consider the natural generalization of $\mathbf{Ab}$%
-category where $\mathbf{Ab}$ is replaced with a category of abelian groups
that is not necessarily monoidal, such as the category of pro-Lie Polish
abelian groups.

\begin{definition}
\label{Definition:M-category}Let $\mathcal{M}$ be a quasi-abelian category
of abelian groups. An $\mathcal{M}$-category is a category $\mathcal{A}$
such that for every object $A$ and $B$ of $\mathcal{A}$, $\mathrm{Hom}_{%
\mathcal{A}}\left( A,B\right) $ is an object of $\mathcal{M}$, and $\mathrm{%
Hom}_{\mathcal{A}}\left( -,B\right) $ an $\mathrm{Hom}_{\mathcal{A}}\left(
A,-\right) $ are functors $\mathcal{A}\rightarrow \mathcal{M}$. Given $%
\mathcal{M}$-categories $\mathcal{A}$ and $\mathcal{B}$, an additive $%
\mathcal{M}$-functor $F:\mathcal{A}\rightarrow \mathcal{B}$ is a functor
such that, for every object $A,B$ in $\mathcal{A}$, $\mathrm{Hom}\left(
A,B\right) \rightarrow \mathrm{Hom}\left( FA,FB\right) $ is a morphism in $%
\mathcal{M}$.
\end{definition}

\begin{example}
If $R$ is a commutative ring, then $R$-modules form an abelian category of
abelian groups $\mathrm{Mod}\left( R\right) $. A $\mathrm{Mod}\left(
R\right) $-category in the sense of Definition \ref{Definition:M-category}
is the same as an $R$-linear category.
\end{example}

Notice that, if $\mathcal{A}$ is an $\mathcal{M}$-category, then $\mathcal{A}%
\times \mathcal{A}$ has a natural $\mathcal{M}$-category structure obtained
by the identification%
\begin{equation*}
\mathrm{Hom}\left( \left( A_{0},A_{1}\right) ,\left( B_{0},B_{1}\right)
\right) =\mathrm{Hom}\left( A_{0},B_{0}\right) \times \mathrm{Hom}\left(
A_{1},B_{1}\right) \text{.}
\end{equation*}%
Likewise, the category of arrows $\mathcal{A}^{\rightarrow }$ in $\mathcal{A}
$ has a natural $\mathcal{M}$-category structure. More generally, for every
finite category $J$, the category $\mathcal{A}^{J}$ of diagrams of shape $J$
in $\mathcal{A}$ is an $\mathcal{M}$-category.

\begin{definition}
Let $\mathcal{M}$ be a quasi-abelian category of abelian groups.

\begin{itemize}
\item An \emph{additive} $\mathcal{M}$-category is an $\mathcal{M}$-category 
$\mathcal{A}$ that has a zero object and biproducts, and such that the
biproduct functor $\mathcal{A}\times \mathcal{A}\rightarrow \mathcal{A}$ is
an $\mathcal{M}$-functor;

\item A \emph{quasi-abelian }$\mathcal{M}$-category is an additive $\mathcal{%
M}$-category that is also quasi-abelian, and such that the kernel and
cokernel functors are $\mathcal{M}$-functors, which implies that all the
finite limit and colimit functors are $\mathcal{M}$-functors;

\item A \emph{triangulated }$\mathcal{M}$-category is an additive $\mathcal{M%
}$-category $\mathcal{T}$ that is also triangulated, and such that the shift
functor $\Sigma $ on $\mathcal{T}$ is an $\mathcal{M}$-functor, and for al
objects $A,B$ of $\mathcal{T}$, $\mathrm{Hom}\left( A,-\right) $ is a
homological functor in the sense of \cite[Definition 1.1.7]%
{neeman_triangulated_2001} when regarded as a functor $\mathcal{T}%
\rightarrow \mathrm{LH}\left( \mathcal{M}\right) $, and $\mathrm{Hom}\left(
-,B\right) $ is a cohomological functor when regarded as a functor $\mathcal{%
T}^{\mathrm{op}}\rightarrow \mathrm{LH}\left( \mathcal{M}\right) $.
\end{itemize}
\end{definition}

\subsection{Derived functors}

Suppose that $\mathcal{A}$ and $\mathcal{R}$ are exact categories, and $F:%
\mathcal{A}\rightarrow \mathcal{R}$ is a functor. Then $F$ induces a \emph{%
triangulated }functor $\mathrm{K}^{+}\left( \mathcal{A}\right) \rightarrow 
\mathrm{K}^{+}\left( \mathcal{R}\right) $ \cite[Definition 2.1.1]%
{neeman_triangulated_2001}, which we still denote by $F$. A \emph{total
right derived functor }for $F:\mathrm{K}^{+}\left( \mathcal{A}\right)
\rightarrow \mathrm{K}^{+}\left( \mathcal{R}\right) $ \cite[Section 10.6]%
{buhler_exact_2010}---see also \cite[Definition 1.3.1]%
{schneiders_quasi-abelian_1999}---is a triangulated functor $\mathrm{R}F:%
\mathrm{D}^{+}\left( \mathcal{A}\right) \rightarrow \mathrm{D}^{+}\left( 
\mathcal{R}\right) $ together with a morphism $\mu :Q_{\mathcal{R}}\circ
F\Rightarrow \mathrm{R}F\circ Q_{\mathcal{A}}$ of triangulated functors that
satisfies the following universal property: for every triangulated functor $%
G:\mathrm{D}^{+}\left( \mathcal{A}\right) \rightarrow \mathrm{D}^{+}\left( 
\mathcal{R}\right) $ and morphism $\nu :Q_{\mathcal{R}}\circ F\rightarrow
G\circ Q_{\mathcal{A}}$ of triangulated functors, there exists a unique
morphism of triangulated functors $\sigma :\mathrm{R}F\Rightarrow G$ such
that $\nu =\left( \sigma Q_{\mathcal{A}}\right) \circ \mu $.

If $\mathcal{R}$ is quasi-abelian, then $\mathrm{H}^{0}\circ F$ is a \emph{%
cohomological functor }\cite[Section 1.1, Definition 3.1]%
{verdier_categories_1977}; see also \cite[Definition 1.1.7]%
{neeman_triangulated_2001}. A \emph{right cohomological derived functor }for 
$\mathrm{H}^{0}\circ F$ is defined in a similar fashion, via a suitable
universal property \cite[Section 2.2, Definition 1.4]%
{verdier_categories_1977}.

\begin{definition}
Suppose that $\mathcal{A}$ is an exact category, and $\mathcal{C}$ is a full
subcategory of $\mathcal{A}$. We say that $\mathcal{C}$ is:

\begin{enumerate}
\item \emph{generating} \cite[Definition 8.3.21(v)]%
{kashiwara_categories_2006} if for each object $A$ of $\mathcal{A}$ there
exists an admissible epimorphism $C\rightarrow A$ with $C$ in $\mathcal{C}$;

\item \emph{cogenerating} if $\mathcal{C}^{\mathrm{op}}$ is generating in $%
\mathcal{A}^{\mathrm{op}}$;

\item \emph{closed under quotients }if for all short exact sequences 
\begin{equation*}
0\rightarrow A^{\prime }\rightarrow C\rightarrow A^{\prime \prime
}\rightarrow 0
\end{equation*}%
in $\mathcal{A}$ with $C$ in $\mathcal{C}$, $A^{\prime \prime }$ is
isomorphic to an object of $\mathcal{C}$;

\item \emph{closed under subobjects }if $\mathcal{C}^{\mathrm{op}}$ is
closed under quotients in $\mathcal{A}^{\mathrm{op}}$.
\end{enumerate}
\end{definition}

We have the following result \cite[Proposition 13.2.2(b)]%
{kashiwara_categories_2006}; see also \cite[Theorem 10.22 and Remark 10.23]%
{buhler_exact_2010}, \cite[Proposition 13.2.2]{kashiwara_categories_2006}, 
\cite[Lemma 1.3.3 and Lemma 1.3.4]{schneiders_quasi-abelian_1999}, and \cite[%
Corollary 3.10]{hoffmann_homological_2007}.

\begin{lemma}
\label{Lemma:same-derived}Suppose that $\mathcal{A}$ is an exact category
and $\mathcal{C}$ is a cogenerating fully exact subcategory of $\mathcal{A}$
closed under quotients. Then for any bounded complex $A$ in $\mathcal{A}$
there exists a bounded complex $C$ in $\mathcal{C}$ and a quasi-isomorphism $%
\eta :A\rightarrow C$ with $\eta ^{k}:A^{k}\rightarrow C^{k}$ an admissible
monic for every $k\in \mathbb{Z}$.

Furthermore, the inclusion $\mathcal{C}\rightarrow \mathcal{A}$ induces an
equivalence of categories%
\begin{equation*}
\mathrm{D}^{b}\left( \mathcal{C}\right) \rightarrow \mathrm{D}^{b}\left( 
\mathcal{A}\right) \text{.}
\end{equation*}
\end{lemma}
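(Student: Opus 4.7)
The strategy is twofold: I would first construct the termwise admissible-monic quasi-isomorphism $\eta : A \to C$ by induction, then use it to establish the equivalence of derived categories.

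For the first assertion, I would proceed by induction on the length $\ell = b - a$ of $A$, which we may assume is supported in degrees $[a,b]$. When $\ell = 0$, the cogenerating hypothesis gives an admissible monic $A^{a}\hookrightarrow C^{a}$ with $C^{a}\in \mathcal{C}$; since $\mathcal{C}$ is closed under quotients and fully exact, the cokernel $C^{a}/A^{a}$ also lies in $\mathcal{C}$, and placing these in degrees $a$ and $a+1$ yields a length-one resolution that is a quasi-isomorphism. For $\ell > 0$, I would build $C$ degree by degree: start with $\eta^{a}$ as above, and, given $\eta^{k} : A^{k}\hookrightarrow C^{k}$, form the pushout $P^{k+1}$ of $d_{A}^{k}$ along $\eta^{k}$, which carries an admissible monic $A^{k+1}\hookrightarrow P^{k+1}$ by axiom (3) of the exact structure; then apply the cogenerating hypothesis to embed $P^{k+1}$ via an admissible monic into some $C^{k+1}\in \mathcal{C}$. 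Composition yields the admissible monic $\eta^{k+1}$ (using axiom (2)), and the natural map $C^{k}\to P^{k+1}\to C^{k+1}$ is the differential. Once the top degree $b$ is reached, the construction is capped off in finitely many steps using that $\mathcal{C}$ is closed under quotients, which keeps $C$ bounded.

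For the second assertion, essential surjectivity of the functor $\mathrm{D}^{b}(\mathcal{C})\to \mathrm{D}^{b}(\mathcal{A})$ follows immediately from the first part. For fully faithfulness, I would invoke the calculus of fractions describing the Verdier quotient $\mathrm{D}^{b}(\mathcal{A})$: any morphism in the target between $C_{1}, C_{2}\in \mathrm{D}^{b}(\mathcal{C})$ is represented by a roof $C_{1}\leftarrow A\to C_{2}$ with the left leg a quasi-isomorphism and $A$ bounded in $\mathcal{A}$; applying the first part to $A$ gives a quasi-isomorphism $A\to C$ with $C$ in $\mathrm{D}^{b}(\mathcal{C})$, allowing the roof to be replaced by one with all vertices in $\mathcal{C}$. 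An analogous argument, applied to the complex giving an equivalence between two roofs, shows that the equivalence relation identifying roofs in $\mathrm{D}^{b}(\mathcal{A})$ is already realized in $\mathrm{D}^{b}(\mathcal{C})$.

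The main obstacle will be verifying that the inductively constructed $\eta$ is a quasi-isomorphism. Termwise admissibility and boundedness fall out of the pushout construction together with the cogenerating hypothesis, but showing that the mapping cone of $\eta$ is acyclic requires a careful diagram chase, identifying the cohomology objects of the cone via the iterated cokernels $C^{k}/A^{k}$ and exploiting the closure of $\mathcal{C}$ under quotients (together with its full exactness in $\mathcal{A}$) to ensure these cokernels assemble into an acyclic complex in a way compatible with the pushout-built differentials.
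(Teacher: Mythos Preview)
The paper does not give its own proof of this lemma; it is stated with citations to \cite[Proposition~13.2.2(b)]{kashiwara_categories_2006}, \cite[Theorem~10.22 and Remark~10.23]{buhler_exact_2010}, \cite[Lemmas~1.3.3--1.3.4]{schneiders_quasi-abelian_1999}, and \cite[Corollary~3.10]{hoffmann_homological_2007}. Your overall strategy---inductive pushout construction of $C$, then essential surjectivity plus a calculus-of-fractions argument for full faithfulness---is exactly the one carried out in those references, so in that sense you are on the right track.

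There is, however, a genuine gap in your inductive step. Forming $P^{k+1}$ as the bare pushout of $d_{A}^{k}$ along $\eta^{k}$ and then setting $d_{C}^{k}$ to be the composite $C^{k}\to P^{k+1}\hookrightarrow C^{k+1}$ does \emph{not} yield $d_{C}^{k+1}\circ d_{C}^{k}=0$: the map $C^{k}\to P^{k+1}$ carries no information about $d_{C}^{k-1}$, so nothing forces the image of $C^{k-1}$ to die in $P^{k+2}$. The standard fix (see the proof of \cite[Proposition~13.2.1]{kashiwara_categories_2006}) is to first pass to $W^{k}:=\mathrm{coker}(d_{C}^{k-1})$, note that $\eta^{k}$ still induces a map $A^{k}\to W^{k}$, and then form the pushout of $d_{A}^{k}$ along $A^{k}\to W^{k}$; the differential $d_{C}^{k}$ factors through $W^{k}$ by construction, guaranteeing $d_{C}^{2}=0$. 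With this correction your capping-off step also becomes precise: once $A^{k}=0$ for $k>b$ one takes $C^{b+1}:=W^{b}$ (which lies in $\mathcal{C}$ by closure under quotients) and then $W^{b+1}=0$, so $C$ terminates one degree above $A$. A minor additional point: for the full-faithfulness argument you should use \emph{right} roofs $C_{1}\to X\leftarrow C_{2}$ rather than left ones, so that the quasi-isomorphism $X\to C_{X}$ produced by the first part composes in the correct direction.
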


As a consequence of Lemma \ref{Lemma:same-derived} we have the following;
see \cite[Section 10.6]{buhler_exact_2010}, \cite[Definition 10.3.2,
Proposition 10.3.3, Corollary 13.3.8]{kashiwara_categories_2006}, \cite[%
Proposition 1.3.5]{schneiders_quasi-abelian_1999}, and \cite[Remark 4.10]%
{hoffmann_homological_2007}.

\begin{proposition}
\label{Proposition:explicitly-right-derivable}Suppose that $F:\mathcal{A}%
\rightarrow \mathcal{R}$ is a functor between exact categories, and $%
\mathcal{C}$ is an cogenerating full subcategory of $\mathcal{A}$ closed
under quotients such that $F|_{\mathcal{C}}$ is exact. For a bounded complex 
$A$ over $\mathcal{A}$, pick a bounded complex $C_{A}$ over $\mathcal{C}$
together with a quasi-isomorphism $\eta _{A}:A\rightarrow C_{A}$.\ For
bounded complexes $A,B$ over $\mathcal{A}$ and morphisms $f:A\rightarrow B$
in $\mathrm{D}^{b}\left( \mathcal{A}\right) $, define%
\begin{equation*}
\left( \mathrm{R}F\right) \left( A\right) :=F\left( C_{A}\right)
\end{equation*}%
and 
\begin{equation*}
\left( \mathrm{R}F\right) \left( f\right) :=Q_{\mathcal{R}}\left( F\left(
g\right) \right) \circ Q_{\mathcal{R}}\left( F\left( \sigma \right) \right)
^{-1}\text{,}
\end{equation*}%
where $\sigma :C\rightarrow C_{A}$ and $g:C\rightarrow C_{B}$ are morphisms
in $\mathrm{K}\left( \mathcal{C}\right) $ such that 
\begin{equation*}
Q_{\mathcal{A}}\left( g\right) Q_{\mathcal{A}}\left( \sigma \right) ^{-1}=Q_{%
\mathcal{A}}(\eta _{B})\circ f\circ Q_{\mathcal{A}}(\eta _{A})^{-1}\text{.}
\end{equation*}

This yields a triangulated functor $\mathrm{R}F:\mathrm{D}^{b}\left( 
\mathcal{A}\right) \rightarrow \mathrm{D}^{b}\left( \mathcal{R}\right) $.
Defining $\mu _{A}:=F\left( \eta _{A}\right) :F\left( A\right) \rightarrow
\left( \mathrm{R}F\right) \left( A\right) $ for each bounded complex $A$
over $\mathcal{A}$ yields a morphism $\mu :Q_{\mathcal{R}}F\Rightarrow
\left( \mathrm{R}F\right) Q_{\mathcal{A}}$ of triangulated functors. We have
that $\left( \mathrm{R}F,\mu \right) $ is the total right derived functor of 
$F$. Furthermore, if $\mathcal{R}$ is a quasi-abelian category, then $%
\mathrm{H}^{0}\circ \mathrm{R}F$ is a cohomological right derived functor of 
$\mathrm{H}^{0}\circ F$.
\end{proposition}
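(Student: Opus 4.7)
The plan is to exploit the equivalence $\mathrm{D}^{b}(\mathcal{C}) \to \mathrm{D}^{b}(\mathcal{A})$ supplied by Lemma \ref{Lemma:same-derived} together with the fact that $F|_{\mathcal{C}}$, being exact, sends acyclic bounded complexes over $\mathcal{C}$ to acyclic bounded complexes over $\mathcal{R}$. Concretely, $F|_{\mathcal{C}}$ induces a triangulated functor $\mathrm{K}^{b}(\mathcal{C}) \to \mathrm{K}^{b}(\mathcal{R})$; since it preserves acyclic complexes, it descends by the universal property of the Verdier quotient to a triangulated functor $\widetilde{F}: \mathrm{D}^{b}(\mathcal{C}) \to \mathrm{D}^{b}(\mathcal{R})$. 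The candidate for $\mathrm{R}F$ is then $\widetilde{F} \circ \iota^{-1}$, where $\iota: \mathrm{D}^{b}(\mathcal{C}) \to \mathrm{D}^{b}(\mathcal{A})$ is the equivalence induced by inclusion and $\iota^{-1}$ is the specific quasi-inverse determined by the chosen assignments $A \mapsto C_{A}$ and $\eta_{A}$.

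First I would verify that this abstract description unfolds to the explicit formula in the statement. On objects this is immediate: $(\mathrm{R}F)(A) = F(C_{A})$. On morphisms, any $f: A \to B$ in $\mathrm{D}^{b}(\mathcal{A})$ transports across $\iota$ to a unique morphism $C_{A} \to C_{B}$ in $\mathrm{D}^{b}(\mathcal{C})$, represented by a roof $C_{A} \xleftarrow{\sigma} C \xrightarrow{g} C_{B}$ in $\mathrm{K}(\mathcal{C})$ with $\sigma$ a quasi-isomorphism; exactness of $F|_{\mathcal{C}}$ forces $F(\sigma)$ to remain a quasi-isomorphism in $\mathrm{K}(\mathcal{R})$, so $Q_{\mathcal{R}}(F(\sigma))$ is invertible and the formula $(\mathrm{R}F)(f) = Q_{\mathcal{R}}(F(g)) \circ Q_{\mathcal{R}}(F(\sigma))^{-1}$ is well-defined and independent of the roof representative by a standard calculus-of-fractions argument. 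Functoriality, triangulatedness, and naturality of $\mu_{A} := F(\eta_{A})$ then follow formally from the corresponding properties of $\widetilde{F}$, $\iota$, and $\iota^{-1}$.

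For the universal property, given any triangulated functor $G: \mathrm{D}^{+}(\mathcal{A}) \to \mathrm{D}^{+}(\mathcal{R})$ with a morphism $\nu: Q_{\mathcal{R}} F \Rightarrow G Q_{\mathcal{A}}$, the key observation is that $\nu_{C}$ is an isomorphism for every bounded complex $C$ over $\mathcal{C}$, because $F|_{\mathcal{C}}$ is exact and $G \circ \iota$ is triangulated so both sides descend compatibly. One then defines $\sigma_{A}: (\mathrm{R}F)(A) \to G(A)$ by $\sigma_{A} := G(Q_{\mathcal{A}}(\eta_{A}))^{-1} \circ \nu_{C_{A}}$ and verifies uniqueness via the equivalence $\iota$. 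Finally, the statement that $\mathrm{H}^{0} \circ \mathrm{R}F$ is a right cohomological derived functor of $\mathrm{H}^{0} \circ F$ follows by postcomposing the universal property with the cohomological functor $\mathrm{H}^{0}: \mathrm{D}^{+}(\mathcal{R}) \to \mathrm{LH}(\mathcal{R})$ and applying \cite[Section 2.2, Definition 1.4]{verdier_categories_1977}. The main obstacle is ensuring that the choices of roof representatives in the explicit morphism formula really are compatible under composition and under the triangulated structure; this is the technical verification that the description via $\widetilde{F} \circ \iota^{-1}$ is designed to encapsulate, so casting the proof in those terms keeps the bookkeeping manageable.
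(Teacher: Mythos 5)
Your overall route is the intended one: the paper gives no self-contained proof of Proposition \ref{Proposition:explicitly-right-derivable}, stating it as a consequence of Lemma \ref{Lemma:same-derived} together with the standard references, and your argument---descend $F|_{\mathcal{C}}$ to a triangulated functor on $\mathrm{D}^{b}\left( \mathcal{C}\right) $ (since an exact functor preserves acyclic complexes) and compose with the chosen quasi-inverse of the equivalence $\mathrm{D}^{b}\left( \mathcal{C}\right) \rightarrow \mathrm{D}^{b}\left( \mathcal{A}\right) $, then unwind to the roof formula---is exactly that standard argument.

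Two technical points in your write-up need repair, though neither affects the architecture. First, your ``key observation'' for the universal property, that $\nu _{C}$ is an isomorphism for every bounded complex $C$ over $\mathcal{C}$, is false in general: take $G=0$ and $\nu =0$. Fortunately you never use it; your formula $\sigma _{A}:=G\left( Q_{\mathcal{A}}\left( \eta _{A}\right) \right) ^{-1}\circ \nu _{C_{A}}$ makes sense without inverting $\nu $, and the identity $\nu _{A}=\sigma _{A}\circ \mu _{A}$ follows from naturality of $\nu $ applied to $\eta _{A}$. What actually carries the uniqueness argument is that $\mu _{C}=Q_{\mathcal{R}}\left( F\left( \eta _{C}\right) \right) $ is invertible for every bounded complex $C$ over $\mathcal{C}$, which pins down $\sigma _{C}=\nu _{C}\circ \mu _{C}^{-1}$ and then $\sigma _{A}$ for all $A$ via the isomorphisms $Q_{\mathcal{A}}\left( \eta _{A}\right) $. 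Second, the assertion that ``exactness of $F|_{\mathcal{C}}$ forces $F\left( \sigma \right) $ to remain a quasi-isomorphism'' is not a consequence of exactness alone: the cone of $\sigma $ is a bounded complex over $\mathcal{C}$ that is acyclic in $\mathcal{A}$, and one must know that its syzygy objects can be taken in $\mathcal{C}$ before $F|_{\mathcal{C}}$ can be applied degreewise; this is precisely where the hypothesis that $\mathcal{C}$ is closed under quotients (as in Lemma \ref{Lemma:same-derived}) enters. With these two corrections your proof is complete and agrees with the treatment in \cite[Corollary 13.3.8]{kashiwara_categories_2006} and \cite[Proposition 1.3.5]{schneiders_quasi-abelian_1999} that the paper points to.
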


Suppose that $\mathcal{A}$, $\mathcal{B}$, $\mathcal{R}$ are exact
categories, and $F:\mathcal{A}^{\mathrm{op}}\times \mathcal{B}\rightarrow 
\mathcal{R}$ is a functor. Given bounded complexes $A$ over $\mathcal{A}$
and $B$ over $\mathcal{B}$, we have a corresponding double complex $F\left(
A,B\right) $ over $\mathcal{R}$. We let $F^{\bullet }\left( A,B\right) $ be
the total complex associated with $F\left( A,B\right) $, which is
well-defined and bounded since $A$ and $B$ are bounded and hence $F\left(
A,B\right) $ has only finitely many nonzero entries in each diagonal. This
defines a triangulated functor $F^{\bullet }:\mathrm{K}^{b}\left( \mathcal{A}%
\right) ^{\mathrm{op}}\times \mathrm{K}^{b}\left( \mathcal{B}\right)
\rightarrow \mathrm{K}^{b}\left( \mathcal{R}\right) $; see \cite[Proposition
11.6.4]{kashiwara_categories_2006}.\ The same proof as Proposition \ref%
{Proposition:explicitly-right-derivable} gives the following.

\begin{proposition}
\label{Proposition:explicitly-right-derivable-bifunctor}Suppose that $F:%
\mathcal{A}^{\mathrm{op}}\times \mathcal{B}\rightarrow \mathcal{R}$ is a
functor between exact categories, $\mathcal{C}$ is an generating full
subcategory of $\mathcal{A}$ closed under subobjects, and $\mathcal{D}$ is a
cogenerating full subcategory of $\mathcal{B}$ closed under quotients.
Assume that for every object $C$ of $\mathcal{C}$ and $D$ of $\mathcal{D}$, $%
F\left( C,-\right) $ is exact on $\mathcal{D}$ and $F\left( -,D\right) $ is
exact on $\mathcal{C}$. For bounded complexes $A$ and $B$ over $\mathcal{A}$
and $\mathcal{B}$, respectively, pick bounded complexes $C_{A}$ and $D_{B}$
over $\mathcal{C}$ and $\mathcal{D}$, respectively, together with
quasi-isomorphisms $\eta _{A}:C_{A}\rightarrow A$ and $\eta
_{B}:B\rightarrow D_{B}$, and define%
\begin{equation*}
\left( \mathrm{R}F\right) \left( A,B\right) :=F\left( C_{A},D_{B}\right) 
\text{.}
\end{equation*}

This yields a triangulated functor $\mathrm{R}F^{\bullet }:\mathrm{D}%
^{b}\left( \mathcal{A}\right) ^{\mathrm{op}}\times \mathrm{D}^{b}\left( 
\mathcal{B}\right) \rightarrow \mathrm{D}^{b}\left( \mathcal{R}\right) $.
Defining $\mu _{A}:=F\left( \eta _{A},\eta _{B}\right) :F^{\bullet }\left(
A,B\right) \rightarrow \left( \mathrm{R}F\right) \left( A\right) $ for each
bounded complex $A$ over $\mathcal{A}$ yields a morphism $\mu :Q_{\mathcal{R}%
}F^{\bullet }\Rightarrow \left( \mathrm{R}F\right) \left( Q_{\mathcal{A}%
}\times Q_{\mathcal{B}}\right) $ of triangulated functors. We have that $%
\left( \mathrm{R}F,\mu \right) $ is the total right derived functor of $%
F^{\bullet }$. Furthermore, if $\mathcal{R}$ is a quasi-abelian category,
then $\mathrm{H}^{0}\circ \mathrm{R}F^{\bullet }$ is a cohomological right
derived functor of $\mathrm{H}^{0}\circ F^{\bullet }$.
\end{proposition}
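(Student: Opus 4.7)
The plan is to mirror the proof of Proposition \ref{Proposition:explicitly-right-derivable}, with the contravariant argument handled by the dual of Lemma \ref{Lemma:same-derived} and the total-complex construction providing the combination of the two resolutions.

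First, I would invoke Lemma \ref{Lemma:same-derived} and its dual. Since $\mathcal{C}$ is a generating full subcategory of $\mathcal{A}$ closed under subobjects, the dual of Lemma \ref{Lemma:same-derived} gives, for every bounded complex $A$ over $\mathcal{A}$, a bounded complex $C_{A}$ over $\mathcal{C}$ together with a quasi-isomorphism $\eta_{A}: C_{A} \to A$ that is a degree-wise admissible epic, and it induces an equivalence $\mathrm{D}^{b}\left(\mathcal{C}\right) \to \mathrm{D}^{b}\left(\mathcal{A}\right)$. Analogously, applied to $\mathcal{D} \subseteq \mathcal{B}$, Lemma \ref{Lemma:same-derived} produces a degree-wise admissibly monic quasi-isomorphism $\eta_{B}: B \to D_{B}$ and an equivalence $\mathrm{D}^{b}\left(\mathcal{D}\right) \to \mathrm{D}^{b}\left(\mathcal{B}\right)$. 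These two equivalences reduce the problem to constructing the derived bifunctor on $\mathrm{D}^{b}\left(\mathcal{C}\right)^{\mathrm{op}} \times \mathrm{D}^{b}\left(\mathcal{D}\right)$.

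Next, I would verify that the total complex functor $F^{\bullet}$ restricted to $\mathrm{K}^{b}\left(\mathcal{C}\right)^{\mathrm{op}} \times \mathrm{K}^{b}\left(\mathcal{D}\right)$ inverts quasi-isomorphisms in each variable separately. Fix $C$ in $\mathcal{C}$. The hypothesis that $F(C, -)$ is exact on $\mathcal{D}$ ensures that $F(C, -)$ carries acyclic bounded complexes over $\mathcal{D}$ to acyclic complexes over $\mathcal{R}$; passing to a double complex with one column concentrated in finitely many degrees, an acyclic-assembly argument (a bounded double complex with acyclic columns has acyclic total complex) shows that for $C^{\bullet} \in \mathrm{K}^{b}\left(\mathcal{C}\right)$ and an acyclic $D^{\bullet} \in \mathrm{K}^{b}\left(\mathcal{D}\right)$ the total complex $F^{\bullet}\left(C^{\bullet}, D^{\bullet}\right)$ is acyclic. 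The symmetric argument, using exactness of $F(-, D)$ on $\mathcal{C}$ for each $D$ in $\mathcal{D}$, handles acyclic complexes in the first variable. Consequently, defining
\begin{equation*}
\mathrm{R}F\left(A, B\right) := F^{\bullet}\left(C_{A}, D_{B}\right)
\end{equation*}
and functoriality by lifting morphisms in $\mathrm{D}^{b}\left(\mathcal{A}\right)$ and $\mathrm{D}^{b}\left(\mathcal{B}\right)$ to morphisms between resolutions (unique up to homotopy), as in Proposition \ref{Proposition:explicitly-right-derivable}, yields a well-defined triangulated functor $\mathrm{R}F: \mathrm{D}^{b}\left(\mathcal{A}\right)^{\mathrm{op}} \times \mathrm{D}^{b}\left(\mathcal{B}\right) \to \mathrm{D}^{b}\left(\mathcal{R}\right)$.

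The natural transformation $\mu$ is then defined componentwise by $\mu_{A, B} := F^{\bullet}(\eta_{A}, \eta_{B})$, and verifying the universal property proceeds formally: for any competing $(G, \nu)$ one constructs $\sigma: \mathrm{R}F \Rightarrow G$ by noting that $\mu_{A, B}$ is an isomorphism whenever $A$ is in $\mathrm{K}^{b}\left(\mathcal{C}\right)$ and $B$ is in $\mathrm{K}^{b}\left(\mathcal{D}\right)$, while every object of $\mathrm{D}^{b}\left(\mathcal{A}\right)^{\mathrm{op}} \times \mathrm{D}^{b}\left(\mathcal{B}\right)$ is isomorphic to one of this form via the fixed resolutions. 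The final assertion about $\mathrm{H}^{0} \circ \mathrm{R}F^{\bullet}$ follows by postcomposing with $\mathrm{H}^{0}$ and applying the universal property of cohomological derived functors from \cite{verdier_categories_1977}. The main obstacle is the bifunctor acyclicity claim, i.e.\ showing that a bounded double complex with one side of admissible bounded acyclic complex produces an acyclic total complex; this is the two-variable counterpart of the single-variable exactness argument underlying Proposition \ref{Proposition:explicitly-right-derivable}, and in the bounded setting is a finite induction on the width of the double complex rather than a genuine spectral sequence argument.
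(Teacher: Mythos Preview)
Your proposal is correct and follows essentially the same approach as the paper, which simply states that the proof is the same as that of Proposition~\ref{Proposition:explicitly-right-derivable}. You have spelled out in more detail the bifunctor adaptation (the dual application of Lemma~\ref{Lemma:same-derived} for the contravariant variable and the acyclic-assembly argument for the total complex), but this is exactly the intended elaboration.
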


\subsection{Injective and projective objects}

Suppose that $\mathcal{A}$ is an exact category. An object $I$ of $\mathcal{A%
}$ is called \emph{injective }\cite[Definition 11.1]{buhler_exact_2010} if
the functor $\mathrm{Ext}\left( -,I\right) :\mathcal{A}^{\mathrm{op}%
}\rightarrow \mathbf{Ab}$ is exact; see also \cite[Proposition 11.3]%
{buhler_exact_2010}. The category $\mathcal{A}$ \emph{has enough injectives }%
\cite[Definition 11.9]{buhler_exact_2010} if for every object $A$ of $%
\mathcal{A}$ there exists an admissible monic $m:A\rightarrow I$ where $I$
is injective. Let $\mathcal{A}$ be an exact category with enough injectives,
and let $\mathcal{I}$ be the class of injective objects in $\mathcal{A}$.
Then for every left-bounded complex $A$ in $\mathcal{A}$ there exists a
left-bounded complex $I_{A}$ over $\mathcal{I}$ and a quasi-isomorphism $\mu
_{A}:A\rightarrow I_{A}$ \cite[Theorem 12.7]{buhler_exact_2010}. In
particular, when $A$ is an object of $\mathcal{A}$, regarded as a complex
concentrated in degree zero, then $I_{A}$ is called an \emph{injective
resolution} of $A$ \cite[Definition 12.1]{buhler_exact_2010}.

Furthermore, the inclusion $\mathcal{I}\rightarrow \emph{A}$ induces an
equivalence of triangulated categories $\mathrm{K}^{+}\left( \mathcal{I}%
\right) \rightarrow \mathrm{D}^{+}\left( \mathcal{A}\right) $. If $\mathcal{B%
}$ is an exact category, and $F:\mathcal{A}\rightarrow \mathcal{B}$ is a
functor, then $F$ has a total right derived functor $\mathrm{R}F:\mathrm{D}%
^{+}\left( \mathcal{A}\right) \rightarrow \mathrm{D}^{+}\left( \mathcal{B}%
\right) $. This is defined by setting $\left( \mathrm{R}F\right) \left(
A\right) :=F\left( I_{A}\right) $ and $\left( \mathrm{R}F\right) \left(
f\right) =Q_{\mathcal{B}}\left( F\left( g\right) \right) $ for left-bounded
complexes $A$ and $B$ over $\mathcal{A}$ and morphism $f:A\rightarrow B$ in 
\textrm{D}$^{+}\left( \mathcal{A}\right) $, where $g:I_{\mathcal{A}%
}\rightarrow I_{\mathcal{B}}$ is a morphism in $\mathrm{K}\left( \mathcal{A}%
\right) $ satisfying $Q_{\mathcal{A}}\left( g\right) =Q_{\mathcal{A}}\left(
\mu _{B}\right) \circ f\circ Q_{\mathcal{A}}\left( \mu _{A}\right) ^{-1}$.

For objects $A$ and $B$ of $\mathcal{A}$ and $n\in \mathbb{Z}$, one defines $%
\mathrm{Ext}^{n}\left( A,B\right) :=\mathrm{Hom}_{\mathrm{D}\left( \mathcal{A%
}\right) }\left( A,B[n]\right) $. One says that $\mathcal{A}$ has \emph{%
finite homological dimension }if there exists $d\in \mathbb{Z}$ such that $%
\mathrm{Ext}^{n}\left( A,B\right) =0$ for all $n>d$ and objects $A$ and $B$
of $\mathcal{A}$. In this case, the least such $d$ is called the \emph{%
homological dimension }$\mathrm{hd}\left( \mathcal{A}\right) $ of $\mathcal{A%
}$ \cite[Exercise 13.8]{kashiwara_categories_2006}. If $\mathcal{A}$ has
enough injectives and $d\geq 0$, then $\mathrm{hd}\left( \mathcal{A}\right)
\leq d$ if and only if for every exact sequence $X^{0}\rightarrow
X^{1}\rightarrow \cdots \rightarrow X^{d}\rightarrow 0$ in $\mathcal{A}$
with $X^{0},X^{1},\ldots ,X^{d-1}$ injective, we have that $X^{d}$ is also
injective \cite[Exercise 13.8]{kashiwara_categories_2006}. In this case,
every object of $\mathcal{A}$ has an injective resolution $I_{A}$ with $%
I_{A}^{k}=0$ for $k\in \mathbb{Z}\setminus \left\{ 0,1,\ldots ,d\right\} $.

If $\mathcal{A}$ has enough injectives and finite homological dimension,
then for every (bounded) complex $A$ over $\mathcal{A}$ there exists a
quasi-isomorphism $\mu _{A}:A\rightarrow I_{A}$ where $I_{A}$ is a (bounded)
complex over $\mathcal{I}$ \cite[Corollary I.7.7]{iversen_cohomology_1986}.
Furthermore, the inclusion $\mathcal{I}\rightarrow \mathcal{A}$ induces an
equivalence of categories $\mathrm{K}\left( \mathcal{I}\right) \rightarrow 
\mathrm{D}\left( \mathcal{A}\right) $, which restricts to an equivalence of
categories $\mathrm{K}^{b}\left( \mathcal{I}\right) \rightarrow \mathrm{D}%
^{b}\left( \mathcal{A}\right) $ \cite[Proposition IX.2.12]%
{iversen_cohomology_1986}; see also \cite[Proposition 13.2.2]%
{kashiwara_categories_2006}. As a consequence of Proposition \ref%
{Proposition:explicitly-right-derivable-bifunctor} one obtains the following:

\begin{proposition}
\label{Proposition:explicitly-right-derivable-injectives}Let $\mathcal{A}$
and $\mathcal{R}$ be exact categories. Let $F:\mathcal{A}^{\mathrm{op}%
}\times \mathcal{A}\rightarrow \mathcal{R}$ be a functor. Suppose that
either:

\begin{itemize}
\item $\mathcal{A}$ has enough injectives, and $F\left( -,I\right) :\mathcal{%
A}^{\mathrm{op}}\rightarrow \mathcal{R}$ is exact for every injective object 
$I$ of $\mathcal{A}$, or

\item $\mathcal{A}$ has enough projectives, and $F\left( P,-\right) :%
\mathcal{A}^{\mathrm{op}}\rightarrow \mathcal{R}$ is exact for every
projective object $P$ of $\mathcal{A}$.
\end{itemize}

Suppose furthermore that $\mathcal{A}$ has homological dimension at most $1$%
. Then $F^{\bullet }:\mathrm{K}^{b}\left( \mathcal{A}\right) ^{\mathrm{op}%
}\times \mathrm{K}^{b}\left( \mathcal{A}\right) \rightarrow \mathrm{K}%
^{b}\left( \mathcal{R}\right) $ has a total right derived functor $\mathrm{R}%
F:\mathrm{D}^{b}\left( \mathcal{A}\right) ^{\mathrm{op}}\times \mathrm{D}%
^{b}\left( \mathcal{A}\right) \rightarrow \mathrm{D}^{b}\left( \mathcal{R}%
\right) $. If $\mathcal{R}$ is quasi-abelian, then $\mathrm{H}^{0}\circ 
\mathrm{R}F$ is a cohomological derived functor of $\mathrm{H}^{0}\circ
F^{\bullet }$.
\end{proposition}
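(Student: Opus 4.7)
The plan is to reduce the claim directly to Proposition \ref{Proposition:explicitly-right-derivable-bifunctor}, by choosing appropriate generating and cogenerating subcategories in each of the two cases.

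Suppose first that $\mathcal{A}$ has enough injectives and $F(-,I)$ is exact for every injective $I$. Take $\mathcal{C} := \mathcal{A}$, which is trivially generating in $\mathcal{A}$ and trivially closed under subobjects, and take $\mathcal{D}$ to be the full subcategory $\mathcal{I}$ of injective objects of $\mathcal{A}$. Then $\mathcal{I}$ is cogenerating by hypothesis, so the main verification is that $\mathcal{I}$ is closed under quotients. Given a short exact sequence $0 \to I' \to I \to A'' \to 0$ with $I \in \mathcal{I}$ and any object $B$ of $\mathcal{A}$, the long exact sequence obtained by applying $\mathrm{Hom}(B,-)$ contains the segment
\[
\mathrm{Ext}^{1}(B,I) \longrightarrow \mathrm{Ext}^{1}(B,A'') \longrightarrow \mathrm{Ext}^{2}(B,I'),
\]
whose outer terms both vanish ($\mathrm{Ext}^{1}(B,I) = 0$ by injectivity of $I$, and $\mathrm{Ext}^{2}(B,I') = 0$ by $\mathrm{hd}(\mathcal{A}) \leq 1$). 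Hence $\mathrm{Ext}^{1}(B,A'') = 0$ for every $B$, so $A''$ is injective. The remaining exactness hypotheses of Proposition \ref{Proposition:explicitly-right-derivable-bifunctor} are then easy to verify: $F(-,I)$ is exact on $\mathcal{C}=\mathcal{A}$ by assumption, and $F(A,-)$ is exact on $\mathcal{D}=\mathcal{I}$ because every short exact sequence of injective objects splits (injectivity of the leftmost term produces a retraction), and any additive functor preserves split exact sequences.

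The case in which $\mathcal{A}$ has enough projectives is handled by the dual choice $\mathcal{C} := \mathcal{P}$ (the full subcategory of projectives) and $\mathcal{D} := \mathcal{A}$. Closure of $\mathcal{P}$ under subobjects is the mirror of the injective argument: for a short exact sequence $0 \to A' \to P \to A'' \to 0$ with $P \in \mathcal{P}$, applying $\mathrm{Hom}(-,B)$ yields a segment
\[
\mathrm{Ext}^{1}(P,B) \longrightarrow \mathrm{Ext}^{1}(A',B) \longrightarrow \mathrm{Ext}^{2}(A'',B),
\]
whose outer terms vanish by projectivity of $P$ and by $\mathrm{hd}(\mathcal{A}) \leq 1$, so $A'$ is projective. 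The exactness of $F(P,-)$ on $\mathcal{D}=\mathcal{A}$ is an assumption, while exactness of $F(-,D)$ on $\mathcal{C}=\mathcal{P}$ follows since short exact sequences of projectives split.

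In either case, Proposition \ref{Proposition:explicitly-right-derivable-bifunctor} directly yields the total right derived functor $\mathrm{R}F\colon \mathrm{D}^{b}(\mathcal{A})^{\mathrm{op}} \times \mathrm{D}^{b}(\mathcal{A}) \to \mathrm{D}^{b}(\mathcal{R})$, as well as the statement that $\mathrm{H}^{0} \circ \mathrm{R}F$ is a cohomological right derived functor of $\mathrm{H}^{0} \circ F^{\bullet}$ when $\mathcal{R}$ is quasi-abelian. The only substantive step is the closure of $\mathcal{I}$ under quotients (respectively of $\mathcal{P}$ under subobjects), which is precisely where the hypothesis $\mathrm{hd}(\mathcal{A}) \leq 1$ is used; without it one would only obtain a derived functor on left-bounded (respectively right-bounded) complexes rather than on $\mathrm{D}^{b}$.
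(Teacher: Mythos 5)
Your proof is correct and follows essentially the same route as the paper: the Proposition is obtained by specializing Proposition \ref{Proposition:explicitly-right-derivable-bifunctor} with $\mathcal{C}=\mathcal{A}$, $\mathcal{D}=\mathcal{I}$ (resp.\ $\mathcal{C}=\mathcal{P}$, $\mathcal{D}=\mathcal{A}$), using $\mathrm{hd}(\mathcal{A})\leq 1$ to see that quotients of injectives are injective (resp.\ subobjects of projectives are projective) and split exactness to handle the other variable. Your long-exact-sequence verification of that closure property is exactly the criterion the paper quotes from \cite[Exercise 13.8]{kashiwara_categories_2006}, so no gap remains.
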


As a particular instance of Proposition \ref%
{Proposition:explicitly-right-derivable-bifunctor}, one has the following.

\begin{corollary}
\label{Corollary:explicitly-derivable-hom}Suppose that $\mathcal{M}$ is a
quasi-abelian category of abelian groups, and $\mathcal{A}$ is a
quasi-abelian $\mathcal{M}$-category. Assume that $\mathcal{A}$ has enough
injectives or enough projectives, and homological dimension at most $1$.
Then $\mathrm{Hom}^{\bullet }:\mathrm{K}^{b}\left( \mathcal{A}\right) ^{%
\mathrm{op}}\times \mathrm{K}^{b}\left( \mathcal{A}\right) \rightarrow 
\mathrm{K}^{b}\left( \mathcal{M}\right) $ has a total right derived functor $%
\mathrm{RHom}:\mathrm{D}^{b}\left( \mathcal{A}\right) ^{\mathrm{op}}\times 
\mathrm{D}^{b}\left( \mathcal{A}\right) \rightarrow \mathrm{D}^{b}\left( 
\mathcal{M}\right) $, and $\mathrm{Ext}^{0}:=\mathrm{H}^{0}\circ \mathrm{RHom%
}$ is a cohomological derived functor of $\mathrm{Hom}_{\mathrm{K}\left( 
\mathcal{A}\right) }:\mathrm{K}^{b}\left( \mathcal{A}\right) ^{\mathrm{op}%
}\times \mathrm{K}^{b}\left( \mathcal{A}\right) \rightarrow \mathrm{LH}(%
\mathcal{M)}$. Furthermore, for bounded complexes $A$ and $B$ over $\mathcal{%
A}$, $\mathrm{Ext}^{0}\left( A,B\right) $ is naturally isomorphic as an
abelian group to $\mathrm{Hom}_{\mathrm{D}\left( \mathcal{A}\right) }\left(
A,B\right) $. This isomorphism turns $\mathrm{D}^{b}\left( \mathcal{A}%
\right) $ into a triangulated category enriched over $\mathrm{LH}\left( 
\mathcal{M}\right) $.
\end{corollary}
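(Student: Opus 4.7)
The plan is to apply Proposition~\ref{Proposition:explicitly-right-derivable-injectives} to the bifunctor $F := \mathrm{Hom}_{\mathcal{A}} : \mathcal{A}^{\mathrm{op}} \times \mathcal{A} \to \mathcal{M}$, which by the hypothesis that $\mathcal{A}$ is a quasi-abelian $\mathcal{M}$-category genuinely lands in $\mathcal{M}$ and is an $\mathcal{M}$-functor in each variable separately.

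Suppose first that $\mathcal{A}$ has enough injectives; the enough-projectives case is strictly dual. For every injective $I$ of $\mathcal{A}$, the functor $\mathrm{Hom}_{\mathcal{A}}(-, I)$ sends a short exact sequence in $\mathcal{A}$ to one in $\mathcal{M}$: this is essentially the definition of injectivity, upgraded to the enriched setting by means of the fact that the kernel and cokernel functors on $\mathcal{A}$ are $\mathcal{M}$-functors, so that the universal properties of kernels and cokernels in $\mathcal{A}$ realise the kernel and cokernel of the induced map on $\mathrm{Hom}$-objects internally in $\mathcal{M}$. This is precisely the exactness hypothesis of Proposition~\ref{Proposition:explicitly-right-derivable-injectives}; combined with $\mathrm{hd}(\mathcal{A}) \leq 1$, it yields a total right derived functor $\mathrm{RHom} : \mathrm{D}^{b}(\mathcal{A})^{\mathrm{op}} \times \mathrm{D}^{b}(\mathcal{A}) \to \mathrm{D}^{b}(\mathcal{M})$, and since $\mathcal{M}$ is quasi-abelian, also identifies $\mathrm{Ext}^{0} := \mathrm{H}^{0} \circ \mathrm{RHom}$ as a cohomological right derived functor of $\mathrm{Hom}^{\bullet}_{\mathrm{K}(\mathcal{A})}$.

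For the identification $\mathrm{Ext}^{0}(A, B) \cong \mathrm{Hom}_{\mathrm{D}(\mathcal{A})}(A, B)$, I would pick a bounded injective resolution $B \to I_{B}$, which exists by the homological-dimension-$\leq 1$ assumption. Then $\mathrm{RHom}(A, B)$ is represented by $\mathrm{Hom}^{\bullet}_{\mathcal{A}}(A, I_{B})$, whose zeroth cohomology computes the group of chain maps $A \to I_{B}$ modulo homotopy; under the equivalence $\mathrm{K}^{b}(\mathcal{I}) \simeq \mathrm{D}^{b}(\mathcal{A})$ recalled earlier, this equals $\mathrm{Hom}_{\mathrm{D}^{b}(\mathcal{A})}(A, B)$, naturally in both variables. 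The $\mathrm{LH}(\mathcal{M})$-enrichment of $\mathrm{D}^{b}(\mathcal{A})$ is then obtained by transporting, through this isomorphism, the $\mathrm{LH}(\mathcal{M})$-structure arising from $\mathrm{Ext}^{0} = \mathrm{H}^{0} \circ \mathrm{RHom}$; composition in $\mathrm{D}^{b}(\mathcal{A})$ corresponds to the pairing $\mathrm{Ext}^{0}(B, C) \times \mathrm{Ext}^{0}(A, B) \to \mathrm{Ext}^{0}(A, C)$ induced by the bifunctoriality of $\mathrm{RHom}$, which, upon fixing one argument, is an $\mathrm{LH}(\mathcal{M})$-morphism in the other.

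I expect the main technical obstacle to be the promotion of exactness of $\mathrm{Hom}(-, I)$ from $\mathbf{Ab}$ to $\mathcal{M}$: one must carefully use the $\mathcal{M}$-functoriality of the kernel and cokernel functors of $\mathcal{A}$ to verify that the underlying short exact sequence of abelian groups is realised by an honest kernel-cokernel pair in $\mathcal{M}$, and not merely at the level of underlying abelian groups.
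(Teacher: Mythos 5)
Your proposal is correct and takes essentially the same route as the paper: the existence of $\mathrm{RHom}$ and the cohomological-derived-functor statement are obtained as an instance of Proposition \ref{Proposition:explicitly-right-derivable-injectives} (itself a case of Proposition \ref{Proposition:explicitly-right-derivable-bifunctor}), with homological dimension $\leq 1$ supplying bounded injective (or projective) resolutions, and your computation of $\mathrm{H}^{0}\mathrm{Hom}^{\bullet }\left( A,I_{B}\right) $ as chain maps modulo homotopy is precisely the content of the citation of \cite[Proposition 11.7.3]{kashiwara_categories_2006} that the paper uses to identify $\mathrm{H}^{0}\circ \mathrm{Hom}^{\bullet }$ with $\mathrm{Hom}_{\mathrm{K}\left( \mathcal{A}\right) }$ and $\mathrm{Ext}^{0}$ with $\mathrm{Hom}_{\mathrm{D}^{b}\left( \mathcal{A}\right) }$. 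The one step you elaborate beyond the paper, namely that $\mathrm{Hom}\left( -,I\right) $ is exact as an $\mathcal{M}$-valued (not merely $\mathbf{Ab}$-valued) functor for injective $I$, is left implicit in the paper's one-line proof of the first assertion, and your appeal to the $\mathcal{M}$-functoriality of the kernel and cokernel functors is the intended mechanism, so flagging it as the delicate point is appropriate rather than a defect.
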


\begin{proof}
The first assertion is a particular instance of Proposition \ref%
{Proposition:explicitly-right-derivable-bifunctor}.

For the second assertion, observe that by \cite[Proposition 11.7.3]%
{kashiwara_categories_2006}, we have that $\mathrm{Hom}_{\mathrm{K}\left( 
\mathcal{A}\right) }$ is naturally isomorphic to $\mathrm{H}^{0}\circ 
\mathrm{Hom}^{\bullet }$. Therefore, by the first assertion we have that $%
\mathrm{RHom}$ is a total right derived functor of $\mathrm{Hom}^{\bullet }$%
, and $\mathrm{Ext}^{0}:=\mathrm{H}^{0}\circ \mathrm{RHom}$ is a
cohomological right derived functor of $\mathrm{Hom}_{\mathrm{K}\left( 
\mathcal{A}\right) }\cong \mathrm{H}^{0}\circ \mathrm{Hom}^{\bullet }$.
Furthermore, we have that $\mathrm{Ext}^{0}$ is naturally isomorphic to $%
\mathrm{Hom}_{\mathrm{D}^{b}\left( \mathcal{A}\right) }$ by \cite[%
Proposition 11.7.3]{kashiwara_categories_2006}. This allows one to regard $%
\mathrm{Hom}_{\mathrm{D}^{b}\left( \mathcal{A}\right) }$ as a functor to $%
\mathrm{D}^{b}\left( \mathcal{A}\right) ^{\mathrm{op}}\times \mathrm{D}%
^{b}\left( \mathcal{A}\right) \rightarrow \mathrm{LH}\left( \mathcal{M}%
\right) $, which turns $\mathrm{D}^{b}\left( \mathcal{A}\right) $ into a
triangulated category enriched over $\mathrm{LH}\left( \mathcal{M}\right) $.
\end{proof}

Under the hypotheses of Corollary \ref{Corollary:explicitly-derivable-hom},
one sets $\mathrm{Ext}^{n}\left( A,B\right) :=\mathrm{Ext}^{0}\left(
A,B[n]\right) $ for complexes $A$ and $B$ over $\mathcal{A}$ and $n\in 
\mathbb{Z}$. Then we have that $\mathrm{Ext}^{n}=0$ for $n\geq 2$.
Furthermore, for objects $A,B$ of $\mathcal{A}$, $\mathrm{Ext}\left(
A,B\right) :=\mathrm{Ext}^{1}\left( A,B\right) $ is isomorphic to the group $%
\mathrm{Ext}_{\mathrm{Yon}}\left( A,B\right) $ of \emph{isomorphism classes
of extensions} of $A$ by $B$; see \cite[Section XI.4]%
{iversen_cohomology_1986}. Notice that if $\mathcal{B}$ is a thick
subcategory of $\mathcal{A}$, then for objects $A$ and $B$ of $\mathcal{B}$,
the group \textrm{Ext}$_{\mathrm{Yon}}\left( A,B\right) $ is unchanged
whether it is computed in $\mathcal{B}$ or in $\mathcal{A}$; see \cite[%
Exercise 13.17]{kashiwara_categories_2006}.

\subsection{The left heart of thick categories of Polish abelian groups}

Let $\mathbf{PAb}$ be the category of Polish abelian groups and continuous
group homomorphisms. This is a quasi-abelian category \cite[Lemma 6.1]%
{lupini_looking_2022}. An explicit description of \textrm{LH}$\left( \mathbf{%
PAb}\right) $ was given in \cite{lupini_looking_2022} in terms of \emph{%
groups with a Polish cover}. An abelian group with a Polish cover is a group 
$G$ explicitly presented as a quotient $\hat{G}/N$, where $\hat{G}$ is a
Polish abelian group and $N\subseteq \hat{G}$ is a Polishable subgroup of $%
\hat{G}$. This means that $N$ is a Polish group with respect to some Polish
topology such that the inclusion $N\rightarrow \hat{G}$ is continuous. The
Polish topology on $N$ is in general not the subspace topology inherited
from $\hat{G}$ (unless $N$ is closed in $\hat{G}$). Nonetheless, the
topology on $N$ is in some sense \emph{induced }by the topology on $\hat{G}$%
, as it is the unique Polish topology on $N$ whose open sets are Borel in $%
\hat{G}$. Every Polish abelian group $G$ can be identified with the group
with a Polish cover $\hat{G}/N$ where $G=\hat{G}$ and $N$ is the trivial
subgroup of $G$. A group homomorphism $\varphi :G\rightarrow H$ between
groups with a Polish cover $G=\hat{G}/N$ and $H=\hat{H}/M$ is \emph{%
Borel-definable }if has a \emph{lift }to a Borel function $f:\hat{G}%
\rightarrow \hat{H}$, such that $\varphi \left( x+N\right) =f\left( x\right)
+M$ for every $x\in \hat{G}$.

The category $\mathrm{LH}\left( \mathbf{PAb}\right) $ is (equivalent to) the
category of groups with a Polish cover and Borel-definable group
homomorphisms \cite[Theorem 6.2]{lupini_looking_2022}. More generally, if $%
\mathcal{B}$ is a thick subcategory of $\mathbf{PAb}$, then $\mathrm{LH}%
\left( \mathcal{B}\right) $ is (equivalent to) the full subcategory of the
category of abelian groups with a Polish cover spanned by \emph{groups with
a }$\mathcal{B}$-cover, which are the groups with a Polish cover of the form 
$\hat{G}/N$ where both $\hat{G}$ and $N$ belong to $\mathcal{B}$ \cite[%
Theorem 6.13 and Proposition 6.15]{lupini_looking_2022}. As it is remarked
therein, a Borel-definable homomorphism between groups with a $\mathcal{B}$%
-cover is an isomorphism in $\mathrm{LH}\left( \mathcal{B}\right) $ if and
only if it is a bijection.

An abelian Polish group is \emph{non-Archimedean }if it has a basis of zero
neighborhoods consisting of subgroups. This is equivalent to the assertion
that $A$ is isomorphic to an inverse limit of countable groups.
Non-Archimedean Polish abelian groups form a thick subcategory of the
category of Polish abelian groups; see \cite[Theorem 6.17]%
{lupini_looking_2022}.

\section{Homological algebra for pro-Lie Polish abelian groups\label%
{Section:pro-Lie}}

\subsection{Borel cocycles for Polish groups\label{Subsection:Borel-cocycles}%
}

Let $G,H$ be abelian Polish groups. A (symmetric)\emph{\ }$2$-\emph{cocycle}
on $G$ with values in $H$ is a function $c:G\times G\rightarrow H$
satisfying the following identities for all $x,y,z\in G$:

\begin{itemize}
\item $c\left( x+y,z\right) +c\left( x,y\right) =c\left( x,y+z\right)
+c\left( y,z\right) $;

\item $c\left( x,y\right) =c\left( y,x\right) $;

\item $c\left( 0,x\right) =0$.
\end{itemize}

We say that a $2$-cocycle is Borel (respectively, continuous) if it is Borel
(respectively, continuous) as a function $G\times G\rightarrow H$. Given a
function $t:G\rightarrow H$, we define $\delta t:G\times G\rightarrow H$ to
be the function $\left( x,y\right) \mapsto t\left( x\right) +t\left(
y\right) -t\left( x+y\right) $. A Borel $2$-cocycle $c$ on $G$ with values
in $H$ is a coboundary if there exists a Borel function $t:G\rightarrow H$
such that $c=\delta t$. Borel $2$-cocycles on $G$ with values in $H$ form a
group $\mathrm{Z}^{1}\left( G,H\right) $ with respect to pointwise addition.
Coboundaries form a subgroup $\mathrm{B}^{1}\left( G,H\right) $ of \textrm{Z}%
$^{1}\left( G,H\right) $. We define $\mathrm{Ext}_{\mathrm{c}}\left(
G,H\right) $ to be the quotient group $\mathrm{Z}^{1}\left( G,H\right) /%
\mathrm{B}^{1}\left( G,H\right) $. This defines a functor $\mathrm{Ext}_{%
\mathrm{c}}:\mathbf{PAb}^{\mathrm{op}}\times \mathbf{PAb}\rightarrow \mathbf{%
Ab}$.

We let $\mathrm{Ext}_{\mathrm{Yon}}\left( C,A\right) $ be the group whose
elements are the isomorphism classes of abelian Polish group extensions $%
A\rightarrow X\rightarrow C$, where the group operation is induced by Baer
sum of extensions and the trivial element is the class of split extensions.
A short exact sequence $A\rightarrow X\rightarrow C$ in $\mathbf{PAb}$
yields an element of $\mathrm{Ext}_{\mathrm{c}}\left( C,A\right) $ as
follows. Pick a Borel right inverse $t:C\rightarrow X$ for the map $%
X\rightarrow C$, which exists by \cite[Theorem 12.17]{kechris_classical_1995}%
. Identifying $A$ with a closed subgroup of $X$, we have that for every $%
x,y\in C$, $\kappa \left( x,y\right) :=t\left( x+y\right) -t\left( x\right)
-t\left( y\right) $ belongs to $A$. This defines a Borel $2$-cocycle on $C$
with values in $A$, whose corresponding element of $\mathrm{Ext}_{\mathrm{c}%
}\left( C,A\right) $ is independent on the choice of $t$. This assignment
defines an injective group homomorphism $\mathrm{Ext}_{\mathrm{Yon}}\left(
C,A\right) \rightarrow \mathrm{Ext}_{\mathrm{c}}\left( C,A\right) $.

\begin{lemma}
\label{Lemma:Ext-Yon-lc}Suppose that $C$ and $A$ are locally compact Polish
abelian groups. Then the group homomorphism $\mathrm{Ext}_{\mathrm{Yon}%
}\left( C,A\right) \rightarrow \mathrm{Ext}_{\mathrm{c}}\left( C,A\right) $
is an isomorphism.
\end{lemma}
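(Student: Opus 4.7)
The homomorphism $\mathrm{Ext}_{\mathrm{Yon}}(C,A) \to \mathrm{Ext}_{\mathrm{c}}(C,A)$ has been observed to be injective immediately before the lemma, so the entire task is to prove surjectivity. My plan is, starting from an arbitrary Borel $2$-cocycle $c : C \times C \to A$, to construct an extension $A \hookrightarrow X \twoheadrightarrow C$ in $\mathbf{LCPAb}$ together with a Borel section whose associated cocycle equals $c$ up to coboundary.

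As an abstract abelian group, take $X := A \oplus_c C$ with underlying set $A \times C$ and operation twisted by $c$, namely $(a_1,x_1) + (a_2,x_2) := (a_1+a_2+c(x_1,x_2),\, x_1+x_2)$. The cocycle identity yields associativity, symmetry of $c$ yields commutativity, and the normalization $c(0,-)=0$ provides an identity element; so $0 \to A \to X \to C \to 0$ is a short exact sequence of abstract abelian groups, and the Borel section $t(x):=(0,x)$ recovers $c$ (up to a sign, which is inessential because $\mathrm{Ext}_{\mathrm{c}}$ is an abelian group and the image of a group homomorphism is a subgroup). All of this is formal.

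The main step, and the one where local compactness of $C$ is indispensable, is to equip $X$ with a compatible Polish group topology. My plan here is to first reduce to the case where $c$ is continuous on a neighborhood of the identity. This is the content of the classical Mackey theorem on Borel cocycles over locally compact Polish groups: after modification by a Borel coboundary, any Borel cocycle becomes continuous on $U \times U$ for some open neighborhood $U$ of $0$ in $C$. Since such a modification does not change the class $[c] \in \mathrm{Ext}_{\mathrm{c}}(C,A)$ and alters $X$ only up to isomorphism of abstract extensions, we may assume $c$ itself is continuous on $U \times U$.

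With a locally continuous $c$ in hand, I topologize $X$ by declaring a basis of neighborhoods of $0_X$ to consist of the sets $V \times W$, where $V$ ranges over open neighborhoods of $0_A$ in $A$ and $W \subseteq U$ over symmetric open neighborhoods of $0_C$ in $C$, and propagate to all of $X$ by translation. Local continuity of $c$ renders addition and inversion continuous near $0_X$, hence continuous globally by translation invariance. Locally, $X$ is a product of $A$ with an open subset of $C$, both locally compact Polish; combined with $\sigma$-compactness of $C$ (so that countably many translates of a single such neighborhood cover $X$), this makes $X$ locally compact, Hausdorff, second countable, and Polish. The projection $X \to C$ is then continuous, surjective, and open with kernel $A$ embedded as a closed subgroup, giving the required short exact sequence in $\mathbf{LCPAb}$. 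The decisive step is the Mackey-type reduction to a locally continuous cocycle, which is where I expect the paper to appeal either to a classical reference or to the continuous selection machinery developed in \cite{bergfalk_definable_2020}; the rest of the argument is routine verification.
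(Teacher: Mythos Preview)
Your proposal is correct but takes a genuinely different route from the paper. The paper does not modify the cocycle at all: it equips the set $A\times C$ with the product Borel structure and the product of Haar measures, observes that this measure is $\sigma$-finite and invariant for the twisted group operation, and then invokes Mackey's theorem \cite[Theorem~1]{mackey_les_1957} that a standard Borel group carrying a $\sigma$-finite invariant Borel measure admits a unique compatible locally compact Polish group topology. With that topology in place, the canonical maps $A\to X\to C$ form a short exact sequence in $\mathbf{PAb}$, and the Borel section $x\mapsto(0,x)$ recovers the original cocycle.

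Your approach instead first reduces to a locally continuous representative and then builds the topology by hand from local product neighborhoods. This works, but the reduction you label ``the classical Mackey theorem on Borel cocycles'' is not as readily citable as you suggest; the usual way to establish that every Borel $2$-cocycle on a locally compact Polish group is cohomologous to a locally continuous one is precisely via the Weil--Mackey measurable-group theorem the paper uses (one first topologizes the extension, then reads off a continuous local section). So your route hides the same deep input one step earlier. The paper's argument is therefore more economical---a single citation rather than a reduction followed by a verification---while yours has the minor expository advantage of making the resulting topology on $X$ concrete.
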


\begin{proof}
It suffices to show that it is surjective. Given a Borel cocycle $\kappa
:C\times C\rightarrow A$ one can define a corresponding extension $%
A\rightarrow X\rightarrow C$ as follows. One let $X$ be the group that has $%
A\times C$ as set of objects, and group operation defined by%
\begin{equation*}
\left( a,c\right) +\left( a^{\prime },c^{\prime }\right) =\left( a+a^{\prime
}+\kappa \left( c,c^{\prime }\right) ,c+c^{\prime }\right) \text{.}
\end{equation*}%
We also endow $A\times C$ with the product Borel structure and the Borel
measure defined as the product of Haar measures on $A$ and $G$. Then we have
that this is a $\sigma $-finite invariant Borel measure, and so by \cite[%
Theorem 1]{mackey_les_1957} there exists a unique locally compact Polish
group topology on $X$ that is compatible with its Borel structure and has
the given measure as a Haar measure. When endowed with this topology, the
canonical maps $A\rightarrow X\rightarrow C$ give a short exact sequence in $%
\mathbf{PAb}$. The image of the corresponding element in $\mathrm{Ext}_{%
\mathrm{Yon}}\left( C,A\right) $ under the homomorphism $\mathrm{Ext}_{%
\mathrm{Yon}}\left( C,A\right) \rightarrow \mathrm{Ext}_{\mathrm{c}}\left(
C,A\right) $ is equal to the element of $\mathrm{Ext}_{\mathrm{c}}\left(
C,A\right) $ represented by $\kappa $.
\end{proof}

\begin{lemma}
\label{Lemma:Ext-Yon-nA}Suppose that $C$ and $A$ are Polish abelian groups,
where $C$ is non-Archimedean.\ Then the image of the group homomorphism $%
\mathrm{Ext}_{\mathrm{Yon}}\left( C,A\right) \rightarrow \mathrm{Ext}_{%
\mathrm{c}}\left( C,A\right) $ is equal to the subgroup of elements
represented by \emph{continuous }cocycles.
\end{lemma}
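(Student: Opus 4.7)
The plan is to prove the two inclusions separately. For the easy direction---that every class represented by a continuous cocycle lies in the image of $\mathrm{Ext}_{\mathrm{Yon}}(C,A) \to \mathrm{Ext}_{\mathrm{c}}(C,A)$---I would take a continuous (symmetric, normalized) $2$-cocycle $\kappa : C \times C \to A$, form the set $X := A \times C$ with the twisted addition $(a, c) + (a', c') := (a + a' + \kappa(c, c'),\, c + c')$, and observe that the continuity of $\kappa$ makes $X$ a Polish topological group under the product topology. The obvious inclusion $A \hookrightarrow X$ and projection $X \twoheadrightarrow C$ form a short exact sequence in $\mathbf{PAb}$, and the Borel section $c \mapsto (0, c)$ exhibits its image in $\mathrm{Ext}_{\mathrm{c}}(C, A)$ as precisely $[\kappa]$. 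This direction does not use that $C$ is non-Archimedean.

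For the reverse inclusion, suppose a short exact sequence $A \to X \xrightarrow{\pi} C$ is given. I need to produce a Borel section $t : C \to X$ of $\pi$ whose coboundary $\delta t(x,y) := t(x) + t(y) - t(x+y)$ is continuous. Applying the continuous selection theorem from \cite{bergfalk_definable_2020} to the non-Archimedean group $C$, I obtain an open subgroup $U \le C$ together with a continuous section $s : U \to X$ of $\pi|_{\pi^{-1}(U)}$, normalized so that $s(0) = 0$. Since $C$ is Polish and $U$ is open, the quotient $C/U$ is countable; fix a set of representatives $c_\alpha \in C$ ($c_0 = 0$) for the cosets and arbitrary Borel lifts $x_\alpha \in \pi^{-1}(c_\alpha)$ with $x_0 = 0$, then define the Borel section $t(c_\alpha + u) := x_\alpha + s(u)$ for $u \in U$.

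A direct calculation on the clopen rectangle $(c_\alpha + U) \times (c_\beta + U)$ gives
\[
\delta t(c_\alpha + u,\, c_\beta + u') = (x_\alpha + x_\beta - x_{\alpha + \beta}) + s(u) + s(u') - s(u + u' + v_{\alpha,\beta}),
\]
where $v_{\alpha,\beta} := c_\alpha + c_\beta - c_{\alpha+\beta} \in U$. For fixed $\alpha, \beta$ this depends continuously on $(u, u')$, since $s$ is continuous on $U$. Because the rectangles $\{(c_\alpha + U) \times (c_\beta + U)\}_{\alpha,\beta \in C/U}$ form a clopen partition of $C \times C$, it follows that $\delta t$ is continuous on $C \times C$, and it represents in $\mathrm{Ext}_{\mathrm{c}}(C,A)$ the same class as the original extension.

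The main obstacle is the first step of the reverse inclusion: producing the continuous local section $s : U \to X$ on an open subgroup of $C$. This is exactly where the non-Archimedean hypothesis on $C$ enters and where the continuous selection theorem of \cite{bergfalk_definable_2020} is indispensable. Once $s$ is in hand, the extension to a global Borel section $t$ and the verification that $\delta t$ is continuous are essentially automatic, since the zero-dimensional (clopen-subgroup) structure of $C$ localizes continuity questions to each coset of $U$.
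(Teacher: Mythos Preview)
Your proof is correct, and the easy direction matches the paper's argument verbatim. For the reverse inclusion, however, you take a longer route than the paper: Proposition~4.6 of \cite{bergfalk_definable_2020} in fact yields a \emph{global} continuous right inverse $t:C\to X$ for the quotient map (not merely a continuous section on an open subgroup), so the paper simply takes $\delta t$ as the desired continuous cocycle in one line. Your patching argument---obtaining a continuous section on an open subgroup $U$, extending by coset representatives, and verifying continuity of $\delta t$ on each clopen rectangle---is valid and in effect reproves part of that proposition, but it is unnecessary once the full statement is invoked.
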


\begin{proof}
Suppose that $A\rightarrow X\rightarrow C$ is a short exact sequence in $%
\mathbf{PAb}$, where $C$ is non-Archimedean. Then by \cite[Proposition 4.6]%
{bergfalk_definable_2020} we have that the map $X\rightarrow C$ has a \emph{%
continuous }right inverse $t:C\rightarrow X$. This yields a cocycle $\delta
t $ that represents an element of $\mathrm{Ext}_{\mathrm{c}}\left(
C,A\right) $ that is in the image of the element of $\mathrm{Ext}_{\mathrm{%
Yon}}\left( C,A\right) $ represented by the given extension.

Conversely if $\kappa :C\times C\rightarrow A$ is a continuous cocycle, then
one can define the extension $A\rightarrow X\rightarrow C$ letting $X$ be
the group that has $A\times C$ as set of objects, endowed with the product
topology, and group operation defined by%
\begin{equation*}
\left( a,c\right) +\left( a^{\prime },c^{\prime }\right) =\left( a+a^{\prime
}+\kappa \left( c,c^{\prime }\right) ,c+c^{\prime }\right) \text{.}
\end{equation*}
\end{proof}

The following lemma is \cite[Lemma 4.7]{bergfalk_applications_2023}.

\begin{lemma}
\label{Lemma:coboundary}Suppose that $C$ is an abelian Polish group and $A$
is an abelian Polish group. Suppose that $c:C\times C\rightarrow A$ is a
continuous cocycle, and $t:C\rightarrow A$ is a function such that $c=\delta
t$. If $t$ is Borel, then $t$ is continuous, and $c$ is a coboundary.
\end{lemma}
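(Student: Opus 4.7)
The main content of this lemma is that $t$ must be continuous; once this is established, the assertion that $c$ is a coboundary is then immediate from the hypothesis $c=\delta t$ together with the definition of coboundary. My plan is therefore to focus on proving continuity of $t$, using a Pettis-type Baire category argument in combination with the continuity of $c$.

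The first step is a normalization and a reduction. Evaluating $c=\delta t$ at $(0,0)$ and noting $c(0,0)=0$ gives $t(0)=0$. Substituting $(x-y,y)$ into the cocycle identity rearranges to
\[
t(x)-t(y)=t(x-y)-c(x-y,y).
\]
Since $c$ is jointly continuous with $c(0,\cdot)\equiv 0$, this identity reduces continuity of $t$ everywhere to continuity of $t$ at the single point $0\in C$: if $x_n\to x$ then $t(x_n)-t(x)=t(x_n-x)-c(x_n-x,x)\to 0$ provided $t$ is continuous at $0$.

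The second (and main) step is to prove continuity of $t$ at $0$. Since $t$ is Borel on a Polish group, $t$ has the Baire property, so by the classical theorem on Baire-measurable functions there exists a dense $G_\delta$ set $D\subseteq C$ such that $t|_D$ is continuous. Given a neighborhood $V$ of $0$ in $A$, I would pick a symmetric neighborhood $V'$ of $0$ with $V'-V'+V'\subseteq V$, a point $d_0\in D$, and open neighborhoods $U_0\ni d_0$ in $C$ and $W\ni 0$ in $C$ small enough that $t(U_0\cap D)\subseteq t(d_0)+V'$ and $c(W\times U_0)\subseteq V'$ (using continuity of $t|_D$ at $d_0$ and joint continuity of $c$ at $(0,d_0)$, together with $c(0,d_0)=0$). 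The set $E:=U_0\cap D$ is Borel and non-meager, hence has the Baire property, so Pettis's theorem produces a neighborhood $U\subseteq E-E$ of $0$ in $C$. For any $x\in U\cap W$, writing $x=y-z$ with $y,z\in E$, the cocycle identity gives
\[
t(x)=t(y)-t(z)+c(x,z)\in V'-V'+V'\subseteq V,
\]
proving continuity of $t$ at $0$.

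The main obstacle is orchestrating the second step so that the Pettis neighborhood coming from $E-E$ can be intersected with the neighborhood $W$ controlling the cocycle correction, and so that the representatives $y,z$ land in the domain where both the estimate on $t|_D$ and the estimate on $c$ are simultaneously applicable. All remaining ingredients (existence of the comeager set of continuity points, $t(0)=0$, continuity of $c$ in each argument separately) are routine.
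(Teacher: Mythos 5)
Your argument is correct, but it takes a different route from the paper's. The paper's proof is a two-line packaging of the same underlying phenomenon: using the continuity of $c$, it forms the twisted Polish group $X:=A\rtimes_{c}C$ (the product space $A\times C$ with operation $(a,x)+(a',x'):=(a+a'+c(x,x'),x+x')$) and observes that $\varphi:X\rightarrow A\times C$, $\varphi(a,x):=(a+t(x),x)$, is a group homomorphism precisely because $\delta t=c$; since Borel homomorphisms between Polish groups are automatically continuous, $\varphi$, and hence $t$, is continuous. You instead unwind the automatic-continuity machinery by hand: reduce continuity of $t$ to continuity at $0$ via $t(x)-t(y)=t(x-y)-c(x-y,y)$, take a comeager set $D$ on which the Baire-measurable function $t$ restricts continuously, and run the Pettis argument on $E=U_{0}\cap D$ to get a neighborhood $U\subseteq E-E$ of $0$, with the cocycle correction $c(x,z)$ controlled by a second neighborhood $W$ chosen from joint continuity of $c$ at $(0,d_{0})$; the estimate $t(x)=t(y)-t(z)+c(x,z)\in V'-V'+V'\subseteq V$ then closes the argument, and all the small verifications (that $t(0)=0$, that $E$ is Borel and non-meager, that the $t(d_{0})$ terms cancel) check out. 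The trade-off is that the paper's proof is shorter and delegates the Baire-category work to a standard automatic-continuity theorem, at the cost of constructing the auxiliary group $X$ and checking it is a Polish group; your proof is self-contained and makes explicit exactly where continuity of $c$ enters, at the cost of re-proving a special case of that theorem. Both are valid, and your version would serve equally well in the paper.
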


\begin{proof}
Define the Polish group $X:=A\rtimes _{c}C$ obtained by endowing the product 
$A\times C$ with the product topology and the group operation defined by $%
\left( a,x\right) +\left( a^{\prime },x^{\prime }\right) :=\left(
a+a^{\prime }+c\left( x,x^{\prime }\right) ,x+x^{\prime }\right) $ for $%
a,a^{\prime }\in A$ and $x,x^{\prime }\in C$. Observe that the map $a\mapsto
\left( a,0\right) $ is an inclusion of $A$ into $X$ as a closed subgroup,
giving an extension $A\rightarrow X\rightarrow C$.

Let also $A\times C$ be the product Polish group. We can define a Borel
group homomorphism $\varphi :X\rightarrow A\times C$ by setting%
\begin{equation*}
\varphi \left( a,x\right) :=\left( a+t\left( x\right) ,x\right) \text{.}
\end{equation*}%
Being a\ Borel group homomorphism, we must have that $\varphi $ is
continuous. Hence, $t:C\rightarrow A$ is continuous.
\end{proof}

Given a short exact sequence $A\rightarrow B\rightarrow C$ with the
corresponding cocycle $\kappa $ as above, and abelian Polish groups $X$ and $%
Y$, we define the corresponding \emph{boundary homomorphisms }$\mathrm{Hom}%
\left( X,C\right) \rightarrow \mathrm{Ext}_{\mathrm{c}}\left( X,A\right) $, $%
\varphi \mapsto \kappa \circ \left( \varphi \times \varphi \right) +\mathrm{B%
}^{1}\left( X,A\right) $ and \textrm{Hom}$\left( A,Y\right) \rightarrow 
\mathrm{Ext}_{\mathrm{c}}\left( C,Y\right) $, $\psi \mapsto \psi \circ
\kappa +\mathrm{B}^{1}\left( C,Y\right) $. The same proof as in the case of
discrete groups gives the following lemma; see \cite[Chapter IX]%
{fuchs_infinite_1970}. When $X,C$ are non-Archimedean, one can choose $%
\kappa $ to be continuous, whence the boundary homomorphism takes values in $%
\mathrm{Ext}_{\mathrm{Yon}}\left( C,A\right) $.

\begin{lemma}
\label{Lemma:exact-Ext-Yon}Let $X,Y$ be abelian Polish groups, and let $%
A\rightarrow B\overset{\pi }{\rightarrow }C$ be a short exact sequence of
abelian Polish groups.

\begin{enumerate}
\item We have exact sequences of abelian groups:%
\begin{equation*}
0\rightarrow \mathrm{Hom}\left( X,A\right) \rightarrow \mathrm{Hom}\left(
X,B\right) \rightarrow \mathrm{Hom}\left( X,C\right) \rightarrow \mathrm{Ext}%
_{\mathrm{c}}\left( X,A\right) \rightarrow \mathrm{Ext}_{\mathrm{c}}\left(
X,B\right) \rightarrow \mathrm{Ext}_{\mathrm{c}}\left( X,C\right)
\end{equation*}

\item If $X,C$ are non-Archimedean, then we have an exact sequence of
abelian groups:%
\begin{equation*}
0\rightarrow \mathrm{Hom}\left( X,A\right) \rightarrow \mathrm{Hom}\left(
X,B\right) \rightarrow \mathrm{Hom}\left( X,C\right) \rightarrow \mathrm{Ext}%
_{\mathrm{Yon}}\left( X,A\right) \rightarrow \mathrm{Ext}_{\mathrm{Yon}%
}\left( X,B\right) \rightarrow \mathrm{Ext}_{\mathrm{Yon}}\left( X,C\right)
\end{equation*}

\item If $A,B,C$ are non-Archimedean, then we have an exact sequence of
abelian groups:%
\begin{equation*}
0\rightarrow \mathrm{Hom}\left( C,Y\right) \rightarrow \mathrm{Hom}\left(
B,Y\right) \rightarrow \mathrm{Hom}\left( A,Y\right) \rightarrow \mathrm{Ext}%
_{\mathrm{Yon}}\left( C,Y\right) \rightarrow \mathrm{Ext}_{\mathrm{Yon}%
}\left( B,Y\right) \rightarrow \mathrm{Ext}_{\mathrm{Yon}}\left( A,Y\right)
\end{equation*}
\end{enumerate}
\end{lemma}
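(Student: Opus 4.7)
The plan is to imitate the standard algebraic proof of the long exact sequence of Hom-Ext for abelian groups, working throughout with Borel (or, where available, continuous) $2$-cocycles, and invoking two topological inputs: automatic continuity of Borel group homomorphisms between Polish groups, and Lemma \ref{Lemma:coboundary}, which upgrades Borel trivializations of continuous cocycles to continuous ones. Fix a Borel right inverse $t:C\to B$ of $\pi$ and set $\kappa:=\delta t$, so that $B$ is isomorphic as a Borel group to $A\times C$ with twisted addition $(a,c)+(a',c')=(a+a'+\kappa(c,c'),c+c')$. Changing $t$ by a Borel function $C\to A$ changes $\kappa$ by an element of $\mathrm{B}^{1}(C,A)$, so the boundary maps defined just before the statement are well-defined.

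For part (1), exactness at $\mathrm{Hom}(X,A)$ and at $\mathrm{Hom}(X,B)$ is purely formal. For exactness at $\mathrm{Hom}(X,C)$: given a continuous $\varphi:X\to C$, a map $(s,\varphi):X\to A\times C=B$ with $s:X\to A$ Borel is a homomorphism precisely when $\delta s=\kappa\circ(\varphi\times\varphi)$, so $\varphi$ lifts to $B$ iff the boundary $\partial\varphi=[\kappa\circ(\varphi\times\varphi)]$ vanishes in $\mathrm{Ext}_{\mathrm{c}}(X,A)$; the resulting lift is Borel, hence continuous by automatic continuity. For exactness at $\mathrm{Ext}_{\mathrm{c}}(X,A)$: if a Borel cocycle $\omega:X\times X\to A$ satisfies $\omega=\delta r$ for some Borel $r:X\to B$, then $\pi\circ r:X\to C$ is a Borel homomorphism (since $\pi\circ\omega=0$), hence continuous, and writing $r=(u,\pi\circ r)$ with $u:X\to A$ Borel yields $\omega=\delta u+\kappa\circ((\pi\circ r)\times(\pi\circ r))$, so $[\omega]=\partial(\pi\circ r)$. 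Exactness at $\mathrm{Ext}_{\mathrm{c}}(X,B)$ is a routine pushout/pullback argument on cocycles.

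For part (2), when $X$ and $C$ are non-Archimedean, a continuous section $t:C\to B$ exists by \cite[Proposition 4.6]{bergfalk_definable_2020}, so $\kappa$ is itself continuous, and Lemma \ref{Lemma:Ext-Yon-nA} identifies $\mathrm{Ext}_{\mathrm{Yon}}(X,-)$ with the subgroup of $\mathrm{Ext}_{\mathrm{c}}(X,-)$ represented by continuous cocycles. The images of the boundary maps from (1) already consist of continuous cocycles, and at every exactness point where in (1) we produced a Borel trivializing map, Lemma \ref{Lemma:coboundary} forces it to be continuous, so (2) arises by restricting (1) to the continuous-cocycle subgroups. Part (3) is the contravariant analogue: with $\kappa$ chosen continuous (now using non-Archimedeanness of $C$), define the boundary $\mathrm{Hom}(A,Y)\to\mathrm{Ext}_{\mathrm{Yon}}(C,Y)$ by $\psi\mapsto[\psi\circ\kappa]$, which is patently a continuous cocycle, and run the dual diagram chase using the same two topological upgrades.

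The main obstacle is not the cocycle bookkeeping, which is identical to the classical discrete case in \cite[Chapter IX]{fuchs_infinite_1970}, but rather the two topological upgrades just described: extracting a continuous lift of a continuous $\varphi$ from a Borel cocycle identity (handled by automatic continuity of Borel homomorphisms between Polish groups), and extracting a continuous trivialization of a continuous cocycle from a Borel one (handled by Lemma \ref{Lemma:coboundary}). Once these are in place, every exactness assertion reduces to a cocycle computation of the same form as for discrete abelian groups.
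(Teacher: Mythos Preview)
Your approach is correct and essentially the same as the paper's: both fix a Borel (or, when $C$ is non-Archimedean, continuous) section $\sigma:C\to B$, set $\kappa=\delta\sigma$, and verify each exactness assertion by a cocycle computation, invoking automatic continuity of Borel homomorphisms and Lemma~\ref{Lemma:coboundary} exactly where you do. The one minor difference is at the final exactness point of part~(3), namely at $\mathrm{Ext}_{\mathrm{Yon}}(B,Y)$: rather than a pure cocycle argument, the paper switches to the extension picture, takes a splitting $\xi:A\to H$ of the pulled-back extension $Y\to p^{-1}(A)\to A$, and forms the quotient $H/\xi(A)$ to exhibit the required preimage in $\mathrm{Ext}_{\mathrm{Yon}}(C,Y)$; this is cleaner than trying to push a cocycle on $B$ down to $C$ directly.
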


\begin{proof}
Let us fix a Borel right inverse $\sigma :C\rightarrow B$ for $\pi $, and
let $\kappa =\delta \sigma :C\times C\rightarrow A$. We identify $A$ with a
closed subgroup of $B$. By \cite[Proposition 4.6]{bergfalk_definable_2020},
if $C$ is non-Archimedean, then one can choose $\sigma $ to be continuous.

(1) and (2): It is clear that the image of $\mathrm{Hom}\left( X,C\right)
\rightarrow \mathrm{Ext}_{\mathrm{c}}\left( X,A\right) $ is contained in the
kernel of $\mathrm{Ext}_{\mathrm{c}}\left( X,A\right) \rightarrow \mathrm{Ext%
}_{\mathrm{c}}\left( X,B\right) $. We prove the converse implication.
Suppose that $c:X\times X\rightarrow A$ is a Borel cocycle such that there
exists a function $t:X\rightarrow B$ such that $c=\delta t$. Then we have
that $\pi t:X\rightarrow C$ is a group homomorphism. Then $\kappa \circ
\left( \pi t\times \pi t\right) $ is a cocycle cohomologous to $c$, as we
are about to show. Define $g:X\rightarrow A$, $x\mapsto \sigma \pi t\left(
x\right) -t\left( x\right) $. For $x,y\in X$,%
\begin{eqnarray*}
\left( \kappa \circ \left( \pi t\times \pi t\right) \right) \left(
x,y\right) &=&\kappa \left( \pi t\left( x\right) ,\pi t\left( y\right)
\right) \\
&=&\sigma \left( \pi t\left( x\right) \right) +\sigma \left( \pi t\left(
y\right) \right) -\sigma \left( \pi t\left( x+y\right) \right) \\
&=&t\left( x\right) +t\left( y\right) -t\left( x+y\right) +\delta g\left(
x,y\right) \\
&=&c\left( x,y\right) +\delta g\left( x,y\right) \text{.}
\end{eqnarray*}%
This concludes the proof that the image of $\mathrm{Hom}\left( X,C\right)
\rightarrow \mathrm{Ext}_{\mathrm{c}}\left( X,A\right) $ is equal to the
kernel of $\mathrm{Ext}_{\mathrm{c}}\left( X,A\right) \rightarrow \mathrm{Ext%
}_{\mathrm{c}}\left( X,B\right) $.

We now prove that the kernel of $\mathrm{Ext}_{\mathrm{c}}\left( X,B\right)
\rightarrow \mathrm{Ext}_{\mathrm{c}}\left( X,C\right) $ is contained in the
image of $\mathrm{Ext}_{\mathrm{c}}\left( X,A\right) \rightarrow \mathrm{Ext}%
_{\mathrm{c}}\left( X,B\right) $. Suppose that $c:X\times X\rightarrow B$ is
a Borel cocycle such that there is a Borel function $f:X\rightarrow C$ such
that $\pi c=\delta f$. Then we have that $c_{0}:=c-\delta \left( \sigma
f\right) $ is a Borel cocycle $X\times X\rightarrow A$ such that $%
c=c_{0}+\delta \left( \sigma f\right) $ and hence $c,c_{0}$ are cohomologous
as\ Borel cocycles $X\times X\rightarrow B$. This shows that the kernel of $%
\mathrm{Ext}_{\mathrm{c}}\left( X,B\right) \rightarrow \mathrm{Ext}_{\mathrm{%
c}}\left( X,C\right) $ is contained in the image of $\mathrm{Ext}_{\mathrm{c}%
}\left( X,A\right) \rightarrow \mathrm{Ext}_{\mathrm{c}}\left( X,B\right) $.

When $X,C$ are non-Archimedean, $\sigma $ is continuous. If $c$ is
continuous, then $f$ is continuous, and hence $c_{0}$ is continuous as well.
This shows that the kernel of $\mathrm{Ext}_{\mathrm{Yon}}\left( X,B\right)
\rightarrow \mathrm{Ext}_{\mathrm{Yon}}\left( X,C\right) $ is contained in
the image of $\mathrm{Ext}_{\mathrm{Yon}}\left( X,A\right) \rightarrow 
\mathrm{Ext}_{\mathrm{Yon}}\left( X,B\right) $ when $X,C$ are
non-Archimedean.

(3) Suppose now that $c:C\times C\rightarrow Y$ is a Borel cocycle that
represents an element of the kernel of $\mathrm{Ext}_{\mathrm{c}}\left(
C,Y\right) \rightarrow \mathrm{Ext}_{\mathrm{c}}\left( B,Y\right) $. Thus,
we have that $c\circ \left( \pi \times \pi \right) $ is a coboundary, and
there exists a Borel function $f:B\rightarrow Y$ such that $\delta f=c\circ
\left( \pi \times \pi \right) $. Hence,%
\begin{equation*}
f\left( b+b^{\prime }\right) =f\left( b\right) +f\left( b^{\prime }\right)
+c\left( \pi \left( b\right) ,\pi \left( b^{\prime }\right) \right)
\end{equation*}%
for $b,b^{\prime }\in B$. This implies that%
\begin{equation*}
f\left( 0\right) =0
\end{equation*}%
and%
\begin{equation*}
f\left( b\right) +f\left( -b\right) =-c\left( \pi \left( b\right) ,-\pi
\left( b\right) \right) \text{.}
\end{equation*}%
Then we have that $\varphi :=f|_{A}:A\rightarrow Y$ is a continuous group
homomorphism. We have that $\varphi \circ \kappa $ is a Borel cocycle
cohomologous to $c$. Indeed, define $g:=f\sigma :C\rightarrow Y$. For $%
x,y\in C$ we have that 
\begin{eqnarray*}
\left( \varphi \circ \kappa \right) \left( x,y\right) &=&\varphi \left(
\sigma \left( x\right) +\sigma \left( y\right) -\sigma \left( x+y\right)
\right) \\
&=&f\left( \sigma \left( x\right) +\sigma \left( y\right) -\sigma \left(
x+y\right) \right) \\
&=&f\left( \sigma \left( x\right) +\sigma \left( y\right) \right) +f\left(
-\sigma \left( x+y\right) \right) +c\left( x+y,-\left( x+y\right) \right) \\
&=&f\left( \sigma \left( x\right) \right) +f\left( \sigma \left( y\right)
\right) +c\left( x,y\right) -f\left( \sigma \left( x+y\right) \right)
-c\left( x+y,-\left( x+y\right) \right) +c\left( x+y,-\left( x+y\right)
\right) \\
&=&\left( c+\delta g\right) \left( x,y\right) \text{.}
\end{eqnarray*}%
This shows that the kernel of $\mathrm{Ext}_{\mathrm{c}}\left( C,Y\right)
\rightarrow \mathrm{Ext}_{\mathrm{c}}\left( B,Y\right) $ is contained in the
image of $\mathrm{Hom}\left( A,Y\right) \rightarrow \mathrm{Ext}_{\mathrm{c}%
}\left( C,Y\right) $. In particular, we have that the kernel of $\mathrm{Ext}%
_{\mathrm{Yon}}\left( C,Y\right) \rightarrow \mathrm{Ext}_{\mathrm{Yon}%
}\left( B,Y\right) $ is contained in the image of $\mathrm{Hom}\left(
A,Y\right) \rightarrow \mathrm{Ext}_{\mathrm{Yon}}\left( C,Y\right) $.

We now prove that the kernel of $\mathrm{Ext}_{\mathrm{Yon}}\left(
B,Y\right) \rightarrow \mathrm{Ext}_{\mathrm{Yon}}\left( A,Y\right) $ is
contained in the image of $\mathrm{Ext}_{\mathrm{Yon}}\left( C,Y\right)
\rightarrow \mathrm{Ext}_{\mathrm{Yon}}\left( B,Y\right) $. Suppose that $%
Y\rightarrow H\overset{p}{\rightarrow }B$ is an extension that represents an
element of the kernel of $\mathrm{Ext}_{\mathrm{Yon}}\left( B,Y\right)
\rightarrow \mathrm{Ext}_{\mathrm{Yon}}\left( A,Y\right) $. This means that
the induced extension $Y\rightarrow p^{-1}\left( A\right) \rightarrow A$
splits. Thus, there exists a continuous group homomorphism $\xi
:A\rightarrow H$ with closed image such that $p\xi $ is the inclusion of $A$
in $B$. In particular, we have that $\pi p\xi =0$. Thus, $\pi p$ induces a
continuous group homomorphism $\lambda :H/\xi \left( A\right) \rightarrow C$
such that $\lambda \left( h+\xi \left( A\right) \right) =\pi \left( p\left(
h\right) \right) $ for $h\in H$. This yields a commuting diagram%
\begin{equation*}
\begin{array}{ccccccccc}
&  &  &  & A & = & A &  &  \\ 
&  &  &  & \downarrow &  & \downarrow &  &  \\ 
0 & \rightarrow & Y & \rightarrow & H & \rightarrow & B & \rightarrow & 0 \\ 
&  &  &  & \downarrow &  & \downarrow &  &  \\ 
0 & \rightarrow & Y & \rightarrow & H/\xi \left( A\right) & \rightarrow & C
& \rightarrow & 0%
\end{array}%
\end{equation*}%
where the diagram%
\begin{equation*}
\begin{array}{ccc}
H & \rightarrow & B \\ 
\downarrow &  & \downarrow \\ 
H/\xi \left( A\right) & \rightarrow & C%
\end{array}%
\end{equation*}%
is a push-out. This yields a short exact sequence $Y\rightarrow H/\xi \left(
A\right) \rightarrow C$ that represents an element of $\mathrm{Ext}_{\mathrm{%
Yon}}\left( C,Y\right) $ whose image in $\mathrm{Ext}_{\mathrm{Yon}}\left(
B,Y\right) $ is the element represented by $Y\rightarrow H\rightarrow B$.
\end{proof}

Suppose that $G,H$ are abelian Polish groups, where $H=\prod_{n\in \alpha
}H_{n}$ for some $\alpha \leq \omega $. It is clear from the definition that 
\begin{equation*}
\mathrm{Ext}_{\mathrm{c}}\left( G,H\right) \cong \prod_{n\in \alpha }\mathrm{%
Ext}_{\mathrm{c}}\left( G,H_{n}\right) \text{.}
\end{equation*}%
Furthermore, if $\alpha <\omega $ then, since $\mathrm{Ext}_{\mathrm{c}%
}\left( -,G\right) $ is an additive functor, we also have%
\begin{equation*}
\mathrm{Ext}_{\mathrm{c}}(H,G)\cong \prod_{n\in \alpha }\mathrm{Ext}_{%
\mathrm{c}}\left( H_{n},G\right)
\end{equation*}

\subsection{Preliminaries on pro-Lie groups\label{Subsection:pro-Lie}}

In this section, we recall fundamental facts about the theory of pro-Lie
groups as can be found in \cite{hofmann_lie_2007}. We will only consider
abelian Polish groups. The category $\mathbf{PAb}$ of abelian Polish groups
and continuous group homomorphisms is a quasi-abelian category \cite[Theorem
6.2]{lupini_looking_2022}. Locally compact Polish abelian groups form a
thick subcategory $\mathbf{LCPAb}$ of $\mathbf{PAb}$ \cite[Theorem 6.17]%
{lupini_looking_2022}.

An abelian Polish group is a \emph{Lie group} if and only if it is of the
form $V\oplus T\oplus D$ where $D$ is countable discrete, $V$ is a
finite-dimensional vector group (isomorphic to $\mathbb{R}^{n}$ for some $%
n\in \omega $), and $T$ is a finite-dimensional torus (isomorphic to $%
\mathbb{T}^{d}$ for some $d\in \omega $); see \cite[Exercise E5.18]%
{hofmann_structure_2013}. Lie groups form a thick subcategory $\mathbf{LiePAb%
}$ of the category of locally compact abelian Polish groups; see \cite[%
Theorem 6.17]{lupini_looking_2022}.

A Polish abelian group $G$ has \emph{no small subgroups} if and only if it
has a zero neighborhood $U$ such that for every subgroup $N$ of $G$
contained in $U$, one has that $N=\left\{ 0\right\} $ \cite%
{moskowitz_homological_1967}. When $G$ is locally compact, this is
equivalent to the assertion that the Pontryagin dual $G^{\vee }$ is
compactly generated, as well as to the assertion that $G$ is a Lie group 
\cite[Theorem 2.4 and Corollary 1]{moskowitz_homological_1967}.

Let $G$ be an abelian Polish group. A closed subgroup $N$ of $G$ is co-Lie
if $G/N$ is a Lie group. Following \cite{hofmann_lie_2007}, we let $\mathcal{%
N}\left( G\right) $ be the collection of co-Lie closed subgroups of $G$.

\begin{definition}
An abelian Polish group is \emph{pro-Lie }if every zero neighborhood in $G$
contains an element of $\mathcal{N}\left( G\right) $.
\end{definition}

An abelian Polish group $G$ is a \emph{pro-Lie group} if every zero
neighborhood of $G$ contains a closed subgroup $N$ such that $G/N$ is a Lie
group. This implies that $\mathcal{N}\left( G\right) $ is closed under
intersections, and hence a filter basis \cite[page 148]{hofmann_lie_2007}.
It also implies that $G\cong \mathrm{\mathrm{lim}}_{N\in \mathcal{N}\left(
G\right) }G/N$. For a decreasing chain $\left( N_{k}\right) _{k\in \omega }$
in $\mathcal{N}\left( G\right) $, we say that $N_{k}\rightarrow 0$ if for
every zero neighborhood $U$ in $G$, $N_{k}$ is contained in $U$ eventually.
Notice that such a sequence exists for a pro-Lie Polish abelian group (as a
Polish abelian group has a countable basis of zero neighborhoods).

\begin{lemma}
\label{Lemma:countable-cofinality}Suppose that $G$ is a pro-Lie abelian
Polish group. Let $\left( N_{k}\right) _{k\in \omega }$ be a decreasing
chain in $\mathcal{N}\left( G\right) $.\ Then we have that $N_{k}\rightarrow
0$ if and only if $\left( N_{k}\right) _{k\in \omega }$ is cofinal in $%
\mathcal{N}\left( G\right) $. If this holds, then $G\cong \mathrm{\mathrm{lim%
}}_{k}G/N_{k}$.
\end{lemma}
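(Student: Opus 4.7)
The plan is to prove the two implications separately and then deduce the isomorphism with the inverse limit from cofinality.

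For the forward implication, assume that $N_k \to 0$ and fix $N \in \mathcal{N}(G)$. The key fact that I would invoke is that, as recalled in the text, a Polish abelian Lie group is of the form $V \oplus T \oplus D$ and hence has no small subgroups. Thus the Lie quotient $G/N$ admits a zero neighborhood $W$ whose only subgroup is the trivial one. Letting $\pi_{N} : G \to G/N$ be the quotient map, the preimage $U := \pi_{N}^{-1}(W)$ is a zero neighborhood of $G$ (that contains $N$). The hypothesis $N_k \to 0$ forces $N_k \subseteq U$ for all sufficiently large $k$, so that $\pi_{N}(N_k)$ is a subgroup of $G/N$ contained in $W$, hence trivial. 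This means $N_k \subseteq N$ eventually, which is precisely cofinality of $(N_k)_{k\in\omega}$ in $\mathcal{N}(G)$.

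For the reverse implication, assume that $(N_k)_{k\in\omega}$ is cofinal in $\mathcal{N}(G)$ and fix a zero neighborhood $U$ of $G$. Because $G$ is pro-Lie, there exists $N \in \mathcal{N}(G)$ with $N \subseteq U$. By cofinality, there is $k_{0}$ with $N_{k_{0}} \subseteq N$, and since the chain is decreasing we get $N_{k} \subseteq N_{k_{0}} \subseteq U$ for every $k \geq k_{0}$, which gives $N_k \to 0$.

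Finally, assume either (equivalent) condition holds. As noted just before the lemma statement, $G \cong \mathrm{lim}_{N \in \mathcal{N}(G)}\, G/N$ via the canonical morphism. Because $(N_k)_{k\in\omega}$ is cofinal in the directed set $\mathcal{N}(G)$ (ordered by reverse inclusion), the standard fact that restriction to a cofinal subsystem does not alter the inverse limit yields a canonical isomorphism $\mathrm{lim}_{N \in \mathcal{N}(G)}\, G/N \cong \mathrm{lim}_{k}\, G/N_{k}$. Composing these isomorphisms gives $G \cong \mathrm{lim}_{k}\, G/N_{k}$. I do not anticipate a serious obstacle: the only nontrivial ingredient is the no-small-subgroups property of Lie groups, which is recorded in the preceding discussion; everything else is a direct unwinding of the definitions of $\mathcal{N}(G)$, the convergence $N_{k} \to 0$, and cofinality in a directed system.
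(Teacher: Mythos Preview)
Your proof is correct and follows essentially the same approach as the paper: the forward implication uses the no-small-subgroups property of the Lie quotient $G/N$ to force $N_k\subseteq N$ eventually, the converse is the immediate argument from the pro-Lie definition that you spell out (the paper simply calls it ``obvious''), and the final isomorphism comes from restricting the limit $G\cong\lim_{N\in\mathcal{N}(G)}G/N$ to a cofinal subsystem.
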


\begin{proof}
Suppose that $N_{k}\rightarrow 0$.\ For $M\in \mathcal{N}\left( G\right) $,
we have that $G/M$ is a Lie group. Thus, it has a zero neighborhood $U$ that
does not contain any subgroup. Since $N_{k}\rightarrow 0$, we have that
there exists $k\in \omega $ such that $\pi \left( N_{k}\right) \subseteq U$,
where $\pi :G\rightarrow G/M$ is the quotient map. This implies that $\pi
\left( N_{k}\right) =\left\{ 0\right\} $ and $N_{k}\subseteq M$. The
converse implication is obvious. The second assertion follows from the fact
that $G\cong \mathrm{\mathrm{lim}}_{N\in \mathcal{N}\left( G\right) }G/N$.
\end{proof}

It is proved in \cite[Theorem 3.39]{hofmann_lie_2007} that a Polish abelian
group is pro-Lie if and only if it is the inverse limit of an inverse system
of Polish abelian Lie groups.

A Polish group $G$ is \emph{almost connected }if $G/c\left( G\right) $ is
compact, where $c\left( G\right) $ is the connected component of the trivial
element of $G$ (which is a closed subgroup of $G$) \cite[Definition 5.6]%
{hofmann_lie_2007}. If $G$ is an abelian pro-Lie group, then $G/c\left(
G\right) $ is non-Archimedean \cite[Theorem 5.20(iii)]{hofmann_lie_2007}.

The class of abelian Polish pro-Lie groups contains all locally compact
abelian Polish groups \cite[Example 5.1]{hofmann_lie_2007}, all
non-Archimedean abelian Polish groups, and all almost connected abelian
Polish groups \cite{yamabe_generalization_1953, yamabe_conjecture_1953}, and
is closed within the category of Polish groups under the following
operations \cite[Chapter 3]{hofmann_lie_2007}:

\begin{itemize}
\item taking closed subgroups;

\item taking quotients by closed subgroups;

\item taking countable limits.
\end{itemize}

An abelian pro-Lie group is called:

\begin{itemize}
\item a \emph{vector group} if it is isomorphic to $\mathbb{R}^{\alpha }$
for some $\alpha \leq \omega $;

\item a \emph{torus group }if it is isomorphic to $\mathbb{T}^{\beta }$ for
some $\beta \leq \omega $.
\end{itemize}

We also say that an abelian Polish pro-Lie group is \emph{vector-free }if it
has no nonzero closed subgroups that are vector groups. By \cite[Theorem 5.19%
]{hofmann_lie_2007} we have the following:

\begin{lemma}
\label{Lemma:injective-pro-Lie}Let $G$ be an abelian Polish pro-Lie group
and $H$ a closed subgroup of $G$. If $H$ is isomorphic to an abelian Polish
pro-Lie group of the form $T\oplus V$ where $T$ is a torus group and $V$ is
a vector group, then the short exact sequence $H\rightarrow G\rightarrow G/H$
splits.
\end{lemma}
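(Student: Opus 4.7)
The plan is to peel off the vector part first and then the torus part, using the splitting results encapsulated in \cite[Theorem 5.19]{hofmann_lie_2007}. Identify $H$ with its decomposition $T\oplus V$ inside $G$, so that $T$ and $V$ are closed subgroups of $G$ with $T\cap V=0$ and $T+V=H$.

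First, since $V$ is a closed vector subgroup of the abelian Polish pro-Lie group $G$, the vector subgroup splitting portion of \cite[Theorem 5.19]{hofmann_lie_2007} yields a closed subgroup $G_{1}\subseteq G$ with $G=V\oplus G_{1}$. Let $p\colon G\to V$ denote the projection along $G_{1}$. Because $T$ is compact while $V$ is a vector group, the only compact subgroup of $V$ is trivial, and hence $p(T)=0$, so $T\subseteq \ker p=G_{1}$. Therefore $H=V\oplus T$ respects the decomposition $G=V\oplus G_{1}$, and the problem reduces to splitting the closed torus subgroup $T$ off of the abelian Polish pro-Lie group $G_{1}$.

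Second, $T$ is a compact, connected, divisible closed subgroup of $G_{1}$. By the splitting statement for closed torus subgroups, also contained in \cite[Theorem 5.19]{hofmann_lie_2007}, one obtains a closed subgroup $G_{2}\subseteq G_{1}$ with $G_{1}=T\oplus G_{2}$. Combining the two steps gives
\begin{equation*}
G=V\oplus G_{1}=V\oplus T\oplus G_{2}=H\oplus G_{2}\text{,}
\end{equation*}
which furnishes a continuous section $G/H\to G$ and exhibits the short exact sequence $H\to G\to G/H$ as split.

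The main obstacle I expect is locating within \cite[Theorem 5.19]{hofmann_lie_2007} both splitting statements (for closed vector subgroups and for closed torus subgroups of abelian Polish pro-Lie groups) in precisely the form used above; once both are granted, the reduction is essentially formal. The linchpin of the combination argument is the elementary observation that any compact subgroup of a vector group is trivial, which is what forces $T\subseteq G_{1}$ automatically and makes the two splittings compatible.
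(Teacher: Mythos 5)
Your proof is correct and takes essentially the same route as the paper: the paper establishes this lemma purely by citing \cite[Theorem 5.19]{hofmann_lie_2007}, which already handles closed subgroups isomorphic to $T\oplus V$ in one stroke, and your two-step argument (split off $V$, observe that the compact group $T$ has trivial image in the vector group $V$ and hence lies in the complementary summand, then split off $T$) only invokes the special cases $T=0$ and $V=0$ of that same theorem. So the elaboration is valid, with the correct key observation that vector groups have no nontrivial compact subgroups, but it adds nothing beyond the citation on which the paper's own proof rests.
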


An abelian Polish pro-Lie group $G$ admits a closed subgroup $V$, called 
\emph{maximal vector subgroup }(or vector group complement), that is a
vector group, and such that $H:=G/V$ is vector-free; see \cite[Theorem 5.20]%
{hofmann_lie_2007}. In this case, by Lemma \ref{Lemma:injective-pro-Lie} we
have that $G\cong V\oplus H$. Weil's Lemma for pro-Lie groups asserts the
following; see \cite[Theorem 5.3]{hofmann_lie_2007}.

\begin{lemma}
\label{Lemma:Weil}Let $G$ be an abelian Polish pro-Lie group and let $E$ be
either $\mathbb{Z}$ or $\mathbb{R}$. If $f:E\rightarrow G$ is a continuous
homomorphism, then exactly one of the following alternatives holds: either
the image of $f$ has compact closure, or $f$ is injective with closed image.
\end{lemma}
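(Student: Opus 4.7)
The plan is to reduce to the classical Weil's lemma for locally compact abelian groups via the inverse-limit presentation of $G$ as a limit of Lie groups. The two alternatives are mutually exclusive: if $f$ is injective with closed image, then $f\colon E\to f(E)$ is a continuous bijective homomorphism between Polish groups and hence a homeomorphism by the open mapping theorem for Polish groups, so $f(E)$ is homeomorphic to the noncompact group $E$ and cannot have compact closure. Thus the content of the lemma is to show that if $\overline{f(E)}$ is not compact, then $f$ is injective with closed image.

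To that end, by Lemma \ref{Lemma:countable-cofinality} I would fix a decreasing sequence $(N_k)_{k\in\omega}$ in $\mathcal{N}(G)$ with $N_k\to 0$, so that $G\cong \mathrm{lim}_k \, G/N_k$ embeds as a closed subgroup of $\prod_k G/N_k$. Writing $\pi_k\colon G\to G/N_k$ and $f_k:=\pi_k\circ f$, I would argue that some $\overline{f_k(E)}\subseteq G/N_k$ is noncompact: otherwise $\overline{f(E)}$, as a closed subset of the product $\prod_k \overline{f_k(E)}$, would be compact by Tychonoff, contradicting the hypothesis. So I fix $k$ with $\overline{f_k(E)}$ noncompact inside the locally compact Lie group $G/N_k$.

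Applying the classical Weil's lemma in $G/N_k$ then yields that $f_k$ is injective and a homeomorphism onto its closed image. Injectivity transfers to $f$ since $\ker f\subseteq \ker f_k=\{0\}$. To close the argument I would show $f(E)$ is closed in $G$: given a sequence $t_n\in E$ with $f(t_n)\to y$ in $G$, applying $\pi_k$ yields $f_k(t_n)\to \pi_k(y)$; closedness of $f_k(E)$ in $G/N_k$ gives $\pi_k(y)=f_k(t)$ for some $t\in E$; the homeomorphism property of $f_k$ upgrades this to $t_n\to t$; and continuity of $f$ then forces $y=f(t)\in f(E)$.

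The step I expect to be most delicate is the reduction to a single Lie quotient $G/N_k$: it hinges on $G$ being Polish as well as pro-Lie, so that $\mathcal{N}(G)$ admits a countable cofinal chain and $G$ sits as a closed subgroup of the countable product $\prod_k G/N_k$, allowing Tychonoff to propagate compactness from all Lie quotients back to $G$. Once that reduction is in place, the argument is essentially a lift of the classical Weil's lemma along the projections $\pi_k$.
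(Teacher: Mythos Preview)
Your argument is correct. The paper does not actually prove this lemma: it merely records it as \cite[Theorem 5.3]{hofmann_lie_2007} and moves on, so there is no in-paper proof to compare against. Your reduction to a single Lie quotient via a countable cofinal chain in $\mathcal{N}(G)$, followed by an appeal to the classical Weil lemma in the locally compact group $G/N_k$ and a lift of both injectivity and closedness along $\pi_k$, is exactly the kind of argument one would expect (and is in the spirit of how the Hofmann--Morris reference proceeds). The Tychonoff step is sound: $G$ sits as a closed subgroup of $\prod_k G/N_k$, the closure of $f(E)$ there lands inside $\prod_k \overline{f_k(E)}$, and compactness of each factor forces compactness of $\overline{f(E)}$. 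The lift of closedness via the homeomorphism $f_k\colon E\to f_k(E)$ is likewise clean, and metrizability of $G$ justifies the sequential argument.
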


Let $G$ be an abelian Polish group. Define $\mathrm{comp}\left( G\right) $
to be the subgroup of $g\in G$ such that the subgroup generated by $g$ has
compact closure. If $G$ is an abelian Polish pro-Lie group, then $\mathrm{%
comp}\left( G\right) $ is a closed subgroup of $G$ \cite[Theorem 5.5]%
{hofmann_lie_2007}. One says that $G$ is \emph{elementwise compact }if $G=%
\mathrm{comp}\left( G\right) $ and \emph{compact-free }if $\mathrm{comp}%
\left( G\right) =\left\{ 0\right\} $; see \cite[Definition 5.4]%
{hofmann_lie_2007}. We have that $G/\mathrm{comp}\left( G\right) \cong
V\oplus S$ where $V$ is a maximal vector subgroup and $S$ is non-Archimedean
and compact-free \cite[Theorem 5.20(iv)]{hofmann_lie_2007}. By \cite[%
Proposition 5.43]{hofmann_lie_2007}, we have the following:

\begin{lemma}
\label{Lemma:structure-pro-Lie}If $G$ is an abelian Polish group, then there
exists a non-Archimedean closed subgroup $D$ of $G$ such that $G/D$ is a
torus group.
\end{lemma}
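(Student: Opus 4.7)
The plan is to reduce to the vector-free case via the vector group splitting, and then combine Pontryagin duality on the (compact) identity component with the splitting lemma for closed torus subgroups (Lemma~\ref{Lemma:injective-pro-Lie}) to produce the desired subgroup $D$.

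I first decompose $G \cong V \oplus H$ via the vector group splitting from \cite[Theorem 5.20]{hofmann_lie_2007} (together with Lemma~\ref{Lemma:injective-pro-Lie}), where $V \cong \mathbb{R}^{\alpha}$ for some $\alpha \leq \omega$ is a maximal vector subgroup and $H$ is vector-free. For the vector factor, the discrete subgroup $D_V := \mathbb{Z}^{\alpha} \subseteq V$ is closed and non-Archimedean, with $V/D_V \cong \mathbb{T}^{\alpha}$ a torus. It therefore suffices to construct a closed non-Archimedean $D_H \subseteq H$ with $H/D_H$ a torus; then $D := D_V \oplus D_H$ does the job.

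In the vector-free case, every vector subgroup of the identity component $c(H)$ is a vector subgroup of $H$, so $c(H)$ is vector-free connected abelian pro-Lie Polish; by the Hofmann--Morris structure theorem for connected abelian pro-Lie groups \cite{hofmann_lie_2007}, $c(H)$ is then \emph{compact} connected abelian Polish, while $H/c(H)$ is non-Archimedean by Theorem~5.20(iii) of \cite{hofmann_lie_2007}. Pontryagin duality now identifies $c(H)$ with the dual of the countable torsion-free discrete abelian group $A := c(H)^{\vee}$. Picking a maximal $\mathbb{Z}$-independent subset of $A$ (which exists and is at most countable since $A$ is) generates a free abelian subgroup $F \subseteq A$ with torsion quotient $A/F$; dualizing the inclusion $F \hookrightarrow A$ produces a continuous surjection $c(H) \twoheadrightarrow F^{\vee} \cong \mathbb{T}^{I}$ whose kernel $D_0 \cong (A/F)^{\vee}$ is the Pontryagin dual of a countable torsion discrete abelian group, hence profinite---in particular non-Archimedean---while $c(H)/D_0 \cong \mathbb{T}^{I}$ is a torus.

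To finish, I pass to the pro-Lie Polish abelian quotient $H/D_0$, which contains the torus subgroup $c(H)/D_0$ with quotient $H/c(H)$ non-Archimedean. Lemma~\ref{Lemma:injective-pro-Lie} applied to the closed torus subgroup $c(H)/D_0 \subseteq H/D_0$ yields a continuous section $s : H/c(H) \to H/D_0$ splitting the extension. I then let $D_H$ be the preimage of $s(H/c(H))$ under $H \to H/D_0$; thus $D_H \supseteq D_0$, $D_H/D_0 \cong H/c(H)$ is non-Archimedean, and since $D_0$ is profinite and non-Archimedean Polish abelian groups are closed under extensions \cite[Theorem 6.17]{lupini_looking_2022}, $D_H$ is non-Archimedean, while $H/D_H \cong c(H)/D_0$ is a torus. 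The main obstacle lies in the Pontryagin-duality step: one must invoke the identification of compact connected abelian Polish groups with duals of countable torsion-free discrete abelian groups, and then extract a maximal $\mathbb{Z}$-independent subset to ensure the annihilator is profinite (i.e., dual to a \emph{torsion} group); the remaining verifications---applying the splitting lemma and closure of non-Archimedean Polish abelian groups under extensions---are routine.
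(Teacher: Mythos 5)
Your argument is correct, but it follows a genuinely different route from the paper's: in the text this lemma carries no internal proof at all---it is quoted directly from \cite[Proposition 5.43]{hofmann_lie_2007}---so what you wrote is a self-contained derivation that the paper never gives. Concretely, you split off the maximal vector subgroup $G\cong V\oplus H$ (Theorem 5.20 of \cite{hofmann_lie_2007} together with Lemma \ref{Lemma:injective-pro-Lie}), handle $V\cong \mathbb{R}^{\alpha}$ with the closed lattice $\mathbb{Z}^{\alpha}$, note that in the vector-free factor $c(H)$ is compact connected while $H/c(H)$ is non-Archimedean, use Pontryagin duality to produce a profinite $D_{0}\subseteq c(H)$ with $c(H)/D_{0}$ a torus (the annihilator of a maximal free subgroup of the countable torsion-free dual), split the torus $c(H)/D_{0}$ off $H/D_{0}$ by Lemma \ref{Lemma:injective-pro-Lie}, and take $D_{H}$ to be the preimage of the closed complement, which is non-Archimedean because it is an extension of the non-Archimedean $H/c(H)$ by the profinite $D_{0}$ and $\mathbf{PAb}_{\mathrm{nA}}$ is thick \cite[Theorem 6.17]{lupini_looking_2022}; finally $G/(D_{V}\oplus D_{H})\cong \mathbb{T}^{\alpha}\oplus (c(H)/D_{0})$ is a torus. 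Each of these steps checks out, and nothing you use depends on later results of the paper, so there is no circularity. What the paper's citation buys is brevity; what your proof buys is transparency, at the cost of invoking duality and the splitting lemma explicitly.

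One point worth recording: as printed the lemma says ``abelian Polish group,'' whereas your proof (like the cited source and every application in the paper) uses the pro-Lie hypothesis, and the statement is in fact false without it: in an infinite-dimensional separable Banach space every subgroup contained in a small ball is trivial, so any non-Archimedean closed subgroup is discrete, and a discrete cocompact subgroup would force local compactness. So your implicit reading ``pro-Lie Polish abelian group'' is the intended one and should be made explicit.
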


The following result is established in \cite[Theorem 4.1]{hofmann_lie_2007}.

\begin{lemma}
\label{Lemma:quotient}Suppose that $G$ is a pro-Lie group, and $H$ is a
closed subgroup of $G$. Suppose that $\left( N_{k}\right) $ is a cofinal
sequence in $\mathcal{N}\left( G\right) $. Then $G/H$ is pro-Lie, and $((%
\overline{N_{k}+H})/H)_{k\in \omega }$ is a cofinal sequence in $\mathcal{N}%
\left( G/H\right) $.
\end{lemma}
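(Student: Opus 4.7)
The plan is to establish the two conclusions in sequence, with the cofinality statement carrying the main content; the first assertion is essentially a by-product of the closure properties already recorded. For the pro-Lieness of $G/H$, I would simply invoke the fact listed in Subsection~\ref{Subsection:pro-Lie} that the class of pro-Lie Polish abelian groups is closed in $\mathbf{PAb}$ under quotients by closed subgroups. The substance therefore lies in identifying an explicit cofinal chain in $\mathcal{N}(G/H)$.

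First I would verify that each $\overline{N_{k}+H}/H$ is itself in $\mathcal{N}(G/H)$. The third isomorphism theorem gives
\[
(G/H)\big/\bigl(\overline{N_{k}+H}/H\bigr) \;\cong\; G/\overline{N_{k}+H},
\]
and the right-hand side is a quotient of the Lie group $G/N_{k}$ by the closed subgroup $\overline{N_{k}+H}/N_{k}$. Using the explicit description of abelian Polish Lie groups recalled in Subsection~\ref{Subsection:pro-Lie} as groups of the form $V\oplus T\oplus D$ with $V$ a finite-dimensional vector group, $T$ a finite-dimensional torus, and $D$ countable discrete, together with the standard fact that closed subgroups and quotients of such a group remain of this shape, one concludes that $G/\overline{N_{k}+H}$ is a Lie group; hence $\overline{N_{k}+H}/H$ is co-Lie in $G/H$.

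Next I would establish cofinality. Let $M$ be an arbitrary element of $\mathcal{N}(G/H)$ and let $\pi:G\to G/H$ denote the quotient map. Then $\pi^{-1}(M)$ is a closed subgroup of $G$ containing $H$, and the third isomorphism theorem again yields
\[
G/\pi^{-1}(M) \;\cong\; (G/H)/M,
\]
which is a Lie group; hence $\pi^{-1}(M)\in\mathcal{N}(G)$. Since $(N_{k})_{k\in\omega}$ is cofinal in $\mathcal{N}(G)$, there exists some $k$ with $N_{k}\subseteq\pi^{-1}(M)$. Because $\pi^{-1}(M)$ contains $H$ and is closed, it also contains $\overline{N_{k}+H}$, and passing to $G/H$ gives $\overline{N_{k}+H}/H\subseteq M$, as required.

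The only mildly technical point is the stability of the Lie property under passage to closed subgroups and quotients within $\mathbf{PAb}$, which is where the explicit structure $V\oplus T\oplus D$ of abelian Polish Lie groups is used. Beyond that, no real obstacle arises: the correspondence $M\mapsto \pi^{-1}(M)$ provides a canonical inclusion-preserving bijection between $\mathcal{N}(G/H)$ and the set of closed subgroups $K\in\mathcal{N}(G)$ with $K\supseteq H$, and transferring the cofinality of $(N_{k})$ from $\mathcal{N}(G)$ across this correspondence is then purely formal.
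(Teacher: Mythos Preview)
Your argument is correct. Note, however, that the paper does not actually prove this lemma: it simply records it as a consequence of \cite[Theorem 4.1]{hofmann_lie_2007}. What you have written is a self-contained proof in the abelian Polish setting, using only the structural facts about abelian Lie groups and the closure properties already collected in Subsection~\ref{Subsection:pro-Lie}. In particular, the paper already notes that $\mathbf{LiePAb}$ is a thick subcategory of $\mathbf{LCPAb}$, which immediately gives the stability of the Lie property under quotients that you single out as the one mildly technical point; so your argument fits cleanly into the paper's framework and supplies a proof where the paper only gives a citation.
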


We can describe the groups of morphisms between abelian pro-Lie groups as
follows.

\begin{lemma}
\label{Lemma:Hom-Lie}Suppose that $G$ is a pro-Lie Polish abelian group and $%
H$ is a Lie abelian group.\ Then $\mathrm{Hom}\left( G,H\right) \cong 
\mathrm{co\mathrm{lim}}_{N\in \mathcal{N}\left( G\right) }\mathrm{Hom}\left(
G/N,H\right) $.
\end{lemma}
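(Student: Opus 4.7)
The plan is to exhibit explicit inverse maps between $\mathrm{Hom}(G,H)$ and $\mathrm{colim}_{N \in \mathcal{N}(G)}\mathrm{Hom}(G/N,H)$, where the colimit is taken over the filtered poset $\mathcal{N}(G)$ ordered by reverse inclusion (so that if $M \subseteq N$ in $\mathcal{N}(G)$, the induced quotient $G/M \to G/N$ yields the transition morphism $\mathrm{Hom}(G/N,H) \to \mathrm{Hom}(G/M,H)$). The canonical map
\[
\Phi : \mathrm{colim}_{N \in \mathcal{N}(G)} \mathrm{Hom}(G/N,H) \longrightarrow \mathrm{Hom}(G,H)
\]
is induced by precomposition with the quotient maps $\pi_N : G \to G/N$. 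I would verify that $\Phi$ is bijective.

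For surjectivity, let $\varphi : G \to H$ be continuous. Since $H = V \oplus T \oplus D$ is a Lie group, it has no small subgroups: there exists a zero neighborhood $U \subseteq H$ containing no nontrivial subgroup. Then $\varphi^{-1}(U)$ is a zero neighborhood in $G$, and since $G$ is pro-Lie, it contains some $N \in \mathcal{N}(G)$. The image $\varphi(N)$ is then a subgroup of $H$ contained in $U$, hence $\varphi(N) = \{0\}$. Consequently $\varphi$ factors as $\varphi = \bar{\varphi} \circ \pi_N$ for a unique continuous homomorphism $\bar{\varphi} : G/N \to H$, and the class of $\bar{\varphi}$ in the colimit maps to $\varphi$.

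For injectivity, suppose $\psi_i : G/N_i \to H$ for $i = 1,2$ satisfy $\psi_1 \circ \pi_{N_1} = \psi_2 \circ \pi_{N_2}$. Since $\mathcal{N}(G)$ is a filter basis (stated just after the definition of pro-Lie and used essentially via the fact that pro-Lie abelian Polish groups are closed under intersections of co-Lie subgroups), one has $N_1 \cap N_2 \in \mathcal{N}(G)$; call it $M$. The maps $\psi_i$ pull back along $G/M \to G/N_i$ to homomorphisms $G/M \to H$ which agree after composing with the surjection $\pi_M : G \to G/M$, and hence are equal as $\pi_M$ is epic. Thus $(\psi_1, N_1)$ and $(\psi_2, N_2)$ represent the same class in the filtered colimit.

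The only subtlety is making sure the indexing is genuinely filtered and that the filter basis property holds in the pro-Lie Polish setting; both are already available in the excerpt (the discussion preceding Lemma~\ref{Lemma:countable-cofinality} explicitly records that $\mathcal{N}(G)$ is closed under intersection for pro-Lie $G$). No further descriptive-set-theoretic input is needed since we never had to construct a Borel lift: the no-small-subgroups property of $H$ forces the factorization automatically.
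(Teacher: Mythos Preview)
Your proof is correct and takes essentially the same approach as the paper. The paper's proof is a one-sentence sketch recording only the key point (that $H$ has no small subgroups, so every $\varphi:G\to H$ factors through some $G/N$), whereas you spell out both surjectivity and injectivity of the canonical map and verify the filteredness of $\mathcal{N}(G)$ explicitly.
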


\begin{proof}
Since $H$ has no small subgroups, for every continuous group homomorphism $%
\varphi :G\rightarrow H$ there exists $N\in \mathcal{N}\left( G\right) $
such that $N\subseteq \mathrm{\mathrm{Ker}}\left( \varphi \right) $ and
hence $\varphi $ factors through a homomorphism $G/N\rightarrow H$.
\end{proof}

\begin{proposition}
\label{Proposition:Hom-pro-Lie}Suppose that $G$ and $H$ are pro-Lie Polish
abelian groups. Then%
\begin{equation*}
\mathrm{Hom}\left( G,H\right) \cong \mathrm{\mathrm{lim}}_{M\in \mathcal{N}%
\left( H\right) }\mathrm{co\mathrm{lim}}_{N\in \mathcal{N}\left( G\right) }%
\mathrm{Hom}\left( G/N,H/M\right) \text{.}
\end{equation*}
\end{proposition}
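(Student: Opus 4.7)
The plan is to combine two ingredients: the fact that $H$, being pro-Lie, is a limit of its Lie quotients, and the previous Lemma \ref{Lemma:Hom-Lie} which already handles the case of a Lie target.

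First, I would use that $H \cong \lim_{M \in \mathcal{N}(H)} H/M$ (with the limit taken in the category $\mathbf{PAb}$, or equivalently in $\mathbf{proLiePAb}$, since this inverse limit is a defining feature of pro-Lie groups, and by Lemma \ref{Lemma:countable-cofinality} the indexing filter has countable cofinality so the limit is reduced to a sequential one). Since $\mathrm{Hom}(G, -)$ on a category of Polish abelian groups preserves inverse limits (a continuous homomorphism into a limit is determined by its compositions with the projections, and conversely a compatible family assembles into a unique continuous homomorphism via the universal property of the limit), we obtain a natural isomorphism
\begin{equation*}
\mathrm{Hom}(G, H) \;\cong\; \lim_{M \in \mathcal{N}(H)} \mathrm{Hom}(G, H/M).
\end{equation*}

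Next, for each fixed $M \in \mathcal{N}(H)$ the quotient $H/M$ is a Lie abelian group by definition of $\mathcal{N}(H)$. Applying Lemma \ref{Lemma:Hom-Lie} with target $H/M$ gives
\begin{equation*}
\mathrm{Hom}(G, H/M) \;\cong\; \mathrm{colim}_{N \in \mathcal{N}(G)} \mathrm{Hom}(G/N, H/M).
\end{equation*}
Substituting this isomorphism into the previous display, and noting that the isomorphism is natural in $M$ (both sides define functors $\mathcal{N}(H)^{\mathrm{op}} \to \mathbf{Ab}$ sending $M$ to the displayed group, and the Lie-target argument of Lemma \ref{Lemma:Hom-Lie} is functorial in the target Lie group), we may pass to the inverse limit in $M$ on both sides, obtaining exactly the claimed identification.

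I do not expect a real obstacle here: the only thing to check carefully is the naturality of Lemma \ref{Lemma:Hom-Lie} in the Lie target, which is immediate since that lemma is proved by saying that any continuous morphism into a Lie group factors through a co-Lie quotient of the source, and this factorization is compatible with postcomposition with $H/M \to H/M'$ whenever $M \subseteq M'$. The only mildly subtle point is to confirm that $\mathrm{Hom}(G,-)$ preserves the specific limit $\lim_M H/M$, which holds because by Lemma \ref{Lemma:countable-cofinality} we may restrict to a decreasing cofinal sequence $(M_k)$ and $H = \lim_k H/M_k$ as Polish abelian groups, so a continuous homomorphism $G \to H$ is precisely a compatible sequence of continuous homomorphisms $G \to H/M_k$.
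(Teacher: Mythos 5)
Your argument is correct and is essentially the paper's own proof: the paper likewise writes $H\cong \mathrm{lim}_{M\in \mathcal{N}(H)}H/M$, invokes the universal property of the limit to obtain $\mathrm{Hom}(G,H)\cong \mathrm{lim}_{M}\mathrm{Hom}(G,H/M)$, and then applies Lemma \ref{Lemma:Hom-Lie} to each Lie quotient $H/M$. Your extra remarks on naturality in $M$ and on reducing to a countable cofinal sequence are fine but not needed beyond what the paper states.
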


\begin{proof}
Since $H\cong \mathrm{\mathrm{lim}}_{M\in \mathcal{N}\left( H\right) }H/M$,
by the universal property of the limit we have%
\begin{equation*}
\mathrm{Hom}\left( G,H\right) \cong \mathrm{\mathrm{lim}}_{M\in \mathcal{N}%
\left( H\right) }\mathrm{Hom}\left( G,H/M\right) \text{.}
\end{equation*}%
By definition, for $M\in \mathcal{N}\left( H\right) $, $H/M$ is a Polish Lie
abelian group. Thus, the conclusion follows from Lemma \ref{Lemma:Hom-Lie}.
\end{proof}

We regard pro-Lie Polish abelian groups as a full subcategory $\mathbf{%
proLiePAb}$ of the quasi-abelian category $\mathbf{PAb}$ of Polish abelian
groups. We will soon prove that $\mathbf{proLiePAb}$ is in fact a thick
subcategory of $\mathbf{PAb}$.

When $G$ is a locally compact Polish abelian group and $H$ is a pro-Lie
Polish abelian group, we have that $\mathrm{Hom}\left( G,H\right) $ is a 
\emph{Polish} abelian group endowed with the compact-open topology \cite[%
Exercise 1.1.6]{gao_invariant_2009}, and the isomorphism%
\begin{equation*}
\mathrm{Hom}\left( G,H\right) \cong \mathrm{\mathrm{lim}}_{M\in \mathcal{N}%
\left( H\right) }\mathrm{Hom}\left( G,H/M\right)
\end{equation*}%
is as topological groups.

\begin{lemma}
\label{Lemma:vector-group}If $V$ is a Polish $\mathbb{R}$-vector space that
is a pro-Lie abelian Polish group, then $V\cong \mathbb{R}^{\alpha }$ for
some $\alpha \leq \omega $.
\end{lemma}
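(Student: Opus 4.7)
The plan is to realize $V$ as the inverse limit of a countable cofinal system of finite-dimensional vector group quotients, and then identify such an inverse limit with $\mathbb{R}^{\alpha}$. Since $V$ is pro-Lie and Polish, Lemma \ref{Lemma:countable-cofinality} supplies a decreasing sequence $(N_{k})_{k\in \omega}$ in $\mathcal{N}(V)$ that is cofinal, with $V\cong \mathrm{lim}_{k}V/N_{k}$. Being a topological $\mathbb{R}$-vector space, $V$ is contractible (via $(t,v)\mapsto tv$) and in particular connected, so each Lie quotient $V/N_{k}$ is a connected abelian Lie group, hence isomorphic to $\mathbb{R}^{n_{k}}\oplus \mathbb{T}^{d_{k}}$ for some $n_{k},d_{k}\in \omega$.

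The key step is to refine each $N_{k}$ to a smaller closed subgroup $M_{k}$ such that $V/M_{k}$ is itself a vector group. The universal cover $\mathbb{R}^{n_{k}+d_{k}}\to \mathbb{R}^{n_{k}}\oplus \mathbb{T}^{d_{k}}$ has fiber $\{0\}\times \mathbb{Z}^{d_{k}}$, and since $V$ is contractible, the quotient map $\pi_{k}\colon V\to V/N_{k}$ admits a continuous lift $\Phi_{k}\colon V\to \mathbb{R}^{n_{k}+d_{k}}$ normalized by $\Phi_{k}(0)=0$. The obstruction to additivity, $(v,w)\mapsto \Phi_{k}(v+w)-\Phi_{k}(v)-\Phi_{k}(w)$, is a continuous $\mathbb{Z}^{d_{k}}$-valued function on the connected space $V\times V$ vanishing at the origin, hence identically zero, so $\Phi_{k}$ is a continuous group homomorphism. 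For each $v\in V$ the map $t\mapsto \Phi_{k}(tv)$ is continuous in $t$ and $\mathbb{Q}$-linear by additivity, hence $\mathbb{R}$-linear by continuity; so $\Phi_{k}$ is $\mathbb{R}$-linear. Setting $M_{k}:=\ker \Phi_{k}$, we have $M_{k}\subseteq N_{k}$ (since $\pi_{k}$ factors through $\Phi_{k}$), and the induced continuous injection $V/M_{k}\hookrightarrow \mathbb{R}^{n_{k}+d_{k}}$ has image a finite-dimensional $\mathbb{R}$-linear subspace; the open mapping theorem for Polish groups then forces $V/M_{k}\cong \mathbb{R}^{m_{k}}$ for some $m_{k}\leq n_{k}+d_{k}$. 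In particular, $M_{k}\in \mathcal{N}(V)$, and since $M_{k}\subseteq N_{k}$ with $(N_{k})$ cofinal, so is $(M_{k})$.

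Consequently $V\cong \mathrm{lim}_{k}\mathbb{R}^{m_{k}}$, with continuous, surjective, $\mathbb{R}$-linear transition maps between finite-dimensional vector spaces. Choosing linear splittings produces decompositions $\mathbb{R}^{m_{k+1}}\cong \mathbb{R}^{m_{k}}\oplus W_{k}$ for finite-dimensional $W_{k}$, yielding an isomorphism of topological groups $V\cong \mathbb{R}^{m_{0}}\oplus \prod_{k}W_{k}\cong \mathbb{R}^{\alpha}$ where $\alpha =m_{0}+\sum_{k}\dim W_{k}\leq \omega$. The main obstacle is the lifting step in the second paragraph: it is what converts the a priori toroidal contribution in each Lie quotient into a purely vector one, and it rests on the contractibility of $V$ together with the automatic $\mathbb{R}$-linearity of continuous group homomorphisms out of a Polish topological $\mathbb{R}$-vector space.
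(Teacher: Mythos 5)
Your argument is correct, but it runs along a different track than the paper's. The paper kills the torus parts of the Lie quotients functorially: since $V$ is a Polish $\mathbb{R}$-vector space, evaluation at $1$ gives a topological isomorphism $\mathrm{Hom}\left( \mathbb{R},V\right) \cong V$, and $\mathrm{Hom}\left( \mathbb{R},V\right) \cong \mathrm{lim}_{k}\mathrm{Hom}\left( \mathbb{R},V/N_{k}\right) $, where each $\mathrm{Hom}\left( \mathbb{R},V/N_{k}\right) $ is a finite-dimensional vector space (the universal cover of the connected Lie group $V/N_{k}$); the splitting of the kernels of the transition maps then gives $\mathbb{R}^{\alpha }$. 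You instead construct the lift to the universal cover by hand, using contractibility of $V$ to lift $\pi _{k}$ through $\mathbb{R}^{n_{k}+d_{k}}\rightarrow \mathbb{R}^{n_{k}}\oplus \mathbb{T}^{d_{k}}$, check additivity and $\mathbb{R}$-linearity, and thereby refine $\left( N_{k}\right) $ to kernels $M_{k}$ whose quotients are honest vector groups; the endgame (split surjective tower of finite-dimensional spaces) is the same in both proofs. Your route is more hands-on but buys essentially the same thing; the paper's is shorter because $\mathrm{Hom}\left( \mathbb{R},-\right) $ packages the covering-space step and the identification $V\cong \mathrm{Hom}\left( \mathbb{R},V\right) $ in one stroke. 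Two small points you should patch: the covering-space lifting criterion needs $V$ locally path-connected as well as simply connected (true for any topological $\mathbb{R}$-vector space, since zero has a basis of star-shaped neighborhoods, but worth saying); and the $M_{k}$ need not be decreasing, so to write $V\cong \mathrm{lim}_{k}V/M_{k}$ as a sequential tower you should either replace $M_{k}$ by $M_{0}\cap \cdots \cap M_{k}$ (the map $\left( \Phi _{0},\ldots ,\Phi _{k}\right) $ is linear, so the same open-mapping argument shows this intersection again has a finite-dimensional vector group as quotient), or invoke that $\mathcal{N}\left( V\right) $ is a filter basis and your family is cofinal and hence directed.
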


\begin{proof}
Let $\left( N_{k}\right) $ be a cofinal sequence in $\mathcal{N}\left(
V\right) $. Since $V$ is a Polish $\mathbb{R}$-vector space, the function $%
\mathrm{Hom}\left( \mathbb{R},V\right) \rightarrow V$, $\varphi \mapsto
\varphi \left( 1\right) $ is a topological isomorphism. We have that%
\begin{equation*}
\mathrm{Hom}\left( \mathbb{R},V\right) \cong \mathrm{\mathrm{lim}}_{k}%
\mathrm{Hom}\left( \mathbb{R},V/N_{k}\right)
\end{equation*}%
Since $V/N_{k}$ is a connected Polish Lie abelian group, we have that $%
V/N_{k}\cong \mathbb{R}^{d_{k}}\oplus \mathbb{T}^{m_{k}}$, and $W_{k}:=%
\mathrm{Hom}\left( \mathbb{R},V/N_{k}\right) $ is a finite-dimensional $%
\mathbb{R}$-vector space. Since $\mathrm{\mathrm{Ker}}\left(
W_{k+1}\rightarrow W_{k}\right) $ is a closed $\mathbb{R}$-subspace of $%
W_{k+1}$, it is a finite-dimensional $\mathbb{R}$-vector space and a direct
summand of $W_{k+1}$. Thus, we have that 
\begin{equation*}
V\cong \mathrm{Hom}\left( \mathbb{R},V\right) \cong \mathrm{\mathrm{lim}}%
_{k}W_{k}
\end{equation*}%
is isomorphic to a product of finite-dimensional $\mathbb{R}$-vector spaces,
and hence to $\mathbb{R}^{\alpha }$ for some $\alpha \leq \omega $.
\end{proof}

By Lemma \ref{Lemma:vector-group}, the vector groups are precisely the
pro-Lie Polish abelian groups that are Polish $\mathbb{R}$-vector spaces.
These are also called \emph{weakly complete topological Polish }$\mathbb{R}$-%
\emph{vector spaces in }\cite[Proposition 5.43]{hofmann_lie_2007}. We have
that if $V,W$ are vector groups, and $\varphi :V\rightarrow W$ is a
continuous homomorphism, then it is $\mathbb{R}$-linear and its image is a
closed subspace of $W$ which is a direct summand \cite[Theorem A2.12]%
{hofmann_lie_2007}.

\subsection{Extensions of abelian Polish pro-Lie groups\label%
{Subsection:extensions}}

The goal of this section is to prove the following result.

\begin{theorem}
\label{Theorem:pro-Lie-thick}Pro-Lie Polish abelian groups form a thick
subcategory of the category of Polish abelian groups.
\end{theorem}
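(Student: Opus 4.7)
The plan is to verify the three defining conditions of a thick subcategory. Closure under closed subgroups and quotients by closed subgroups in $\mathbf{PAb}$ is classical, following from the structural results on pro-Lie groups collected in \cite[Chapter 3]{hofmann_lie_2007} and recorded in Subsection \ref{Subsection:pro-Lie}. The substantive content is closure under extensions: given a short exact sequence $0\to A\xrightarrow{\iota} X\xrightarrow{\pi} C\to 0$ in $\mathbf{PAb}$ with $A$ and $C$ pro-Lie, one must produce, for every zero neighborhood $U$ of $X$, a closed subgroup $N\subseteq U$ with $X/N$ a Lie group.

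My plan is the following. Choose a Borel section $\sigma:C\to X$ of $\pi$ with $\sigma(0)=0$ (available by \cite[Theorem 12.17]{kechris_classical_1995}) and form the associated Borel cocycle $\kappa=\delta\sigma:C\times C\to A$. For to-be-chosen $N_A\in\mathcal{N}(A)$ and $M\in\mathcal{N}(C)$, I would consider the candidate subgroup $N:=\iota(N_A)+\sigma(M)$. A direct computation using the cocycle identity shows that $N$ is closed under addition exactly when $\kappa(M\times M)\subseteq N_A$; assuming this, $N$ is a closed subgroup of $X$ provided $\sigma|_M$ is a continuous group homomorphism, and the quotient $X/N$ then fits into a short exact sequence $0\to A/N_A\to X/N\to C/M\to 0$ with both ends Polish Lie abelian groups. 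Since $\mathbf{LCPAb}$ is thick in $\mathbf{PAb}$, the middle term is locally compact Polish; combined with the characterization of Lie groups within $\mathbf{LCPAb}$ as those without small subgroups \cite[Theorem 2.4 and Corollary 1]{moskowitz_homological_1967}, this forces $X/N$ to be Lie. Shrinking $N_A$ and $M$ further places $N$ inside $U$, yielding the desired co-Lie subgroup.

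The principal obstacle is justifying that one can simultaneously arrange $\sigma$ to be continuous on $M$ and $\kappa$ to take values in $N_A$ on $M\times M$. To do this, I would first pass to the quotient $X/\iota(N_A)$ for $N_A\in\mathcal{N}(A)$ chosen small enough that $A/N_A$ is Lie and hence has no small subgroups, reducing the problem to finding a continuous partial section of $X/\iota(N_A)\to C$ over some $M\in\mathcal{N}(C)$. The Bergfalk--Casarosa continuous selection theorem \cite[Proposition 4.6]{bergfalk_definable_2020} supplies continuous sections over the non-Archimedean quotient $C/c(C)$ of $C$; combining this with the structural splitting results of \cite[Theorem 5.19]{hofmann_lie_2007} for the vector and torus subgroups arising from the connected part of $C$, together with Lemma \ref{Lemma:coboundary} to upgrade Borel corrections to continuous ones, I expect to be able to produce the required continuous group homomorphism $\sigma|_M$. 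This continuous-selection step is the technical heart of the argument and the main difficulty to overcome.
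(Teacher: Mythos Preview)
Your approach is genuinely different from the paper's, and it has a gap at the step you yourself flag as the technical heart.

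The circularity problem is this: you propose to handle the connected part of $C$ via Lemma \ref{Lemma:injective-pro-Lie} (which is \cite[Theorem 5.19]{hofmann_lie_2007}). But that lemma asserts that a closed subgroup of the form $T\oplus V$ splits off \emph{inside a pro-Lie Polish abelian group}. In your setup, the ambient group is (a quotient of) $X$, and you do not yet know $X$ is pro-Lie---that is precisely what you are trying to prove. So you cannot invoke this splitting result to manufacture the continuous partial section over $c(C)$. The continuous-selection theorem of \cite{bergfalk_definable_2020} likewise only applies when the quotient is non-Archimedean, and for a general pro-Lie $C$ no $M\in\mathcal{N}(C)$ need be non-Archimedean (e.g.\ $C=\mathbb{R}^{\omega}$). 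Your proposal to somehow combine the two does not explain how to bridge this gap, and I do not see a non-circular way to do so along the lines you sketch.

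By contrast, the paper never tries to build a co-Lie subgroup of $X$ directly. Instead it reduces the problem by a cascade of pullbacks and pushouts along the structure decomposition $D\to C\to T$ of Lemma \ref{Lemma:structure-pro-Lie} (non-Archimedean closed subgroup with torus quotient), applied alternately to $A$ and to $C$. This reduces to two concrete base cases: (i) $A$ non-Archimedean and $C$ locally compact, handled by embedding $X$ into a product via Lemma \ref{Lemma:Ext-product}; and (ii) $A\in\{\mathbb{R},\mathbb{T}\}$ and $C$ a product of countable groups, handled by vanishing of bounded cohomology (Lemma \ref{Lemma:vanishing-bounded-cohomology}) and the continuous-selection theorem. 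The point is that at every stage the paper exhibits $X$ as a closed subgroup or quotient of something already known to be pro-Lie, rather than producing a co-Lie subgroup from scratch; this sidesteps the circularity entirely.
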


In view of the results of \cite[Chapter 3]{hofmann_lie_2007}, in order to
establish Theorem \ref{Theorem:pro-Lie-thick} it suffices to prove that the
class of pro-Lie Polish abelian groups is closed under extensions.

\begin{lemma}
\label{Lemma:Ext-product}Suppose that $A\rightarrow B\rightarrow C$ is a
short exact sequence of abelian Polish groups, where $A=\prod_{n\in \omega
}A_{n}$ for Polish groups $A_{n}$ for $n\in \omega $. Then we have an
injective continuous homomorphism with closed image $\eta :B\rightarrow
\prod_{n\in \omega }B_{n}$ where, for every $n\in \omega $, there is a short
exact sequence of abelian Polish groups $A_{n}\rightarrow B_{n}\rightarrow C$%
. Furthermore, we have a commuting diagram%
\begin{equation*}
\begin{array}{ccc}
A & = & \prod_{n}A_{n} \\ 
\downarrow &  & \downarrow \\ 
B & \overset{\eta }{\rightarrow } & \prod_{n}B_{n}%
\end{array}%
\end{equation*}%
where the map%
\begin{equation*}
\prod_{n\in \omega }A_{n}\rightarrow \prod_{n\in \omega }B_{n}
\end{equation*}%
is induced by the maps $A_{n}\rightarrow B_{n}$ for $n\in \omega $.
\end{lemma}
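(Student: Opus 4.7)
The plan is to construct each $B_n$ as the push-out of the given extension along the projection $\pi_n\colon A=\prod_m A_m\twoheadrightarrow A_n$. Concretely, set $K_n:=\prod_{m\neq n}A_m$, which is a closed subgroup of $A$ (hence of $B$), and define $B_n:=B/K_n$ with quotient map $q_n\colon B\to B_n$. Since $K_n\subseteq A$, the induced map $A_n=A/K_n\to B/K_n=B_n$ is an injection of abelian Polish groups whose image is precisely the kernel of the continuous surjection $B_n\to B/A=C$ induced by the given quotient $B\to C$. Thus $A_n\to B_n\to C$ is a short exact sequence in $\mathbf{PAb}$.

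Bundling the $q_n$ together, define $\eta\colon B\to\prod_n B_n$ by $\eta(b):=(q_n(b))_n$. This is a continuous group homomorphism, and commutativity of the square in the statement is immediate: on $A=\prod_m A_m$ the $n$-th component of $\eta$ is $(a_m)_m\mapsto a_n+K_n$, which is exactly $\pi_n$ followed by the inclusion $A_n\hookrightarrow B_n$. Injectivity of $\eta$ is easy, since $\ker\eta=\bigcap_n K_n$ lies inside $A$, and the intersection in $A=\prod_m A_m$ of the subgroups $\prod_{m\neq n}A_m$ is zero.

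The main obstacle is showing that $\eta$ has closed image. The plan is to identify $\eta(B)$ with the fiber product
\[
P:=\bigl\{(x_n)_n\in \textstyle\prod_n B_n : p_n(x_n)\text{ is independent of } n\bigr\},
\]
where $p_n\colon B_n\to C$ is the map induced by $B\to C$. The inclusion $\eta(B)\subseteq P$ is immediate from $p_n\circ q_n$ agreeing with the original map $B\to C$ for every $n$, and $P$ is closed because only countably many continuous coincidence conditions $p_n(x_n)=p_0(x_0)$ are imposed. For the reverse inclusion, given $(x_n)_n\in P$ with common image $c\in C$, lift $c$ to some $b\in B$; then $y_n:=x_n-q_n(b)\in \ker p_n=A_n$, and viewing $y:=(y_n)_n\in\prod_n A_n=A\subseteq B$, a quick check shows $q_n(y)=y_n$ for every $n$ (since $y-y_n\in K_n$ once $y_n$ is identified with its image under $A_n\hookrightarrow A$), so $\eta(b+y)=(x_n)_n$. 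This gives $\eta(B)=P$ and hence closed image, completing the proof.
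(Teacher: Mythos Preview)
Your proof is correct and follows essentially the same approach as the paper: both construct $B_n$ as the push-out of the extension along the projection $A\to A_n$ (your model $B_n=B/K_n$ is canonically isomorphic to the paper's $A_n\oplus_A B$, since $K_n=\ker(A\to A_n)$), and both identify $\eta(B)$ with the closed fiber product over $C$ via the same lift-and-correct argument.
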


\begin{proof}
We identify $A$ with a closed subgroup of $B$. Let $\pi :B\rightarrow C$ be
the quotient map. For $n\in \omega $, consider the pushout%
\begin{equation*}
\begin{array}{ccc}
A & \rightarrow  & B \\ 
p_{n}\downarrow  &  & \downarrow \eta _{n} \\ 
A_{n} & \overset{\sigma _{n}}{\rightarrow } & B_{n}%
\end{array}%
\end{equation*}%
where $p_{n}:A\rightarrow A_{n}$ is the canonical projection. Explicitly, we
have that%
\begin{equation*}
B_{n}:=A_{n}\oplus _{A}B
\end{equation*}%
is the quotient of $A_{n}\oplus B$ by the closed subgroup%
\begin{equation*}
\Xi _{n}:=\left\{ \left( -p_{n}(a),a\right) :a\in A\right\} \text{.}
\end{equation*}%
The map $\eta _{n}:B\rightarrow B_{n}$ is given by $b\mapsto \left(
0,b\right) +\Xi _{n}$. Then we have a short exact sequence 
\begin{equation*}
A_{n}\rightarrow B_{n}\overset{\pi _{n}}{\rightarrow }C
\end{equation*}%
where%
\begin{equation*}
\pi _{n}\left( \left( t,b\right) +\Xi _{n}\right) =\pi \left( b\right) .
\end{equation*}%
Define the continuous group homomorphism $\eta :B\rightarrow \prod_{n}B_{n}$%
, $b\mapsto \left( \eta _{n}\left( b\right) \right) _{n\in \omega }$. We
claim that $\eta $ is injective and has closed image. Indeed, suppose that $%
b\in B$ is such that $\eta \left( b\right) =0$. This gives that $b\in A$ and 
$p_{n}\left( b\right) =0$ for every $n\in \omega $, and hence $b=0$. This
shows that $\eta $ is injective.

Consider the continuous homomorphism $\tau :\prod_{n\in \omega
}B_{n}\rightarrow C^{\omega }$, $\left( x_{n}\right) \mapsto \left( \pi
_{n}\left( x_{n}\right) \right) $, and let%
\begin{equation*}
\Delta _{C}=\left\{ \left( c_{n}\right) _{n\in \omega }\in C^{\omega
}:\forall n\in \omega ,c_{n}=c_{0}\right\} \subseteq C^{\omega }\text{.}
\end{equation*}%
Then we have that the image of $\eta $ is equal to the preimage of $\Delta
_{C}$ under $\tau $. Indeed, it is clear that $\tau \circ \eta $ has image
contained in $\Delta _{C}$. Conversely, suppose that 
\begin{equation*}
\left( \left( t_{n},b_{n}\right) +\Xi _{n}\right) _{n\in \omega }\in
\prod_{n\in \omega }B_{n}
\end{equation*}%
is mapped to $\Delta _{C}$ by $\tau $. This implies that $\pi \left(
b_{n}\right) =\pi \left( b_{0}\right) $ for every $n\in \omega $. Thus, for
every $n\in \omega $ there exists $a_{n}\in A$ such that 
\begin{equation*}
b_{n}=b_{0}+a_{n}\text{.}
\end{equation*}%
Define now $s_{n}=p_{n}\left( a_{n}\right) $ for $n\in \omega $, $s:=\left(
s_{n}\right) \in A$, and $t=\left( t_{n}\right) \in A$. We have%
\begin{equation*}
\left( \left( t_{n},b_{n}\right) +\Xi _{n}\right) _{n\in \omega }=\left(
\left( t_{n},b_{0}+a_{n}\right) +\Xi _{n}\right) _{n\in \omega }=\left(
\left( t_{n}+s_{n},b_{0}\right) +\Xi _{n}\right) _{n\in \omega }=\left(
\left( 0,b_{0}+t+s\right) +\Xi _{n}\right) _{n\in \omega }=\eta \left(
b_{0}+t+s\right)
\end{equation*}%
This concludes the proof.
\end{proof}

\begin{lemma}
\label{Lemma:Ext-subgroup}Suppose that $A\rightarrow B\rightarrow C$ is a
short exact sequence of abelian Polish groups. If $A$ is isomorphic to a
closed subgroup of an abelian Polish group $A^{\prime }$, then there exists
a short exact sequence of abelian Polish groups $A^{\prime }\rightarrow
B^{\prime }\rightarrow C$ such that $B$ is isomorphic to a closed subgroup
of $B^{\prime }$.
\end{lemma}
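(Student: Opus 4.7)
The plan is to construct $B'$ as the pushout of $A'\leftarrow A\rightarrow B$, mirroring the construction used in the proof of Lemma \ref{Lemma:Ext-product}. Identifying the given closed embedding as $\iota:A\hookrightarrow A'$ and writing $\pi:B\to C$ for the quotient map, I would set
\begin{equation*}
\Xi:=\{(-\iota(a),a):a\in A\}\subseteq A'\oplus B\qquad\text{and}\qquad B':=(A'\oplus B)/\Xi.
\end{equation*}
A first step is to show that $\Xi$ is a closed subgroup of $A'\oplus B$, so that $B'$ is a Polish abelian group. This follows because $\iota(A)\oplus A$ is closed in $A'\oplus B$ (as $\iota$ is a closed embedding and $A$ is closed in $B$), and within $\iota(A)\oplus A$ the set $\Xi$ is the graph of the continuous group isomorphism $A\to \iota(A)$, $a\mapsto -\iota(a)$, hence closed.

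Next I would introduce the natural continuous homomorphisms $\alpha:A'\to B'$, $a'\mapsto(a',0)+\Xi$; $\beta:B\to B'$, $b\mapsto (0,b)+\Xi$; and $\pi':B'\to C$ defined by $(a',b)+\Xi\mapsto \pi(b)$. The map $\pi'$ is well-defined because $\pi$ vanishes on $A$, and surjective since $\pi$ is. A short calculation using $(\iota(a),-a)\in\Xi$ for $a\in A$ rewrites any $(a',b)+\Xi$ with $b\in A$ as $(a'+\iota(b),0)+\Xi$, showing that $\ker(\pi')=\alpha(A')$, while injectivity of $\alpha$ is immediate from the definition of $\Xi$. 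Since $\alpha(A')$ is then closed in $B'$ and $\alpha:A'\to\alpha(A')$ is a continuous bijective homomorphism of Polish groups, the open mapping theorem makes it a topological isomorphism, and analogously promotes $\pi'$ to an open surjection. This yields the desired short exact sequence $A'\to B'\to C$ in $\mathbf{PAb}$.

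The main point remaining, and what I expect to be the only nontrivial step, is showing that $\beta$ is a closed embedding of $B$ into $B'$. Injectivity of $\beta$ follows from injectivity of $\iota$: if $(0,b)\in\Xi$, then $b=a$ for some $a\in A$ with $\iota(a)=0$, forcing $b=0$. To produce the closed image, I would construct a continuous homomorphism
\begin{equation*}
\psi:B'\to A'/\iota(A),\qquad (a',b)+\Xi\mapsto a'+\iota(A),
\end{equation*}
(well-defined since $-\iota(a)\in\iota(A)$ for every $a\in A$) and verify that $\ker(\psi)=\beta(B)$. The inclusion $\beta(B)\subseteq\ker(\psi)$ is obvious, and for the reverse inclusion one uses again that $(\iota(a_{0}),-a_{0})\in\Xi$, which for any $a'=\iota(a_{0})\in\iota(A)$ allows us to rewrite $(\iota(a_{0}),b)+\Xi=(0,b+a_{0})+\Xi=\beta(b+a_{0})$. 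Thus $\beta(B)$ is the kernel of a continuous homomorphism, hence closed in $B'$, and the open mapping theorem upgrades $\beta:B\to\beta(B)$ to a topological isomorphism. Altogether $B$ embeds as a closed subgroup of $B'$, completing the proof.
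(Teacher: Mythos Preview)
Your proof is correct and follows exactly the same approach as the paper: both take $B'$ to be the pushout of $A'\leftarrow A\rightarrow B$. The paper's proof simply records the pushout diagram and implicitly appeals to the quasi-abelian structure of $\mathbf{PAb}$ (pushouts of admissible monics are admissible monics), whereas you spell out by hand that $\Xi$ is closed, that $A'\to B'\to C$ is short exact, and that $\beta$ has closed image via the auxiliary homomorphism $\psi:B'\to A'/\iota(A)$; these verifications are all accurate.
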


\begin{proof}
It suffices to consider the pushout diagram%
\begin{equation*}
\begin{array}{ccc}
A & \rightarrow & B \\ 
\downarrow &  & \downarrow \\ 
A^{\prime } & \rightarrow & B^{\prime }%
\end{array}%
\end{equation*}%
where the vertical map $A\rightarrow A^{\prime }$ is the inclusion of $A$
into $A^{\prime }$ as a closed subgroup.
\end{proof}

\begin{lemma}
\label{Lemma:lc-by-nA}Suppose that $A\rightarrow B\rightarrow C$ is a short
exact sequence of abelian Polish groups. If $A$ is non-Archimedean and $C$
is locally compact, then $B$ is pro-Lie.
\end{lemma}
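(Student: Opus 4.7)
The plan is to show directly that every zero neighborhood $V$ of $B$ contains a co-Lie closed subgroup. The strategy exploits both hypotheses: $A$ is non-Archimedean, so has a basis of zero neighborhoods consisting of open subgroups, while $C$ is locally compact, so that a suitable quotient of $B$ can be made locally compact. Concretely, given $V$, first pick a symmetric zero neighborhood $W$ of $B$ with $W + W \subseteq V$, then use the non-Archimedean hypothesis on $A$ to choose an open (hence clopen) subgroup $U$ of $A$ satisfying $U \subseteq W \cap A$; the subgroup $U$ is closed in $B$ because $A$ is a closed subgroup of $B$ carrying the subspace topology.

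The key reduction is to pass to the Polish quotient $B/U$, which fits into a short exact sequence $A/U \to B/U \to C$ in $\mathbf{PAb}$. Since $A/U$ is countable discrete and $C$ is locally compact, and $\mathbf{LCPAb}$ is a thick subcategory of $\mathbf{PAb}$, the extension $B/U$ is itself locally compact Polish abelian, and hence pro-Lie. Writing $\pi : B \to B/U$ for the quotient map, one applies the pro-Lie property of $B/U$ to the zero neighborhood $\pi(W)$ to obtain a co-Lie closed subgroup $M$ of $B/U$ contained in $\pi(W)$. Setting $N := \pi^{-1}(M)$, the quotient $B/N \cong (B/U)/M$ is a Lie group, so $N$ is co-Lie in $B$; and the inclusions $N \subseteq W + U \subseteq W + W \subseteq V$ place $N$ inside the prescribed neighborhood.

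The main care lies in the neighborhood bookkeeping at the final step: to ensure that the pullback of $M$ really lands in $V$, one must arrange $U \subseteq W$ before pulling back, which motivates introducing the auxiliary $W$ satisfying $W + W \subseteq V$ at the outset. Beyond this, the argument uses only that closed subgroups of Polish groups carry the subspace topology, that locally compact Polish abelian groups are pro-Lie, and that extensions within $\mathbf{LCPAb}$ (here, of $C$ by a countable discrete group) remain locally compact---all standard or already recorded in the preceding background---so no deeper obstacle is expected.
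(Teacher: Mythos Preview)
Your proof is correct and takes a genuinely different, more direct route than the paper's.

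The paper argues by successive reduction: first the case where $A$ is countable (then $B$ is locally compact outright), then the case $A\cong\prod_{n}A_{n}$ with each $A_{n}$ countable (using Lemma~\ref{Lemma:Ext-product} to embed $B$ into a product $\prod_{n}B_{n}$ of locally compact groups), and finally the general non-Archimedean case (using Lemma~\ref{Lemma:Ext-subgroup} to embed $B$ into an extension of $C$ by such a product). Your argument bypasses both auxiliary lemmas by verifying the pro-Lie definition directly: given $V$, you quotient by a small open subgroup $U\subseteq A$ to make $B/U$ locally compact in one stroke, then pull back a co-Lie subgroup from $B/U$. This is shorter and more self-contained; the paper's approach, on the other hand, develops Lemmas~\ref{Lemma:Ext-product} and~\ref{Lemma:Ext-subgroup} as reusable tools that recur later (e.g.\ in the proof of Lemma~\ref{Lemma:nA-by-torus}).
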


\begin{proof}
If $A$ is countable, then $B$ is locally compact, and hence pro-Lie. If $%
A\cong \prod_{n\in \omega }A_{n}$ where, for every $n\in \omega $, $A_{n}$
is countable, then the conclusion follows from Lemma \ref{Lemma:Ext-product}%
. Finally, if $A$ is isomorphic to a closed subgroup of $\prod_{n\in \omega
}A_{n}$ where, for every $n\in \omega $, $A_{n}$ is countable, then the
conclusion follows from Lemma \ref{Lemma:Ext-subgroup}.
\end{proof}

\begin{lemma}
\label{Lemma:lc-by-proL}Suppose that $A\rightarrow B\rightarrow C$ is a
short exact sequence of abelian Polish groups. If $A$ is pro-Lie and $C$ is
locally compact, then $B$ is pro-Lie.
\end{lemma}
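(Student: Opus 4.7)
The plan is to reduce to Lemma \ref{Lemma:lc-by-nA}-style machinery (specifically Lemmas \ref{Lemma:Ext-product} and \ref{Lemma:Ext-subgroup}) by exploiting the fact that a pro-Lie Polish abelian group embeds as a closed subgroup of a countable product of Lie (hence locally compact) groups. Start by picking, via Lemma \ref{Lemma:countable-cofinality}, a cofinal decreasing sequence $(N_k)_{k\in\omega}$ in $\mathcal{N}(A)$ so that the canonical map $A\to\prod_k A/N_k$ is an injective continuous homomorphism with closed image; by definition, each quotient $A_k:=A/N_k$ is a Lie group.

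Next, apply Lemma \ref{Lemma:Ext-subgroup} to enlarge the short exact sequence $A\to B\to C$ along the inclusion $A\hookrightarrow \prod_k A_k$: this produces a short exact sequence $\prod_k A_k\to B'\to C$ together with a closed embedding $B\hookrightarrow B'$. Then apply Lemma \ref{Lemma:Ext-product} to $B'$ to obtain a closed embedding $B'\hookrightarrow\prod_k B_k$, where each $B_k$ sits in a short exact sequence
\begin{equation*}
A_k\rightarrow B_k\rightarrow C.
\end{equation*}
The composition gives a closed embedding $B\hookrightarrow\prod_k B_k$, so it suffices to prove that the latter product is pro-Lie.

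For this, I would observe that $A_k$ is a Lie group and $C$ is locally compact, so both ends of the displayed sequence are locally compact Polish abelian groups; therefore $B_k$ is itself locally compact, using the standard fact that an extension of locally compact Polish groups by locally compact Polish groups is locally compact (which is where the pro-Lie hypothesis on $A$ genuinely does its work, by letting us replace $A$ by the collection of its Lie quotients). Consequently each $B_k$ is pro-Lie, and since by the results of \cite[Chapter 3]{hofmann_lie_2007} recalled in Subsection \ref{Subsection:pro-Lie} the class of pro-Lie Polish abelian groups is closed under countable products and under closed subgroups, the product $\prod_k B_k$ and hence $B$ itself are pro-Lie.

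The only substantive step is the reduction from $A$ to its Lie quotients through the two pushout/embedding lemmas; everything else is routine invocation of closure properties. The step most worth double-checking is that the chain of closed embeddings $B\hookrightarrow B'\hookrightarrow\prod_k B_k$ remains a closed embedding, which is immediate from the descriptions in Lemmas \ref{Lemma:Ext-subgroup} and \ref{Lemma:Ext-product}.
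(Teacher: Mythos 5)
Your argument is correct, but it takes a genuinely different route from the paper's. The paper proves this lemma by first decomposing $A$ via Lemma \ref{Lemma:structure-pro-Lie} as an extension $D\rightarrow A\rightarrow T$ with $D$ non-Archimedean and $T$ a torus, pushing out along $A\rightarrow T$ to get a locally compact group $B_{T}$ (an extension of the locally compact $C$ by $T$), and then invoking Lemma \ref{Lemma:lc-by-nA} for the resulting extension of $B_{T}$ by $D$. You instead exploit the pro-Lie structure of $A$ directly: embed $A$ as a closed subgroup of $\prod_{k}A/N_{k}$ with each $A/N_{k}$ a Lie group, and run Lemma \ref{Lemma:Ext-subgroup} followed by Lemma \ref{Lemma:Ext-product} to reduce to extensions of $C$ by Lie groups, which are locally compact (thickness of $\mathbf{LCPAb}$ in $\mathbf{PAb}$) and hence pro-Lie; closure of pro-Lie groups under countable products and closed subgroups finishes the proof. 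This is exactly the pattern the paper uses one level down, in the proof of Lemma \ref{Lemma:lc-by-nA}, with ``countable discrete'' replaced by ``Lie,'' and all the steps check out: the canonical map $A\rightarrow\prod_{k}A/N_{k}$ is indeed a closed embedding by Lemma \ref{Lemma:countable-cofinality}, and the composite $B\hookrightarrow B^{\prime}\hookrightarrow\prod_{k}B_{k}$ stays a closed embedding. What your route buys is directness: it bypasses both Lemma \ref{Lemma:structure-pro-Lie} and Lemma \ref{Lemma:lc-by-nA} (and would in fact subsume the latter, since non-Archimedean Polish groups are pro-Lie). What the paper's factorization buys is reuse: the torus/non-Archimedean decomposition and Lemma \ref{Lemma:lc-by-nA} are needed anyway for Lemma \ref{Lemma:nA-by-proL}, where the quotient is non-Archimedean rather than locally compact and your product trick alone would not suffice, because an extension of a non-Archimedean group by a Lie group is not obviously pro-Lie without the splitting results (Lemma \ref{Lemma:nA-by-torus}).
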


\begin{proof}
By Lemma \ref{Lemma:structure-pro-Lie}, we have a short exact sequence $%
D\rightarrow A\rightarrow T$ where $D$ is non-Archimedean and $T$ is a torus
group. Considering the pushout diagram%
\begin{equation*}
\begin{array}{ccc}
A & \rightarrow & B \\ 
\downarrow &  & \downarrow \\ 
T & \rightarrow & B_{T}%
\end{array}%
\end{equation*}%
we obtain a commuting diagram%
\begin{equation*}
\begin{array}{ccccc}
D & \rightarrow & D & \rightarrow & 0 \\ 
\downarrow &  & \downarrow &  & \downarrow \\ 
A & \rightarrow & B & \rightarrow & C \\ 
\downarrow &  & \downarrow &  & \downarrow \\ 
T & \rightarrow & B_{T} & \rightarrow & C%
\end{array}%
\end{equation*}%
whose rows and columns are short exact sequences. Since $C$ and $T$ are
locally compact, we have that $B_{T}$ is locally compact. Since $D$ is
non-Archimedean, we have that $B$ is pro-Lie by Lemma \ref{Lemma:lc-by-nA}.
\end{proof}

If $X$ is a countable set, then for a bounded function $f:X\rightarrow 
\mathbb{R}$ we set%
\begin{equation*}
\left\Vert f\right\Vert _{\infty }:=\mathrm{\mathrm{sup}}\left\{ \left\vert
f\left( x\right) \right\vert :x\in X\right\} \text{.}
\end{equation*}

\begin{lemma}
\label{Lemma:vanishing-bounded-cohomology}For every $\varepsilon >0$,
countable discrete group $A$, and bounded $2$-cocycle $c:A\times
A\rightarrow \mathbb{R}$ with $\left\Vert c\right\Vert _{\infty }\leq
\varepsilon $, there exists a bounded $t:A\rightarrow \mathbb{R}$ with $%
\left\Vert t\right\Vert _{\infty }\leq \varepsilon $ and $\delta t=c$.
\end{lemma}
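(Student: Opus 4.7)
The plan is to exploit the fact that every countable abelian group $A$ is amenable, so $\ell^{\infty}(A)$ admits a translation-invariant mean $m$, i.e.\ a positive linear functional with $m(\mathbf{1})=1$ satisfying $m_{z}[f(z+a)]=m_{z}[f(z)]$ for every $a\in A$ and $f\in\ell^{\infty}(A)$. This is the standard argument showing the vanishing of bounded cohomology in positive degrees for amenable groups; the quantitative control $\|t\|_{\infty}\le\varepsilon$ will fall out for free from positivity and normalization of $m$.

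Define $t:A\to\mathbb{R}$ by $t(x):=m_{y}[c(x,y)]$, which makes sense since $c(x,\cdot)$ is bounded by $\varepsilon$. Positivity and normalization of the mean give immediately $|t(x)|\le \|c(x,\cdot)\|_{\infty}\le\varepsilon$, so $\|t\|_{\infty}\le\varepsilon$.

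To verify $\delta t=c$, rewrite the cocycle identity as
\begin{equation*}
c(x+y,z)=c(x,y+z)+c(y,z)-c(x,y)
\end{equation*}
and apply $m_{z}$ to both sides. Linearity of $m$ and translation invariance in the form $m_{z}[c(x,y+z)]=m_{z}[c(x,z)]$ (which uses commutativity of $A$ to rewrite $y+z=z+y$) turn the right-hand side into $t(x)+t(y)-c(x,y)$, while the left-hand side is exactly $t(x+y)$. Rearranging yields $c(x,y)=t(x)+t(y)-t(x+y)=\delta t(x,y)$, as required.

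No step is a serious obstacle: the only care needed is to apply translation invariance to the correct variable, namely the averaging variable $z$ appearing in $c(x,y+z)$. A constructive alternative, avoiding any appeal to Hahn--Banach, is to fix a F\o lner sequence $(F_{n})$ in $A$ and a non-principal ultrafilter $\mathcal{U}$ on $\mathbb{N}$ and set $t(x):=\lim_{n\to\mathcal{U}}\tfrac{1}{|F_{n}|}\sum_{y\in F_{n}}c(x,y)$; the same two computations go through with the ultrafilter limit playing the role of $m$, and the bound $|t(x)|\le\varepsilon$ is preserved term by term.
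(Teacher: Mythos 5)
Your proof is correct: the invariant-mean computation is valid, the bound $\lvert t(x)\rvert\le\left\Vert c(x,\cdot)\right\Vert_{\infty}\le\varepsilon$ follows from positivity and normalization of the mean, and the verification of $\delta t=c$ via applying $m_{z}$ to the rearranged cocycle identity is exactly right (including the one place where commutativity of $A$ is needed to invoke invariance in the averaging variable). Note that, as in the paper, the hypothesis should be read as $A$ abelian; your argument makes this dependence explicit.

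The difference with the paper is one of packaging rather than substance. The paper does not give a computation at all: it quotes the vanishing of the bounded cohomology group $\mathrm{H}_{\mathrm{b}}^{2}\left( A,\mathbb{R}\right) $ for amenable groups and the fact that abelian groups have $n$-th vanishing modulus equal to $1$ (citing Frigerio and Fournier-Facio et al.), and observes that the lemma is a reformulation of the latter quantitative statement. Your argument is the direct proof underlying those citations: defining $t(x)=m_{y}\left[ c(x,y)\right] $ for an invariant mean $m$ on $\ell ^{\infty }(A)$ simultaneously produces the primitive and the sharp constant $1$ in the norm bound, so you get the quantitative conclusion self-containedly instead of importing the notion of vanishing modulus. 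What the paper's route buys is brevity and a pointer to the general framework (vanishing moduli for amenable groups, not just abelian ones); what your route buys is a short, elementary, and fully explicit argument that needs nothing beyond amenability of countable abelian groups. Both are acceptable; yours could be inserted verbatim as a proof of the lemma.
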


\begin{proof}
Since $A$ is abelian and, in particular, amenable, we have that the bounded
cohomology group $\mathrm{H}_{\mathrm{b}}^{2}\left( A,\mathbb{R}\right) $ is
trivial by \cite[Theorem 3.6]{frigerio_bounded_2017}. Furthermore, $A$ has $%
n $-th vanishing modulus of $1$; see \cite[Definition 4.10 and Example 4.11]%
{fournier-facio_bounded_2022}. The statement of the lemma is just a
reformulation of this fact.
\end{proof}

\begin{lemma}
\label{Lemma:u-nA-projective-reals}Suppose that $V\rightarrow X\overset{\pi }%
{\rightarrow }C$ is a short exact sequence of abelian Polish groups. If $%
V\cong \mathbb{R}$ and $C\cong \prod_{n\in \omega }A_{n}$ where $A_{n}$ is a
countable abelian group for every $n\in \omega $, then the sequence $%
V\rightarrow X\rightarrow C$ splits.
\end{lemma}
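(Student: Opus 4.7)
The plan is to construct a continuous function $t:C\to V\cong\mathbb{R}$ with $\delta t=\kappa$, where $\kappa$ is the continuous cocycle attached to a continuous set-theoretic section $\sigma$ of $\pi:X\to C$; then $\rho:=\sigma-t:C\to X$ will be a continuous homomorphism splitting $\pi$. Since $C\cong\prod_{n}A_{n}$ is non-Archimedean, \cite[Proposition~4.6]{bergfalk_definable_2020} supplies such a continuous section $\sigma$, and $\kappa:=\delta\sigma$ is a continuous normalized $2$-cocycle $C\times C\to V$.

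For each $m\in\omega$ set $B_{m}:=\prod_{k\geq m}A_{k}$ and $B_{m}^{*}:=\bigoplus_{k\geq m}A_{k}$, so $B_{m}^{*}$ is a countable dense subgroup of $B_{m}$ and the family $\{B_{m}\}_{m}$ is a neighborhood basis of $0$ in $C$. By continuity of $\kappa$ at the origin, $\varepsilon_{m}:=\sup_{B_{m}\times B_{m}}|\kappa|\to 0$ as $m\to\infty$. Applying Lemma~\ref{Lemma:vanishing-bounded-cohomology} to the countable abelian group $B_{m}^{*}$ and the restriction $\kappa|_{B_{m}^{*}\times B_{m}^{*}}$ (which is bounded by $\varepsilon_{m}$) yields, for each $m$, a function $u_{m}:B_{m}^{*}\to\mathbb{R}$ with $\|u_{m}\|_{\infty}\leq\varepsilon_{m}$ and $\delta u_{m}=\kappa|_{B_{m}^{*}\times B_{m}^{*}}$.

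The main step — on which the whole argument hinges — is showing that these $u_{m}$'s are automatically compatible. For $m_{0}<m_{1}$ the difference $u_{m_{0}}|_{B_{m_{1}}^{*}}-u_{m_{1}}:B_{m_{1}}^{*}\to\mathbb{R}$ has vanishing coboundary and so is a group homomorphism, bounded by $\varepsilon_{m_{0}}+\varepsilon_{m_{1}}$. But any bounded homomorphism $h$ from an abelian group to $\mathbb{R}$ is zero, since $h(na)=nh(a)$ is bounded in $n\in\mathbb{Z}$ only when $h(a)=0$ (using torsion-freeness of $\mathbb{R}$ to handle torsion elements). Hence $u_{m_{0}}|_{B_{m_{1}}^{*}}=u_{m_{1}}$, and the $u_{m}$'s glue to a single function $t:=u_{0}:B_{0}^{*}\to\mathbb{R}$ satisfying $|t(x)|\leq\varepsilon_{m}$ on each $B_{m}^{*}$. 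Since $\{B_{m}^{*}\}_{m}$ is a neighborhood basis of $0$ in $B_{0}^{*}$, $t$ is continuous at $0$, and continuous everywhere on $B_{0}^{*}$ by the identity $t(x+h)-t(x)=t(h)-\kappa(x,h)$ together with continuity of $\kappa$. Density of $B_{0}^{*}$ in $C$ and completeness of $\mathbb{R}$ then yield a continuous extension $\bar{t}:C\to\mathbb{R}$ (Cauchyness of $(t(x_{j}))$ for $x_{j}\to x$ in $B_{0}^{*}$ follows from the same cocycle identity, as $t(x_{j})-t(x_{k})=t(x_{j}-x_{k})-\kappa(x_{k},x_{j}-x_{k})\to 0$), and $\delta\bar{t}=\kappa$ on all of $C\times C$ by continuity of both sides and density. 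The map $\rho:=\sigma-\bar{t}:C\to X$ is then a continuous map satisfying $\delta\rho=0$ and $\pi\rho=\mathrm{id}_{C}$, that is, a continuous homomorphism splitting of $V\to X\to C$.
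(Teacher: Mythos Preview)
Your argument is essentially correct and takes a genuinely different route from the paper. One small gap: you apply Lemma~\ref{Lemma:vanishing-bounded-cohomology} ``for each $m$'', but that lemma needs a \emph{bounded} cocycle, and $\varepsilon_m=\sup_{B_m\times B_m}|\kappa|$ may well be infinite for small $m$ (continuity of $\kappa$ only gives finiteness for $m$ large). Your compatibility step then also breaks for such $m$, since the bound $\varepsilon_{m_0}+\varepsilon_{m_1}$ is vacuous. The fix is trivial: either regroup finitely many initial factors so that already $\varepsilon_0<\infty$ (exactly what the paper does in its own proof), or run your construction only for $m\geq M$ with $\varepsilon_M<\infty$ to get $t$ on $B_M^{*}$, and then extend $t$ to $B_0^{*}$ using $\mathrm{Ext}(B_0^{*}/B_M^{*},\mathbb{R})=0$ together with injectivity of $\mathbb{R}$ in $\mathbf{Ab}$. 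Either way your argument goes through unchanged from that point on.

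As to the comparison: the paper never works on the dense direct sum. Instead it \emph{replaces} the section by a coordinatewise one, $\psi(x)=\sum_n\sigma(x_n)$, so that the new cocycle $\delta\psi$ decomposes as $\sum_n\delta\sigma(x_n,y_n)$; bounded cohomology is then applied on each single factor $A_n$, and the correcting function is $t(x)=\sum_n t_n(x_n)$. Your approach keeps the original $\sigma$ and $\kappa$, applies bounded cohomology on the whole countable group $B_m^{*}$, and uses the pleasant observation that bounded homomorphisms into $\mathbb{R}$ vanish to force compatibility of the $u_m$'s---a uniqueness principle that has no analogue in the paper's proof. Your method is a bit more conceptual and avoids modifying the section, at the cost of a density/extension step; the paper's is more explicit and sidesteps any completion argument. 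Both yield the splitting equally well.
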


\begin{proof}
We identify $C$ with $\prod_{n\in \omega }A_{n}$, and we identify $V$ with $%
\mathbb{R}$ and with a closed subgroup of $X$. For $n\in \omega $, we write $%
C_{>k}=\left\{ x\in C:\forall i\leq k,x_{i}=0\right\} $. Fix a compatible
complete invariant metric $d$ on $X$ \cite[Exercise 2.2.9]%
{gao_invariant_2009}. For $x\in V$ we let $\left\vert x\right\vert $ be its
absolute value in $\mathbb{R}$.

By \cite[Proposition 4.6]{bergfalk_definable_2020}, there exists a
continuous right inverse $\varphi :C\rightarrow X$ for $\pi $. Define by
recursion a strictly increasing sequence $\left( k_{n}\right) _{n\in \omega
} $ in $\omega $ such that, for all $n\in \omega $ and $x,y\in C_{>k_{n}}$, $%
d\left( \varphi \left( x\right) ,0\right) \leq 2^{-n}$ and $\left\vert
\delta \varphi \left( x,y\right) \right\vert \leq 2^{-n}$.

After replacing $A_{n}$ with $\prod_{i=k_{n}+1}^{k_{n+1}}A_{i}$ for $n\in
\omega $, we can assume without loss of generality that $k_{n}=n$. We have
that $\left\Vert \delta \varphi |_{A_{n}\times A_{n}}\right\Vert _{\infty
}\leq 2^{-n}$. By Lemma \ref{Lemma:vanishing-bounded-cohomology}, we have
that exists a bounded function $t_{n}:A_{n}\rightarrow V\cong \mathbb{R}$
such that $\delta t_{n}=\delta \varphi |_{A_{n}\times A_{n}}$ and $%
\left\Vert t_{n}\right\Vert _{\infty }\leq 2^{-n}$.

We define now $t:C\rightarrow V$ by setting 
\begin{equation*}
t\left( x\right) =\sum_{n\in \omega }t_{n}\left( x_{n}\right) \text{,}
\end{equation*}%
where we identify $x_{n}$ as an element of $A_{n}$. Define also the
continuous function $\psi :C\rightarrow X$ by setting%
\begin{equation*}
\psi \left( x\right) =\sum_{n\in \omega }\varphi \left( x_{n}\right)
\end{equation*}%
We notice that $\psi $ is a right inverse for $\pi $. Furthermore, we have
that, for $x,y\in C$,%
\begin{equation*}
\delta \psi \left( x,y\right) =\sum_{n\in \omega }\delta \varphi \left(
x_{n},y_{n}\right) =\sum_{n\in \omega }\delta t_{n}\left( x_{n},y_{n}\right)
=\delta t\left( x,y\right) \text{.}
\end{equation*}%
Therefore, we have that $\eta :=\psi -t:C\rightarrow X$ is a continuous
homomorphism that is a right inverse for $\pi $. This shows that the short
exact sequence $V\rightarrow X\rightarrow C$ splits.
\end{proof}

\begin{lemma}
\label{Lemma:u-nA-projective-torus}Suppose that $\mathbb{T}\rightarrow X%
\overset{\pi }{\rightarrow }C$ is a short exact sequence of abelian Polish
groups. If $C\cong \prod_{n\in \omega }A_{n}$ where $A_{n}$ is a countable
abelian group for every $n\in \omega $, then the sequence $\mathbb{T}%
\rightarrow X\rightarrow C$ splits.
\end{lemma}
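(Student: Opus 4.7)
The plan is to mimic the proof of Lemma~\ref{Lemma:u-nA-projective-reals}, with one extra ingredient: we must lift the $\mathbb{T}$-valued cocycle locally to an $\mathbb{R}$-valued cocycle so that the vanishing bounded cohomology lemma (Lemma~\ref{Lemma:vanishing-bounded-cohomology}) can be applied. The key point that makes this lift possible is that we can arrange the cocycle to take values in an arbitrarily small neighborhood of $0\in \mathbb{T}$ on each factor.

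Concretely, I would first invoke \cite[Proposition~4.6]{bergfalk_definable_2020} to obtain a continuous right inverse $\varphi:C\to X$ for $\pi$, setting $c:=\delta\varphi:C\times C\to \mathbb{T}$. Fix a complete invariant metric $d$ on $X$ and let $|\cdot|$ denote the distance from $0$ in $\mathbb{T}$. Exactly as in the previous lemma, construct inductively a strictly increasing sequence $(k_n)$ in $\omega$ such that $d(\varphi(x),0)\le 2^{-n}$ and $|\delta\varphi(x,y)|\le 2^{-n}$ for $x,y\in C_{>k_n}$, and after replacing $A_n$ with $\prod_{i=k_n+1}^{k_{n+1}}A_i$ assume $k_n=n$; in particular $\|c|_{A_n\times A_n}\|_\infty \le 2^{-n}$.

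Next, for each $n$ with $2^{-n}<1/8$, choose the unique lift $\tilde c_n:A_n\times A_n\to \mathbb{R}$ of $c|_{A_n\times A_n}$ taking values in $(-2^{-n},2^{-n})$ (using the standard identification of a small neighborhood of $0\in\mathbb{T}$ with an interval around $0\in\mathbb{R}$). Because the cocycle defect $\tilde c_n(x+y,z)+\tilde c_n(x,y)-\tilde c_n(x,y+z)-\tilde c_n(y,z)$ is an integer of absolute value at most $4\cdot 2^{-n}<1$, it must vanish, so $\tilde c_n$ is itself a symmetric normalized $\mathbb{R}$-valued cocycle on $A_n$. By Lemma~\ref{Lemma:vanishing-bounded-cohomology}, there exists $t_n:A_n\to \mathbb{R}$ with $\delta t_n=\tilde c_n$ and $\|t_n\|_\infty\le 2^{-n}$; let $\bar t_n:A_n\to \mathbb{T}$ be the composition with $\mathbb{R}\to\mathbb{T}$. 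Handle the finitely many exceptional small $n$ separately (e.g.\ absorb them by enlarging the initial block of coordinates).

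Finally, mirroring the end of the proof of Lemma~\ref{Lemma:u-nA-projective-reals}, define
\[
\psi:C\to X,\qquad \psi(x):=\sum_{n}\varphi(x_n),\qquad t:C\to \mathbb{T},\qquad t(x):=\sum_n \bar t_n(x_n),
\]
both series converging and continuous thanks to the bounds $d(\varphi(x_n),0)\le 2^{-n}$ (plus invariance of $d$) and $|\bar t_n(x_n)|\le 2^{-n}$. Then $\pi\circ\psi=\mathrm{id}_C$, and the same telescoping computation as in the previous lemma gives $\delta\psi(x,y)=\sum_n c(x_n,y_n)=\sum_n \delta\bar t_n(x_n,y_n)=\delta t(x,y)$, so $\eta:=\psi-t:C\to X$ (with $t$ regarded as taking values in the closed subgroup $\mathbb{T}\subseteq X$) is a continuous group homomorphism splitting the extension.

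The main obstacle is the lifting step: one needs the local cocycle values to be small enough (less than $1/4$) to guarantee that a pointwise lift to $\mathbb{R}$ is still a cocycle, and one needs the bound on $\|t_n\|_\infty$ from Lemma~\ref{Lemma:vanishing-bounded-cohomology} to be of the same order as the bound on $\|\tilde c_n\|_\infty$ so that $\sum_n \bar t_n(x_n)$ converges in $\mathbb{T}$; both are ensured by the tail estimate on $\varphi$ together with the precise norm control in Lemma~\ref{Lemma:vanishing-bounded-cohomology}.
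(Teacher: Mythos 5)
Your argument is correct, but it takes a more hands-on route than the paper. The paper's proof also starts from the continuous section $\varphi$ of \cite[Proposition 4.6]{bergfalk_definable_2020} and the observation that the cocycle $\kappa=\delta\varphi$ is small on a tail subgroup $C_{\geq n_0}$; it then splits off the initial block $C_{<n_0}$ using $\mathrm{Ext}_{\mathrm{Yon}}\left( C_{<n_0},\mathbb{T}\right) =0$ (injectivity of $\mathbb{T}$ for countable discrete, indeed locally compact, groups), lifts the \emph{entire} small $\mathbb{T}$-valued cocycle on the tail to a continuous $\mathbb{R}$-valued cocycle via a locally additive section $\rho$ of $\mathbb{R}\rightarrow \mathbb{T}$, and finishes by quoting Lemma \ref{Lemma:u-nA-projective-reals} (together with Lemma \ref{Lemma:Ext-Yon-nA}) to get $\mathrm{Ext}_{\mathrm{Yon}}\left( C,\mathbb{R}\right) =0$, so the lifted cocycle, and hence $\kappa$, is a coboundary. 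You instead lift block-by-block, using the integer-valued-defect trick on each $A_{n}$ with $2^{-n}$ small, apply Lemma \ref{Lemma:vanishing-bounded-cohomology} with its norm control to each block, and re-run the summation argument of Lemma \ref{Lemma:u-nA-projective-reals} with $\mathbb{T}$-valued primitives; this is self-contained but essentially re-proves the mechanism that the paper reuses as a black box. Both routes work and rely on the same two inputs (the continuous section and the quantitative vanishing of bounded cohomology); the paper's is shorter by reduction to the real-coefficient case, yours avoids the global lift and the auxiliary section $\rho$. One small point to tighten: ``absorb the exceptional $n$ by enlarging the initial block'' is not by itself enough, since on the enlarged initial block the cocycle is no longer small and Lemma \ref{Lemma:vanishing-bounded-cohomology} does not apply there; you need the separate (easy) fact that the restriction of the cocycle to that countable discrete block is a coboundary, e.g.\ because $\mathbb{T}$ is divisible, equivalently $\mathrm{Ext}_{\mathrm{Yon}}\left( C_{<n_0},\mathbb{T}\right) =0$, which is exactly the step the paper makes explicit.
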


\begin{proof}
We identify $\mathbb{T}$ with a closed subgroup of $X$ and with $\mathbb{R}/%
\mathbb{Z}$. We let $d_{\mathbb{T}}$ be a compatible complete invariant
metric on $\mathbb{T}$. For $\varepsilon >0$ define $\mathbb{T}_{\varepsilon
}=\left\{ x\in \mathbb{T}:d_{\mathbb{T}}\left( x,0\right) <\varepsilon
\right\} $. Fix $\varepsilon >0$ such that there exists a continuous
function $\rho :\mathbb{T}_{3\varepsilon }\rightarrow \mathbb{R}$ such that $%
\rho \left( z\right) +\mathbb{Z}=z$ for every $z\in \mathbb{T}_{3\varepsilon
}$, and $\rho \left( x+y\right) =\rho \left( x\right) +\rho \left( y\right) $
for all $x,y\in \mathbb{T}_{\varepsilon }$.

By \cite[Proposition 4.6]{bergfalk_definable_2020}, we have a continuous
right inverse $\varphi :C\rightarrow X$ for $\pi $. Define the continuous $%
\kappa :C\times C\rightarrow \mathbb{T}$ by $\kappa \left( x,y\right)
=\varphi \left( x+y\right) -\varphi \left( x\right) -\varphi \left( y\right) 
$. Since $\kappa $ is continuous, there exists $n_{0}\in \omega $ such that $%
d_{\mathbb{T}}\left( \kappa (x,y),0\right) <\varepsilon $ for every $x\in
C_{\geq n_{0}}:=\left\{ x\in C:\forall i<n_{0},x_{i}=0\right\} $.

Since $\mathrm{Ext}_{\mathrm{Yon}}\left( C_{<n_{0}},\mathbb{T}\right) =0$ by
injectivity of $\mathbb{T}$ in the category of locally compact abelian
Polish groups, where $C_{<n_{0}}=\left\{ x\in C:\forall i\geq
n_{0},x_{i}=0\right\} $, it suffices to show that the function $\pi |_{\pi
^{-1}\left( C_{\geq n}\right) }:\pi ^{-1}\left( C_{\geq n}\right)
\rightarrow C_{\geq n}$ has a right inverse that is a continuous group
homomorphism. Thus, we can assume without loss of generality that $n=0$ and $%
d_{\mathbb{T}}\left( \kappa \left( x,y\right) ,0\right) <\varepsilon $ for
every $x\in C$.

This implies that, setting $c:=\rho \circ \kappa $, one obtains a continuous 
$2$-cocycle $c:C\times C\rightarrow \mathbb{R}$ such that $c\left(
x,y\right) +\mathbb{Z}=\kappa \left( x,y\right) $ for every $x,y\in C$. By
Lemma \ref{Lemma:u-nA-projective-reals} and Lemma \ref{Lemma:Ext-Yon-nA}, we
have that $\mathrm{Ext}_{\mathrm{Yon}}\left( C,\mathbb{R}\right) =0$.
Therefore, $c$ is a coboundary, and hence $\kappa $ is a coboundary as well.
This implies that $\varphi $ has a right inverse that is a continuous
homomorphism.
\end{proof}

\begin{lemma}
\label{Lemma:u-nA}Suppose that $C$ is a non-Archimedean abelian Polish
group. Then there exists a continuous surjective homomorphism $\left( 
\mathbb{Z}^{\left( \omega \right) }\right) ^{\omega }\rightarrow C$.
\end{lemma}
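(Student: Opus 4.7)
The plan is to use the non-Archimedean structure of $C$ to write it as an inverse limit of countable discrete groups and exhibit $C$ as a convergent sum of lifts of generators. Concretely, since $C$ is non-Archimedean Polish, it admits a decreasing basis of zero neighborhoods consisting of open subgroups $C=U_{0}\supseteq U_{1}\supseteq U_{2}\supseteq \cdots $ with $\bigcap_{n}U_{n}=0$. For each $n\in \omega $, the discrete group $U_{n}/U_{n+1}$ is a subgroup of the countable discrete group $C/U_{n+1}$, hence it is countable, so we can pick elements $\{s_{i}^{(n)}\}_{i\in \omega }\subseteq U_{n}$ whose images in $U_{n}/U_{n+1}$ generate that group as an abelian group.

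Next, I would define $\Psi :(\mathbb{Z}^{(\omega )})^{\omega }\rightarrow C$ by
\[
\Psi \bigl((x_{n})_{n\in \omega }\bigr):=\sum_{n\in \omega }y_{n},\qquad y_{n}:=\sum_{i\in \omega }x_{n}(i)\,s_{i}^{(n)}.
\]
Each inner sum is a finite $\mathbb{Z}$-linear combination of elements of $U_{n}$, so $y_{n}\in U_{n}$. The partial sums $T_{N}:=\sum_{n\leq N}y_{n}$ are Cauchy in $C$, because $T_{N}-T_{M}\in U_{M+1}$ for $N>M$ (since $U_{M+1}$ is a subgroup containing each $y_{n}$ for $n>M$), and the $U_{n}$ form a neighborhood basis of $0$. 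Since $C$ is a complete Polish group, this limit exists, so $\Psi $ is well-defined; linearity is clear. For continuity, note that the open subgroups $W_{N}:=\{(x_{n}):x_{n}=0\text{ for all }n\leq N\}$ form a basis of zero neighborhoods in $(\mathbb{Z}^{(\omega )})^{\omega }$, and $\Psi (W_{N})\subseteq U_{N+1}$ by the same argument.

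For surjectivity, given $c\in C$, I would construct $(x_{n})\in (\mathbb{Z}^{(\omega )})^{\omega }$ by induction so that $c-T_{N}\in U_{N+1}$ at every stage. Starting with $c_{0}:=c\in U_{0}$, if $c_{n}\in U_{n}$ has been defined, use that $\{s_{i}^{(n)}+U_{n+1}\}_{i}$ generates $U_{n}/U_{n+1}$ to pick a finitely supported $x_{n}\in \mathbb{Z}^{(\omega )}$ with $c_{n}-\sum_{i}x_{n}(i)s_{i}^{(n)}\in U_{n+1}$, and let $c_{n+1}$ be this element. By construction $c-T_{N}=c_{N+1}\in U_{N+1}\to 0$, so $\Psi ((x_{n}))=\lim_{N}T_{N}=c$.

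There is no substantial obstacle here: the only subtlety is choosing the generators inside the successive quotients $U_{n}/U_{n+1}$ rather than inside $C/U_{n}$, so that the lifted generators lie in $U_{n}$ and the tail sums converge automatically; this is precisely what non-Archimedeanness and subgroup-invariance of the $U_{n}$'s provide.
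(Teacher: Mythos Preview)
Your proof is correct and takes a genuinely different, more direct route than the paper.

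The paper proceeds indirectly: it first embeds $C$ into a product $\prod_{n}A_{n}$ of countable groups, lifts along surjections $\mathbb{Z}^{(\omega)}\to A_{n}$ to realize $C$ as a quotient of a closed subgroup $\hat{C}\subseteq(\mathbb{Z}^{(\omega)})^{\omega}$, and then analyzes $\hat{C}$ via its description as the set of branches of a pruned tree, showing $\hat{C}\cong\lim_{n}B_{n}$ for an epimorphic tower of countable free abelian groups. Projectivity of $\mathbb{Z}^{(\omega)}$ then splits the tower, yielding $\hat{C}$ as a product of countable groups, onto each factor of which $\mathbb{Z}^{(\omega)}$ surjects.

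Your argument bypasses all of this structure: you exploit the non-Archimedean filtration $C=U_{0}\supseteq U_{1}\supseteq\cdots$ directly, choosing generators of the successive countable quotients $U_{n}/U_{n+1}$ lifted into $U_{n}$, and define $\Psi$ as a convergent infinite sum whose tail always lies in $U_{N+1}$. This is shorter and conceptually cleaner for the purpose at hand. What the paper's approach buys is the intermediate structural fact that every closed subgroup of $(\mathbb{Z}^{(\omega)})^{\omega}$ is itself a product of countable free abelian groups, which is essentially Lemma~\ref{Lemma:free-subgroups} and is used again later; your approach does not yield that, but it was not asked for here.
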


\begin{proof}
We have that $C$ is isomorphic to a closed subgroup of $\prod_{n\in \omega
}A_{n}$ where, for every $n\in \omega $, $A_{n}$ is countable. For every $%
n\in \omega $, there exists a surjective homomorphism $\mathbb{Z}^{\left(
\omega \right) }\rightarrow A_{n}$. Hence, $C$ is isomorphic to a quotient
of a closed subgroup $\hat{C}$ of $\left( \mathbb{Z}^{\left( \omega \right)
}\right) ^{\omega }$. Let $S$ be a pruned tree on $\mathbb{Z}^{\left( \omega
\right) }$ such that $\hat{C}$ is equal to closed subgroup $[S]\subseteq
\left( \mathbb{Z}^{\left( \omega \right) }\right) ^{\omega }$ consisting of
the branches of $S$. For every $n\in \omega $, let $B_{n}:=S\cap \left( 
\mathbb{Z}^{\left( \omega \right) }\right) ^{n}$, which is a subgroup of $%
\left( \mathbb{Z}^{\left( \omega \right) }\right) ^{n}$. Since $S$ is
pruned, we have that the projection map $B_{n+1}\rightarrow B_{n}$ is onto.
Furthermore, we have that $\hat{C}\cong \lbrack S]\cong \mathrm{\mathrm{lim}}%
_{n}B_{n}$. Since $B_{n}\cong \mathbb{Z}^{\left( \omega \right) }$ and $%
\mathbb{Z}^{\left( \omega \right) }$ is projective for countable abelian
groups, we have that $B_{n+1}\cong B_{n}\oplus \mathrm{\mathrm{Ker}}\left(
\pi _{n+1}\right) $. Thus, we have that $\hat{C}\cong B_{0}\oplus
\prod_{n\geq 1}\mathrm{\mathrm{Ker}}\left( \pi _{n}\right) $. Since for
every countable group $B$ there exists a surjective homomorphism $\mathbb{Z}%
^{\left( \omega \right) }\rightarrow B$, the conclusion follows.
\end{proof}

\begin{lemma}
\label{Lemma:nA-by-torus}Suppose that $L\rightarrow X\overset{\pi }{%
\rightarrow }C$ is a short exact sequence of abelian Polish groups. If $%
L\cong \mathbb{R}^{\alpha }\oplus \mathbb{T}^{\beta }$ for $\alpha ,\beta
\leq \omega $ and $C$ is non-Archimedean, then the sequence $L\rightarrow
X\rightarrow C$ splits.
\end{lemma}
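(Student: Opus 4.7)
The plan is to show that every such sequence splits by proving that the corresponding element of $\mathrm{Ext}_{\mathrm{Yon}}(C,L)$ vanishes, working through continuous cocycles. Since $C$ is non-Archimedean, Lemma~\ref{Lemma:Ext-Yon-nA} lets us represent the extension by a continuous $2$-cocycle $c\colon C\times C\to L$, and by injectivity of $\mathrm{Ext}_{\mathrm{Yon}}\to\mathrm{Ext}_{\mathrm{c}}$ together with Lemma~\ref{Lemma:coboundary}, the extension splits precisely when $c=\delta t$ for some continuous $t\colon C\to L$. Writing $L=\prod_{i\in I}L_i$ as an at-most-countable product with each $L_i\in\{\mathbb{R},\mathbb{T}\}$, the cocycle $c$ decomposes coordinatewise into continuous cocycles $c_i\colon C\times C\to L_i$; continuous trivializations $c_i=\delta t_i$ assemble into a continuous trivialization $c=\delta t$ via the product topology on $L$. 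This reduces us to the cases $L=\mathbb{R}$ and $L=\mathbb{T}$.

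Fixing such an $L$, I would pull the cocycle back along a continuous surjective homomorphism $\pi\colon D\to C$ with $D=(\mathbb{Z}^{(\omega)})^{\omega}$ supplied by Lemma~\ref{Lemma:u-nA}, and set $K:=\ker\pi$. The pulled-back cocycle $c':=c\circ(\pi\times\pi)$ satisfies the hypotheses of Lemma~\ref{Lemma:u-nA-projective-reals} (when $L=\mathbb{R}$) or Lemma~\ref{Lemma:u-nA-projective-torus} (when $L=\mathbb{T}$), because $D$ is itself a countable product of countable abelian groups. Either lemma yields a continuous $t'\colon D\to L$ with $\delta t'=c'$. As $c'$ vanishes on $K\times K$, the restriction $\tau:=t'|_K\colon K\to L$ is a continuous group homomorphism. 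If $\tau$ can be extended to a continuous homomorphism $\varphi\colon D\to L$, then $t'-\varphi$ vanishes on $K$ and therefore descends through $\pi$ to a continuous $t\colon C\to L$ with $\delta t\circ(\pi\times\pi)=c\circ(\pi\times\pi)$; surjectivity of $\pi\times\pi$ then gives $\delta t=c$, as required.

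The main obstacle is the extension of the continuous homomorphism $\tau\colon K\to L$ to all of $D$. Since $L$ has no small subgroups and $K$ is non-Archimedean (as a closed subgroup of $D$), $\tau$ kills some open subgroup $U$ of $K$ and hence factors through the countable discrete quotient $K/U$. As $K$ inherits its topology from $D$ and $D$ is non-Archimedean, I can choose an open subgroup $V$ of $D$ with $V\cap K\subseteq U$, so that $\tau$ factors as $K\twoheadrightarrow K/(V\cap K)\hookrightarrow D/V\to L$, a composition of continuous maps between countable discrete abelian groups, where the middle arrow is the canonical injection induced by $K\hookrightarrow D$. Since $L$ is either $\mathbb{R}$ or $\mathbb{T}$, it is divisible and hence injective in $\mathbf{Ab}$; the homomorphism $K/(V\cap K)\to L$ therefore extends to a homomorphism $D/V\to L$, and composing with $D\twoheadrightarrow D/V$ produces the required continuous $\varphi$, completing the argument.
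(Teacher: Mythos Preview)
Your argument is correct and takes a genuinely different route from the paper. Both proofs reduce to $L\in\{\mathbb{R},\mathbb{T}\}$ and pull back along the surjection $\pi\colon D=(\mathbb{Z}^{(\omega)})^{\omega}\twoheadrightarrow C$ from Lemma~\ref{Lemma:u-nA}, invoking Lemmas~\ref{Lemma:u-nA-projective-reals} and~\ref{Lemma:u-nA-projective-torus} for the pulled-back extension. From there the paths diverge. The paper argues at the level of extensions: the pullback $Y$ splits as $L\oplus D$, hence is pro-Lie; $X$ is a quotient of $Y$, hence also pro-Lie; and then the structural result Lemma~\ref{Lemma:injective-pro-Lie} (from the Hofmann--Morris theory) forces $L\rightarrow X\rightarrow C$ to split. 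You instead stay entirely at the cocycle level: having trivialized the pulled-back cocycle $c'=\delta t'$, you observe that $t'|_{K}$ is a continuous homomorphism into a group with no small subgroups, hence factors through a countable discrete quotient, and then you use the purely algebraic divisibility of $\mathbb{R}$ and $\mathbb{T}$ to extend it to $D$ and descend. Your approach is more elementary in that it avoids the detour through pro-Lie structure theory; the paper's approach, on the other hand, packages the descent step into the single citation of Lemma~\ref{Lemma:injective-pro-Lie}, at the cost of first establishing that $X$ is pro-Lie. One minor point worth making explicit in your write-up: the fact that $t'-\varphi$ is constant on $K$-cosets uses not only $(t'-\varphi)|_K=0$ but also $c'(d,k)=c(\pi(d),0)=0$, via the cocycle relation.
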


\begin{proof}
By Lemma \ref{Lemma:Ext-product} it suffices to consider the case when $L=%
\mathbb{R}$ or $L=\mathbb{T}$. In this case, Lemma \ref%
{Lemma:u-nA-projective-reals} and Lemma \ref{Lemma:u-nA-projective-torus}
prove the result when $C$ is product of countable groups. If $C$ is an
arbitrary non-Archimedean Polish abelian group, then by Lemma \ref%
{Lemma:u-nA} there exists a surjective continuous homomorphism $g:\left( 
\mathbb{Z}^{\left( \omega \right) }\right) ^{\omega }\rightarrow C$.
Considering the pullback diagram%
\begin{equation*}
\begin{array}{ccc}
Y & \rightarrow & \left( \mathbb{Z}^{\left( \omega \right) }\right) ^{\omega
} \\ 
\downarrow &  & \downarrow g \\ 
X & \rightarrow & C%
\end{array}%
\end{equation*}%
we obtain a commuting diagram%
\begin{equation*}
\begin{array}{ccccc}
0 & \rightarrow & \mathrm{ker}\left( g\right) & \rightarrow & \mathrm{ker}%
\left( g\right) \\ 
\downarrow &  & \downarrow &  & \downarrow \\ 
L & \rightarrow & Y & \rightarrow & \left( \mathbb{Z}^{\left( \omega \right)
}\right) ^{\omega } \\ 
\downarrow &  & \downarrow &  & \downarrow \\ 
L & \rightarrow & X & \rightarrow & C%
\end{array}%
\end{equation*}%
whose rows and columns are short exact sequences. Then the exact sequence $%
L\rightarrow Y\rightarrow \left( \mathbb{Z}^{\left( \omega \right) }\right)
^{\omega }$ splits as explained at the beginning of the proof of this lemma,
hence $Y\cong L\oplus \left( \mathbb{Z}^{\left( \omega \right) }\right)
^{\omega }$ is pro-Lie.\ Therefore, $X$ is pro-Lie, being quotient of a
pro-Lie group by a closed subgroup. (Notice that a surjective continuous
homomorphism between Polish groups is a quotient mapping by the Open Mapping
Theorem for Polish groups; see \cite[Corollary 2.3.4]{gao_invariant_2009}.)
By Lemma \ref{Lemma:injective-pro-Lie}, we conclude that the short exact
sequence $L\rightarrow X\rightarrow C$ splits.
\end{proof}

\begin{lemma}
\label{Lemma:nA-by-proL}Suppose that $A\rightarrow B\rightarrow C$ is a
short exact sequence of abelian Polish groups. If $A$ is pro-Lie and $C$ is
non-Archimedean, then $B$ is pro-Lie.
\end{lemma}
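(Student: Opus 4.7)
The plan is to decompose $A$ via the structural Lemma \ref{Lemma:structure-pro-Lie}, pass to the quotient by the non-Archimedean piece to land in the hypothesis of Lemma \ref{Lemma:nA-by-torus}, and then pull back along a torus projection in order to reduce the remaining work to Lemma \ref{Lemma:lc-by-proL}. This mirrors the strategy used in the proof of Lemma \ref{Lemma:lc-by-proL}, but now the decomposition of $A$ interacts with a non-Archimedean (rather than locally compact) quotient $C$.

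Since $A$ is pro-Lie, by Lemma \ref{Lemma:structure-pro-Lie} there is a short exact sequence $D\rightarrow A\rightarrow T$ with $D$ non-Archimedean and $T:=A/D$ a torus group. Because $D$ is closed in $A$ and $A$ is closed in $B$, the subgroup $D$ is closed in $B$, and we can form $B/D$. Pushing the original extension out along $A\rightarrow T$ yields a commuting diagram with short exact rows and columns
\[
\begin{array}{ccccc}
D & = & D & \rightarrow & 0 \\
\downarrow & & \downarrow & & \downarrow \\
A & \rightarrow & B & \rightarrow & C \\
\downarrow & & \downarrow & & \downarrow \\
T & \rightarrow & B/D & \rightarrow & C.
\end{array}
\]
The bottom row is an extension of a non-Archimedean group by a torus group, hence splits by Lemma \ref{Lemma:nA-by-torus}, giving $B/D\cong T\oplus C$.

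Let $K$ be the kernel of the composite continuous homomorphism $B\rightarrow B/D\cong T\oplus C\rightarrow T$, where the last arrow is the canonical projection onto the torus summand. Then $K$ contains $D$, and $K/D$ is the kernel of the projection $T\oplus C\rightarrow T$, i.e., $K/D\cong C$. Thus $K$ fits into a short exact sequence $D\rightarrow K\rightarrow C$ in which both endpoints are non-Archimedean. Since non-Archimedean Polish abelian groups form a thick subcategory of $\mathbf{PAb}$, $K$ is itself non-Archimedean, and in particular pro-Lie.

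Finally, $B$ fits into the short exact sequence $K\rightarrow B\rightarrow T$ with $K$ pro-Lie and $T$ a torus, hence locally compact. Applying Lemma \ref{Lemma:lc-by-proL} (pro-Lie kernel, locally compact quotient) concludes that $B$ is pro-Lie. No step poses a genuine obstacle; the crux is to recognize that quotienting by the non-Archimedean part of $A$ already produces a splitting via Lemma \ref{Lemma:nA-by-torus}, and that pulling this splitting back recasts the remaining extension as one to which the locally compact lemma applies.
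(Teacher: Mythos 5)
Your argument is correct and is essentially the paper's own proof: decompose $A$ as $D\rightarrow A\rightarrow T$ via Lemma \ref{Lemma:structure-pro-Lie}, push out to get $T\rightarrow B/D\rightarrow C$, split it with Lemma \ref{Lemma:nA-by-torus}, and note that the kernel of $B\rightarrow T$ (your $K$, the paper's $B_0$) is non-Archimedean as an extension of $C$ by $D$. The only cosmetic difference is that you finish with Lemma \ref{Lemma:lc-by-proL} where the paper invokes Lemma \ref{Lemma:lc-by-nA} directly, which is immaterial.
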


\begin{proof}
As in the proof of Lemma \ref{Lemma:lc-by-proL}, by Lemma \ref%
{Lemma:structure-pro-Lie} we have a short exact sequence $D\rightarrow
A\rightarrow T$ where $D$ is non-Archimedean and $T$ is a torus group.
Considering the pushout diagram%
\begin{equation*}
\begin{array}{ccc}
A & \rightarrow & B \\ 
\downarrow &  & \downarrow \\ 
T & \rightarrow & B_{T}%
\end{array}%
\end{equation*}%
we obtain a commuting diagram%
\begin{equation*}
\begin{array}{ccccc}
D & \rightarrow & D & \rightarrow & 0 \\ 
\downarrow &  & \downarrow &  & \downarrow \\ 
A & \rightarrow & B & \rightarrow & C \\ 
\downarrow &  & \downarrow &  & \downarrow \\ 
T & \rightarrow & B_{T} & \rightarrow & C%
\end{array}%
\end{equation*}%
whose rows and columns are short exact sequences. Since $C$ is
non-Archimedean and $T$ is a torus group, we have that the short exact
sequence $T\rightarrow B_{T}\rightarrow C$ splits by Lemma \ref%
{Lemma:nA-by-torus}. Consider the commuting diagram%
\begin{equation*}
\begin{array}{ccccc}
D & \rightarrow & B_{0} & \rightarrow & C \\ 
\downarrow &  & \downarrow &  & \downarrow \\ 
D & \rightarrow & B & \rightarrow & B_{T}\cong T\oplus C \\ 
\downarrow &  & \downarrow &  & \downarrow \\ 
0 & \rightarrow & T & \rightarrow & T%
\end{array}%
\end{equation*}%
with short exact rows and columns. We have that $B_{0}$ is non-Archimedean
since $D$ and $C$ are non-Archimedean. Hence, $B$ is pro-Lie by Lemma \ref%
{Lemma:lc-by-nA}.
\end{proof}

\begin{theorem}
\label{Theorem:extension-proL}Suppose that $A\rightarrow B\rightarrow C$ is
a short exact sequence of abelian Polish groups. Then we have that $B$ is
pro-Lie if and only if $A$ and $C$ are pro-Lie.
\end{theorem}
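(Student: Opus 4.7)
The forward direction is immediate from the closure properties of pro-Lie groups under closed subgroups and Hausdorff quotients (recalled in Subsection \ref{Subsection:pro-Lie}, based on \cite[Chapter 3]{hofmann_lie_2007}). So the work lies in showing that if $A$ and $C$ are pro-Lie, then $B$ is pro-Lie.

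The plan is to reduce the general case to the two cases already handled by Lemma \ref{Lemma:lc-by-proL} (extensions by locally compact quotients) and Lemma \ref{Lemma:nA-by-proL} (extensions by non-Archimedean quotients), using the structure theorem for pro-Lie groups (Lemma \ref{Lemma:structure-pro-Lie}) applied to the quotient $C$. Concretely, I would proceed as follows. First, apply Lemma \ref{Lemma:structure-pro-Lie} to $C$ to obtain a non-Archimedean closed subgroup $D\leq C$ with $T:=C/D$ a torus group. Let $\pi:B\rightarrow C$ be the given surjection, and set $B':=\pi^{-1}(D)$, which is a closed subgroup of $B$. Since $A=\pi^{-1}(0)\subseteq B'$, restricting $\pi$ gives a short exact sequence
\begin{equation*}
A\rightarrow B'\rightarrow D
\end{equation*}
with $A$ pro-Lie and $D$ non-Archimedean. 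By Lemma \ref{Lemma:nA-by-proL}, $B'$ is pro-Lie.

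Second, taking the quotient of $B\rightarrow C$ by the closed subgroup $B'\rightarrow D$ gives a short exact sequence
\begin{equation*}
B'\rightarrow B\rightarrow T
\end{equation*}
(using that $B/B'\cong C/D=T$ by the open mapping theorem, \cite[Corollary 2.3.4]{gao_invariant_2009}). Here $B'$ is pro-Lie by the previous step, and $T$ is a torus group, hence locally compact. Lemma \ref{Lemma:lc-by-proL} then gives that $B$ is pro-Lie, completing the argument.

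There is no major obstacle: the heart of the theorem has already been handled in the two key special cases (locally compact quotient and non-Archimedean quotient), and Lemma \ref{Lemma:structure-pro-Lie} provides exactly the right two-step filtration of the pro-Lie group $C$ needed to interpolate between them. The only mild subtlety is the verification that the quotient $B/B'$ is naturally isomorphic to $T$ as Polish groups, which follows from the open mapping theorem as noted; everything else is routine diagram chasing.
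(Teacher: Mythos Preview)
Your proof is correct and follows essentially the same approach as the paper's: both apply Lemma \ref{Lemma:structure-pro-Lie} to $C$ to obtain the filtration $D\rightarrow C\rightarrow T$, then use the pullback $B':=\pi^{-1}(D)$ (which the paper denotes $B_D$) together with Lemma \ref{Lemma:nA-by-proL} and Lemma \ref{Lemma:lc-by-proL} in exactly the same order. The only difference is presentational---you write out the two short exact sequences directly while the paper packages them into a single $3\times 3$ commuting diagram.
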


\begin{proof}
We just need to prove that if $A$ and $C$ are pro-Lie, then $B$ is pro-Lie,
as the other implication is established in \cite[Chapter 3]{hofmann_lie_2007}%
. By Lemma \ref{Lemma:structure-pro-Lie}, we have a short exact sequence $%
D\rightarrow C\rightarrow T$ where $D$ is non-Archimedean and $T$ is a torus
group. Considering the pullback diagram%
\begin{equation*}
\begin{array}{ccc}
B_{D} & \rightarrow & D \\ 
\downarrow &  & \downarrow \\ 
B & \rightarrow & C%
\end{array}%
\end{equation*}%
we obtain a commuting diagram%
\begin{equation*}
\begin{array}{ccccc}
A & \rightarrow & B_{D} & \rightarrow & D \\ 
\downarrow &  & \downarrow &  & \downarrow \\ 
A & \rightarrow & B & \rightarrow & C \\ 
\downarrow &  & \downarrow &  & \downarrow \\ 
0 & \rightarrow & T & \rightarrow & T%
\end{array}%
\end{equation*}%
with exact rows and columns. Since $D$ is non-Archimedean, we have that $%
B_{D}$ is pro-Lie by Lemma \ref{Lemma:nA-by-proL}. Since $T$ is locally
compact, we have that $B$ is pro-Lie by Lemma \ref{Lemma:lc-by-proL}.
\end{proof}

Theorem \ref{Theorem:pro-Lie-thick} is an immediate consequence of Theorem %
\ref{Theorem:extension-proL}.

\subsection{Type decomposition for pro-Lie Polish abelian groups}

We recall the type decomposition for locally compact Polish abelian groups
as described by Hoffmann and Spitzweck in \cite[Section 2]%
{hoffmann_homological_2007}.

\begin{definition}
\label{Definition:lc-types}Let $A$ be a locally compact Polish abelian
group. Then we say that:

\begin{itemize}
\item $A$ is a topological $p$-group if, for every $x\in A$, the sequence $%
\left( p^{n}x\right) _{n\in \omega }$ is vanishing or, equivalently, $A$ has
a basis of zero neighborhoods consisting of open subgroups $U$ such that $%
A/U $ is a $p$-group \cite[Chapter 2]{armacost_structure_1981};

\item $A$ is a topological torsion group if, for every $x\in A$, the
sequence $\left( n!x\right) _{n\in \omega }$ is vanishing or, equivalently, $%
A$ has a basis of zero neighborhoods consisting of open subgroups $U$ such
that $A/U$ is a torsion group \cite[Chapter 3]{armacost_structure_1981};

\item $A$ is type $\mathbb{Z}$ if it is discrete and torsion-free;

\item $A$ is type $\mathbb{S}^{1}$ if it is compact and connected or,
equivalently, its Pontryagin dual $A^{\vee }$ is type $\mathbb{Z}$;

\item $A$ is type $\mathbb{R}$ if it is a vector group;

\item $A$ is type $\mathbb{A}$ if $A\cong V\oplus B$ where $V$ is a vector
group and $B$ is a topological torsion group.
\end{itemize}
\end{definition}

For a locally compact Polish abelian group $A$, it is proved in \cite[%
Proposition 2.2]{hoffmann_homological_2007} that there exist canonical short
exact sequences $F_{\mathbb{Z}}A\rightarrow A\rightarrow A_{\mathbb{Z}}$ and 
$A_{\mathbb{S}^{1}}\rightarrow F_{\mathbb{Z}}A\rightarrow A_{\mathbb{A}}$
where $A_{\mathbb{Z}}$, $A_{\mathbb{S}^{1}}$, and $A_{\mathbb{A}}$ have type 
$\mathbb{Z}$, $\mathbb{S}^{1}$, and $\mathbb{A}$, respectively. Furthermore,
we have $A_{\mathbb{A}}=A_{\mathbb{R}}\oplus A_{\mathrm{t}}$ where $A_{%
\mathbb{R}}$ is a finite-dimensional vector group and $A_{\mathrm{t}}$ is a
topological torsion group.

If $P$ is any of the properties from Definition \ref{Definition:lc-types},
we say a pro-Lie Polish abelian group $A$ satisfies $P$ if and only if $%
\left\{ N\in \mathcal{N}\left( A\right) :A/N\text{ satisfies }P\right\} $ is
cofinal in $\mathcal{N}\left( A\right) $ (ordered by reverse inclusion). We
say that $A$ has sub-type $\mathbb{Z}$ if it is a Polishable subgroup of a
type $\mathbb{Z}$ pro-Lie Polish abelian group.

\begin{lemma}
\label{Definition:pro-Lie-types}Suppose that $A$ is an abelian Polish
pro-Lie group.

\begin{enumerate}
\item $A$ is a topological $p$-group if and only if $A$ is isomorphic to a
closed subgroup of $\mathbb{Z}\left( p^{\infty }\right) ^{\omega }$ if and
only if the sequence $\left( p^{n}x\right) _{n\in \omega }$ is vanishing for
every $x\in A$;

\item $A$ is a topological torsion group if and only if $A$ is isomorphic to
a closed subgroup of $\prod_{p}\mathbb{Z}\left( p^{\infty }\right) ^{\omega
} $ if and only if the sequence $\left( n!x\right) _{n\in \omega }$ is
vanishing for every $x\in A$;

\item $A$ is type $\mathbb{Z}$ if and only if $A$ is isomorphic to a closed
subgroup of $(\mathbb{Q}^{\left( \omega \right) })^{\omega }$;

\item $A$ is type $\mathbb{S}^{1}$ if and only if $A$ is compact connected;

\item $A$ is type $\mathbb{R}$ if and only if it is a vector group;

\item $A$ is type $\mathbb{A}$ if and only if $A\cong V\oplus B$ where $B$
is a vector group and $B$ is a topological torsion group.
\end{enumerate}
\end{lemma}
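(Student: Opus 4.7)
The plan is to reduce each equivalence to the corresponding characterization for abelian Lie groups, using the decomposition $A\cong \varprojlim_k A/N_k$ for a cofinal decreasing sequence $(N_k)_{k\in\omega}$ in $\mathcal{N}(A)$ guaranteed by Lemma \ref{Lemma:countable-cofinality}. The starting point is that every Polish abelian Lie group has the form $L\cong V\oplus T\oplus D$ with $V=\mathbb{R}^n$, $T=\mathbb{T}^d$, and $D$ countable discrete. For (1), such an $L$ is a topological $p$-group if and only if $V=T=0$ and $D$ is a discrete countable $p$-group, since both $\mathbb{R}^n$ and $\mathbb{T}^d$ contain elements $x$ for which $(p^nx)$ does not converge to $0$. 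An analogous analysis (using that $\mathbb{T}$ contains elements of infinite order) handles (2) with $n!$ in place of $p^n$, giving that a Lie topological torsion group is a discrete countable torsion group.

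Combined with the decomposition $A\cong \varprojlim_k A/N_k$, this Lie analysis yields the three-way equivalence in (1) and (2): the cofinality condition on $A$ forces every $A/N$ (for $N\in \mathcal{N}(A)$) to be a discrete countable $p$-group (resp.\ torsion group), which is in turn equivalent to the element-wise condition $(p^nx)\to 0$ (resp.\ $(n!x)\to 0$) holding for all $x\in A$. The forward direction uses that the image of $x$ in each Lie quotient has $p$-power (resp.\ finite) order; the reverse direction applies the Lie analysis to each $A/N$. For the embedding characterization, note that each countable discrete $p$-group embeds into $\mathbb{Z}(p^\infty)^{(\omega)}\subseteq \mathbb{Z}(p^\infty)^{\omega}$ via its divisible hull, and $A\cong \varprojlim_k A/N_k$ is a closed subgroup of $\prod_k A/N_k$, so assembling embeddings of the $A/N_k$ produces a closed-subgroup embedding of $A$ into $\bigl(\mathbb{Z}(p^\infty)^{\omega}\bigr)^\omega\cong \mathbb{Z}(p^\infty)^{\omega}$. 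The same template handles (2) and (3) with universal groups $\prod_p \mathbb{Z}(p^\infty)^{\omega}$ and $(\mathbb{Q}^{(\omega)})^{\omega}$, invoking divisible hulls of countable torsion and torsion-free abelian groups respectively; the converse implications are immediate, as the element-wise conditions are inherited by closed subgroups.

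For (4), each $A/N$ compact connected Lie is a torus $\mathbb{T}^{d_k}$, the inverse limit of which is compact connected; conversely, every Lie quotient of a compact connected pro-Lie group is compact connected Lie, hence a torus. For (5), Lemma \ref{Lemma:vector-group} already identifies the pro-Lie Polish abelian groups with Polish $\mathbb{R}$-vector space structure as the vector groups $\mathbb{R}^\alpha$, and the transition maps in $A\cong \varprojlim \mathbb{R}^{d_k}$ are automatically $\mathbb{R}$-linear; conversely, each vector group admits a cofinal system of vector-group Lie quotients via coordinate projections. The main obstacle is (6): given $A$ with $\{N\in\mathcal{N}(A) : A/N \text{ of type }\mathbb{A}\}$ cofinal, I would invoke the maximal vector subgroup decomposition $A\cong V\oplus H$ from Section \ref{Subsection:pro-Lie}, with $V$ a vector group and $H$ vector-free, and argue that $H$ is topological torsion. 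By Lemma \ref{Lemma:quotient}, each $H/M$ for $M\in \mathcal{N}(H)$ inherits the type $\mathbb{A}$ property, so $H/M\cong V_M\oplus D_M$ with $D_M$ discrete torsion; the delicate step is showing $V_M=0$, which I would handle by using Lemma \ref{Lemma:injective-pro-Lie} to lift a nontrivial $V_M$ to a closed vector subgroup of $H$, contradicting vector-freeness. Granted this, each $H/M$ is discrete torsion, establishing topological torsion for $H$; the converse is immediate, since if $A=V\oplus B$ with $V$ vector and $B$ topological torsion, cofinally many co-Lie $N$ give $A/N\cong V_N\oplus B_N$ of type $\mathbb{A}$.
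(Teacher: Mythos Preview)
Your overall strategy coincides with the paper's: the paper only writes out (6), declaring (1)--(5) ``easy to see,'' and for (6) it too uses the maximal vector subgroup decomposition $A\cong V\oplus B$ with $B$ vector-free and then argues that every Lie quotient $B/M$ has trivial vector part. Your treatment of (1)--(5) is more detailed than the paper's but follows the expected lines.

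There is, however, a genuine gap in your handling of the key step in (6). You write that you would ``use Lemma~\ref{Lemma:injective-pro-Lie} to lift a nontrivial $V_M$ to a closed vector subgroup of $H$.'' But Lemma~\ref{Lemma:injective-pro-Lie} is an \emph{injectivity} statement: it says that a closed \emph{subgroup} of the form $T\oplus V$ splits off as a direct summand. It does not let you produce such a subgroup from a \emph{quotient}. What you actually need is \emph{projectivity} of the finite-dimensional vector group $V_M$ in $\mathbf{proLiePAb}$, so that the surjection $H\twoheadrightarrow H/M\twoheadrightarrow V_M$ admits a continuous section; that section would then exhibit a closed copy of $V_M$ inside $H$, contradicting vector-freeness. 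This projectivity is precisely the content of the later lemma asserting that locally compact vector groups are projective in $\mathbf{proLiePAb}$ (or, alternatively, one can argue via Weil's Lemma and exactness of the Lie algebra functor from \cite{hofmann_lie_2007}).

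The paper's own argument at this point is also terse---it simply asserts ``since $B$ is vector-free, the composition $B\to B/M\to W$ is the zero homomorphism''---but it is implicitly invoking the Hofmann--Morris structure theory rather than Lemma~\ref{Lemma:injective-pro-Lie}. So your strategy is right, but the lemma you cite does not supply the lifting you need; replace it with the projectivity of finite-dimensional vector groups (or the equivalent argument via $c(H)$ being compact and $H/c(H)$ non-Archimedean, which forces any continuous surjection $H\to\mathbb{R}^n$ to have countable image).
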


\begin{proof}
We give details for (6), the other assertions being easy to see.

Suppose that $A$ is type $\mathbb{A}$. Then we have that $A\cong V\oplus B$
where $V$ is a vector group and $B$ is vector-free. We claim that $B$ is a
topological torsion group. If $N\in \mathcal{N}\left( B\right) $ then there
exist $L\in \mathcal{N}\left( V\right) $ and $M\in \mathcal{N}\left(
B\right) $ contained in $N$ such that 
\begin{equation*}
\frac{V\oplus B}{L\oplus M}\cong \left( V/L\right) \oplus \left( B/M\right)
\end{equation*}%
is type $\mathbb{A}$, and hence $B/M$ is type $\mathbb{A}$. We can write $%
B/M=W\oplus S$ where $W$ is a vector group and $S$ is a topological torsion
group. Since $B$ is vector-free, we must have that the composition $%
B\rightarrow B/M\rightarrow W$ where $B\rightarrow B/M$ is the quotient map
and $B/M\rightarrow W$ is the coordinate projection, is the zero
homomorphism. This shows that $W=0$ and $B/M$ is a topological torsion group.

The converse implication follows from (1) and (5).
\end{proof}

For locally compact abelian Polish groups $A$ and $B$, we have that $\mathrm{%
Hom}\left( A,B\right) =0$ whenever:

\begin{itemize}
\item $A$ has type $\mathbb{S}^{1}$ and $B$ has type $\mathbb{A}$ or $%
\mathbb{Z}$;

\item $A$ has type $\mathbb{A}$ and $B$ has type $\mathbb{Z}$.
\end{itemize}

It follows from this and Proposition \ref{Proposition:Hom-pro-Lie} that the
same conclusions hold when $A$ and $B$ are pro-Lie Polish abelian groups.

If $A$ is a pro-Lie Polish abelian group, then by \cite[Theorem 5.20]%
{hofmann_lie_2007} we have that $A$ has a largest closed vector subgroup,
which we denote by $A_{\mathbb{R}}$. Then we have that $A=A_{\mathbb{R}%
}\oplus B$ where $B$ is vector-free. We also let $A_{\mathbb{S}^{1}}$ be the
connected component of the trivial element in $B$, which is \emph{compact }%
\cite[Theorem 24(ii)]{hofmann_lie_2007}. We let $c\left( A\right) $ be the
connected component of the trivial element in $A$.

\begin{lemma}
\label{Lemma:connected-compact}Suppose that $\left( A^{\left( k\right)
}\right) $ is a tower of compact connected Lie groups. Then $\mathrm{\mathrm{%
lim}}_{k}A^{\left( k\right) }$ is connected.
\end{lemma}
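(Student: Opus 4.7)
The plan is to argue via Pontryagin duality. First I would note that each $A^{(k)}$, being a compact connected abelian Lie group, is a torus $\mathbb{T}^{n_{k}}$ for some $n_{k}\in \omega $, so its Pontryagin dual $(A^{(k)})^{\vee }$ is isomorphic to the finitely generated torsion-free discrete abelian group $\mathbb{Z}^{n_{k}}$. The inverse limit $A:=\mathrm{lim}_{k}A^{(k)}$ is a closed subgroup of the compact group $\prod_{k}A^{(k)}$, hence itself a compact Hausdorff abelian group.

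Next, I would invoke the standard fact that Pontryagin duality converts inverse limits of compact abelian groups into direct limits of discrete abelian groups, yielding
\[
A^{\vee }\;\cong \;\mathrm{colim}_{k}(A^{(k)})^{\vee }.
\]
I would then verify that this direct limit is torsion-free: if a class $[x]\in \mathrm{colim}_{k}(A^{(k)})^{\vee }$ represented by some $x\in (A^{(\ell )})^{\vee }$ satisfies $n[x]=0$, then the image of $nx$ in $(A^{(m)})^{\vee }$ vanishes for some $m\geq \ell $; since $(A^{(m)})^{\vee }\cong \mathbb{Z}^{n_{m}}$ is torsion-free, the image of $x$ in $(A^{(m)})^{\vee }$ is already zero, and therefore $[x]=0$.

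Finally, I would apply the standard characterization that a compact Hausdorff abelian group is connected if and only if its Pontryagin dual is torsion-free (for example, a character of finite order $>1$ factors through a nontrivial finite quotient and hence witnesses disconnectedness, while conversely, if $G$ is disconnected, then $G/c(G)$ is a nontrivial profinite group whose dual is a nontrivial torsion subgroup of $G^{\vee }$). Applying this to $A$ with its torsion-free dual $A^{\vee }$ gives that $A$ is connected. The main point is the interplay between Pontryagin duality and the countable inverse limit; once that is invoked, the remainder is bookkeeping about torsion-freeness under directed colimits.
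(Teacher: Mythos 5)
Your proof is correct, but it follows a genuinely different route from the paper. You classify each $A^{(k)}$ as a torus, dualize, use that Pontryagin duality turns the countable inverse limit of compact abelian groups into the directed colimit of their (discrete) duals, check that a directed colimit of torsion-free groups is torsion-free, and conclude via the standard criterion that a compact abelian group is connected if and only if its dual is torsion-free; all of these steps are sound (and the commutativity you use is consistent with the paper's standing convention that all groups are abelian). The paper instead argues directly with compactness: it replaces $A^{(k)}$ by the eventual images $B^{(k)}=\bigcap_{n>k}\mathrm{Ran}\bigl(A^{(n)}\rightarrow A^{(k)}\bigr)$, which by compactness form an \emph{epimorphic} tower of compact connected Lie groups with the same limit, and then observes that any open subgroup $U$ of the limit projects onto each $A^{(k)}$, hence is dense, hence (being open and therefore closed) is everything; a compact group with no proper open subgroups is connected. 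Your duality argument is arguably cleaner and makes the structural reason transparent (torsion-freeness of $\mathbb{Z}^{n_k}$ persists in the colimit), but it leans on the duality machinery and on the groups being abelian, whereas the paper's argument is elementary, does not use commutativity, and its key device (passing to the epimorphic tower of eventual images) is reused verbatim in the non-compact companion result, Lemma \ref{Lemma:connected-tower}, where a duality argument would not apply as directly.
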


\begin{proof}
Define $B^{\left( k\right) }=\bigcap_{n>k}\mathrm{Ran}\left( A^{\left(
n\right) }\rightarrow A^{\left( k\right) }\right) \subseteq A^{\left(
k\right) }$ for $k\in \omega $. Then, by compactness, $\left( B^{\left(
k\right) }\right) $ is an epimorphic tower of compact connected Lie groups
with $\mathrm{\mathrm{lim}}_{k}B^{\left( k\right) }=\mathrm{\mathrm{lim}}%
_{k}A^{\left( k\right) }$. Thus, without loss of generality we can assume
that $\left( A^{\left( k\right) }\right) $ is an epimorphic tower. If $%
U\subseteq \mathrm{\mathrm{lim}}_{k}A^{\left( k\right) }$ is an open
subgroup, then for every $k\in \omega $, $\pi ^{\left( k\right) }\left(
U\right) \subseteq A^{\left( k\right) }$ is an open subgroup. Thus, $\pi
^{\left( k\right) }\left( U\right) =A^{\left( k\right) }$ and hence $U$ is
dense in $\mathrm{\mathrm{lim}}_{k}A^{\left( k\right) }$. This implies that $%
U=\mathrm{\mathrm{lim}}_{k}A^{\left( k\right) }$, showing that $\mathrm{%
\mathrm{lim}}_{k}A^{\left( k\right) }$, being a compact group, is connected.
\end{proof}

\begin{lemma}
If $V$ is a\emph{\ locally compact} vector group, then $V$ is projective in
the category of pro-Lie Polish abelian groups.x
\end{lemma}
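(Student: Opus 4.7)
The approach is to pull back: given an admissible epic $\pi\colon A\to B$ in $\mathbf{proLiePAb}$ and a morphism $f\colon V\to B$, form the pullback $P:=A\times_B V$. This belongs to $\mathbf{proLiePAb}$ by Theorem \ref{Theorem:extension-proL} (since the kernel $K:=\mathrm{\mathrm{Ker}}(\pi)$ is pro-Lie as a closed subgroup of $A$, and $V$ is pro-Lie), and it fits into a short exact sequence $0\to K\to P\overset{\rho}{\rightarrow} V\to 0$. A lift of $f$ along $\pi$ is the same data as a continuous section of $\rho$, so the task reduces to splitting this sequence. Since a locally compact vector group is necessarily finite-dimensional, $V\cong\mathbb{R}^n$ for some $n<\omega$.

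By the pro-Lie structure theorem \cite[Theorem 5.20]{hofmann_lie_2007}, write $P=V_P\oplus P'$, where $V_P$ is the maximal vector subgroup and $P'$ is vector-free. The restriction $\rho|_{V_P}\colon V_P\to V$ is a continuous homomorphism between vector groups, and hence, by the remark following Lemma \ref{Lemma:vector-group}, it is $\mathbb{R}$-linear with closed image $W\subseteq V$ that is a direct summand of $V$. If $W=V$, then lifting any basis of $V\cong\mathbb{R}^n$ to preimages in $V_P$ and extending $\mathbb{R}$-linearly gives a continuous section $V\to V_P$ (continuity holds because $V$ is finite-dimensional); composing with the inclusion $V_P\hookrightarrow P$ then yields the desired splitting of $\rho$.

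The main obstacle is therefore to prove $W=V$, and this is the only place where the pro-Lie (rather than Lie) nature of $K$ plays a role. Suppose for contradiction $V/W\cong\mathbb{R}^m$ with $m\geq 1$; then $\rho$ descends to a continuous surjection $\bar\rho\colon P'\to\mathbb{R}^m$. The connected component $c(P')$ is itself vector-free, since any vector subgroup of $c(P')$ is in particular a vector subgroup of $P'$. By the structure theorem for connected abelian pro-Lie groups, which writes any such group as a vector group direct-summed with a compact connected group (see \cite[Theorem 5.20]{hofmann_lie_2007} and the ensuing structure theory of Hofmann--Morris), we conclude that $c(P')$ is compact connected. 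Its image under $\bar\rho$ is thus a compact subgroup of $\mathbb{R}^m$, hence trivial, so $\bar\rho$ factors through the quotient $P'/c(P')$, which is non-Archimedean by \cite[Theorem 5.20(iii)]{hofmann_lie_2007}. It remains to observe that no non-Archimedean Polish abelian group $N$ admits a continuous surjection onto a nontrivial vector group: picking a zero-neighborhood $U_0\subseteq\mathbb{R}^m$ that contains no nontrivial subgroup, by continuity there is an open subgroup $U$ of $N$ mapped into $U_0$, forcing the image of $U$ to be trivial; then $\mathrm{\mathrm{Ker}}(\bar\rho)$ is open in $N$, and the Open Mapping Theorem for Polish groups would make $\mathbb{R}^m$ discrete, a contradiction. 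Hence $W=V$, completing the proof.
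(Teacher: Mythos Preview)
Your proof is correct but takes a genuinely different route from the paper's. The paper argues directly from the inverse-limit presentation of the target pro-Lie group: given a surjection $\varphi\colon Y\to V$, it writes $Y=\lim_n Y_n$ as a tower of Lie groups, factors $\varphi$ through some $\varphi_0\colon Y_0\to V$ (using that $V$ has no small subgroups), and then recursively lifts a section level by level using projectivity of $V$ among locally compact abelian groups. Your argument instead invokes the global structure theory of pro-Lie groups: you split off the maximal vector subgroup $V_P$ of the pullback $P$, and then use that for the vector-free complement $P'$ the connected component is compact and the quotient $P'/c(P')$ is non-Archimedean, so $P'$ cannot surject onto any $\mathbb{R}^m$. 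The paper's approach is lighter on prerequisites (it only needs projectivity of $V$ in $\mathbf{LCPAb}$ and the tower description of pro-Lie groups, and would adapt to other projective Lie groups); yours is more structural and avoids any recursive construction, at the cost of using the vector-group complement and the compactness of $c(P')$ for vector-free $P'$, results which the paper only records or proves slightly later in the section.
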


\begin{proof}
Suppose that $Y$ is a pro-Lie Polish abelian group and $\varphi
:Y\rightarrow V$ is a surjective continuous group homomorphism. Since $V$ is
a Lie group, we can find a tower $\left( Y_{n},\eta _{n}:Y_{n+1}\rightarrow
Y_{n}\right) $ of abelian Lie groups and a continuous group homomorphism $%
\varphi _{0}:Y_{0}\rightarrow V$ such that $Y=\mathrm{\mathrm{lim}}_{n}Y_{n}$
and $\varphi =\varphi _{0}\circ \pi _{0}$ where $\pi _{0}:Y\rightarrow Y_{0}$
is the canonical map. By projectivity of $V$ in the category of locally
compact Polish abelian groups, we can find a continuous homomorphism $\psi
_{0}:V\rightarrow Y_{0}$ that is a right inverse for $\varphi _{0}$. Since
the image of $\psi _{0}$ is a closed subgroup of $Y_{0}$ isomorphic to $V$,
we can find a continuous homomorphism $\psi _{1}:\psi _{0}\left( V\right)
\rightarrow Y_{1}$ that is a right inverse for $\eta _{0}|_{\eta
_{0}^{-1}(\psi _{0}(V))}$, whence $\psi _{1}\circ \psi _{0}$ is a continuous
homomorphism that is a right inverse for $\varphi _{0}\circ \eta _{0}$.
Proceeding in this fashion, we can define recursively a continuous
homomorphisms $\psi _{n}:\left( \psi _{n-1}\circ \cdots \circ \psi
_{0}\right) \left( V\right) \rightarrow Y_{n}$ that is a right inverse for $%
\eta _{n}|_{\eta _{n}^{-1}((\psi _{n}\circ \cdots \circ \psi _{0})(V))}$,
whence $\psi _{n}\circ \cdots \circ \psi _{0}$ is a continuous homomorphism
that is a right inverse for $\varphi _{0}\circ \eta _{0}\circ \cdots \circ
\eta _{n-1}$.

One can then define the continuous group homomorphism $\psi :V\rightarrow Y$%
, $x\mapsto \left( \left( \psi _{n}\circ \cdots \circ \psi _{0}\right)
(x)\right) _{n\in \omega }$, which is a right inverse for $\varphi $.
\end{proof}

\begin{corollary}
If $Y$ is a vector-free pro-Lie Polish abelian group, then for every $N\in 
\mathcal{N}\left( Y\right) $, $Y/N$ is a vector-free abelian\ Lie group.
\end{corollary}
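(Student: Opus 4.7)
The plan is to argue by contradiction: if $Y/N$ fails to be vector-free, then its Lie-group decomposition yields a nonzero vector direct summand, which by the projectivity result just proved can be lifted through $Y$, and Weil's Lemma will then produce an honest closed vector subgroup of $Y$, contradicting the hypothesis.

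More concretely, by definition of $\mathcal{N}(Y)$ the group $Y/N$ is an abelian Polish Lie group, so it decomposes as $V\oplus T\oplus D$ with $V$ a finite-dimensional vector group, $T$ a finite-dimensional torus, and $D$ a countable discrete group (as recalled in Subsection~\ref{Subsection:pro-Lie}). Any closed connected subgroup of $T\oplus D$ must sit inside the compact identity component $T$, hence $T\oplus D$ is automatically vector-free. Thus it suffices to rule out $V\neq 0$. Assuming $V\neq 0$, the composition of the quotient $Y\to Y/N$ with the coordinate projection $Y/N\to V$ yields a continuous surjective homomorphism $\varphi:Y\to V$. Since $V$ is a locally compact vector group, the preceding lemma supplies a continuous right inverse $\psi:V\to Y$.

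Now fix any $v\in V\setminus\{0\}$ and consider the continuous homomorphism $f:\mathbb{R}\to Y$, $t\mapsto\psi(tv)$. By Weil's Lemma (Lemma~\ref{Lemma:Weil}), either $f(\mathbb{R})$ has compact closure in $Y$, or $f$ is injective with closed image. The first alternative is impossible, because $\varphi\circ f$ has image $\mathbb{R}v\subseteq V\cong\mathbb{R}^{n}$, which is unbounded and therefore cannot be contained in any compact set. So $f$ is an injective continuous homomorphism onto a closed subgroup of $Y$, and since $f(\mathbb{R})$ is Polish the Open Mapping Theorem for Polish groups \cite[Corollary 2.3.4]{gao_invariant_2009} promotes $f$ to a topological isomorphism $\mathbb{R}\cong f(\mathbb{R})$. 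Hence $f(\mathbb{R})$ is a nonzero closed vector subgroup of $Y$, contradicting vector-freeness. The only delicate point is this last step: the section $\psi$ by itself only provides a continuous algebraic injection of a line into $Y$, and it is precisely Weil's Lemma together with the Open Mapping Theorem that converts this into a topological embedding with closed image, after which the contradiction with the definition of vector-freeness is immediate.
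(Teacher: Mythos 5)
Your argument is correct and is essentially the paper's intended one: the corollary is stated as an immediate consequence of the preceding lemma on projectivity of locally compact vector groups in $\mathbf{proLiePAb}$, which is exactly how you use it. (Minor remark: the detour through Weil's Lemma and the Open Mapping Theorem is unnecessary, since $\psi$ is a homomorphic section of $\varphi$, so $y\mapsto\left( y-\psi \varphi \left( y\right) ,\varphi \left( y\right) \right) $ gives a topological isomorphism $Y\cong \ker \left( \varphi \right) \oplus \psi \left( V\right) $, and $\psi \left( V\right) $ is already a nonzero closed vector subgroup of $Y$.)
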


\begin{lemma}
\label{Lemma:connected-component-surjective}Suppose that $\pi :B\rightarrow
A $ is a continuous surjective homomorphism between Lie Polish abelian
groups. Then its restriction $B_{0}\rightarrow A_{0}$ is surjective.
\end{lemma}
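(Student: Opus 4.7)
The plan is to use the open mapping theorem for Polish groups together with the fact that in a Lie Polish abelian group the identity component is an open subgroup.

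First I would observe that since $A$ and $B$ are abelian Lie Polish groups, by the structure theorem recalled earlier they decompose as $V\oplus T\oplus D$ with $V$ a finite-dimensional vector group, $T$ a finite-dimensional torus, and $D$ a countable discrete group. In particular, the identity components $A_{0}$ and $B_{0}$ (which equal $V_{A}\oplus T_{A}$ and $V_{B}\oplus T_{B}$ respectively) are \emph{open} subgroups of $A$ and $B$. Moreover $B_{0}$ is connected, so its image $\pi(B_{0})$ is a connected subgroup of $A$ containing $0$, and hence $\pi(B_{0})\subseteq A_{0}$.

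Next I would apply the Open Mapping Theorem for Polish groups (cited in the paper as \cite[Corollary~2.3.4]{gao_invariant_2009}) to the continuous surjection $\pi:B\rightarrow A$: it is automatically an open map. Since $B_{0}$ is open in $B$, it follows that $\pi(B_{0})$ is an open subgroup of $A$, and therefore also an open subgroup of $A_{0}$.

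To finish, I would use the standard fact that an open subgroup of a topological group is also closed (its complement is a union of open cosets). Thus $\pi(B_{0})$ is a nonempty subset of $A_{0}$ that is both open and closed in $A_{0}$; by connectedness of $A_{0}$, this forces $\pi(B_{0})=A_{0}$. I do not anticipate any genuine obstacle here: the proof is essentially a bookkeeping combination of (i) openness of the identity component in a Lie group, (ii) the Open Mapping Theorem for Polish groups, and (iii) connectedness of $A_{0}$.
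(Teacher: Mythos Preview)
Your proof is correct. Both arguments finish the same way---showing that $\pi(B_{0})$ is an open subgroup of the connected group $A_{0}$---but they differ in how openness is obtained. The paper writes $B=B_{0}\oplus C$ with $C$ countable, so $\pi(B_{0})$ is an \emph{analytic} subgroup of $A_{0}$ of countable index, and then invokes the descriptive-set-theoretic fact that an analytic subgroup of countable index is open. You instead use that $B_{0}$ is already open in the Lie group $B$ and apply the Open Mapping Theorem directly. Your route is arguably more elementary here, since it avoids the Baire-category/analytic-set argument; the paper's route, on the other hand, would still work under the weaker hypothesis that $B/B_{0}$ is merely countable rather than discrete (though for Lie groups these coincide).
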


\begin{proof}
We can write $B=B_{0}\oplus C$ where $C$ is countable. Thus, we have that $%
\pi \left( B_{0}\right) $ is an analytic subgroup of $A_{0}$ of countable
index, whence it is open. Since $A_{0}$ is connected, this implies that $\pi
\left( B_{0}\right) =A_{0}$.
\end{proof}

\begin{lemma}
\label{Lemma:connected-component-tower}Suppose that $A$ is a pro-Lie Polish
abelian group. Let $\left( A^{\left( k\right) }\right) $ be an epimorphic
tower with $A=\mathrm{\mathrm{lim}}_{k}A^{\left( k\right) }$. Then $%
(c(A^{\left( k\right) }))$ is an epimorphic tower.
\end{lemma}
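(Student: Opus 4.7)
The plan is to reduce the statement to a direct application of Lemma~\ref{Lemma:connected-component-surjective} to each bonding map of the tower.

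First, I would observe that the claim amounts to showing that each bonding map $\pi_{k}:A^{(k+1)}\rightarrow A^{(k)}$ restricts to a \emph{surjective} continuous homomorphism $c(A^{(k+1)})\rightarrow c(A^{(k)})$. That the restriction is well-defined is automatic: $\pi_{k}$ is continuous and maps $0$ to $0$, so $\pi_{k}(c(A^{(k+1)}))$ is a connected subset containing $0$, hence is contained in $c(A^{(k)})$. Thus only surjectivity needs to be proved.

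Second, in the pro-Lie framework used throughout this paper it is natural to take the tower $(A^{(k)})$ to consist of Polish abelian Lie groups. If this is not part of the hypothesis, I would first reduce to this case: by Lemma~\ref{Lemma:countable-cofinality} one can pick a decreasing sequence $(N_{k})$ cofinal in $\mathcal{N}(A)$ with $N_{k}\rightarrow 0$, giving $A\cong \mathrm{lim}_{k}A/N_{k}$; the quotients $A/N_{k}$ are abelian Polish Lie groups and the bonding maps $A/N_{k+1}\rightarrow A/N_{k}$ are surjective because $N_{k+1}\subseteq N_{k}$. A standard cofinality argument between inverse systems over $\omega$ shows that the tower $(A^{(k)})$ and the tower $(A/N_{k})$ are isomorphic as pro-objects after passing to subsequences, so it is enough to prove the assertion for the latter.

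Third, once the $A^{(k)}$ are known to be Polish Lie abelian groups, Lemma~\ref{Lemma:connected-component-surjective} applies to each surjective continuous homomorphism $\pi_{k}:A^{(k+1)}\rightarrow A^{(k)}$ and yields exactly that the restriction $c(A^{(k+1)})\rightarrow c(A^{(k)})$ is surjective. Since this holds for every $k\in \omega $, the tower $(c(A^{(k)}))$ is epimorphic, as required. The only genuine obstacle is the reduction to the Lie case, which is handled cleanly by Lemma~\ref{Lemma:countable-cofinality}; the conclusion itself is then a one-line consequence of the preceding lemma.
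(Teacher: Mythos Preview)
Your main argument is correct and matches the paper's proof exactly: the paper simply says the result follows from Lemma~\ref{Lemma:connected-component-surjective}, applied to each bonding map. In the paper's usage (see Lemmas~\ref{Lemma:connected-component vf}, \ref{Lemma:non-Archimedean-quotient vf}, and Corollary~\ref{Corollary:connected-component}) the tower $(A^{(k)})$ always consists of Lie groups, so your ``Third'' step is all that is needed.

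Your ``Second'' step, however, is both unnecessary and flawed. Even granting that $(A^{(k)})$ and $(A/N_{k})$ are isomorphic as pro-objects after passing to cofinal subsequences, this tells you nothing about the \emph{individual} bonding maps of the original tower: surjectivity of $c(A/N_{k+1})\rightarrow c(A/N_{k})$ for a cofinal Lie tower does not transfer back to surjectivity of $c(A^{(k+1)})\rightarrow c(A^{(k)})$ for the given tower. If the $A^{(k)}$ were genuinely arbitrary pro-Lie groups, you would need a separate argument (for instance, a version of Lemma~\ref{Lemma:connected-component-surjective} for pro-Lie rather than Lie targets). Since the intended reading is that the $A^{(k)}$ are Lie, you should simply drop this reduction.
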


\begin{proof}
This follows from Lemma \ref{Lemma:connected-component-surjective}.
\end{proof}

\begin{lemma}
\label{Lemma:connected-component vf}Let $A$ be a vector-free pro-Lie Polish
abelian group. Suppose that $\left( A^{\left( k\right) }\right) $ is an
epimorphic tower of Lie groups with $A=\mathrm{\mathrm{lim}}_{k}A^{\left(
k\right) }$. Then $c\left( A\right) =\mathrm{\mathrm{lim}}_{k}c\left(
A^{\left( k\right) }\right) $ is a compact connected subgroups of $A$.
\end{lemma}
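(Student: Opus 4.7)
The plan is to first identify the shape of each $A^{(k)}$, then use the two preceding lemmas about epimorphic towers of connected Lie groups, and finally match the two subgroups.

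First I would observe that since $(A^{(k)})$ is an epimorphic tower with $A=\mathrm{lim}_{k}A^{(k)}$, the kernel $N_{k}$ of the canonical map $A\rightarrow A^{(k)}$ lies in $\mathcal{N}(A)$ and $A^{(k)}\cong A/N_{k}$. By the Corollary on vector-free pro-Lie groups above, each $A^{(k)}$ is a vector-free Polish abelian Lie group. Using the structure theorem for Polish abelian Lie groups, $A^{(k)}\cong T^{(k)}\oplus D^{(k)}$ with $T^{(k)}$ a finite-dimensional torus and $D^{(k)}$ countable discrete. Hence $c(A^{(k)})=T^{(k)}$ is a compact connected Lie group. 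By Lemma \ref{Lemma:connected-component-tower}, $(c(A^{(k)}))$ is again an epimorphic tower, now of compact connected Lie groups.

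Next, setting $K:=\mathrm{lim}_{k}c(A^{(k)})$, I would note that $K$ is automatically compact as an inverse limit of compact groups, and connected by Lemma \ref{Lemma:connected-compact}. Since the inclusions $c(A^{(k)})\hookrightarrow A^{(k)}$ are closed and compatible with the transition maps, $K$ is a closed subgroup of $\mathrm{lim}_{k}A^{(k)}=A$.

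Finally I would identify $K$ with $c(A)$ by a double inclusion. On one hand, $K$ is connected and contains $0$, so $K\subseteq c(A)$. On the other hand, for each $k$ the image of $c(A)$ under the canonical map $A\rightarrow A^{(k)}$ is a connected subset containing $0$, hence contained in $c(A^{(k)})$; passing to the inverse limit gives $c(A)\subseteq K$. There is no real obstacle here: everything reduces to the vector-freeness of the $A^{(k)}$ (to guarantee compactness of each $c(A^{(k)})$) together with the two previously established lemmas; the only step that requires care is confirming that the given tower can be treated as a tower of quotients by $\mathcal{N}(A)$-subgroups so that the Corollary applies.
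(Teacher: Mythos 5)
Your proposal is correct and follows essentially the same route as the paper: identify each $A^{(k)}$ as a vector-free abelian Lie group so that $c(A^{(k)})$ is a compact (connected) torus, apply Lemma \ref{Lemma:connected-component-tower} and Lemma \ref{Lemma:connected-compact} to get that $\mathrm{lim}_{k}c(A^{(k)})$ is compact and connected, and conclude $c(A)=\mathrm{lim}_{k}c(A^{(k)})$ by the two inclusions (the paper phrases your second inclusion as functoriality of $c$). Your extra care about realizing the $A^{(k)}$ as quotients $A/N_{k}$ with $N_{k}\in\mathcal{N}(A)$, via surjectivity of the limit projections for an epimorphic tower, is a legitimate filling-in of a step the paper leaves implicit.
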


\begin{proof}
Notice that, for every $k\in \omega $, $A^{\left( k\right) }$ is
vector-free. We have that $\mathrm{\mathrm{lim}}_{k}c(A^{\left( k\right) })$
is compact, and connected by Lemma \ref{Lemma:connected-compact}. This shows
that $\mathrm{\mathrm{lim}}_{k}c(A^{\left( k\right) })\subseteq c\left(
A\right) $. The converse inclusion follow from functoriality of $c\left(
A\right) $.
\end{proof}

\begin{lemma}
\label{Lemma:non-Archimedean-quotient vf}Suppose that $A$ is a vector-free
pro-Lie Polish abelian group. Then $A\left/ c\left( A\right) \right. $ is a
non-Archimedean Polish abelian group.
\end{lemma}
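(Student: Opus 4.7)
My plan is to reduce the claim to a Mittag--Leffler-style argument over an epimorphic tower, using that vector-freeness forces the connected components at finite stages to be tori.

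First I would write $A \cong \mathrm{\mathrm{lim}}_{k} A^{(k)}$ for an epimorphic tower $(A^{(k)})$ of Polish abelian Lie groups, using Lemma \ref{Lemma:countable-cofinality} (and, if needed, replacing the given cofinal sequence by the tower of images to secure that the transition maps are surjective). Since $A$ is vector-free, the corollary preceding Lemma \ref{Lemma:connected-component-tower} yields that every $A^{(k)}$ is vector-free as a Lie group; combined with the Lie structure theorem recalled in Subsection \ref{Subsection:pro-Lie}, this gives $A^{(k)} \cong T_{k} \oplus D_{k}$ with $T_{k}$ a finite-dimensional torus and $D_{k}$ a countable discrete group. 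In particular $c(A^{(k)}) = T_{k}$ is \emph{compact}, and by Lemma \ref{Lemma:connected-component vf} we have $c(A) \cong \mathrm{\mathrm{lim}}_{k} c(A^{(k)})$, with $(c(A^{(k)}))$ an epimorphic tower of compact connected Lie groups.

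Next I would pass to the towers of short exact sequences
\begin{equation*}
0 \rightarrow c(A^{(k)}) \rightarrow A^{(k)} \rightarrow A^{(k)}\left/c(A^{(k)})\right. \rightarrow 0.
\end{equation*}
Each quotient $A^{(k)}/c(A^{(k)}) \cong D_{k}$ is countable discrete, and the transition maps between them are surjective since the tower $(A^{(k)})$ is epimorphic and pushes forward surjectively to the quotients. The core point is that taking inverse limits preserves the short exactness here: because each kernel $c(A^{(k)})$ is a compact Hausdorff group and the bonding maps of the kernel tower are continuous surjections, the standard Mittag--Leffler argument (compactness implies $\mathrm{\mathrm{lim}}^{1}$ vanishes) shows that the induced map $A \cong \mathrm{\mathrm{lim}}_{k} A^{(k)} \rightarrow \mathrm{\mathrm{lim}}_{k} D_{k}$ is surjective with kernel exactly $\mathrm{\mathrm{lim}}_{k} c(A^{(k)}) = c(A)$. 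Since $A \rightarrow \mathrm{\mathrm{lim}}_{k} D_{k}$ is a continuous surjection between Polish groups, the Open Mapping Theorem for Polish groups (cited earlier via \cite{gao_invariant_2009}) gives that it is a quotient map, so
\begin{equation*}
A\left/c(A)\right. \cong \mathrm{\mathrm{lim}}_{k} D_{k}
\end{equation*}
as Polish abelian groups. Being an inverse limit of countable discrete groups, this is non-Archimedean.

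The one step requiring care is the Mittag--Leffler-type surjectivity onto $\mathrm{\mathrm{lim}}_{k} D_{k}$: given a coherent sequence $(d_{k}) \in \prod_{k} D_{k}$, one lifts by successive approximation, at each stage correcting the lift in $A^{(k)}$ by an element of the compact group $c(A^{(k)})$ and using a diagonal/subsequence argument with the compact metrizable topology on $c(A^{(k)})$ to produce a consistent element of the limit. This is the only non-formal ingredient; everything else (identifying the kernel, invoking vector-freeness to force compactness of $c(A^{(k)})$, and applying the Open Mapping Theorem) is a direct use of the lemmas already in place.
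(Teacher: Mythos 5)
Your proof is correct and follows essentially the same route as the paper's: realize $A$ as the limit of an epimorphic tower of (necessarily vector-free) Lie groups, use Lemma \ref{Lemma:connected-component-tower} and Lemma \ref{Lemma:connected-component vf} to identify $c(A)$ with $\mathrm{lim}_{k}c(A^{(k)})$, and pass the short exact sequences $c(A^{(k)})\rightarrow A^{(k)}\rightarrow C^{(k)}$ to the limit to get $A/c(A)\cong \mathrm{lim}_{k}C^{(k)}$, which is non-Archimedean. The only cosmetic difference is that you justify exactness of the limit by a compactness (Mittag--Leffler) argument on the kernel tower and make the Open Mapping Theorem step explicit, whereas the paper appeals to the kernel tower being epimorphic; both are standard and valid.
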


\begin{proof}
Let $\left( A^{\left( k\right) }\right) $ be an epimorphic tower of
(necessarily vector-free) Lie Polish abelian groups with $A=\mathrm{\mathrm{%
lim}}_{k}A^{\left( k\right) }$. For every $k\in \omega $, we have a short
exact sequence%
\begin{equation*}
c(A^{\left( k\right) })\rightarrow A^{\left( k\right) }\rightarrow C^{\left(
k\right) }
\end{equation*}%
where $C^{\left( k\right) }$ is countable. Since $(c\left( A\right) ^{\left(
k\right) })$ is an epimorphic tower by Lemma \ref%
{Lemma:connected-component-tower}, this implies that we have a short exact
sequence%
\begin{equation*}
\mathrm{\mathrm{lim}}_{k}c(A^{\left( k\right) })\rightarrow \mathrm{\mathrm{%
lim}}_{k}A^{\left( k\right) }\rightarrow \mathrm{\mathrm{lim}}_{k}C^{\left(
k\right) }
\end{equation*}%
Since $c\left( A\right) =\mathrm{\mathrm{lim}}_{k}c(A^{\left( k\right) })$
by Lemma \ref{Lemma:connected-component-tower}, $A=\mathrm{\mathrm{lim}}%
_{k}A^{\left( k\right) }$, and $\mathrm{\mathrm{lim}}_{k}C^{\left( k\right)
} $ is non-Archimedean, the conclusion follows.
\end{proof}

\begin{lemma}
Suppose that $A$ is a non-Archimedean Polish group. Then $A$ has a largest
topological torsion closed subgroup $A_{\mathrm{t}}$. Furthermore, $A\left/
A_{\mathrm{t}}\right. $ has sub-type $\mathbb{Z}$.
\end{lemma}

\begin{proof}
Let $\left( A^{\left( k\right) }\right) $ be an epimorphic tower of
countable groups with $A=\mathrm{\mathrm{lim}}_{k}A^{\left( k\right) }$. For 
$k\in \omega $, let $T\left( A^{\left( k\right) }\right) $ be the torsion
subgroup of $A^{\left( k\right) }$, and set $C^{\left( k\right) }:=A^{\left(
k\right) }\left/ T\left( A^{\left( k\right) }\right) \right. $. Then we have
an exact sequence%
\begin{equation*}
0\rightarrow \mathrm{\mathrm{lim}}_{k}T(A^{\left( k\right) })\rightarrow 
\mathrm{\mathrm{lim}}_{k}A^{\left( k\right) }\rightarrow \mathrm{\mathrm{lim}%
}_{k}C^{\left( k\right) }\text{.}
\end{equation*}%
Setting $A_{\mathrm{t}}:=\mathrm{\mathrm{lim}}_{k}T\left( A^{\left( k\right)
}\right) $ and $C:=\mathrm{\mathrm{lim}}_{k}C^{\left( k\right) }$, we have
that $A_{\mathrm{t}}$ is a topological torsion group and $C$ has type $%
\mathbb{Z}$. The above argument shows that $A\left/ A_{\mathrm{t}}\right. $
is a Polishable subgroup of $C$, and hence of sub-type $\mathbb{Z}$.

If $B$ is a topological torsion closed subgroup of $A$, then $\mathrm{Hom}%
\left( B,C\right) =0$ and hence $B\subseteq A_{\mathrm{t}}$.
\end{proof}

\begin{lemma}
\label{Lemma:connected-tower}Suppose that $\left( A^{\left( k\right)
}\right) $ is a tower of connected Lie groups. Then $\mathrm{\mathrm{lim}}%
_{k}A^{\left( k\right) }$ is connected.
\end{lemma}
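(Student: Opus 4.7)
The plan is to mimic the approach of Lemma~\ref{Lemma:connected-compact}---first reduce to an epimorphic tower, then show the limit has no proper open subgroup, and finally invoke the structure theory of pro-Lie groups to conclude connectedness. The extra care required comes from the fact that images of continuous homomorphisms between Lie groups of the form $\mathbb{R}^{n}\oplus\mathbb{T}^{d}$ need not be closed, so the ranges used in the compact case must be replaced with their closures, with surjectivity of the induced bondings recovered via a Lie-algebraic argument.

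For the reduction, for $k\le n$ let $p_{k,n}\colon A^{(n)}\rightarrow A^{(k)}$ denote the composition of bonding maps and set $C^{(k,n)}:=p_{k,n}(A^{(n)})$. Each $A^{(k)}$ is an abelian connected Lie group, so $C^{(k,n)}=\exp_{A^{(k)}}(dp_{k,n}(\mathrm{Lie}(A^{(n)})))$ is a connected analytic subgroup determined by its Lie algebra. The decreasing chain of subspaces $\mathrm{Lie}(C^{(k,n)})\subseteq\mathrm{Lie}(A^{(k)})$ stabilizes in $n$ for each fixed $k$, and hence so does $(C^{(k,n)})_{n}$; choose strictly increasing $N_{k}$ with $C^{(k,n)}=C^{(k,N_{k})}=:C^{(k)}$ for all $n\ge N_{k}$. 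Let $\overline{C}^{(k)}:=\overline{C^{(k)}}$, a closed connected Lie subgroup of $A^{(k)}$. Continuity yields $\pi_{k}(\overline{C}^{(k+1)})\subseteq\overline{\pi_{k}(C^{(k+1)})}=\overline{C^{(k,N_{k+1})}}=\overline{C}^{(k)}$, and this image is a connected analytic subgroup of $\overline{C}^{(k)}$ with Lie algebra $d\pi_{k}(\mathrm{Lie}(\overline{C}^{(k+1)}))=\mathrm{Lie}(C^{(k,N_{k+1})})=\mathrm{Lie}(\overline{C}^{(k)})$; since for connected abelian Lie groups the exponential is surjective and any connected analytic subgroup with the full ambient Lie algebra equals the ambient group, the restriction $\overline{C}^{(k+1)}\rightarrow\overline{C}^{(k)}$ is surjective. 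Moreover, any $(x_{k})\in\mathrm{lim}_{k}A^{(k)}$ satisfies $x_{k}=p_{k,n}(x_{n})\in C^{(k,n)}$ for all $n\ge k$, so $x_{k}\in C^{(k)}\subseteq\overline{C}^{(k)}$, and hence $\mathrm{lim}_{k}A^{(k)}=\mathrm{lim}_{k}\overline{C}^{(k)}$ as topological groups.

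Replacing $(A^{(k)})$ by $(\overline{C}^{(k)})$, I may thus assume the tower is epimorphic. Iteratively lifting through the surjective bondings shows that each projection $\pi_{K}\colon A:=\mathrm{lim}_{k}A^{(k)}\rightarrow A^{(K)}$ is surjective, and the Open Mapping Theorem for Polish groups (\cite[Corollary 2.3.4]{gao_invariant_2009}) then forces $\pi_{K}$ to be open. For any open subgroup $U\subseteq A$, the image $\pi_{K}(U)$ is therefore an open subgroup of the connected group $A^{(K)}$, hence equal to $A^{(K)}$. Since the basic open neighborhoods in $A$ have the form $\pi_{K}^{-1}(V)$, this forces $U$ to meet every basic open set; being simultaneously open and closed, the dense subgroup $U$ equals $A$.

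To upgrade ``no proper open subgroup'' to connectedness, decompose the pro-Lie group $A$ as $A=A_{\mathbb{R}}\oplus B$ with $B$ vector-free; then $c(A)=A_{\mathbb{R}}\oplus c(B)$ and $A/c(A)\cong B/c(B)$ is non-Archimedean by Lemma~\ref{Lemma:non-Archimedean-quotient vf}. If $A$ were disconnected, $A/c(A)$ would be a nontrivial non-Archimedean Polish abelian group and would therefore admit a proper open subgroup (any open subgroup avoiding a fixed nonzero element, available by Hausdorffness and the existence of a basis of open subgroups at $0$), whose preimage under the quotient map would be a proper open subgroup of $A$, contradicting the preceding paragraph. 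The main obstacle is the reduction step: unlike the compact setting of Lemma~\ref{Lemma:connected-compact}, the ranges $\mathrm{Ran}(A^{(n)}\rightarrow A^{(k)})$ need not be closed, and the remedy is to pass to their closures and exploit the Lie-algebraic rigidity of connected analytic subgroups of abelian Lie groups in order to upgrade density of the induced bondings to actual surjectivity.
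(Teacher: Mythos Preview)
Your reduction step contains a genuine gap. The chain of equalities $d\pi_{k}(\mathrm{Lie}(\overline{C}^{(k+1)}))=\mathrm{Lie}(C^{(k,N_{k+1})})=\mathrm{Lie}(\overline{C}^{(k)})$ tacitly assumes $\mathrm{Lie}(\overline{C}^{(k)})=\mathrm{Lie}(C^{(k)})$, but the closure of an analytic subgroup can have strictly larger Lie algebra. Concretely, take $A^{(0)}=\mathbb{T}^{2}$, $A^{(k)}=\mathbb{R}$ for $k\geq 1$, with $A^{(1)}\rightarrow A^{(0)}$ an irrational winding and all higher bondings the identity. Then $C^{(0)}$ is the dense one-parameter subgroup, $\overline{C}^{(0)}=\mathbb{T}^{2}$, $\overline{C}^{(1)}=\mathbb{R}$, and $\pi_{0}(\overline{C}^{(1)})$ is precisely the irrational line---so the closure tower is \emph{not} epimorphic and your open-mapping argument for the projections breaks down. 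Your strategy can be salvaged by equipping each $C^{(k)}$ with its \emph{intrinsic} Lie-group topology rather than passing to closures: then $(C^{(k)})$ is genuinely an epimorphic tower of connected Polish abelian Lie groups, the bondings are continuous for these topologies, and the identity map $\lim_{k}C^{(k)}\rightarrow\lim_{k}A^{(k)}$ is a continuous bijection of Polish groups, hence a homeomorphism.

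The paper's route is rather different and does not rely on the reduction in any essential way. It writes each $A^{(k)}=V^{(k)}\oplus T^{(k)}$ with $V^{(k)}$ a finite-dimensional vector group and $T^{(k)}$ a finite-dimensional torus, and analyzes the six-term exact sequence associated with the short exact sequence of towers $(T^{(k)})\rightarrow(A^{(k)})\rightarrow(V^{(k)})$. A dimension count forces the chain $(\mathrm{Ran}(T^{(n)}\rightarrow T^{(k)}))_{n}$ to stabilize, whence $\lim_{k}^{1}T^{(k)}=0$; since $\lim_{k}V^{(k)}$ is a vector group, projectivity yields $\lim_{k}A^{(k)}\cong\lim_{k}T^{(k)}\oplus\lim_{k}V^{(k)}$, and connectedness follows from Lemma~\ref{Lemma:connected-compact}. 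Your ``no proper open subgroup plus non-Archimedean quotient'' approach is closer in spirit to the compact case and more conceptual, while the paper's decomposition is more computational but sidesteps the delicate reduction altogether.
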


\begin{proof}
As in the proof of Lemma \ref{Lemma:connected-compact}, we can assume
without loss of generality that $\left( A^{\left( k\right) }\right) $ is an
epimorphic tower. We can write $A^{\left( k\right) }=V^{\left( k\right)
}\oplus T^{\left( k\right) }$ where $V^{\left( k\right) }$ is a
finite-dimensional vector group and $T^{\left( k\right) }$ is a
finite-dimensional torus group. Furthermore, since $\mathrm{Hom}\left(
T^{\left( k+1\right) },V^{\left( k\right) }\right) =0$ for every $k\in
\omega $, $\left( V^{\left( k\right) }\right) $ is an epimorphic tower.
Therefore, we have an exact sequence%
\begin{equation*}
0\rightarrow \mathrm{\mathrm{lim}}_{k}T^{\left( k\right) }\rightarrow 
\mathrm{lim}_{k}A^{\left( k\right) }\rightarrow \mathrm{\mathrm{lim}}%
_{k}V^{\left( k\right) }\rightarrow \mathrm{\mathrm{lim}}_{k}^{1}T^{\left(
k\right) }\text{.}
\end{equation*}%
Notice that, for every $k\in \omega $, if $d_{k}$ is the dimension of $%
T^{\left( k\right) }$, then $d_{k}+1$ is the maximum length of a chain of
compact connected subgroups of $T^{\left( k\right) }$. It follows that the
chain of subgroups $\left( \mathrm{Ran}\left( T^{\left( n\right)
}\rightarrow T^{\left( k\right) }\right) \right) _{n>k}$ stabilizes. As this
holds for every $k\in \omega $, $\mathrm{\mathrm{lim}}_{k}^{1}T^{\left(
k\right) }=0$. By projectivity of vector groups, and since $\mathrm{Ran}%
\left( V^{\left( k+1\right) }\rightarrow V^{\left( k\right) }\right) $ is a
closed vector subgroup of $V^{\left( k\right) }$ for every $k\in \omega $ by 
\cite[Theorem A2.12]{hofmann_lie_2007}, we have that $V:=\mathrm{\mathrm{lim}%
}_{k}V^{\left( k\right) }$ is a vector group. Hence, by projectivity of
vector groups, we have that $\mathrm{\mathrm{lim}}_{k}A^{\left( k\right)
}\cong \mathrm{\mathrm{lim}}_{k}T^{\left( k\right) }\oplus V$, where $%
\mathrm{\mathrm{lim}}_{k}T^{\left( k\right) }$ is connected by the previous
lemma.
\end{proof}

\begin{corollary}
\label{Corollary:connected-component}Let $A$ be a pro-Lie Polish abelian
group.

\begin{enumerate}
\item Suppose that $\left( A^{\left( k\right) }\right) $ is an epimorphic
tower of Lie groups with $A=\mathrm{\mathrm{lim}}_{k}A^{\left( k\right) }$.
Then $c\left( A\right) =\mathrm{\mathrm{lim}}_{k}c\left( A^{\left( k\right)
}\right) $.

\item $A/c\left( A\right) $ is a non-Archimedean Polish abelian group.
\end{enumerate}
\end{corollary}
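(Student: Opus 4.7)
The plan is to imitate the proofs of Lemma \ref{Lemma:connected-component vf} and Lemma \ref{Lemma:non-Archimedean-quotient vf} for vector-free groups, substituting Lemma \ref{Lemma:connected-tower} for Lemma \ref{Lemma:connected-compact} throughout. The point is that Lemma \ref{Lemma:connected-tower} handles \emph{any} epimorphic tower of connected Lie groups, so compactness --- and hence the vector-free hypothesis that was imposed to guarantee it --- is no longer needed. Part (2) will then follow from part (1) by exactly the inverse-limit-of-short-exact-sequences argument already used in Lemma \ref{Lemma:non-Archimedean-quotient vf}.

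For (1), Lemma \ref{Lemma:connected-component-tower} shows that $(c(A^{(k)}))$ is an epimorphic tower of connected Lie groups, so Lemma \ref{Lemma:connected-tower} gives that $\lim_{k} c(A^{(k)})$ is connected. As a closed connected subgroup of $A = \lim_{k} A^{(k)}$, it is contained in $c(A)$. Conversely, functoriality of $c(\cdot)$ applied to each projection $A \to A^{(k)}$ yields continuous homomorphisms $c(A) \to c(A^{(k)})$ that assemble into a continuous homomorphism $c(A) \to \lim_{k} c(A^{(k)})$ whose composition with the inclusion $\lim_{k} c(A^{(k)}) \hookrightarrow A$ is the inclusion $c(A) \hookrightarrow A$; hence $c(A) \subseteq \lim_{k} c(A^{(k)})$. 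For (2), set $D^{(k)} := A^{(k)}/c(A^{(k)})$, a countable discrete group, giving a short exact sequence $c(A^{(k)}) \to A^{(k)} \to D^{(k)}$ for each $k$. Since $(A^{(k)})$ and $(c(A^{(k)}))$ are epimorphic towers, so is $(D^{(k)})$, and a Mittag--Leffler-style induction (as in Lemma \ref{Lemma:non-Archimedean-quotient vf}) produces a short exact sequence $\lim_{k} c(A^{(k)}) \to A \to \lim_{k} D^{(k)}$. Part (1) identifies this with $c(A) \to A \to \lim_{k} D^{(k)}$, and the open mapping theorem for Polish groups yields $A/c(A) \cong \lim_{k} D^{(k)}$, which is non-Archimedean as an inverse limit of countable discrete groups.

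The only non-bookkeeping step is surjectivity of $A \to \lim_{k} D^{(k)}$, but this is a straightforward induction: given a compatible sequence $(d^{(k)})$ and a partial lift $a^{(k)} \in A^{(k)}$, one picks any $b \in A^{(k+1)}$ mapping to $d^{(k+1)}$, observes that the image of $b$ in $A^{(k)}$ differs from $a^{(k)}$ by an element of $c(A^{(k)})$, and corrects $b$ using surjectivity of $c(A^{(k+1)}) \to c(A^{(k)})$ guaranteed by Lemma \ref{Lemma:connected-component-tower}. Beyond this lift, no new ideas are required: the heart of the argument is simply the observation that Lemma \ref{Lemma:connected-tower} lets us dispense with the vector-free hypothesis under which the preceding lemmas were stated.
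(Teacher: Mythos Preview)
Your proposal is correct and follows essentially the same approach as the paper: the paper's proof simply says that the proofs of Lemma~\ref{Lemma:connected-component vf} and Lemma~\ref{Lemma:non-Archimedean-quotient vf} apply verbatim with Lemma~\ref{Lemma:connected-compact} replaced by Lemma~\ref{Lemma:connected-tower}, which is exactly what you do. Your version is in fact more explicit than the paper's, spelling out the surjectivity of $A \to \lim_k D^{(k)}$ that the paper leaves implicit.
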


\begin{proof}
The proofs of Lemma \ref{Lemma:connected-component vf} and Lemma \ref%
{Lemma:non-Archimedean-quotient vf} apply verbatim with Lemma \ref%
{Lemma:connected-compact} replaced with Lemma \ref{Lemma:connected-tower}.
\end{proof}

\begin{lemma}
\label{Lemma:non-Archimedean-quotient}Suppose that $A$ is a pro-Lie Polish
abelian group. Then $A\left/ c\left( A\right) \right. $ is a non-Archimedean
Polish abelian group.
\end{lemma}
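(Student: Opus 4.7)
The plan is to reduce the general case to the vector-free case already handled in Lemma \ref{Lemma:non-Archimedean-quotient vf}, using the maximal vector subgroup decomposition for abelian pro-Lie groups.

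First, I would invoke \cite[Theorem 5.20]{hofmann_lie_2007} (recalled in Subsection \ref{Subsection:pro-Lie}) to write $A \cong V \oplus B$, where $V = A_{\mathbb{R}}$ is a maximal vector subgroup of $A$ and $B$ is vector-free. By Lemma \ref{Lemma:vector-group}, $V \cong \mathbb{R}^{\alpha}$ for some $\alpha \leq \omega$, which is connected.

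The key step is then to identify the connected component. Since $V$ is connected, $V \subseteq c(A)$, and since $c(-)$ commutes with finite direct sums in topological groups, we have $c(A) = V \oplus c(B)$. Consequently,
\begin{equation*}
A/c(A) \cong (V \oplus B)/(V \oplus c(B)) \cong B/c(B).
\end{equation*}
Since $B$ is vector-free, Lemma \ref{Lemma:non-Archimedean-quotient vf} (or, equivalently, part (2) of Corollary \ref{Corollary:connected-component} applied to $B$) gives that $B/c(B)$ is non-Archimedean. Therefore $A/c(A)$ is non-Archimedean, as desired.

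There is no real obstacle here: the substantive work was already done in Lemma \ref{Lemma:non-Archimedean-quotient vf} for the vector-free case, where one needed the inverse limit compatibility of connected components established in Lemma \ref{Lemma:connected-component-tower} together with the fact that a Lie group quotient by its connected component is countable. The present statement is essentially a cleanup: once one peels off the vector summand using the Hofmann--Morris decomposition, the non-Archimedean conclusion follows at once. Alternatively, one may observe that this is exactly Corollary \ref{Corollary:connected-component}(2), so the lemma may simply be recorded as a direct restatement of that corollary.
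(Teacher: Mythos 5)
Your argument is correct, but it follows a different route from the paper's. The paper proves this lemma directly with the tower machinery: it picks an epimorphic tower $\left( A^{\left( k\right) }\right)$ of Lie groups with $A=\mathrm{lim}_{k}A^{\left( k\right) }$, notes that each $A^{\left( k\right) }/c(A^{\left( k\right) })$ is countable, uses Lemma \ref{Lemma:connected-component-tower} to see that the connected-component tower is epimorphic (so the limit sequence stays short exact) and Corollary \ref{Corollary:connected-component}(1) to identify $c\left( A\right) =\mathrm{lim}_{k}c(A^{\left( k\right) })$, whence $A/c\left( A\right) \cong \mathrm{lim}_{k}C^{\left( k\right) }$ is non-Archimedean; the key input there is Lemma \ref{Lemma:connected-tower} (limits of towers of arbitrary connected Lie groups are connected), whose proof needs the $\mathrm{lim}^{1}$-vanishing for tori and projectivity of vector groups. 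You instead peel off the maximal vector subgroup, $A\cong V\oplus B$ with $B$ vector-free (via \cite[Theorem 5.20]{hofmann_lie_2007} and Lemma \ref{Lemma:injective-pro-Lie}), observe $c\left( A\right) =V\oplus c\left( B\right)$ so that $A/c\left( A\right) \cong B/c\left( B\right)$, and quote the vector-free case, Lemma \ref{Lemma:non-Archimedean-quotient vf}, which only relies on the compact-connected limit result (Lemma \ref{Lemma:connected-compact}). Each step of your reduction is sound: $B\cong A/V$ is pro-Lie and Polish, the connected component of a topological direct sum is the direct sum of the connected components, and the quotient identification is topological. What your route buys is that it bypasses Lemma \ref{Lemma:connected-tower} and the identification of $c(A)$ as a limit in the general (non-vector-free) case, at the price of invoking the Hofmann--Morris vector-group splitting; the paper's route stays uniform within the tower arguments it has already set up and records the limit description of $c(A)$, which is used elsewhere. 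Your closing observation is also accurate: the lemma coincides with Corollary \ref{Corollary:connected-component}(2), so citing that corollary would suffice, though its proof in the paper is exactly the tower argument just described rather than your reduction.
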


\begin{proof}
Let $\left( A^{\left( k\right) }\right) $ be an epimorphic tower of Lie
Polish abelian groups with $A=\mathrm{\mathrm{lim}}_{k}A^{\left( k\right) }$%
. For every $k\in \omega $, we have a short exact sequence%
\begin{equation*}
c\left( A^{\left( k\right) }\right) \rightarrow A^{\left( k\right)
}\rightarrow C^{\left( k\right) }
\end{equation*}%
where $C^{\left( k\right) }$ is countable. Since $(c\left( A\right) ^{\left(
k\right) })$ is an epimorphic tower by Lemma \ref%
{Lemma:connected-component-tower}, we have a short exact sequence%
\begin{equation*}
\mathrm{\mathrm{lim}}_{k}c(A^{\left( k\right) })\rightarrow \mathrm{\mathrm{%
lim}}_{k}A^{\left( k\right) }\rightarrow \mathrm{\mathrm{lim}}_{k}C^{\left(
k\right) }
\end{equation*}%
By Corollary \ref{Corollary:connected-component}, $c\left( A\right) =\mathrm{%
\mathrm{lim}}_{k}c(A^{\left( k\right) })$. Hence, $A/c\left( A\right) \cong 
\mathrm{\mathrm{lim}}_{k}C^{\left( k\right) }$ is non-Archimedean.
\end{proof}

\begin{lemma}
Suppose that $A$ is a non-Archimedean Polish group. Then $A$ has a largest
topological torsion closed subgroup $A_{\mathrm{t}}$. Furthermore, $A\left/
A_{\mathrm{t}}\right. $ has sub-type $\mathbb{Z}$.
\end{lemma}

\begin{proof}
Let $\left( A^{\left( k\right) }\right) $ be an epimorphic tower of
countable groups with $A=\mathrm{\mathrm{lim}}_{k}A^{\left( k\right) }$. For 
$k\in \omega $, let $T\left( A^{\left( k\right) }\right) $ be the torsion
subgroup of $A^{\left( k\right) }$, and set $C^{\left( k\right) }:=A^{\left(
k\right) }\left/ T\left( A^{\left( k\right) }\right) \right. $. Then we have
an exact sequence%
\begin{equation*}
0\rightarrow \mathrm{\mathrm{lim}}_{k}T(A^{\left( k\right) })\rightarrow 
\mathrm{\mathrm{lim}}_{k}A^{\left( k\right) }\rightarrow \mathrm{\mathrm{lim}%
}_{k}C^{\left( k\right) }\text{.}
\end{equation*}%
Setting $A_{\mathrm{t}}:=\mathrm{\mathrm{lim}}_{k}T\left( A^{\left( k\right)
}\right) $ and $C:=\mathrm{\mathrm{lim}}_{k}C^{\left( k\right) }$, we have
that $A_{\mathrm{t}}$ is a topological torsion group and $C$ has type $%
\mathbb{Z}$. The above argument shows that $A\left/ A_{\mathrm{t}}\right. $
is a Polishable subgroup of $C$, and hence of sub-type $\mathbb{Z}$.

If $B$ is a topological torsion closed subgroup of $A$, then $\mathrm{Hom}%
\left( B,C\right) =0$ and hence $B\subseteq A_{\mathrm{t}}$.
\end{proof}

\begin{lemma}
We have that $\mathbb{R}^{\omega }$ is projective in $\mathbf{proLiePAb}$.
\end{lemma}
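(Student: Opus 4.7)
The plan is to reduce the problem to splitting a surjection between vector groups, leveraging the vector/vector-free decomposition of pro-Lie Polish abelian groups together with the projectivity of $\mathbb{R}$ established in the preceding lemma. Given a short exact sequence $A\rightarrow Y\overset{\pi }{\rightarrow }\mathbb{R}^{\omega }$ in $\mathbf{proLiePAb}$, I first invoke \cite[Theorem 5.20]{hofmann_lie_2007} to decompose $Y=V\oplus H$, where $V=Y_{\mathbb{R}}$ is the maximal vector subgroup and $H$ is vector-free. By \cite[Theorem A2.12]{hofmann_lie_2007}, the restriction $\pi |_{V}:V\rightarrow \mathbb{R}^{\omega }$ is $\mathbb{R}$-linear with closed image $\pi(V)$; hence $\mathbb{R}^{\omega }/\pi(V)$ is a pro-Lie Polish $\mathbb{R}$-vector space, and therefore itself a vector group by Lemma \ref{Lemma:vector-group}.

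The first key step is to show that $\pi |_{V}$ is already surjective. The composition $Y\rightarrow \mathbb{R}^{\omega }\rightarrow \mathbb{R}^{\omega }/\pi(V)$ vanishes on $V$, so it factors through $H$ as a continuous surjection $\bar{\pi}:H\rightarrow \mathbb{R}^{\omega }/\pi(V)$. Were the target nonzero, composing $\bar{\pi}$ with any coordinate projection $\mathbb{R}^{\omega }/\pi(V)\rightarrow \mathbb{R}$ would yield a continuous surjection $H\rightarrow \mathbb{R}$; by the projectivity of $\mathbb{R}$ in $\mathbf{proLiePAb}$ (preceding lemma), this would split, realising $\mathbb{R}$ as a closed subgroup of $H$ and contradicting the vector-freeness of $H$. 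Therefore $\pi(V)=\mathbb{R}^{\omega }$.

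The main obstacle is then to split the continuous surjection $\pi |_{V}:V\rightarrow \mathbb{R}^{\omega }$ of vector groups, since the portion of \cite[Theorem A2.12]{hofmann_lie_2007} quoted in Section \ref{Subsection:pro-Lie} only asserts that the image is a direct summand of the target, which yields nothing new once $\pi |_{V}$ is surjective. I plan to appeal instead to the full content of Appendix~2 of \cite{hofmann_lie_2007}, namely the contravariant equivalence between weakly complete topological $\mathbb{R}$-vector spaces and abstract $\mathbb{R}$-vector spaces via the continuous dual: under this equivalence the surjection $\pi |_{V}$ dualises to an injection of ordinary vector spaces, which splits by choice of a vector-space complement, and dualising back produces a continuous linear section $\mathbb{R}^{\omega }\rightarrow V$. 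Composing with the inclusion $V\hookrightarrow Y$ yields the required section of $\pi$, establishing that $\mathbb{R}^{\omega }$ is projective.
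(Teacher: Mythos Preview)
Your proof is correct and shares the same global architecture as the paper's: decompose the domain as $V\oplus H$ with $V$ the maximal vector subgroup and $H$ vector-free, show that $\pi|_V$ is already surjective, and then split the resulting surjection of vector groups. The differences lie in how the two middle steps are executed.

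For surjectivity of $\pi|_V$, the paper works coordinatewise: it shows that each composition $V\to\mathbb{R}^\omega\to\mathbb{R}^n$ is onto by factoring through a Lie quotient $X/N_k$, observing that the contribution from the vector-free summand $Y/N_k^Y$ has countable image in $\mathbb{R}^n$, and invoking the fact that $\mathbb{R}^n$ has no proper analytic subgroup of countable index. Your argument is more conceptual: you note that $\mathbb{R}^\omega/\pi(V)$ is itself a vector group, obtain a surjection $H\to\mathbb{R}$ onto any nonzero coordinate, and invoke the preceding lemma (projectivity of the locally compact vector group $\mathbb{R}$) to produce a copy of $\mathbb{R}$ inside $H$, contradicting vector-freeness. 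This is shorter and makes clean use of the lemma immediately preceding the one under proof.

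For the final splitting of $\pi|_V:V\to\mathbb{R}^\omega$, you appeal to the contravariant duality between weakly complete and abstract $\mathbb{R}$-vector spaces from Appendix~2 of \cite{hofmann_lie_2007}. This is correct, but the paper instead observes that $\mathrm{Ker}(\pi|_V)$ is a closed $\mathbb{R}$-subspace of $V$, hence a vector group, and then applies Lemma~\ref{Lemma:injective-pro-Lie} to split it off. The paper's route has the minor advantage of staying within results already stated in the text; your route is equally valid but requires citing more of the external reference than the paper has set up.
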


\begin{proof}
Suppose that $X$ is a pro-Lie Polish abelian group and $\varphi
:X\rightarrow \mathbb{R}^{\omega }$ is a continuous surjective
homomorphism.\ We prove that it has a right inverse that is a continuous
group homomorphism. We can write $X=V\oplus Y$ where $V$ is a vector group
and $Y$ has no nonzero closed vector subgroups. We have short exact
sequences $Y_{\mathbb{S}^{1}}\rightarrow F_{\mathbb{Z}}Y\rightarrow Y_{%
\mathrm{t}}$ and $F_{\mathbb{Z}}Y\rightarrow Y\rightarrow Y_{\mathbb{Z}}$,
where $\varphi |_{Y_{\mathbb{S}^{1}}}=0$, and the induced function $Y_{%
\mathrm{t}}\rightarrow \mathbb{R}^{\omega }$ is also trivial. Thus, if $\psi
:Y_{\mathbb{Z}}\rightarrow \mathbb{R}^{\omega }$ is the continuous
homomorphism induced by $\varphi |_{Y}$, then $\mathrm{Ran}\left( \psi
\right) =\mathrm{Ran}\left( \varphi |_{Y}\right) $.

By \cite[Theorem A2.12]{hofmann_lie_2007}, $\mathrm{Ran}\left( \varphi
|_{V}\right) $ is a closed subgroup and a topological direct summand of $%
\mathbb{R}^{\omega }$.

Let $\left( N_{k}^{V}\right) $ be a cofinal sequence in $\mathcal{N}(V)$, $%
\left( N_{k}^{Y}\right) $ be a cofinal sequence in $\mathcal{N}(Y)$, and set 
$N_{k}:=N_{k}^{V}\oplus N_{k}^{Y}$ for $k\in \omega $. Thus, $\left(
N_{k}\right) $ is a cofinal sequence in $\mathcal{N}\left( X\right) $. For $%
n\in \omega $, set 
\begin{equation*}
M_{n}=\left\{ x\in \mathbb{R}^{\omega }:\forall i\leq n\text{, }%
x_{i}=0\right\} \subseteq \mathbb{R}^{\omega }\text{.}
\end{equation*}%
We claim that $\mathrm{Ran}\left( \varphi |_{V}\right) =\mathbb{R}^{\omega }$%
. It suffices to prove that, for every $n\in \omega $, $\pi _{M_{n}}\circ
\varphi |_{V}:V\rightarrow \mathbb{R}^{n}$ is surjective. We have that $\pi
_{M_{n}}\circ \varphi :X\rightarrow \mathbb{R}^{n}$ factors through $X/N_{k}$
for some $k\in \omega $.\ Set $Z_{k}:=X/N_{k}$ and let $\pi
_{N_{k}}:X\rightarrow Z_{k}$ be the quotient map. Then we have that there
exists a continuous homomorphism $\psi :Z_{k}\rightarrow \mathbb{R}^{n}$
such that $\psi \pi _{N_{k}}=\pi _{M_{n}}\varphi $. We have $Z_{k}\cong
V/N_{k}^{V}\oplus Y/N_{k}^{Y}$. Since $Y$ has no nonzero vector subgroups,
it is easily seen considering the type decompositions of $Y$ and $Y/N_{k}$
that the image of $\psi |_{Y/N_{k}^{Y}}:Y/N_{k}^{Y}\rightarrow \mathbb{R}%
^{n} $ is countable. Since $\mathbb{R}^{n}$ has no nontrivial analytic
subgroups of countable index (as any such a subgroup must be open), it
follows that $\psi |_{V/N_{k}^{V}}$ is surjective, and hence $\pi
_{M_{n}}\varphi |_{V}$ is surjective.

We have therefore shown that $\varphi |_{V}:V\rightarrow \mathbb{R}^{\omega
} $ is surjective. Since $\mathrm{\mathrm{Ker}}\left( \varphi |_{V}\right) $
is a closed $\mathbb{R}$-subspace of $V$, it is a vector group. Therefore,
the short exact sequence $\mathrm{\mathrm{Ker}}\left( \varphi |_{V}\right)
\rightarrow V\rightarrow \mathbb{R}^{\omega }$ splits, and there exists a
continuous homomorphism $\psi :\mathbb{R}^{\omega }\rightarrow V\subseteq X$
that is a right inverse for $\varphi |_{V}$. Thus, $\psi $ is also a right
inverse for $\varphi $, if regarded as a continuous group homomorphism with
codomain $X$.
\end{proof}

If $A$ is a pro-Lie Polish abelian group, we define $A_{\mathrm{t}}$ to be
the largest topological torsion closed subgroup of $A\left/ c\left( A\right)
\right. $, and $F_{\mathbb{Z}}A$ to be the preimage of $A_{\mathrm{t}}$
under the quotient map $A\rightarrow A\left/ c\left( A\right) \right. $.
Define also $A_{\mathbb{Z}}:=A\left/ F_{\mathbb{Z}}A\right. $ and $A_{%
\mathbb{S}^{1}}=\mathrm{comp}\left( c\left( A\right) \right) $, which is the
set of elements of $c\left( A\right) $ that generate a subgroup with compact
closure.

By Lemma \ref{Lemma:R-projective}, we have that $c\left( A\right) =A_{%
\mathbb{R}}\oplus A_{\mathbb{S}^{1}}$ where $A_{\mathbb{R}}$ is the largest
closed vector subgroup of $A$. We also set $A_{\mathbb{A}}=A_{\mathbb{R}%
}\oplus A_{\mathrm{t}}$. Then we have canonical exact sequences%
\begin{equation*}
A_{\mathbb{S}^{1}}\rightarrow F_{\mathbb{Z}}A\rightarrow A_{\mathbb{A}}
\end{equation*}%
and%
\begin{equation*}
F_{\mathbb{Z}}A\rightarrow A\rightarrow A_{\mathbb{Z}}
\end{equation*}%
where $A_{\mathbb{S}^{1}}$, $A_{\mathbb{A}}$ have type $\mathbb{S}^{1}$ and $%
\mathbb{A}$, respectively, and $A_{\mathbb{Z}}$ has sub-type $\mathbb{Z}$.

\begin{remark}
In general it is not the case that $A_{\mathbb{Z}}$ has type $\mathbb{Z}$.
For example, consider the reduced product 
\begin{equation*}
G:=\prod_{n}\left( \mathbb{Z}:2\mathbb{Z}\right)
\end{equation*}%
consisting of sequences of integers that are eventually even. Then $G$ is a
Polishable subgroup of $\mathbb{Z}^{\omega }$ such that every open subgroup $%
U$ of $G$ is such that $G/U$ has nontrivial torsion subgroup. Indeed, there
exists $n\in \omega $ such that the set 
\begin{equation*}
U_{n}:=\left\{ x\in G:\forall i<\omega \text{, }x_{i}\in 2\mathbb{Z}\text{
and }\forall i<n\text{, }x_{i}=0\right\}
\end{equation*}%
is contained $U$, and hence $G/U$ is a quotient of $G/U_{n}\cong \mathbb{Z}%
^{n}\oplus \left( \mathbb{Z}/2\mathbb{Z}\right) ^{\left( \omega \right) }$.
\end{remark}

It easily follows that, if $A$ is a pro-Lie Polish abelian groups, then
setting $A_{\mathbb{S}^{1}}:=\mathrm{\mathrm{lim}}_{N\in \mathcal{N}\left(
G\right) }\left( A/N\right) _{\mathbb{S}^{1}}$ etcetera, we obtain short
exact sequences $A_{\mathbb{S}^{1}}\rightarrow F_{\mathbb{Z}}A\rightarrow A_{%
\mathbb{A}}$ and $F_{\mathbb{Z}}A\rightarrow A\rightarrow A_{\mathbb{Z}}$.
Furthermore, we have that $A_{\mathbb{S}^{1}}$ is the largest closed
subgroup of $A$ of type $\mathbb{S}^{1}$ and the smallest closed subgroup of 
$F_{\mathbb{Z}}A$ such that $F_{\mathbb{Z}}A/A_{\mathbb{S}^{1}}$ is of type $%
\mathbb{A}$, while $F_{\mathbb{Z}}A$ is the smallest closed subgroup of $A$
such that $A/F_{\mathbb{Z}}A$ is of sub-type $\mathbb{Z}$.

This decomposition for pro-Lie Polish abelian groups can be reformulated in
terms of \emph{torsion pars}, showing that pro-Lie Polish abelian groups of
type $\mathbb{A}$, type $\mathbb{S}^{1}$, and subtype $\mathbb{Z}$ form
fully exact subcategories of the category $\mathbf{proLiePAb}$. This also
follows from Lemma \ref{Lemma:thick-pro-category}, together with the fact
that locally compact pro-Lie Polish abelian groups of type $\mathbb{A}$, $%
\mathbb{S}^{1}$, and type $\mathbb{Z}$, respectively, form a fully exact
subcategories of $\mathbf{LCPAb}$.

\subsection{Injective and projective pro-Lie Polish abelian groups}

In this section, we completely characterize the projective objects in $%
\mathbf{proLiePAb}$, and obtain the following.

\begin{theorem}
\label{Theorem:proLiePAb}The quasi-abelian category $\mathbf{proLiePAb}$ has
enough projectives but not enough injectives, and homological dimension $1$.
\end{theorem}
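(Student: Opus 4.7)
The plan is to address the three assertions---enough projectives, $\mathrm{hd}(\mathbf{proLiePAb})=1$, and failure of enough injectives---separately, relying on the type decomposition above and on the splitting results of Section~\ref{Subsection:extensions}. For enough projectives, given a pro-Lie Polish abelian group $A$, I would first decompose $A=V\oplus Y$ with $V\cong\mathbb{R}^{\alpha}$ a maximal vector subgroup and $Y$ vector-free. Then $c(Y)$ is compact connected, hence isomorphic to $\mathbb{T}^{\beta}$ for some $\beta\leq\omega$, and $Y/c(Y)$ is non-Archimedean by Lemma~\ref{Lemma:non-Archimedean-quotient vf}. Lemma~\ref{Lemma:u-nA} supplies a continuous surjection $(\mathbb{Z}^{(\omega)})^{\omega}\twoheadrightarrow Y/c(Y)$, and the pullback along $Y\twoheadrightarrow Y/c(Y)$ yields an extension $c(Y)\rightarrow P\rightarrow(\mathbb{Z}^{(\omega)})^{\omega}$ together with a continuous surjection $P\twoheadrightarrow Y$; this extension splits by Lemma~\ref{Lemma:nA-by-torus}, so $P\cong\mathbb{T}^{\beta}\oplus(\mathbb{Z}^{(\omega)})^{\omega}$. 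Composing with a surjection $\mathbb{R}^{\beta}\twoheadrightarrow\mathbb{T}^{\beta}$ and taking direct sum with $V$ yields a continuous surjection $\mathbb{R}^{\alpha+\beta}\oplus(\mathbb{Z}^{(\omega)})^{\omega}\twoheadrightarrow A$.

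To verify that the source is projective, one combines projectivity of vector groups (proved for $\mathbb{R}^{\omega}$ in the preceding lemma, with the same argument covering arbitrary $\mathbb{R}^{\alpha}$) with projectivity of $(\mathbb{Z}^{(\omega)})^{\omega}$. For the latter, given a cover $X\twoheadrightarrow(\mathbb{Z}^{(\omega)})^{\omega}$ with pro-Lie kernel $K$, I would apply Lemma~\ref{Lemma:injective-pro-Lie} successively to split off the maximal vector subgroup of $K$, and then its compact-connected (torus) part, from $X$; this reduces to the case that $K$ is non-Archimedean, in which case a continuous-cocycle splitting argument analogous to Lemmas~\ref{Lemma:u-nA-projective-reals} and~\ref{Lemma:u-nA-projective-torus} produces a continuous homomorphic section. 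This projectivity of $(\mathbb{Z}^{(\omega)})^{\omega}$ is the main technical obstacle.

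For the homological dimension bound, I would extract a projective resolution of length at most $1$ for an arbitrary $A$ by analyzing the kernel $K$ of the cover $P\twoheadrightarrow A$ constructed above. Applying Lemma~\ref{Lemma:injective-pro-Lie} once more to split off the vector part of $K$ inside $P$, and using the fact that closed subgroups of $(\mathbb{Z}^{(\omega)})^{\omega}$ are themselves projective in $\mathbf{PAb}_{\mathrm{nA}}$ (as announced in Section~\ref{Section:thick}), one obtains that $K$ is projective in $\mathbf{proLiePAb}$. Consequently $\mathrm{Ext}^{n}(A,B)=0$ for $n\geq 2$ by the long exact sequence. The dimension is exactly $1$ because the canonical extension $\mathbb{Z}\hookrightarrow\mathbb{R}\twoheadrightarrow\mathbb{T}$ does not split, giving $\mathrm{Ext}^{1}(\mathbb{T},\mathbb{Z})\neq 0$.

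Finally, to show that $\mathbf{proLiePAb}$ does not have enough injectives, I would exhibit an explicit pro-Lie Polish abelian group admitting no admissible monic into any injective object. Building on the characterization of injectives in $\mathbf{proLiePAb}$ (to be established separately, and expected to restrict them to a narrow class consisting of a vector group together with a suitable divisible piece, by analogy with Moskowitz's characterization for $\mathbf{LCPAb}$), the witness is naturally a Polishable group---for instance a closed subgroup of $\mathbb{R}^{\omega}$ or a non-Archimedean group---whose \emph{divisible envelope} fails to exist within $\mathbf{proLiePAb}$. Pinning down the exact characterization of injectives sharply enough to identify such a concrete witness is the other delicate point, and is where the asymmetry between projectives and injectives ultimately becomes visible.
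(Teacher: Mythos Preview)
Your argument contains a genuine error that recurs in two places: you assert that if $Y$ is a vector-free pro-Lie Polish abelian group, then $c(Y)$, being compact connected, is isomorphic to $\mathbb{T}^{\beta}$ for some $\beta\leq\omega$. This is false. A compact connected abelian Polish group is the Pontryagin dual of a countable torsion-free abelian group, and such a dual is a torus only when the countable group is free abelian. For instance, the solenoid $\widehat{\mathbb{Q}}$ is compact connected but not a torus (it is not path-connected; cf.\ the implication (3)$\Rightarrow$(2) in Theorem~\ref{Theorem:injective-pro-Lie}). Consequently Lemma~\ref{Lemma:nA-by-torus} does not apply to the extension $c(Y)\rightarrow P\rightarrow(\mathbb{Z}^{(\omega)})^{\omega}$, and your construction of a projective cover collapses. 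The same mistake reappears when you argue for projectivity of $(\mathbb{Z}^{(\omega)})^{\omega}$: you propose to split off ``the compact-connected (torus) part'' of the kernel $K$ using Lemma~\ref{Lemma:injective-pro-Lie}, but again $K_{\mathbb{S}^{1}}$ need not be a torus.

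The paper avoids this trap by working in the opposite direction: rather than starting from the (not necessarily torus) compact-connected part, it uses Lemma~\ref{Lemma:structure-pro-Lie}, which provides a non-Archimedean closed subgroup $D$ with $G/D$ a torus. The projective cover is then built by an inverse-limit argument (Lemma~\ref{Lemma:tower-quasi-abelian} and Lemma~\ref{Lemma:enough-projective-proLie}) reducing to the Lie case, and projectivity of $(\mathbb{Z}^{(\omega)})^{\omega}$ is established via a direct cocycle computation (Lemma~\ref{Lemma:vanishing-Ext-product-2}) that again exploits the non-Archimedean-by-torus decomposition rather than the torus-by-non-Archimedean one. For the failure of enough injectives, the paper gives a concrete witness: any nonzero countable discrete divisible group, which cannot embed as a closed subgroup of $\mathbb{R}^{\alpha}\oplus\mathbb{T}^{\beta}$. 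Your sketch for this last part is too vague to assess, but the witness is quite elementary once the injectives are characterized.
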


Recall that an abelian Lie group $A$ is of the form $V\oplus T\oplus D$
where $V$ is a finite-dimensional vector group, $T$ is a finite-dimensional
torus, and $D$ is discrete. (This is equivalent to the assertion that $A$ is
a locally compact Polish abelian group \emph{with no small subgroups}; see 
\cite[Theorem 2.4]{moskowitz_homological_1967}.) It easily follows that
there exists a surjective homomorphism $\mathbb{R}^{n}\oplus \mathbb{Z}%
^{\left( \omega \right) }\rightarrow A$ for some $n\in \omega $. Recall that
the projective objects in $\mathbf{LiePAb}$ are precisely those of the form $%
V\oplus F$ where $V$ is a finite-dimensional vector group and $F$ is a
countable free abelian group \cite[Theorem 3.3]{moskowitz_homological_1967}.
The injective objects in $\mathbf{LiePAb}$ are precisely those of the form $%
V\oplus T$ where $V$ is a finite-dimensional vector group and $T$ is a
finite-dimensional torus \cite[Theorem 3.2]{moskowitz_homological_1967}.

\begin{lemma}
\label{Lemma:tower-quasi-abelian}Suppose that $\mathcal{A}$ be a
quasi-abelian subcategory of $\mathbf{PAb}$ closed under countable products,
and let $\mathcal{D}$ be a class of projective objects in $\mathcal{A}$
closed under direct sums. Suppose that $\left( G_{n}\right) $ is an inverse
sequence of objects of $\mathcal{A}$ with surjective continuous
homomorphisms $\pi _{n+1}:G_{n+1}\rightarrow G_{n}$ as bonding maps. We also
set $G_{-1}=0$ and $\pi _{0}=0$. Suppose that for every $n\in \omega $ there
exists a surjective continuous homomorphism $P\rightarrow \mathrm{\mathrm{Ker%
}}\left( \pi _{n}:G_{n}\rightarrow G_{n-1}\right) $ for some $P\in \mathcal{D%
}$. Then there exists a surjective continuous homomorphism $\mathrm{lim}%
_{n}D_{n}\rightarrow \mathrm{lim}_{n}G_{n}$ where $D=\left( D_{n}\right) $
is an inverse sequence with $D_{n}\in \mathcal{D}$ and surjective continuous
homomorphisms $p_{n+1}:D_{n+1}\rightarrow D_{n}$ as bonding maps.
\end{lemma}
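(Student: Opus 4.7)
The plan is to build the tower $(D_n)$ by a straightforward induction, at each step adjoining a projective summand that covers the new kernel. Crucially, the bonding maps $p_{n+1}: D_{n+1} \to D_n$ will be arranged to be \emph{split} surjections by construction, which makes lifting to the inverse limit automatic and removes any Mittag-Leffler type obstruction.

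For the base case, take $D_0 := P_0$ where $P_0 \in \mathcal{D}$ and $f_0 := h_0 : P_0 \twoheadrightarrow G_0$ are given by the hypothesis applied to $n = 0$ (noting $\ker(\pi_0: G_0 \to 0) = G_0$). For the inductive step, suppose we have $D_n \in \mathcal{D}$ and a surjection $f_n: D_n \to G_n$. The map $\pi_{n+1}: G_{n+1} \to G_n$ is an admissible epimorphism in $\mathcal{A}$ (being the cokernel of its kernel $K_{n+1}$, which lies in $\mathcal{A}$ by preservation of limits by the inclusion $\mathcal{A} \hookrightarrow \mathbf{PAb}$), so by projectivity of $D_n$ we can lift $f_n$ to $g_n : D_n \to G_{n+1}$ with $\pi_{n+1} \circ g_n = f_n$. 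Applying the hypothesis to $K_{n+1}$, pick $P_{n+1} \in \mathcal{D}$ and a surjection $h_{n+1}: P_{n+1} \twoheadrightarrow K_{n+1}$. Set $D_{n+1} := D_n \oplus P_{n+1}$, which lies in $\mathcal{D}$ by closure under direct sums, let $p_{n+1}: D_{n+1} \to D_n$ be the canonical projection, and define $f_{n+1}(d, p) := g_n(d) + h_{n+1}(p)$. A quick check shows $\pi_{n+1} \circ f_{n+1} = f_n \circ p_{n+1}$ and that $f_{n+1}$ is surjective: given $x \in G_{n+1}$, choose $d \in D_n$ with $f_n(d) = \pi_{n+1}(x)$, so that $x - g_n(d) \in K_{n+1}$, and then choose $p \in P_{n+1}$ with $h_{n+1}(p) = x - g_n(d)$, whence $f_{n+1}(d, p) = x$.

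Since each $p_{n+1}$ is split, the inverse limit $\lim_n D_n$ is canonically isomorphic as a Polish abelian group to the countable product $\prod_{n \in \omega} P_n$, which lies in $\mathcal{A}$ by the assumed closure under countable products. The compatible family $(f_n)$ induces a continuous group homomorphism $f : \lim_n D_n \to \lim_n G_n$. To see that $f$ is surjective, given $(x_n) \in \lim_n G_n$, inductively build $(d_n)$ with $f_n(d_n) = x_n$ and $p_{n+1}(d_{n+1}) = d_n$: starting with any $d_0 \in P_0$ with $h_0(d_0) = x_0$, and having chosen $d_n$, observe that $\pi_{n+1}(x_{n+1} - g_n(d_n)) = x_n - f_n(d_n) = 0$, so $x_{n+1} - g_n(d_n) \in K_{n+1}$ and by surjectivity of $h_{n+1}$ there is $q_{n+1} \in P_{n+1}$ with $h_{n+1}(q_{n+1}) = x_{n+1} - g_n(d_n)$; put $d_{n+1} := (d_n, q_{n+1}) \in D_n \oplus P_{n+1}$.

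The main obstacle is precisely the surjectivity-in-the-limit step, since continuous surjections between inverse sequences do not in general induce surjections on inverse limits. The whole point of the construction is to sidestep this: realizing $D_{n+1}$ as the biproduct $D_n \oplus P_{n+1}$ and using projectivity of $D_n$ to lift $f_n$ through $\pi_{n+1}$ lets one solve the fibering problem for $(x_n)$ at each level independently, producing an actual element of $\lim_n D_n$ without appealing to any Mittag-Leffler type vanishing.
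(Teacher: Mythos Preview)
Your proof is correct and follows essentially the same strategy as the paper's. The paper phrases the inductive step via the pullback $Y_{n+1}=D_n\times_{G_n}G_{n+1}$ and then uses projectivity of $D_n$ to split the resulting extension $\mathrm{Ker}(\pi_{n+1})\to Y_{n+1}\to D_n$, whereas you use projectivity to lift $f_n$ through $\pi_{n+1}$ directly and then form the biproduct; these are exactly the same move (a lift $g_n$ is precisely a section of $Y_{n+1}\to D_n$). Your write-up is in fact more explicit than the paper's on the surjectivity of the induced map on inverse limits, which the paper simply asserts.
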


\begin{proof}
We define by recursion objects $D_{n}$ of $\mathcal{D}$, continuous
homomorphisms $p_{n+1}:D_{n+1}\rightarrow D_{n}$, and $\varphi
_{n}:D_{n}\rightarrow G_{n}$ such that $\pi _{n+1}\varphi _{n+1}=\varphi
_{n}p_{n+1}$. We have that $\varphi _{0}$ exists by hypothesis. Suppose that 
$\varphi _{i}$ and $p_{i}$ have been defined for $i\leq n$. Then we consider
a pushout diagram%
\begin{equation*}
\begin{array}{ccc}
Y_{n+1} & \rightarrow & G_{n+1} \\ 
\downarrow &  & \downarrow \\ 
D_{n} & \rightarrow & G_{n}%
\end{array}%
\end{equation*}%
Since $D_{n}\rightarrow G_{n}$ and $G_{n+1}\rightarrow G_{n}$ are continuous
surjective homomorphisms, the same holds for $Y_{n+1}\rightarrow D_{n}$ and $%
Y_{n+1}\rightarrow G_{n}$. Since $D_{n}$ is projective, we have that $%
Y_{n+1}\cong D_{n}\oplus \mathrm{\mathrm{Ker}}\left( \pi _{n+1}\right) $. By
the inductive hypothesis, and since $\mathcal{D}$ is closed under direct
sums, there exists a surjective continuous homomorphism $D_{n+1}\rightarrow
Y_{n+1}$. This concludes the recursive construction.

One then has that $\mathrm{lim}_{n}\varphi _{n}:\mathrm{lim}%
_{n}D_{n}\rightarrow \mathrm{lim}_{n}G_{n}$ is a continuous surjective
homomorphism.
\end{proof}

\begin{lemma}
\label{Lemma:enough-projective-proLie}Let $G$ be a pro-Lie Polish abelian
group. Then there exists a surjective continuous homomorphism $\mathbb{R}%
^{\omega }\oplus \left( \mathbb{Z}^{\left( \omega \right) }\right) ^{\omega
}\rightarrow G$.
\end{lemma}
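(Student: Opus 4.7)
The plan is to apply Lemma~\ref{Lemma:tower-quasi-abelian} to a cofinal Lie-group presentation of $G$, with $\mathcal{D}$ chosen to be the class of pro-Lie Polish abelian groups of the form $\mathbb{R}^{\alpha}\oplus \mathbb{Z}^{(\omega)}$ for $\alpha\leq \omega$, and then to identify the resulting inverse limit. Since $G$ has a countable basis of zero neighborhoods and $\mathcal{N}(G)$ is closed under finite intersections, one can choose a decreasing cofinal sequence $(N_n)_{n\in\omega}$ in $\mathcal{N}(G)$, so that by Lemma~\ref{Lemma:countable-cofinality} we get $G\cong \mathrm{lim}_n\, G_n$ with $G_n:=G/N_n$ an abelian Lie group and each $\pi_{n+1}\colon G_{n+1}\to G_n$ surjective. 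Setting $G_{-1}:=0$, each kernel $K_n:=\ker(\pi_n)$ is a closed subgroup of the Lie group $G_n$ and hence itself a Lie group, so of the form $V_n\oplus T_n\oplus E_n$ with $V_n$ a finite-dimensional vector group, $T_n$ a finite-dimensional torus, and $E_n$ a countable discrete group. Since $\mathbb{R}$ surjects continuously onto $\mathbb{T}$ and $\mathbb{Z}^{(\omega)}$ onto any countable abelian group, one obtains a surjective continuous homomorphism $Q_n:=\mathbb{R}^{a_n}\oplus \mathbb{Z}^{(\omega)}\to K_n$ with $a_n:=\dim V_n+\dim T_n$; in particular each $Q_n$ lies in $\mathcal{D}$.

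Next we verify that $\mathcal{D}$ satisfies the hypotheses of Lemma~\ref{Lemma:tower-quasi-abelian}. Every object of $\mathcal{D}$ is projective in $\mathbf{proLiePAb}$: $\mathbb{R}^{\alpha}$ is a direct summand of $\mathbb{R}^{\omega}$, whose projectivity is established by the immediately preceding lemma, and $\mathbb{Z}^{(\omega)}$ is projective because it is a discrete free abelian group, so any group-theoretic section of a surjection onto it is automatically a continuous homomorphism. Moreover $\mathcal{D}$ is closed under finite direct sums, since
\[ (\mathbb{R}^{\alpha}\oplus \mathbb{Z}^{(\omega)})\oplus (\mathbb{R}^{\beta}\oplus \mathbb{Z}^{(\omega)})\cong \mathbb{R}^{\alpha+\beta}\oplus \mathbb{Z}^{(\omega)}\]
using $(\mathbb{Z}^{(\omega)})^{2}\cong \mathbb{Z}^{(\omega)}$ and $\alpha+\beta\leq \omega$. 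Since $\mathbf{proLiePAb}$ is a quasi-abelian subcategory of $\mathbf{PAb}$ closed under countable products by Theorem~\ref{Theorem:pro-Lie-thick}, Lemma~\ref{Lemma:tower-quasi-abelian} applies and yields an inverse sequence $(D_n)$ in $\mathcal{D}$ with surjective bonding maps $p_{n+1}\colon D_{n+1}\to D_n$ together with a surjective continuous homomorphism $\mathrm{lim}_n\, D_n\to G$. Following the recursion in the proof of that lemma, one may take $D_n:=Q_0\oplus \cdots \oplus Q_n$ with $p_{n+1}$ the canonical projection killing the last summand.

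With this choice $\mathrm{lim}_n\, D_n\cong \prod_{n\in\omega}Q_n\cong \mathbb{R}^{N}\oplus (\mathbb{Z}^{(\omega)})^{\omega}$, where $N:=\sum_n a_n\leq \omega$. If $N=\omega$ we are done; if $N<\omega$, precomposing with the coordinate projection $\mathbb{R}^{\omega}\to \mathbb{R}^{N}$ yields the desired surjection $\mathbb{R}^{\omega}\oplus (\mathbb{Z}^{(\omega)})^{\omega}\to G$. The only substantive ingredients are the projectivity of $\mathbb{R}^{\omega}$ in $\mathbf{proLiePAb}$, supplied by the preceding lemma, and the observation that surjections onto the Lie kernels $K_n$ can be taken uniformly in the class $\mathcal{D}$; the remainder is bookkeeping for the recursive construction of Lemma~\ref{Lemma:tower-quasi-abelian}.
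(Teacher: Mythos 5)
Your proof is correct and takes essentially the same route as the paper: the paper's proof likewise reduces to the abelian Lie case via Lemma \ref{Lemma:tower-quasi-abelian} together with the surjection $\mathbb{R}^{n}\oplus \mathbb{Z}^{\left( \omega \right) }\rightarrow A$ available for any abelian Lie group $A$. You simply make explicit the bookkeeping the paper leaves implicit (the choice of the class $\mathcal{D}$, the projectivity of its members, and the identification of $\mathrm{lim}_{n}D_{n}$ with a product of the chosen projectives).
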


\begin{proof}
By Lemma \ref{Lemma:tower-quasi-abelian} it suffices to prove that the
conclusion holds when $G$ is an abelian Lie group, in which case the
conclusion follows from the above remarks.
\end{proof}

\begin{lemma}
\label{Lemma:free-subgroups}If $G$ is a closed subgroup of $\left( \mathbb{Z}%
^{\left( \omega \right) }\right) ^{\omega }$, then $G\cong \mathbb{Z}%
^{\alpha }\oplus \left( \mathbb{Z}^{\left( \omega \right) }\right) ^{\beta }$
for some $\alpha ,\beta \in \omega +1$.
\end{lemma}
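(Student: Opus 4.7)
The plan is to reuse and extend the argument appearing in the proof of Lemma \ref{Lemma:u-nA} and then to identify the topology of the resulting countable product. First, write $G=[S]$ where $S$ is a pruned subtree of $F^{<\omega}$ that is a subgroup, with $F=\mathbb{Z}^{(\omega)}$. For each $n\in\omega$ set $B_n:=S\cap F^n$: this is a subgroup of $F^n$, hence a countable free abelian group, and it is discrete since $F^n$ is discrete. Because $S$ is pruned, the bonding map $\pi_{n+1}:B_{n+1}\to B_n$ is surjective, and since $B_n$ is free the short exact sequence $\mathrm{Ker}(\pi_{n+1})\to B_{n+1}\to B_n$ splits. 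Writing $K_n:=\mathrm{Ker}(\pi_n)$, an inductive choice of compatible splittings yields an isomorphism of topological groups
\begin{equation*}
G\cong \mathrm{lim}_n B_n\cong B_0\times\prod_{n\geq 1}K_n,
\end{equation*}
where the right-hand side carries the product topology with each factor discrete.

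Each factor is a discrete free abelian group of some rank $a_n\in\omega+1$; if $a_n<\omega$ it equals $\mathbb{Z}^{a_n}$ (finite rank discrete), and if $a_n=\omega$ it equals $\mathbb{Z}^{(\omega)}$ (countable rank discrete). Partition $\omega=A\sqcup B$ with $A=\{n:a_n<\omega\}$ and $B=\{n:a_n=\omega\}$. By reindexing coordinates, $\prod_{n\in A}\mathbb{Z}^{a_n}$ is identified, as a topological group, with $\mathbb{Z}^N$ where $N=\sum_{n\in A}a_n\in\omega+1$: if $N$ is finite this is discrete $\mathbb{Z}^N$, and if $N=\omega$ this is $\mathbb{Z}^\omega$ with its product topology. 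Similarly, $\prod_{n\in B}\mathbb{Z}^{(\omega)}$ is either $0$, discrete $\mathbb{Z}^{(\omega)}$ (when $|B|$ is finite and positive, using $(\mathbb{Z}^{(\omega)})^k\cong\mathbb{Z}^{(\omega)}$), or $(\mathbb{Z}^{(\omega)})^\omega$ when $B$ is infinite.

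It remains to combine these two pieces into the required form $\mathbb{Z}^\alpha\oplus(\mathbb{Z}^{(\omega)})^\beta$. This rests on the topological identifications $\mathbb{Z}^n\oplus\mathbb{Z}^{(\omega)}\cong\mathbb{Z}^{(\omega)}$ (both summands discrete), $\mathbb{Z}^n\oplus\mathbb{Z}^\omega\cong\mathbb{Z}^\omega$ (verified via the splitting of the projection $\mathbb{Z}^\omega\to\mathbb{Z}$ onto the first coordinate, whose image is a discrete closed subgroup of $\mathbb{Z}^\omega$ and whose kernel is again $\mathbb{Z}^\omega$), and $\mathbb{Z}^\alpha\oplus(\mathbb{Z}^{(\omega)})^\omega\cong(\mathbb{Z}^{(\omega)})^\omega$ for any discrete free abelian $\mathbb{Z}^\alpha$ with $\alpha\leq\omega$, obtained by extracting $(\mathbb{Z}^{(\omega)})^\omega\cong\mathbb{Z}^{(\omega)}\oplus(\mathbb{Z}^{(\omega)})^\omega$ and absorbing $\mathbb{Z}^\alpha$ into the discrete summand. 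A short case analysis on whether $A$, $B$, and $\sum_{n\in A}a_n$ are finite or infinite then produces, in every case, an isomorphism $G\cong\mathbb{Z}^\alpha\oplus(\mathbb{Z}^{(\omega)})^\beta$ with $\alpha,\beta\in\omega+1$.

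The main obstacle is this last topological bookkeeping. Purely algebraic absorption is not enough, because a direct sum of a discrete $\mathbb{Z}^n$ with $\mathbb{Z}^\omega$ is topologically isomorphic to $\mathbb{Z}^\omega$ only once one verifies that $\mathbb{Z}^\omega$ genuinely admits a discrete $\mathbb{Z}$ as a topological direct summand, and similarly one must check the absorption property for $(\mathbb{Z}^{(\omega)})^\omega$. Once those two topological identifications are in place, the enumeration of cases is routine.
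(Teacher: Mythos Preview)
Your proposal is correct and follows essentially the same approach as the paper: both write $G$ as the inverse limit of its projections $B_n$ to $(\mathbb{Z}^{(\omega)})^n$, use projectivity of countable free abelian groups to split the tower as a product $\prod_n K_n$ of discrete free abelian groups, and then read off the desired form. The paper compresses your final case analysis into the single sentence ``The conclusion easily follows,'' so your write-up simply makes explicit the topological bookkeeping the paper leaves to the reader.
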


\begin{proof}
Define $V_{n}=\left\{ x\in \left( \mathbb{Z}^{\left( \omega \right) }\right)
^{\omega }:\forall i<n,x_{i}=0\right\} $. Then $\left( V_{n}\right) $ is a
basis of zero neighborhoods for $\left( \mathbb{Z}^{\left( \omega \right)
}\right) ^{\omega }$, and $\left( G\cap V_{n}\right) $ is a basis of zero
neighborhoods of $G$. For $n\in \omega $, $G/(G\cap V_{n})$ is a subgroup of 
$\left( \mathbb{Z}^{\left( \omega \right) }\right) ^{n}$, and hence free
abelian. Since countable free abelian groups are projective, we have that $%
G\cong \mathrm{\mathrm{lim}}_{n}G/(G\cap V_{n})$ is isomorphic to a product
of free abelian groups. The conclusion easily follows.
\end{proof}

\begin{lemma}
\label{Lemma:projective-proLiePAb}If $G$ is a projective object in $\mathbf{%
proLiePAb}$, then $G\cong \mathbb{Z}^{\alpha }\oplus \left( \mathbb{Z}%
^{\left( \omega \right) }\right) ^{\beta }\oplus \mathbb{R}^{\gamma }$ for
some $\alpha ,\beta ,\gamma \in \omega +1$.
\end{lemma}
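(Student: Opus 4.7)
The plan is to realize $G$ as a retract of the projective generator $P := \mathbb{R}^{\omega} \oplus (\mathbb{Z}^{(\omega)})^{\omega}$ provided by Lemma~\ref{Lemma:enough-projective-proLie}, and then read off the structure by separately analyzing the vector and vector-free parts, using Lemma~\ref{Lemma:free-subgroups} for the latter.

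First, applying Lemma~\ref{Lemma:enough-projective-proLie} yields a continuous surjection $\pi : P \to G$, which splits by projectivity of $G$; hence $P \cong G \oplus G'$ with $G' := \ker \pi$. Write $G \cong V \oplus H$ and $G' \cong V' \oplus H'$ with $V, V'$ the maximal closed vector subgroups and $H, H'$ vector-free, via \cite[Theorem~5.20]{hofmann_lie_2007} and Lemma~\ref{Lemma:injective-pro-Lie}. Since $(\mathbb{Z}^{(\omega)})^{\omega}$ is non-Archimedean, any continuous homomorphism $\mathbb{R} \to (\mathbb{Z}^{(\omega)})^{\omega}$ is trivial (the preimage of an open subgroup is a clopen subgroup of $\mathbb{R}$, hence all of $\mathbb{R}$), so $(\mathbb{Z}^{(\omega)})^{\omega}$ is vector-free and the maximal vector subgroup of $P$ is exactly $\mathbb{R}^{\omega}$. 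The maximal vector subgroup of a biproduct is the biproduct of the maximal vector subgroups, since any vector subgroup must project trivially to the vector-free factors; consequently $V \oplus V' = \mathbb{R}^{\omega}$. By \cite[Theorem~A2.12]{hofmann_lie_2007}, the inclusion $V \hookrightarrow \mathbb{R}^{\omega}$ identifies $V$ with a closed $\mathbb{R}$-linear direct summand, so $V \cong \mathbb{R}^{\gamma}$ for some $\gamma \le \omega$ (equivalently by Lemma~\ref{Lemma:vector-group}).

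Quotienting the isomorphism $P \cong G \oplus G'$ by the common vector part $V \oplus V' = \mathbb{R}^{\omega}$ produces $(\mathbb{Z}^{(\omega)})^{\omega} \cong H \oplus H'$, exhibiting $H$ as a closed subgroup of $(\mathbb{Z}^{(\omega)})^{\omega}$. Lemma~\ref{Lemma:free-subgroups} then yields $H \cong \mathbb{Z}^{\alpha} \oplus (\mathbb{Z}^{(\omega)})^{\beta}$ for some $\alpha, \beta \le \omega$, and combining gives $G \cong V \oplus H \cong \mathbb{R}^{\gamma} \oplus \mathbb{Z}^{\alpha} \oplus (\mathbb{Z}^{(\omega)})^{\beta}$, as desired. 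The main subtlety is the bookkeeping with the vector/vector-free decomposition when passing between $P$ and $G$, which as outlined above ultimately rests on the non-existence of nonzero continuous homomorphisms from a vector group into a vector-free pro-Lie Polish abelian group.
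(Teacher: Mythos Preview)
Your approach is essentially the same as the paper's: realize $G$ as a direct summand of $P=\mathbb{R}^{\omega}\oplus(\mathbb{Z}^{(\omega)})^{\omega}$, split off the vector part, and apply Lemma~\ref{Lemma:free-subgroups} to the remainder. The paper phrases the splitting via the type decomposition ($G_{\mathbb{S}^1}=0$, $G_{\mathrm{t}}=0$, hence $G\cong V\oplus G_0$ with $G_0$ of sub-type~$\mathbb{Z}$, and then $\mathrm{Hom}(\mathbb{R}^{\omega},G_0)=0$ forces $G_0$ into $(\mathbb{Z}^{(\omega)})^{\omega}$), but the structure is the same.

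One justification needs repair. You assert that the maximal vector subgroup of a biproduct is the biproduct of the maximal vector subgroups because ``any vector subgroup must project trivially to the vector-free factors.'' That claim is false as stated: the diagonal embedding $\mathbb{R}\hookrightarrow\mathbb{R}\times\mathbb{T}$, $t\mapsto(t,t\bmod 1)$, is a closed vector subgroup that projects nontrivially onto the vector-free factor $\mathbb{T}$. What does work here is a connected-component argument: $(\mathbb{Z}^{(\omega)})^{\omega}$ is totally disconnected, so $c(P)=\mathbb{R}^{\omega}$; under the isomorphism $P\cong G\oplus G'$ one has $c(P)=c(G)\oplus c(G')$, and since each $c(G),c(G')$ is then a direct summand of a vector group it is itself a vector group, forcing $c(H)=c(H')=0$ by vector-freeness and hence $V\oplus V'=c(P)=\mathbb{R}^{\omega}$ as subgroups. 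With this correction your argument goes through.
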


\begin{proof}
By Lemma \ref{Lemma:enough-projective-proLie}, we have a surjective
epimorphism $\left( \mathbb{Z}^{\left( \omega \right) }\right) ^{\omega
}\oplus \mathbb{R}^{\omega }\rightarrow G$. If $G$ is projective, then we
have that $G$ is a topological direct summand of $\left( \mathbb{Z}^{\left(
\omega \right) }\right) ^{\omega }\oplus \mathbb{R}^{\omega }$. This implies
that $G_{\mathbb{S}^{1}}=0$, $G_{\mathrm{t}}=0$, and hence $G\cong V\oplus
G_{0}$ where $V$ is a vector group and $G_{0}$ is subtype $\mathbb{Z}$.
Since $V$ is injective in $\mathbf{proLiePAb}$ and $\mathrm{Hom}\left(
V,\left( \mathbb{Z}^{\left( \omega \right) }\right) ^{\omega }\right) =0$,
we can assume without loss of generality that $V=0$ and $G=G_{0}$ is subtype 
$\mathbb{Z}$. In this case, we have that $\mathrm{Hom}\left( \mathbb{R}%
^{\omega },G\right) =0$ and $G\subseteq \left( \mathbb{Z}^{\left( \omega
\right) }\right) ^{\omega }$. The conclusion thus follows from Lemma \ref%
{Lemma:free-subgroups}.
\end{proof}

\begin{lemma}
\label{Lemma:vanishing-Ext-product0}Suppose that $\left( C_{i}\right) _{i\in
\omega }$ is a sequence of countable abelian groups, $C=\prod_{i\in \omega
}C_{i}$, and $A$ is a countable abelian group. If $\mathrm{Ext}_{_{\mathrm{%
Yon}}}\left( C_{i},A\right) =0$ for every $i\in \omega $, then $\mathrm{Ext}%
_{\mathrm{Yon}}\left( C,A\right) =0$.
\end{lemma}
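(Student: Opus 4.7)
The plan is to show directly that every continuous extension $A\rightarrow X\rightarrow C$ splits. Since $A$ is countable Polish, it is discrete by the Baire category theorem, and it sits inside $X$ as a closed discrete subgroup. Because $C$ is non-Archimedean, Lemma~\ref{Lemma:Ext-Yon-nA} gives a \emph{continuous} set-theoretic section $\sigma:C\rightarrow X$, whose coboundary $c:=\delta\sigma:C\times C\rightarrow A$ is a continuous cocycle representing the extension.

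The key observation is that $c(0,0)=0$ and $A$ is discrete, so $c^{-1}(\{0\})$ is an open neighborhood of $(0,0)$ in $C\times C$. Since the subgroups $U_{n}:=\prod_{i\geq n}C_{i}$ form a basis of zero neighborhoods for $C$, there exists $n\in\omega$ such that $c$ vanishes identically on $U_{n}\times U_{n}$. Writing $C_{<n}:=\prod_{i<n}C_{i}$ (a \emph{countable} abelian group), we obtain an internal topological decomposition $C=C_{<n}\oplus U_{n}$.

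The restriction $\sigma|_{U_{n}}$ is then a continuous group homomorphism, giving a continuous section $s_{1}:U_{n}\rightarrow X$ of the extension restricted to $U_{n}$. For the other factor, $C_{<n}=C_{0}\oplus\cdots\oplus C_{n-1}$ is a \emph{finite} direct sum. An elementary induction on $n$ shows that $\mathrm{Ext}_{\mathrm{Yon}}(C_{<n},A)=0$: given an extension of a finite direct sum $D_{1}\oplus D_{2}$ by $A$, the restrictions to $D_{1}$ and $D_{2}$ split by the inductive hypothesis, and the two sections glue (using commutativity) to a section of the whole extension. Applying this to our restricted extension $A\rightarrow\pi^{-1}(C_{<n})\rightarrow C_{<n}$ yields a continuous section $s_{2}:C_{<n}\rightarrow X$.

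Finally, define $s:C\rightarrow X$ by $s(x):=s_{1}(x_{\geq n})+s_{2}(x_{<n})$, where $x=x_{<n}+x_{\geq n}$ is the unique decomposition. Continuity is immediate; since $X$ is abelian and $s_{1},s_{2}$ are homomorphisms, so is $s$; and $\pi\circ s=\mathrm{id}_{C}$ since $s_{1}$ and $s_{2}$ are sections of their respective restrictions. Thus the extension splits, proving $\mathrm{Ext}_{\mathrm{Yon}}(C,A)=0$. There is no real obstacle beyond organizing the decomposition; the substantive point is the automatic vanishing of a continuous $A$-valued cocycle on an open subgroup when $A$ is discrete, which reduces the global splitting problem to the finite (countable) case covered by the hypothesis.
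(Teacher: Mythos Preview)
Your proof is correct and follows essentially the same route as the paper: both exploit that a continuous cocycle into the discrete group $A$ must vanish on $U_{n}\times U_{n}$ for some $n$, reducing the problem to the finite direct sum $C_{<n}$ where the hypothesis and finite additivity of $\mathrm{Ext}$ apply. The only difference is cosmetic---you construct the splitting section explicitly from $s_{1}$ and $s_{2}$, whereas the paper phrases the same reduction via the restriction map $\mathrm{Ext}_{\mathrm{Yon}}(C,A)\rightarrow\mathrm{Ext}_{\mathrm{Yon}}(C^{>n},A)$ being injective (since $\mathrm{Ext}_{\mathrm{Yon}}(C^{\leq n},A)=0$) and noting that $[c]$ restricts to zero.
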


\begin{proof}
By Lemma \ref{Lemma:Ext-Yon-nA} we can identify $\mathrm{Ext}_{\mathrm{Yon}%
}\left( C,A\right) $ with the subgroup of $\mathrm{Ext}_{\mathrm{c}}\left(
C,A\right) $ corresponding to continuous cocycles. For $n\in \omega $, define%
\begin{equation*}
C^{>n}=\left\{ x\in C:\forall i\leq n,x_{i}=0\right\}
\end{equation*}%
and%
\begin{equation*}
C^{\leq n}=\left\{ x\in C:\forall i>n,x_{i}=0\right\} \text{.}
\end{equation*}%
Suppose that $c:C\times C\rightarrow A$ is a continuous cocycle. Since $c$
is continuous, there exists $n_{0}\in \omega $ such that $c\left( x,y\right)
=0$ whenever $x,y\in C^{>n}$. Since 
\begin{equation*}
\mathrm{Ext}_{\mathrm{Yon}}\left( C^{\leq n},A\right) \cong \mathrm{Ext}%
\left( C_{0},A\right) \oplus \cdots \oplus \mathrm{Ext}\left( C_{n},A\right)
\cong 0\text{,}
\end{equation*}%
it follows that the inclusion $C^{>n}\rightarrow C$ induces an isomorphism%
\begin{equation*}
\mathrm{Ext}_{\mathrm{Yon}}\left( C,A\right) \rightarrow \mathrm{Ext}_{%
\mathrm{Yon}}\left( C^{>n},A\right) \text{.}
\end{equation*}%
Thus, we have that $c$ is a coboundary.
\end{proof}

\begin{lemma}
\label{Lemma:vanishing-Ext-product1}Suppose that $C$ and $A$ are
non-Archimedean Polish abelian groups. Suppose that $\left( V_{k}\right) $
is a basis of zero neighborhoods of $A$ with $V_{0}=A$. If $\mathrm{Ext}_{_{%
\mathrm{Yon}}}\left( C,V_{k}/V_{k+1}\right) =0$ for every $k\in \omega $,
then $\mathrm{Ext}_{\mathrm{Yon}}\left( C,A\right) =0$.
\end{lemma}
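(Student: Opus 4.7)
The plan is as follows. By Lemma \ref{Lemma:Ext-Yon-nA}, since $C$ is non-Archimedean, every element of $\mathrm{Ext}_{\mathrm{Yon}}(C,A)$ is represented by a continuous cocycle $c:C\times C\rightarrow A$, so the goal is to show that any such $c$ equals $\delta t$ for some continuous $t:C\rightarrow A$. Without loss of generality, refine the basis $(V_{k})$ to a decreasing chain of open subgroups of $A$ (possible since $A$ is non-Archimedean and $V_{0}=A$); then each quotient $V_{k}/V_{k+1}$ is a countable discrete group.

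I would build recursively continuous cocycles $c_{k}:C\times C\rightarrow V_{k}$, starting with $c_{0}=c$, together with continuous functions $t_{k}:C\rightarrow V_{k}$, so that
\[
c_{k+1} := c_{k}-\delta t_{k}:C\times C\rightarrow V_{k+1}.
\]
Given $c_{k}$, composing with the projection $V_{k}\rightarrow V_{k}/V_{k+1}$ produces a continuous cocycle $\overline{c}_{k}:C\times C\rightarrow V_{k}/V_{k+1}$. By the hypothesis $\mathrm{Ext}_{\mathrm{Yon}}(C,V_{k}/V_{k+1})=0$ and Lemma \ref{Lemma:Ext-Yon-nA}, there is a continuous $\overline{t}_{k}:C\rightarrow V_{k}/V_{k+1}$ with $\delta \overline{t}_{k}=\overline{c}_{k}$. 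Since $V_{k}/V_{k+1}$ is discrete, $\overline{t}_{k}$ is locally constant, and any set-theoretic section of the quotient $V_{k}\rightarrow V_{k}/V_{k+1}$ composed with $\overline{t}_{k}$ gives a locally constant (hence continuous) lift $t_{k}:C\rightarrow V_{k}$. Then $c_{k+1}:=c_{k}-\delta t_{k}$ takes values in $V_{k+1}$ by construction.

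Form the partial sums $T_{n}:=t_{0}+\cdots +t_{n-1}$. For $m\geq n$, the difference $T_{m}(x)-T_{n}(x)$ lies in the subgroup $V_{n}$ since each $t_{i}(x)$ with $i\geq n$ belongs to $V_{i}\subseteq V_{n}$; thus $(T_{n}(x))_{n\in\omega}$ is Cauchy in $A$ for every $x$ and converges, by Raikov-completeness of the Polish group $A$, to some $t(x)\in A$. Continuity of $t$ at an arbitrary $x_{0}\in C$ follows from the tail estimate $t(x)-t(x_{0})-(T_{k}(x)-T_{k}(x_{0}))\in V_{k}$ (which uses that $V_{k}$ is a closed subgroup), reducing continuity modulo $V_{k}$ to that of the finite partial sum $T_{k}$, which is continuous since each $t_{i}$ is. Finally, $c-\delta T_{n}=c_{n}$ has image in $V_{n}$, and since $(V_{n})$ is a neighborhood basis of $0$ we have $c_{n}(x,y)\to 0$ pointwise, yielding $c=\delta t$.

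The main obstacle is really bookkeeping: one must ensure both that the lifts $t_{k}$ can be chosen continuous and that the telescoping series $\sum_{k}t_{k}$ converges to a continuous function. The first point is handled by the discreteness of $V_{k}/V_{k+1}$, a consequence of the non-Archimedean structure of $A$; the second by completeness of $A$ combined with the fact that the $V_{k}$ are subgroups forming a neighborhood basis of $0$.
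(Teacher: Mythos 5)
Your proposal is correct and follows essentially the same route as the paper: recursively correct the continuous cocycle modulo the chain $\left( V_{k}\right) $ using the hypothesis $\mathrm{Ext}_{\mathrm{Yon}}\left( C,V_{k}/V_{k+1}\right) =0$ together with Lemma \ref{Lemma:Ext-Yon-nA}, then sum the telescoping series of continuous correction functions, whose convergence to a continuous primitive is guaranteed because the $V_{k}$ are open subgroups forming a neighborhood basis of $0$ in the complete group $A$. You merely spell out details the paper leaves implicit (the continuous lift via discreteness of $V_{k}/V_{k+1}$ and the Cauchy/tail estimate), so no changes are needed.
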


\begin{proof}
Suppose that $c:C\times C\rightarrow A$ is a continuous cocycle. By
hypothesis, there exists a continuous function $\varphi _{0}:C\rightarrow A$
such that $c_{0}:=\delta \varphi _{0}+c$ defines an element of $\mathrm{Ext}%
_{\mathrm{c}}\left( C,V_{1}\right) $. Proceeding in this fashion, we can
define a sequence of continuous $2$-cocycles $c_{n}:C\times C\rightarrow
V_{n}$ and continuous functions $\varphi _{n}:C^{\omega }\rightarrow V_{n}$
with $c_{0}=c$ such that $c_{n+1}=\delta \varphi _{n}+c_{n}$ for every $n\in
\omega $.

Setting $\varphi :=\sum_{n\in \omega }\varphi _{n}$ we obtain a continuous
function $C\rightarrow A$ such that $\delta \varphi +c=0$, concluding the
proof.
\end{proof}

\begin{lemma}
\label{Lemma:vanishing-Ext-product}Suppose that $\left( C_{i}\right) _{i\in
\omega }$ is a sequence of countable abelian groups, and $A$ is a
non-Archimedean abelian Polish group. Suppose that $\left( V_{k}\right) $ is
a basis of zero neighborhoods of $A$ with $V_{0}=A$. Set also $%
C:=\prod_{i\in \omega }C_{i}$. If $\mathrm{Ext}_{_{\mathrm{Yon}}}\left(
C_{i},V_{k}/V_{k+1}\right) =0$ for every $k,i\in \omega $, then $\mathrm{Ext}%
_{\mathrm{Yon}}\left( C,A\right) =0$.
\end{lemma}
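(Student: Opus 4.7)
The plan is to prove this by combining the two preceding lemmas in sequence: Lemma~\ref{Lemma:vanishing-Ext-product0} lets us pass from countable coefficient groups $C_i$ to their product $C=\prod_i C_i$, while Lemma~\ref{Lemma:vanishing-Ext-product1} lets us pass from the successive quotients $V_k/V_{k+1}$ of a filtration back to the filtered group $A$. The two combine to bridge the gap between the hypothesis (vanishing of $\mathrm{Ext}_{\mathrm{Yon}}$ on $(C_i,V_k/V_{k+1})$) and the conclusion (vanishing on $(C,A)$).

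First, I would observe that since $A$ is a non-Archimedean Polish abelian group, we may assume without loss of generality that $(V_k)$ is a decreasing basis of zero neighborhoods consisting of open subgroups of $A$. Then for each $k$, $V_{k+1}$ is an open subgroup of the Polish group $V_k$, so $V_k/V_{k+1}$ is a discrete Polish group, hence a countable abelian group.

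Next, I would fix $k \in \omega$ and apply Lemma~\ref{Lemma:vanishing-Ext-product0} to the sequence $(C_i)_{i \in \omega}$ and the countable group $V_k/V_{k+1}$: by hypothesis, $\mathrm{Ext}_{\mathrm{Yon}}(C_i, V_k/V_{k+1}) = 0$ for every $i \in \omega$, so that lemma yields $\mathrm{Ext}_{\mathrm{Yon}}(C, V_k/V_{k+1}) = 0$. Since this holds for every $k$, the hypothesis of Lemma~\ref{Lemma:vanishing-Ext-product1} (applied to the non-Archimedean Polish abelian groups $C = \prod_{i} C_i$ and $A$, with the filtration $(V_k)$) is satisfied. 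That lemma then immediately gives $\mathrm{Ext}_{\mathrm{Yon}}(C, A) = 0$, as required.

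There is no substantial obstacle: the proof is a two-step reduction, and the only minor point to verify is that $V_k/V_{k+1}$ is countable so that Lemma~\ref{Lemma:vanishing-Ext-product0} applies, which follows from second countability of $A$ together with the choice of $V_{k+1}$ as an open subgroup of $V_k$.
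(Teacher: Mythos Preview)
Your proposal is correct and follows exactly the same approach as the paper: the paper's proof is simply the one-line statement that the result is obtained by combining Lemma~\ref{Lemma:vanishing-Ext-product0} and Lemma~\ref{Lemma:vanishing-Ext-product1}, and you have spelled out precisely this combination (with the helpful additional verification that $V_k/V_{k+1}$ is countable so that Lemma~\ref{Lemma:vanishing-Ext-product0} applies).
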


\begin{proof}
This is obtained combining Lemma \ref{Lemma:vanishing-Ext-product0} and
Lemma \ref{Lemma:vanishing-Ext-product1}.
\end{proof}

\begin{lemma}
\label{Lemma:vanishing-Ext-product-2}Suppose that $\left( C_{i}\right)
_{i\in \omega }$ is a sequence of countable free abelian groups. Then,
setting $C:=\prod_{k\in \omega }C_{k}$, we have that $C$ is projective for
pro-Lie Polish abelian groups.
\end{lemma}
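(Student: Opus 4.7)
The plan is to show that $\mathrm{Ext}_{\mathrm{Yon}}(C,A)=0$ for every pro-Lie Polish abelian group $A$; since $\mathbf{proLiePAb}$ is thick in $\mathbf{PAb}$ by Theorem \ref{Theorem:pro-Lie-thick}, this is equivalent to $C$ being projective in $\mathbf{proLiePAb}$. Note that $C$ is non-Archimedean, being a closed subgroup of $(\mathbb{Z}^{(\omega)})^{\omega}$.

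First I would use the short exact sequence $c(A)\to A\to A/c(A)$. By Corollary \ref{Corollary:connected-component} the quotient $A/c(A)$ is non-Archimedean, so Lemma \ref{Lemma:exact-Ext-Yon}(2) applies with test object $C$ and produces a six-term exact sequence reducing the problem to the vanishing of $\mathrm{Ext}_{\mathrm{Yon}}(C,c(A))$ and $\mathrm{Ext}_{\mathrm{Yon}}(C,A/c(A))$. Since $c(A)=A_{\mathbb{R}}\oplus A_{\mathbb{S}^{1}}$ with $A_{\mathbb{R}}$ a vector group (hence injective in $\mathbf{proLiePAb}$ by Lemma \ref{Lemma:injective-pro-Lie}), the first summand contributes nothing and the issue reduces to $\mathrm{Ext}_{\mathrm{Yon}}(C,A_{\mathbb{S}^{1}})=0$ for the compact connected group $A_{\mathbb{S}^{1}}$.

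For the non-Archimedean target $A/c(A)$, I would apply Lemma \ref{Lemma:vanishing-Ext-product}: pick a basis $(V_{k})$ of zero-neighborhoods in $A/c(A)$ consisting of open subgroups, with $V_{0}=A/c(A)$; each $V_{k}/V_{k+1}$ is then discrete and countable, and $\mathrm{Ext}_{\mathrm{Yon}}(C_{i},V_{k}/V_{k+1})$ coincides with the ordinary $\mathrm{Ext}$ of discrete abelian groups, which vanishes because each $C_{i}$ is free. The main obstacle is the compact connected case, for which I would write $K:=A_{\mathbb{S}^{1}}=\lim_{n}T_{n}$ as an inverse limit of finite-dimensional tori with surjective bonding maps $\pi_{n+1,n}$. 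A continuous cocycle $c\colon C\times C\to K$ projects to continuous cocycles $c_{n}\colon C\times C\to T_{n}$, and by Lemma \ref{Lemma:u-nA-projective-torus} together with the product decomposition $T_{n}\cong\mathbb{T}^{d_{n}}$ each $c_{n}$ can be written as $\delta t_{n}$ for some continuous $t_{n}\colon C\to T_{n}$.

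The problem is to choose the $t_{n}$ compatibly, and this is achieved by a Mittag-Leffler argument. The kernel $N_{n}$ of $\pi_{n+1,n}$, being a closed subgroup of a torus, is a compact abelian Lie group isomorphic to $\mathbb{T}^{e_{n}}\oplus F_{n}$ with $F_{n}$ finite. Using again the product formula with Lemma \ref{Lemma:u-nA-projective-torus} one obtains $\mathrm{Ext}_{\mathrm{Yon}}(C,\mathbb{T}^{e_{n}})=0$, and Lemma \ref{Lemma:vanishing-Ext-product} applied with the trivial filtration $V_{0}=F_{n}$, $V_{1}=0$ gives $\mathrm{Ext}_{\mathrm{Yon}}(C,F_{n})=0$ thanks to the freeness of each $C_{i}$; therefore $\mathrm{Ext}_{\mathrm{Yon}}(C,N_{n})=0$. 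The standard four-term exact sequence attached to $N_{n}\to T_{n+1}\to T_{n}$ then forces the restriction map $\mathrm{Hom}(C,T_{n+1})\to \mathrm{Hom}(C,T_{n})$ to be surjective, so the $t_{n}$'s can be inductively adjusted by lifts of continuous homomorphisms to yield a compatible tower. Taking the limit produces a continuous $t\colon C\to K$ with $\delta t=c$, establishing the vanishing of $\mathrm{Ext}_{\mathrm{Yon}}(C,K)$ and hence of $\mathrm{Ext}_{\mathrm{Yon}}(C,A)$.
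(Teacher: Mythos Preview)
Your argument is correct, but it takes a longer route than the paper's. The key difference is the choice of decomposition of the target $A$. The paper uses Lemma~\ref{Lemma:structure-pro-Lie} to write a short exact sequence $D\to A\to T$ with $D$ non-Archimedean and $T$ a torus group; since torus groups are injective in $\mathbf{proLiePAb}$ by Lemma~\ref{Lemma:injective-pro-Lie} (combined with Theorem~\ref{Theorem:pro-Lie-thick}), the vanishing $\mathrm{Ext}_{\mathrm{Yon}}(C,T)=0$ is immediate, and the non-Archimedean piece is handled by Lemma~\ref{Lemma:vanishing-Ext-product} exactly as you do. Your decomposition $c(A)\to A\to A/c(A)$ puts the connected part as the \emph{subgroup}, and since a general compact connected $A_{\mathbb{S}^{1}}$ need not be a torus (hence need not be injective), you are forced into the Mittag--Leffler argument to handle it. That argument is valid---the kernels $N_n$ are compact abelian Lie groups, your vanishing $\mathrm{Ext}_{\mathrm{Yon}}(C,N_n)=0$ goes through, and the resulting surjectivity of $\mathrm{Hom}(C,T_{n+1})\to\mathrm{Hom}(C,T_n)$ follows from the standard pullback description of the boundary map---but it is work that the paper's choice of decomposition avoids entirely. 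In short: same non-Archimedean step, but the paper's ``torus as quotient'' trick replaces your inverse-limit argument with a one-line appeal to injectivity.
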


\begin{proof}
By Lemma \ref{Lemma:Ext-Yon-nA}, if $A$ is a pro-Lie Polish abelian group,
then we can identify $\mathrm{Ext}_{\mathrm{Yon}}\left( C,A\right) $ with
the subgroup of $\mathrm{Ext}_{\mathrm{c}}\left( C,A\right) $ corresponding
to continuous cocycles. By Lemma \ref{Lemma:injective-pro-Lie}, since
pro-Lie Polish abelian groups form a thick subcategory of $\mathbf{PAb}$, we
have that $\mathrm{Ext}_{\mathrm{Yon}}\left( C,T\right) =0$ for every torus $%
T$. Furthermore, by Lemma \ref{Lemma:vanishing-Ext-product} we have that $%
\mathrm{Ext}_{\mathrm{Yon}}\left( C,A\right) =0$ for every non-Archimedean
Polish abelian group $A$. If $B$ is an arbitrary pro-Lie abelian Polish
group, then by Lemma \ref{Lemma:structure-pro-Lie} we have a short exact
sequence $A\rightarrow B\rightarrow T$ where $A$ is non-Archimedean and $T$
is a torus. By Lemma \ref{Lemma:exact-Ext-Yon}(2), this induces an exact
sequence $0=\mathrm{Ext}_{\mathrm{Yon}}\left( C,A\right) \rightarrow \mathrm{%
Ext}_{\mathrm{Yon}}\left( C,B\right) \rightarrow \mathrm{Ext}_{\mathrm{Yon}%
}\left( C,T\right) =0$, showing that \textrm{Ext}$_{\mathrm{Yon}}\left(
C,B\right) =0$.
\end{proof}

\begin{theorem}
\label{Theorem:characterize-projectives}Let $G$ be an abelian pro-Lie Polish
group. The following assertions are equivalent:

\begin{enumerate}
\item $G$ is projective in $\mathbf{proLiePAb}$;

\item $G$ is isomorphic to $\mathbb{Z}^{\alpha }\oplus \left( \mathbb{Z}%
^{\left( \omega \right) }\right) ^{\beta }\oplus \mathbb{R}^{\gamma }$ for
some $\alpha ,\beta ,\gamma \in \omega +1$;

\item $\left\{ N\in \mathcal{N}\left( G\right) :G/N\text{ is projective in }%
\mathbf{LieAb}\right\} $ is cofinal in $\mathcal{N}\left( G\right) $.
\end{enumerate}
\end{theorem}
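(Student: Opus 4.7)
The plan is to establish (1) $\Leftrightarrow$ (2) and (2) $\Leftrightarrow$ (3). The implication (1) $\Rightarrow$ (2) is immediate from Lemma \ref{Lemma:projective-proLiePAb}. For (2) $\Rightarrow$ (1), I would apply Lemma \ref{Lemma:vanishing-Ext-product-2} once with $C_{i}=\mathbb{Z}$ and once with $C_{i}=\mathbb{Z}^{(\omega )}$ to conclude that $\mathbb{Z}^{\omega }$ and $(\mathbb{Z}^{(\omega )})^{\omega }$ are projective in $\mathbf{proLiePAb}$; combined with the earlier lemma that $\mathbb{R}^{\omega }$ is projective, together with the fact that direct summands and finite biproducts of projectives are projective, this covers every $G$ of the form in (2), since for each finite exponent the corresponding factor is a direct summand of the countable-exponent projective.

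For (2) $\Rightarrow$ (3), given $G=\mathbb{Z}^{\alpha }\oplus (\mathbb{Z}^{(\omega )})^{\beta }\oplus \mathbb{R}^{\gamma }$, I would consider, for $n,m,k\in \omega $, the closed subgroup $N_{n,m,k}$ obtained by requiring the first $n$ coordinates of the $\mathbb{Z}$-factor, the first $m$ of the $\mathbb{Z}^{(\omega )}$-factor, and the first $k$ of the $\mathbb{R}$-factor to vanish. Each quotient $G/N_{n,m,k}$ is a finite-dimensional vector group in direct sum with a finitely-generated free abelian group, hence projective in $\mathbf{LiePAb}$. These subgroups form a basis of zero neighborhoods of $G$, and since $G/N$ being a Lie group means $N$ contains every subgroup of $G$ sitting inside a sufficiently small zero neighborhood of $0$, the family is cofinal in $\mathcal{N}(G)$.

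The main content is (3) $\Rightarrow$ (2). By Lemma \ref{Lemma:countable-cofinality}, pick a decreasing sequence $(N_{k})_{k\in \omega }$ cofinal in $\mathcal{N}(G)$ with each $G/N_{k}$ projective in $\mathbf{LiePAb}$, so $G/N_{k}=V_{k}\oplus F_{k}$ with $V_{k}=c(G/N_{k})$ a finite-dimensional vector group and $F_{k}$ a countable free abelian group, and $G\cong \mathrm{lim}_{k}G/N_{k}$ with surjective bonding maps. Since $V_{k}$ is canonical as the connected component, the bonding map $G/N_{k+1}\to G/N_{k}$ restricts to a surjection $V_{k+1}\to V_{k}$ (by Lemma \ref{Lemma:connected-component-surjective}) and descends to a surjection $F_{k+1}\to F_{k}$. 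Using projectivity of finite-dimensional vector spaces and of discrete free abelian groups to select sections of these surjections, a standard telescoping identifies $\mathrm{lim}_{k}V_{k}\cong V_{0}\oplus \prod_{k\ge 1}\ker (V_{k}\to V_{k-1})$ as a vector group $\mathbb{R}^{\gamma }$, and $\mathrm{lim}_{k}F_{k}\cong F_{0}\oplus \prod_{k\ge 1}\ker (F_{k}\to F_{k-1})$ as a closed subgroup of $(\mathbb{Z}^{(\omega )})^{\omega }$, via direct-summand embeddings of each countable free factor into $\mathbb{Z}^{(\omega )}$; by Lemma \ref{Lemma:free-subgroups} this latter group has the form $\mathbb{Z}^{\alpha }\oplus (\mathbb{Z}^{(\omega )})^{\beta }$. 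Surjectivity of the $V$-bonding maps forces $\mathrm{lim}^{1}V_{k}=0$, so $0\to \mathrm{lim}_{k}V_{k}\to G\to \mathrm{lim}_{k}F_{k}\to 0$ is exact, and splits by Lemma \ref{Lemma:injective-pro-Lie} since $\mathrm{lim}_{k}V_{k}$ is a vector group. The hardest part will be simultaneously managing the vector and free parts of the tower and verifying exactness of the inverse-limit sequence; once the compatible splittings are in place, the rest is Mittag-Leffler bookkeeping.
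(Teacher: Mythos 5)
Your proposal is correct and follows essentially the same route as the paper: (1)$\Leftrightarrow$(2) via Lemma \ref{Lemma:projective-proLiePAb} and Lemma \ref{Lemma:vanishing-Ext-product-2} together with projectivity of $\mathbb{R}^{\omega }$, and the main direction by splitting each Lie quotient as a finite-dimensional vector group plus a countable free abelian group and assembling along a cofinal tower. The only cosmetic difference is that you close the cycle with (3)$\Rightarrow$(2) by explicit compatible splittings and Mittag--Leffler telescoping, whereas the paper proves (3)$\Rightarrow$(1) directly by identifying $c\left( G\right) =G_{\mathbb{R}}$, writing $G\cong G_{\mathbb{R}}\oplus G_{\mathbb{Z}}$ with $G_{\mathbb{Z}}$ a product of countable free abelian groups, and quoting projectivity of such products.
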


\begin{proof}
(2)$\Rightarrow $(1) We have that $\mathbb{R}^{\omega }$ is projective by
Lemma \ref{Lemma:R-projective}. We have that $\left( \mathbb{Z}^{\left(
\omega \right) }\right) ^{\omega }$ is projective by Lemma \ref%
{Lemma:vanishing-Ext-product-2}. Any group of the form $\mathbb{Z}^{\alpha
}\oplus \left( \mathbb{Z}^{\left( \omega \right) }\right) ^{\beta }\oplus 
\mathbb{R}^{\gamma }$ for $\alpha ,\beta ,\gamma \in \omega +1$ is a direct
summand of $\mathbb{R}^{\omega }\oplus \left( \mathbb{Z}^{\left( \omega
\right) }\right) ^{\omega }$, and hence projective.

(1)$\Rightarrow $(2) Any projective object is isomorphic to one of this form
by Lemma \ref{Lemma:projective-proLiePAb}.

(3)$\Rightarrow $(1) Let $\left( N_{k}\right) $ be a cofinal sequence in $%
\mathcal{N}\left( G\right) $ such that $G^{\left( k\right) }:=G/N_{k}$ is
projective in $\mathbf{LieAb}$ for every $k\in \omega $. Then by Lemma \ref%
{Lemma:connected-component} we have that $c\left( G\right) =G_{\mathbb{R}}$
and $G\left/ c\left( G\right) \right. \cong \mathrm{\mathrm{lim}}_{k}G_{%
\mathbb{Z}}^{\left( k\right) }$ where $(G_{\mathbb{Z}}^{\left( k\right)
})_{k\in \omega }$ is an epimorphic tower of free abelian groups. Whence, $%
G_{\mathbb{Z}}$ is isomorphic to a product of free abelian groups and hence
projective in $\mathbf{proLiePAb}$. Hence, $G\cong G_{\mathbb{R}}\oplus G_{%
\mathbb{Z}}$ is projective in $\mathbf{proLiePAb}$.

(2)$\Rightarrow $(3) This follows from the fact that the projective objects
in $\mathbf{LieAb}$ are of the form $\mathbb{R}^{n}\oplus \mathbb{Z}^{\left(
\alpha \right) }$ for $n<\omega $ and $\alpha \leq \omega $.
\end{proof}

\begin{theorem}
\label{Theorem:hd-proLiePAb}The category $\mathbf{proLiePAb}$ has enough
projectives and homological dimension $1$.
\end{theorem}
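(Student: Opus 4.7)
The claim that $\mathbf{proLiePAb}$ has enough projectives is immediate from Lemma~\ref{Lemma:enough-projective-proLie} together with Theorem~\ref{Theorem:characterize-projectives}: the former supplies a surjection from $\mathbb{R}^{\omega}\oplus(\mathbb{Z}^{(\omega)})^{\omega}$ onto any pro-Lie $G$, and the latter certifies that group as projective. The lower bound $\mathrm{hd}(\mathbf{proLiePAb})\geq 1$ follows from the classical nonvanishing $\mathrm{Ext}(\mathbb{Z}/2,\mathbb{Z})\cong\mathbb{Z}/2$.

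For the upper bound $\mathrm{hd}(\mathbf{proLiePAb})\leq 1$, the plan is to construct, for every pro-Lie $G$, a projective resolution of length one. Write $G=\mathrm{lim}_n G_n$ as an inverse limit along an epimorphic tower of abelian Lie groups, and apply Lemma~\ref{Lemma:tower-quasi-abelian} with $\mathcal{D}$ the class of projective Lie groups. This produces a tower $(D_n)$ of projective Lie groups with surjective bonding maps $p_{n+1}\colon D_{n+1}\to D_n$ and compatible surjections $\varphi_n\colon D_n\to G_n$, assembling to a surjection $\varphi\colon\mathrm{lim}_n D_n\to G$. Setting $K_n:=\ker\varphi_n$, the target is to show that $0\to\mathrm{lim}_n K_n\to\mathrm{lim}_n D_n\to G\to 0$ is the desired resolution.

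Two technical steps underpin this. First---what I expect to be the main step---every closed subgroup of a projective Lie group $\mathbb{R}^d\oplus\mathbb{Z}^{(\alpha)}$ is itself projective Lie: such a subgroup is Lie (as a closed subgroup of a locally compact group with no small subgroups) and torsion-free, and projection $p$ onto the free summand yields a short exact sequence $\mathbb{R}^a\oplus\mathbb{Z}^b\to K_n\to p(K_n)$ whose image is a subgroup of a free abelian group, hence free abelian; since a continuous injection of a Polish group into a discrete group forces the Polish topology to be discrete, the sequence splits in $\mathbf{LiePAb}$ and gives $K_n\cong\mathbb{R}^a\oplus\mathbb{Z}^{(\gamma)}$. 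Second, the bonding maps $K_{n+1}\to K_n$ are surjective, by a diagram chase exploiting the explicit description $Y_{n+1}=D_n\times_{G_n}G_{n+1}$ underlying the proof of Lemma~\ref{Lemma:tower-quasi-abelian}: given $y\in K_n$, the element $(y,0)\in Y_{n+1}$ lifts through the surjection $D_{n+1}\twoheadrightarrow Y_{n+1}$ to an element of $K_{n+1}$ mapping to $y$.

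Granted these facts, $(K_n)$ is an inverse tower of projective Lie groups with surjective bonding, and the kernels $V_k:=\ker(\mathrm{lim}_n K_n\to K_k)$ form a cofinal family in $\mathcal{N}(\mathrm{lim}_n K_n)$ with projective Lie quotients $K_k$; criterion~(3) of Theorem~\ref{Theorem:characterize-projectives} then yields that $\mathrm{lim}_n K_n$ is projective in $\mathbf{proLiePAb}$, and the same reasoning applies to $\mathrm{lim}_n D_n$. This completes the projective resolution of length one, giving $\mathrm{Ext}^n(G,-)=0$ for all $n\geq 2$ and any $G$ in $\mathbf{proLiePAb}$, hence the desired bound on the homological dimension.
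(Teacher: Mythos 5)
Your proof is correct, and for the key step it takes a genuinely different route from the paper. The paper proves the stronger closure property that \emph{every} closed subgroup of a projective object is projective: it embeds such a subgroup $G$ into $\left(F\oplus\mathbb{R}\right)^{\omega}$, uses the type decomposition to reduce to $G=G_{\mathbb{Z}}$, and then invokes the vanishing of $\mathrm{Ext}^{2}$ in $\mathbf{LCPAb}$ (from \cite{bergfalk_applications_2023}) to see that each finite-stage quotient $G/M_{k}\subseteq F^{k}\oplus\mathbb{R}^{k}$ is projective, hence free abelian, so that $G$ is a product of countable free groups. You instead build, for each pro-Lie $G$, one explicit length-one resolution: you run the construction of Lemma \ref{Lemma:tower-quasi-abelian} over an epimorphic tower of Lie quotients, show each kernel $K_{n}$ is projective Lie by the elementary structure of closed subgroups of $\mathbb{R}^{d}\oplus\mathbb{Z}^{\left(\alpha\right)}$ (kernel $K\cap\mathbb{R}^{d}\cong\mathbb{R}^{a}\oplus\mathbb{Z}^{b}$, countable Polish quotient is discrete and free, splitting by lifting a basis), show the tower $\left(K_{n}\right)$ is epimorphic via the pullback description, and conclude projectivity of $\mathrm{lim}_{n}K_{n}$ and $\mathrm{lim}_{n}D_{n}$ from criterion (3) of Theorem \ref{Theorem:characterize-projectives}. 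Your route avoids both the type-decomposition reduction and the external $\mathrm{Ext}^{2}$-vanishing input for $\mathbf{LCPAb}$, at the price of using the internal construction of Lemma \ref{Lemma:tower-quasi-abelian} rather than only its statement (which is needed for the surjectivity of $K_{n+1}\rightarrow K_{n}$, and should be said explicitly, together with the identification $\mathrm{\mathrm{Ker}}\left(\mathrm{lim}_{n}\varphi_{n}\right)=\mathrm{lim}_{n}K_{n}$), and it yields only ``some length-one projective resolution for every object'' rather than the paper's closure of projectives under closed subgroups; by Schanuel's lemma the two are equivalent in the presence of enough projectives, so the bound $\mathrm{hd}\left(\mathbf{proLiePAb}\right)\leq 1$ follows either way, and your lower bound via $\mathrm{Ext}\left(\mathbb{Z}/2\mathbb{Z},\mathbb{Z}\right)\neq 0$ is fine.
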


\begin{proof}
We have that $\mathbf{proLiePAb}$ has enough projectives by Theorem \ref%
{Theorem:characterize-projectives} and Lemma \ref%
{Lemma:enough-projective-proLie}. We now prove that closed subgroups of
projectives are projective. Suppose that $G$ is a closed subgroup of $\left(
F\oplus \mathbb{R}\right) ^{\omega }$ where $F$ is a countable free abelian
group of infinite rank. Then we have that $G_{\mathbb{S}^{1}}=0$ and $G_{%
\mathrm{t}}=0$. Hence, we have that $G\cong G_{\mathbb{R}}\oplus G_{\mathbb{Z%
}}$. Since $G_{\mathbb{R}}$ is injective, we can assume without loss of
generality that $G=G_{\mathbb{Z}}$.

Set 
\begin{equation*}
N_{k}:=\left\{ x\in \left( F\oplus \mathbb{R}\right) ^{\omega }:\forall i<k%
\text{, }x_{i}=0\right\} \text{.}
\end{equation*}%
Then $\left( N_{k}\right) $ is a vanishing sequence in $\mathcal{N}\left(
\left( F\oplus \mathbb{R}\right) ^{\omega }\right) $, and $\left(
M_{k}\right) _{k\in \omega }$ is a vanishing sequence in $\mathcal{N}\left(
G\right) $, where $M_{k}:=N_{k}\cap G$. We have that $G/M_{k}$ is isomorphic
to a closed subgroup $L$ of 
\begin{equation*}
\left( F\oplus \mathbb{R}\right) ^{\omega }/N_{k}\cong \left( F\oplus 
\mathbb{R}\right) ^{k}\cong F^{k}\oplus \mathbb{R}^{k}\text{.}
\end{equation*}%
We consider the cohomological derived functor of $\mathrm{H}^{0}\circ 
\mathrm{Hom}^{\bullet }$ for locally compact Polish abelian groups
introduced in \cite{bergfalk_applications_2023}; see also \cite[Section 4]%
{hoffmann_homological_2007}. For every locally compact Polish abelian group $%
A$, we have an exact sequence%
\begin{equation*}
0=\mathrm{Ext}\left( F^{k}\oplus \mathbb{R}^{k},A\right) \rightarrow \mathrm{%
Ext}\left( L,A\right) \rightarrow \mathrm{Ext}^{2}\left( \left( F^{k}\oplus 
\mathbb{R}^{k}\right) /L,A\right) =0\text{.}
\end{equation*}%
Therefore, we have that $L$ is a projective locally compact Polish abelian
group. Since $L$ is also countable, we have that is a free abelian group.

Thus, we have that $G\cong \mathrm{\mathrm{lim}}_{k}G/M_{k}$ where, for
every $k\in \omega $, $G/M_{k}$ is a countable free abelian group. This
shows that $G$ is isomorphic to a product of countable free abelian groups,
whence projective in $\mathbf{proLiePAb}$.
\end{proof}

\begin{corollary}
The functor $\mathrm{Hom}^{\bullet }:\mathrm{K}^{b}\left( \mathbf{proLiePAb}%
\right) ^{\mathrm{op}}\times \mathrm{K}^{b}\left( \mathbf{proLiePAb}\right)
\rightarrow \mathrm{K}^{b}\left( \mathbf{Ab}\right) $ admits a total right
derived functor $\mathrm{RHom}:\mathrm{D}^{b}\left( \mathbf{proLiePAb}%
\right) ^{\mathrm{op}}\times \mathrm{D}^{b}\left( \mathbf{proLiePAb}\right)
\rightarrow \mathrm{D}^{b}\left( \mathbf{Ab}\right) $.
\end{corollary}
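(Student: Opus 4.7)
The plan is to invoke Corollary~\ref{Corollary:explicitly-derivable-hom} (or equivalently Proposition~\ref{Proposition:explicitly-right-derivable-injectives}) with $\mathcal{M}=\mathbf{Ab}$ and $\mathcal{A}=\mathbf{proLiePAb}$. All the hypotheses needed are already established earlier in the paper, so the argument is essentially a matter of assembling them.

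First I would observe that $\mathbf{proLiePAb}$ is a quasi-abelian category, since by Theorem~\ref{Theorem:pro-Lie-thick} it is a thick subcategory of the quasi-abelian category $\mathbf{PAb}$. Moreover $\mathbf{Ab}$ is an abelian, hence quasi-abelian, category of abelian groups, and since $\mathbf{proLiePAb}$ is an additive category, its $\mathrm{Hom}$-sets are canonically abelian groups with the two $\mathrm{Hom}$-functors landing in $\mathbf{Ab}$; thus $\mathbf{proLiePAb}$ is naturally a quasi-abelian $\mathbf{Ab}$-category in the sense of Definition~\ref{Definition:M-category}.

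Next I would invoke Theorem~\ref{Theorem:hd-proLiePAb}, which asserts that $\mathbf{proLiePAb}$ has enough projectives and homological dimension $1$. Combined with the previous paragraph, these are exactly the hypotheses of Corollary~\ref{Corollary:explicitly-derivable-hom}. That corollary then produces the required total right derived functor
\[
\mathrm{RHom}:\mathrm{D}^{b}(\mathbf{proLiePAb})^{\mathrm{op}}\times \mathrm{D}^{b}(\mathbf{proLiePAb})\rightarrow \mathrm{D}^{b}(\mathbf{Ab}),
\]
computed by replacing the first argument with a bounded projective resolution (which exists because $\mathbf{proLiePAb}$ has enough projectives and homological dimension at most $1$) and taking $\mathrm{Hom}^{\bullet}$.

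There is essentially no obstacle here: the real work has already been done in proving that $\mathbf{proLiePAb}$ is thick in $\mathbf{PAb}$ (Theorem~\ref{Theorem:pro-Lie-thick}) and in classifying projectives and bounding the homological dimension (Theorem~\ref{Theorem:characterize-projectives} and Theorem~\ref{Theorem:hd-proLiePAb}). The only point that merits a brief check is that $\mathrm{Hom}(P,-)$ is exact on $\mathbf{proLiePAb}$ for every projective $P$, but this is immediate from the definition of projective object in an exact category together with the left exactness of $\mathrm{Hom}(P,-)$.
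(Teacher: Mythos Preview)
Your proposal is correct and is exactly the intended argument: the paper states this corollary immediately after Theorem~\ref{Theorem:hd-proLiePAb} with no separate proof, since it follows directly by applying Corollary~\ref{Corollary:explicitly-derivable-hom} (equivalently Proposition~\ref{Proposition:explicitly-right-derivable-injectives}) with $\mathcal{M}=\mathbf{Ab}$ and $\mathcal{A}=\mathbf{proLiePAb}$, using that $\mathbf{proLiePAb}$ has enough projectives and homological dimension~$1$.
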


\begin{corollary}
A pro-Lie Polish abelian group $G$ is projective if and only if $\mathrm{Ext}%
\left( G,\mathbb{Z}\right) =0$ and $\mathrm{Ext}\left( G,\mathbb{Z}^{\left(
\omega \right) }\right) =0$.
\end{corollary}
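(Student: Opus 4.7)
The forward implication is immediate: by definition, a projective object $G$ in $\mathbf{proLiePAb}$ satisfies $\mathrm{Ext}(G,H)=0$ for every object $H$, in particular for $H=\mathbb{Z}$ and $H=\mathbb{Z}^{(\omega)}$.

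For the converse, assume $\mathrm{Ext}(G,\mathbb{Z})=0$ and $\mathrm{Ext}(G,\mathbb{Z}^{(\omega)})=0$. By Theorem \ref{Theorem:hd-proLiePAb}, $\mathbf{proLiePAb}$ has enough projectives, so I would pick an admissible epimorphism $\pi:P\to G$ with $P$ projective and set $K:=\ker(\pi)$. The argument given inside the proof of Theorem \ref{Theorem:hd-proLiePAb} shows that closed subgroups of projective pro-Lie Polish abelian groups are again projective, so $K$ is projective, and Lemma \ref{Lemma:projective-proLiePAb} then gives $K\cong\mathbb{Z}^{\alpha}\oplus(\mathbb{Z}^{(\omega)})^{\beta}\oplus\mathbb{R}^{\gamma}$ for some $\alpha,\beta,\gamma\in\omega+1$. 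It is therefore enough to prove $\mathrm{Ext}(G,K)=0$, since then the extension $K\to P\to G$ splits, exhibiting $G$ as a direct summand of $P$ and hence as projective.

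I would handle the three summands of $K$ separately. Vector groups are injective in $\mathbf{proLiePAb}$ by Lemma \ref{Lemma:injective-pro-Lie}, so $\mathrm{Ext}(G,\mathbb{R}^{\gamma})=0$ with no further work. For the two remaining summands, the crucial point is that $\mathrm{Ext}(G,-)$ commutes with countable products in the second argument. Using a projective resolution $0\to P_{1}\to P_{0}\to G\to 0$ (which exists because $\mathbf{proLiePAb}$ has enough projectives and homological dimension $1$), $\mathrm{Ext}(G,H)$ is the cokernel of $\mathrm{Hom}(P_{0},H)\to\mathrm{Hom}(P_{1},H)$; since $\mathrm{Hom}$ converts products to products and countable products in $\mathbf{Ab}$ are exact, this yields $\mathrm{Ext}(G,\prod_{n}H_{n})\cong\prod_{n}\mathrm{Ext}(G,H_{n})$. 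Applying this with $H_{n}=\mathbb{Z}$ and $H_{n}=\mathbb{Z}^{(\omega)}$, together with additivity of $\mathrm{Ext}$ over finite biproducts, gives $\mathrm{Ext}(G,\mathbb{Z}^{\alpha})=0$ and $\mathrm{Ext}(G,(\mathbb{Z}^{(\omega)})^{\beta})=0$ for every $\alpha,\beta\in\omega+1$. Combining the three vanishings gives $\mathrm{Ext}(G,K)=0$ and completes the argument.

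The only nonformal step is the commutation of $\mathrm{Ext}$ with countable products in the quasi-abelian setting $\mathbf{proLiePAb}$; this is the main obstacle, though a rather mild one, and can be established either by the projective resolution computation sketched above or directly by the pullback description of extensions (an extension of $G$ by $\prod_{n}H_{n}$ in $\mathbf{proLiePAb}$ is determined by its pushouts along the projections $\prod_{n}H_{n}\to H_{n}$, and sections of the pushouts assemble coordinatewise into a section of the original extension).
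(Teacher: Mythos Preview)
Your proof is correct and follows essentially the same approach as the paper: take a short exact sequence $K\to P\to G$ with $P$ projective, use homological dimension $1$ to conclude $K$ is projective and hence of the form $\mathbb{Z}^{\alpha}\oplus(\mathbb{Z}^{(\omega)})^{\beta}\oplus\mathbb{R}^{\gamma}$, then show $\mathrm{Ext}(G,K)=0$ summand by summand using injectivity of $\mathbb{R}^{\gamma}$ and the product formula for $\mathrm{Ext}$. The paper simply asserts the product commutation $\mathrm{Ext}(G,\mathbb{Z}^{\alpha})\cong\mathrm{Ext}(G,\mathbb{Z})^{\alpha}$ without comment, so your projective-resolution justification of that step actually fills in more detail than the original.
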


\begin{proof}
We prove sufficiency, as necessity is obvious. By Theorem \ref%
{Theorem:hd-proLiePAb}, we have a short exact sequence $P^{0}\rightarrow
P^{1}\rightarrow G$ where $P^{0}$ and $P^{1}$ are projectives. By Theorem %
\ref{Theorem:characterize-projectives}, we have that $P^{0}\cong \mathbb{Z}%
^{\alpha }\oplus \left( \mathbb{Z}^{\left( \omega \right) }\right) ^{\beta
}\oplus \mathbb{R}^{\gamma }$ for some $\alpha ,\beta ,\gamma \leq \omega $.
We have that $\mathrm{Ext}\left( G,\mathbb{R}^{\gamma }\right) =\mathrm{Ext}%
\left( G,\mathbb{R}\right) ^{\gamma }=0$ since $\mathbb{R}$ is injective. By
assumption, we have $\mathrm{Ext}\left( G,\mathbb{Z}^{\alpha }\right) \cong 
\mathrm{Ext}\left( G,\mathbb{Z}\right) ^{\alpha }=0$ and $\mathrm{Ext}%
(G,\left( \mathbb{Z}^{\left( \omega \right) }\right) ^{\beta })\cong \mathrm{%
Ext}\left( G,\mathbb{Z}^{\left( \omega \right) }\right) ^{\beta }=0$ by
hypothesis. This implies that $\mathrm{Ext}\left( G,P^{0}\right) =0$. Thus, $%
G$ is a topological direct summand of $P^{1}$, and hence projective.
\end{proof}

\begin{lemma}
\label{Lemma:countable-divisible-injective}Every countable divisible group
is injective in the category of non-Archimedean Polish abelian groups.
\end{lemma}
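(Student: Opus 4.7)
The plan is to show that for a countable divisible abelian group $D$---necessarily discrete as a countable Polish abelian group by the Baire category theorem---every short exact sequence $0 \to D \to E \to C \to 0$ in $\mathbf{PAb}$ with $C$ non-Archimedean splits. By Lemma \ref{Lemma:Ext-Yon-nA} this is equivalent to $\mathrm{Ext}_{\mathrm{Yon}}(C,D)=0$.

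By \cite[Proposition 4.6]{bergfalk_definable_2020}, the quotient $p \colon E \to C$ admits a continuous normalized set-theoretic section $\varphi \colon C \to E$, yielding a continuous cocycle $c := \delta \varphi \colon C \times C \to D$. Since $D$ is discrete and $c(0,0) = 0$, the preimage $c^{-1}(\{0\})$ is an open neighborhood of $(0,0)$ in $C \times C$, and the non-Archimedean structure of $C$ supplies an open subgroup $U \subseteq C$ with $U \times U \subseteq c^{-1}(\{0\})$. Consequently $\varphi|_U$ is already a continuous group homomorphism, and $U' := \varphi(U)$ is an open subgroup of $E$ with $U' \cap D = \{0\}$.

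The quotient $\bar E := E/U'$ is then a countable discrete abelian group---countable because open subgroups of Polish groups have countable index, and discrete because the quotient of a topological group by an open subgroup is discrete---and fits into a short exact sequence of countable discrete abelian groups $0 \to D \to \bar E \to C/U \to 0$. Divisibility makes $D$ injective in the category of abstract abelian groups, so this sequence splits via a homomorphism $\sigma \colon C/U \to \bar E$, giving the internal decomposition $\bar E = D \oplus \sigma(C/U)$. Let $H \subseteq E$ denote the preimage of $\sigma(C/U)$ under the quotient $E \to \bar E$: then $H$ is a closed subgroup of $E$, and one checks that $D \cap H = \{0\}$ (using $D \cap \sigma(C/U) = 0$ in $\bar E$) and $p(H) = C$ (using $U' \subseteq H$ together with the fact that $\bar E \to C/U$ maps $\sigma(C/U)$ isomorphically onto $C/U$). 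Hence $p|_H \colon H \to C$ is a continuous bijection between Polish groups, and by the open mapping theorem for Polish groups it is a topological isomorphism; its inverse is the desired continuous splitting.

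The crucial step is producing the open subgroup $U$ over which the continuous cocycle $c$ vanishes; this combines the discreteness of $D$ with the non-Archimedean structure of $C$. Everything else reduces to the purely algebraic injectivity of divisible abelian groups and the open mapping theorem.
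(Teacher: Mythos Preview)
Your proof is correct and takes a genuinely different route from the paper's. The paper embeds the non-Archimedean group $A$ as a closed subgroup of a product $C=\prod_n C_n$ of countable groups, invokes Lemma~\ref{Lemma:vanishing-Ext-product0} together with the algebraic injectivity of $D$ to get $\mathrm{Ext}(C,D)=0$, and then uses the surjectivity of $\mathrm{Ext}(C,D)\to\mathrm{Ext}(A,D)$ (implicitly relying on the homological dimension~$1$ result of Theorem~\ref{Theorem:hd-proLiePAb}) to conclude. Your argument instead works directly with a single extension $D\to E\to C$: the continuous cocycle $c=\delta\varphi$ must vanish on $U\times U$ for some open subgroup $U\subseteq C$ because $D$ is discrete, and this collapses the problem to an ordinary abelian-group extension $D\to E/U'\to C/U$ of countable groups, which splits by algebraic injectivity.

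The advantage of your approach is that it is entirely self-contained: it needs neither Lemma~\ref{Lemma:vanishing-Ext-product0} nor any prior knowledge of the homological dimension of $\mathbf{proLiePAb}$, only the continuous selection result from \cite{bergfalk_definable_2020} and the open mapping theorem. One small point worth making explicit is why $U'=\varphi(U)$ is open in $E$: the map $p^{-1}(U)\to D$, $e\mapsto e-\varphi(p(e))$ is continuous and $U'$ is the preimage of $\{0\}$ in the discrete group $D$, so $U'$ is open in the open set $p^{-1}(U)$. With that detail filled in, the argument is complete.
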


\begin{proof}
Let $D$ be a countable divisible group, and $A$ be a non-Archimedean Polish
abelian group. Then there exists a sequence $\left( C_{i}\right) $ of
countable abelian groups such that, setting 
\begin{equation*}
C:=\prod_{n}C_{n}\text{,}
\end{equation*}%
there exists an injective continuous homomorphism $A\rightarrow C$. This
induces a surjective homomorphism%
\begin{equation*}
\mathrm{Ext}\left( C,D\right) \rightarrow \mathrm{Ext}\left( A,D\right) 
\text{.}
\end{equation*}%
As $D$ is injective in the category of countable abelian groups, for every $%
i\in \omega $ we have that $\mathrm{Ext}\left( C_{i},D\right) =0$. By Lemma %
\ref{Lemma:vanishing-Ext-product0} this implies that $\mathrm{Ext}\left(
C,D\right) =0$ and hence $\mathrm{Ext}\left( A,D\right) =0$.
\end{proof}

For pro-Lie Polish abelian groups $A$ and $B$, we set $\mathrm{Ext}%
^{n}\left( A,B\right) =\mathrm{H}^{n}\left( \mathrm{RHom}\left( A,B\right)
\right) $. We thus have that $\mathrm{Ext}^{n}\left( A,B\right) =0$ for $%
n\geq 2$, while $\mathrm{Ext}^{1}\left( A,B\right) $ (which we simply denote
by $\mathrm{Ext}\left( A,B\right) $) is isomorphic to the group $\mathrm{Ext}%
_{\mathrm{Yon}}^{1}\left( A,B\right) $ (which we have been denoting by $%
\mathrm{Ext}_{\mathrm{Yon}}\left( A,B\right) $) of isomorphism classes of ($%
1 $-fold) extensions of $A$ by $B$.

Notice that, if $C$ is a countable discrete group and $A$ is a pro-Lie
Polish abelian group, then any extension $A\rightarrow X\rightarrow C$ in $%
\mathbf{Ab}$ can be turned uniquely into an extension in $\mathbf{proLiePAb}$%
. Whence, we have that $\mathrm{Ext}\left( C,A\right) \cong \mathrm{Ext}%
\left( C,A_{\mathrm{disc}}\right) $, where $A_{\mathrm{disc}}$ is the group $%
A$ endowed with the discrete topology.

We now characterize the injective objects in the category of pro-Lie Polish
abelian groups.

\begin{lemma}
\label{Lemma:embed-pro-Lie}Every abelian Polish pro-Lie group is isomorphic
to a closed subgroup of $\mathbb{R}^{\omega }\oplus \mathbb{T}^{\omega
}\oplus U^{\omega }$ where $U=\mathbb{Q}^{\left( \omega \right) }\oplus
\bigoplus_{p}\mathbb{Z}\left( p^{\infty }\right) ^{\left( \omega \right) }$.
\end{lemma}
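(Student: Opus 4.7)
The plan is to decompose $G$ into its three structural pieces---vector, compact connected, and non-Archimedean---and embed each into the corresponding factor of the target. By the maximal vector subgroup decomposition recalled in Section~\ref{Subsection:pro-Lie}, write $G\cong V\oplus H$ where $V\cong \mathbb{R}^{\alpha }$ for some $\alpha \leq \omega $ and $H$ is vector-free; then $V$ embeds as a closed subgroup of $\mathbb{R}^{\omega }$. For the vector-free factor $H$, Lemma~\ref{Lemma:connected-component vf} gives that $c(H)$ is compact connected and Lemma~\ref{Lemma:non-Archimedean-quotient vf} gives that $H/c(H)$ is non-Archimedean.

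Next I would embed the two pieces of $H$ separately. Pontryagin duality embeds $c(H)$ into $\mathbb{T}^{\omega }$ as a closed subgroup: $c(H)^{\vee }$ is a countable torsion-free abelian group, so any countable generating set yields a surjection $\mathbb{Z}^{(\omega )}\twoheadrightarrow c(H)^{\vee }$ which dualizes to the desired embedding $c(H)\hookrightarrow (\mathbb{Z}^{(\omega )})^{\vee }\cong \mathbb{T}^{\omega }$. The group $\mathbb{T}^{\omega }$ is injective in $\mathbf{proLiePAb}$: by Lemma~\ref{Lemma:injective-pro-Lie} any closed inclusion from $\mathbb{T}^{\omega }$ into a pro-Lie group splits, and a standard pushout argument together with Theorem~\ref{Theorem:pro-Lie-thick} upgrades this to full injectivity, so that $c(H)\rightarrow \mathbb{T}^{\omega }$ extends along any closed inclusion $c(H)\hookrightarrow H$. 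Hence the inclusion extends to some $\tilde{\phi}:H\rightarrow \mathbb{T}^{\omega }$. Separately, $H/c(H)$ is a closed subgroup of a product $\prod_{n}C_{n}$ with each $C_{n}$ countable abelian; each $C_{n}$ embeds into its divisible hull, which is a countable direct sum of copies of $\mathbb{Q}$ and of Pr\"{u}fer groups and therefore embeds into $U$, giving a closed embedding $\iota :H/c(H)\hookrightarrow U^{\omega }$.

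The final step is to assemble $\Psi :=(\tilde{\phi},\iota \circ \pi ):H\rightarrow \mathbb{T}^{\omega }\oplus U^{\omega }$, where $\pi :H\rightarrow H/c(H)$ is the quotient map, and verify that $\Psi $ is a closed embedding; pasting with the embedding $V\hookrightarrow \mathbb{R}^{\omega }$ then yields the desired embedding of $G$. Injectivity of $\Psi $ is immediate: if $\Psi (x)=0$ then $\pi (x)=0$ forces $x\in c(H)$, and $\tilde{\phi}(x)=0$ then forces $x=0$ since $\tilde{\phi}|_{c(H)}$ is injective. The main obstacle, which I expect to be the only nontrivial point, is closedness of $\Psi (H)$: given $(a_{n},b_{n})=\Psi (x_{n})\rightarrow (a,b)$, closedness of $\iota (H/c(H))$ identifies $b=\iota (b_{0})$ for a unique $b_{0}\in H/c(H)$, and a continuous section $\sigma :H/c(H)\rightarrow H$ of $\pi $ provided by \cite[Proposition 4.6]{bergfalk_definable_2020} produces residues $h_{n}:=x_{n}-\sigma (\pi (x_{n}))\in c(H)$ whose $\tilde{\phi}$-images converge to $a-\tilde{\phi}(\sigma (b_{0}))$; since $c(H)$ is compact, $\tilde{\phi}(c(H))$ is closed in $\mathbb{T}^{\omega }$, so this limit equals $\tilde{\phi}(h)$ for some $h\in c(H)$, and then $(a,b)=\Psi (\sigma (b_{0})+h)$ lies in $\Psi (H)$.
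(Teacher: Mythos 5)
Your argument is correct, but it is a genuinely different and substantially heavier route than the one the paper takes. The paper's proof is a one-paragraph observation: by definition a pro-Lie Polish abelian group $A$ is (isomorphic to) a closed subgroup of a countable product $\prod_{k}A_{k}$ of abelian Polish Lie groups, each $A_{k}\cong \mathbb{R}^{n_{k}}\oplus \mathbb{T}^{m_{k}}\oplus D_{k}$ with $D_{k}$ countable discrete; since $D_{k}$ embeds (closedly, as $U$ is discrete) into $U$, each $A_{k}$ is a closed subgroup of $\mathbb{R}^{\omega }\oplus \mathbb{T}^{\omega }\oplus U$, and $\left( \mathbb{R}^{\omega }\oplus \mathbb{T}^{\omega }\oplus U\right) ^{\omega }\cong \mathbb{R}^{\omega }\oplus \mathbb{T}^{\omega }\oplus U^{\omega }$. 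You instead run the global structure theory: split off the maximal vector subgroup, treat $c(H)$ and $H/c(H)$ separately, extend the embedding $c(H)\hookrightarrow \mathbb{T}^{\omega }$ using injectivity of $\mathbb{T}^{\omega }$ in $\mathbf{proLiePAb}$, and use the continuous selection theorem of \cite[Proposition 4.6]{bergfalk_definable_2020} plus compactness of $c(H)$ to verify closedness of the assembled map. Each step checks out: your pushout upgrade of Lemma \ref{Lemma:injective-pro-Lie} to injectivity of $\mathbb{T}^{\omega }$ is exactly how the paper later proves Theorem \ref{Theorem:injective-pro-Lie}(2)$\Rightarrow $(1), and since that argument uses only Lemma \ref{Lemma:injective-pro-Lie} and Theorem \ref{Theorem:pro-Lie-thick}, there is no circularity with the present lemma, though you should state this dependence explicitly given that in the paper the embedding lemma precedes the injectivity theorem. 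Two small points you leave implicit: that a continuous injective homomorphism of Polish groups with closed image is automatically a topological isomorphism onto its image (Open Mapping Theorem, \cite[Corollary 2.3.4]{gao_invariant_2009}), and that recovering $\pi (x_{n})\rightarrow b_{0}$ from $\iota (\pi (x_{n}))\rightarrow \iota (b_{0})$ uses that $\iota $ is a homeomorphism onto its closed image. What your approach buys is an embedding adapted to the type decomposition (vector part into $\mathbb{R}^{\omega }$, compact connected part into $\mathbb{T}^{\omega }$, the non-Archimedean quotient into $U^{\omega }$); what the paper's buys is brevity and independence from the injectivity and selection machinery.
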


\begin{proof}
Let $A$ be a pro-Lie abelian Polish group. Then $A$ is isomorphic to a
closed subgroup of $\prod_{k\in \omega }A_{k}$ where, for every $k\in \omega 
$, $A_{k}$ is an abelian Polish Lie group. Thus, $A_{k}$ is isomorphic to $%
\mathbb{R}^{n_{k}}\oplus \mathbb{T}^{m_{k}}\oplus D_{k}$ for some $%
n_{k},m_{k}\in \omega $ and countable discrete group $D_{k}$. Thus, $A_{k}$
is isomorphic to a closed subgroup of $\mathbb{R}^{\omega }\oplus \mathbb{T}%
^{\omega }\oplus U$. The conclusion about $A$ follows.
\end{proof}

\begin{theorem}
\label{Theorem:injective-pro-Lie}Let $G$ be an abelian Polish pro-Lie group.
The following assertions are equivalent:

\begin{enumerate}
\item $G$ is an \emph{injective object }in $\mathbf{proLiePAb}$;

\item $G$ is isomorphic to $\mathbb{R}^{\alpha }\oplus \mathbb{T}^{\beta }$
for some $\alpha ,\beta \in \omega +1$;

\item $G$ is path-connected;

\item $\mathrm{Ext}\left( \mathbb{T},G\right) =0$.
\end{enumerate}
\end{theorem}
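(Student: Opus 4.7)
The plan is to establish the cycle (2) $\Rightarrow$ (1) $\Rightarrow$ (4) $\Rightarrow$ (3) $\Rightarrow$ (2).

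For (2) $\Rightarrow$ (1), I would first show that each of $\mathbb{R}$ and $\mathbb{T}$ is injective in $\mathbf{proLiePAb}$ by a pushout argument: given an admissible monic $A \hookrightarrow B$ and a morphism $\varphi : A \to I$ with $I \in \{\mathbb{R}, \mathbb{T}\}$, the pushout yields a short exact sequence $I \to X \to B/A$ in $\mathbf{proLiePAb}$ by Theorem \ref{Theorem:pro-Lie-thick}; this splits by Lemma \ref{Lemma:injective-pro-Lie} (since $I$ is a vector or a torus group), and composing a retraction $X \to I$ with the canonical map $B \to X$ extends $\varphi$. Since $\mathrm{Ext}$ commutes with products in the second argument, $\mathbb{R}^\alpha \oplus \mathbb{T}^\beta$ is injective for any $\alpha, \beta \le \omega$. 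The implication (1) $\Rightarrow$ (4) is immediate.

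For (4) $\Rightarrow$ (3), I would apply $\mathrm{Hom}(-, G)$ to the short exact sequence $0 \to \mathbb{Z} \to \mathbb{R} \to \mathbb{T} \to 0$. Since $\mathbb{R}$ is projective in $\mathbf{proLiePAb}$ by Theorem \ref{Theorem:characterize-projectives}, $\mathrm{Ext}(\mathbb{R}, G) = 0$, so the long exact sequence reduces to $\mathrm{Hom}(\mathbb{R}, G) \to G \to \mathrm{Ext}(\mathbb{T}, G) \to 0$. Thus $\mathrm{Ext}(\mathbb{T}, G) = 0$ says exactly that $\mathrm{Hom}(\mathbb{R}, G) \to G$ is surjective: every $g \in G$ equals $\varphi(1)$ for some continuous homomorphism $\varphi : \mathbb{R} \to G$, and then $\varphi|_{[0,1]}$ is a path from $0$ to $g$, so $G$ is path-connected.

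For (3) $\Rightarrow$ (2), path-connectedness forces $G = c(G) = G_{\mathbb{R}} \oplus G_{\mathbb{S}^1}$, with $G_{\mathbb{R}} \cong \mathbb{R}^\alpha$ by Lemma \ref{Lemma:vector-group}; setting $H := G_{\mathbb{S}^1}$, it remains to show that the path-connected compact connected abelian Polish group $H$ is isomorphic to $\mathbb{T}^\beta$ for some $\beta \le \omega$. Pontryagin dualizing, $L := H^\vee$ is a countable discrete torsion-free abelian group and $H = \mathrm{Hom}_{\mathbb{Z}}(L, \mathbb{T})$. Applying $\mathrm{Hom}_{\mathbb{Z}}(L, -)$ to $0 \to \mathbb{Z} \to \mathbb{R} \to \mathbb{T} \to 0$ and using $\mathrm{Ext}_{\mathbb{Z}}(L, \mathbb{R}) = 0$ yields $\mathrm{Hom}_{\mathbb{Z}}(L, \mathbb{R}) \to H \to \mathrm{Ext}_{\mathbb{Z}}(L, \mathbb{Z}) \to 0$. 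A coordinate-wise path-lifting argument (using that $L$ is discrete, $\mathbb{R} \to \mathbb{T}$ is a covering map, and $[0,1]$ is simply connected) identifies the image of $\mathrm{Hom}_{\mathbb{Z}}(L, \mathbb{R}) \to H$ with the path-component of $0$ in $H$, so path-connectedness of $H$ forces $\mathrm{Ext}_{\mathbb{Z}}(L, \mathbb{Z}) = 0$. By Stein's theorem, a countable abelian group with vanishing $\mathrm{Ext}$ into $\mathbb{Z}$ is free, so $L \cong \mathbb{Z}^{(\beta)}$ and hence $H \cong \mathbb{T}^\beta$.

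The main obstacle is (3) $\Rightarrow$ (2): the coordinate-wise path-lifting identification of the path-component of $0$ in $H$ with the image of $\mathrm{Hom}_{\mathbb{Z}}(L, \mathbb{R}) \to H$, and the appeal to Stein's theorem (a nontrivial classical result on countable abelian groups), carry the weight of the equivalence.
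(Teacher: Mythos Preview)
Your proof is correct and follows the same overall scheme as the paper: the cycle $(2)\Rightarrow(1)\Rightarrow(4)\Rightarrow(3)\Rightarrow(2)$, with $(2)\Rightarrow(1)$ via Lemma~\ref{Lemma:injective-pro-Lie} and Theorem~\ref{Theorem:pro-Lie-thick}, and $(4)\Rightarrow(3)$ via the six-term sequence associated to $\mathbb{Z}\to\mathbb{R}\to\mathbb{T}$.

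The one genuine difference is in $(3)\Rightarrow(2)$. The paper dispatches this in one line by invoking \cite[Theorem~8.27]{armacost_structure_1981}, which says that a path-connected compact abelian (Polish) group is a torus. You instead reconstruct that result: dualize $H=G_{\mathbb{S}^{1}}$ to a countable torsion-free $L$, identify the path-component of $0$ in $\mathrm{Hom}(L,\mathbb{T})$ with the image of $\mathrm{Hom}(L,\mathbb{R})$ via unique path-lifting along the covering $\mathbb{R}\to\mathbb{T}$, deduce $\mathrm{Ext}_{\mathbb{Z}}(L,\mathbb{Z})=0$, and conclude $L$ is free by Stein's theorem. This is exactly the content behind Armacost's theorem in the separable case, so your argument is more self-contained at the cost of invoking Stein's (nontrivial but classical) result explicitly. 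One small point worth making precise in your path-lifting step: the lifted map $L\to\mathbb{R}$ is a homomorphism because for each $l,l'\in L$ the two lifts of $t\mapsto\gamma(t)(l+l')$ and $t\mapsto\gamma(t)(l)+\gamma(t)(l')$ agree at $t=0$ and hence coincide by uniqueness of path lifts; you allude to this but it is the crux of that identification.
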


\begin{proof}
(2)$\Rightarrow $(1) This follows from Lemma \ref{Lemma:injective-pro-Lie}
considering that $\mathbf{proLiePAb}$ is a thick subcategory of $\mathbf{PAb}
$ by Theorem \ref{Theorem:pro-Lie-thick}.

(4)$\Rightarrow $(3) Suppose that $\mathrm{Ext}\left( \mathbb{T},G\right) =0$%
. Then we have an exact sequence%
\begin{equation*}
\mathrm{Hom}\left( \mathbb{R},G\right) \rightarrow \mathrm{Hom}\left( 
\mathbb{Z},G\right) \rightarrow \mathrm{Ext}\left( \mathbb{T},G\right) =0
\end{equation*}%
This shows that $G$ is path-connected.

(3)$\Rightarrow $(2) Since $G$ is connected, we have that $G\cong G_{\mathbb{%
R}}\oplus G_{\mathbb{S}^{1}}$. Furthermore, $G_{\mathbb{S}^{1}}$ is
path-connected, and hence a torus by \cite[Theorem 8.27]%
{armacost_structure_1981}.
\end{proof}

\begin{corollary}
\label{Corollary:quotient-injective-pro-Lie}The class of injective objects
in $\mathbf{proLiePAb}$ is closed under quotients.
\end{corollary}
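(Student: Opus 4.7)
The plan is to leverage the characterization from Theorem \ref{Theorem:injective-pro-Lie}, which identifies injective objects in $\mathbf{proLiePAb}$ with path-connected pro-Lie Polish abelian groups (equivalently, groups of the form $\mathbb{R}^{\alpha}\oplus\mathbb{T}^{\beta}$). Since path-connectedness is preserved by continuous surjections, the corollary should be essentially immediate once the closure properties of the ambient category are invoked.

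Concretely, suppose $G$ is an injective object in $\mathbf{proLiePAb}$ and let $q\colon G\to Q$ be an admissible epic, i.e.\ $Q=G/H$ for some closed subgroup $H$ of $G$. First, I would note that $Q$ belongs to $\mathbf{proLiePAb}$: by Theorem \ref{Theorem:pro-Lie-thick}, $\mathbf{proLiePAb}$ is a thick subcategory of $\mathbf{PAb}$, so in particular it is closed under quotients by closed subgroups. Next, by the equivalence (1)$\Leftrightarrow$(3) of Theorem \ref{Theorem:injective-pro-Lie}, $G$ is path-connected. Since $q$ is a continuous surjection, the image $Q=q(G)$ is the continuous image of a path-connected space, hence path-connected.

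Applying the implication (3)$\Rightarrow$(1) of Theorem \ref{Theorem:injective-pro-Lie} to the pro-Lie Polish abelian group $Q$, we conclude that $Q$ is injective in $\mathbf{proLiePAb}$. This shows that the class of injective objects is closed under quotients.

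The argument involves no real obstacle: the entire content is the observation that the characterization of injectives as path-connected groups is manifestly stable under continuous surjections. The only point worth flagging is the appeal to Theorem \ref{Theorem:pro-Lie-thick} to ensure that the quotient $G/H$ stays within the category $\mathbf{proLiePAb}$, which is what allows the characterization of Theorem \ref{Theorem:injective-pro-Lie} to be applied to $G/H$ in the first place.
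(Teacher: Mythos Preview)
Your proof is correct and is precisely the intended argument: the paper states the corollary without proof immediately after Theorem~\ref{Theorem:injective-pro-Lie}, and the characterization of injectives as path-connected groups, together with the evident stability of path-connectedness under continuous surjections, is exactly what makes it a corollary.
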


\begin{corollary}
If $D$ is a nonzero countable discrete divisible group, then there exists no
injective object $I$ in $\mathbf{proLiePAb}$ such that $D$ is isomorphic to
a closed subgroup of $I$. Hence, $\mathbf{proLiePAb}$ does not have enough
injectives.
\end{corollary}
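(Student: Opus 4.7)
The plan is to invoke Theorem~\ref{Theorem:injective-pro-Lie} to reduce to the following claim: no nonzero countable discrete divisible group can appear as a closed subgroup of $I = \mathbb{R}^{\alpha} \oplus \mathbb{T}^{\beta}$ for any $\alpha, \beta \le \omega$. The second assertion then follows immediately because $D$ itself is an object of $\mathbf{proLiePAb}$ (being a countable discrete abelian, hence Polish abelian Lie, group). Suppose toward a contradiction that $H \subseteq I$ is a closed subgroup isomorphic as a topological group to $D$, so $H$ is discrete, nonzero, countable, and divisible.

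Let $\pi \colon I \to I/\mathbb{T}^{\beta} \cong \mathbb{R}^{\alpha}$ be the canonical projection. Since $\mathbb{T}^{\beta}$ is compact and $H$ is closed, the subgroup $H + \mathbb{T}^{\beta}$ is closed in $I$; because $\pi$ is a quotient map with $\pi^{-1}(\pi(H)) = H + \mathbb{T}^{\beta}$, this shows $\pi(H)$ is closed in $\mathbb{R}^{\alpha}$. Moreover, $\pi(H)$ is the image of the divisible group $H$ under a group homomorphism, hence itself divisible; it is countable (as a continuous image of a countable set) and torsion-free (as a subgroup of the torsion-free group $\mathbb{R}^{\alpha}$), and is therefore canonically a $\mathbb{Q}$-vector subspace of $\mathbb{R}^{\alpha}$.

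The crucial step is to upgrade this $\mathbb{Q}$-subspace structure to an $\mathbb{R}$-subspace structure. Given $\lambda \in \mathbb{R}$ and $x \in \pi(H)$, approximate $\lambda$ by rationals $\lambda_n$; since $\lambda_n x \in \pi(H)$ and continuity of scalar multiplication in $\mathbb{R}^{\alpha}$ yields $\lambda_n x \to \lambda x$, closedness of $\pi(H)$ forces $\lambda x \in \pi(H)$. Hence $\pi(H)$ is a closed $\mathbb{R}$-subspace of $\mathbb{R}^{\alpha}$, so by Lemma~\ref{Lemma:vector-group} it is isomorphic to a vector group $\mathbb{R}^{\gamma}$ for some $\gamma \le \omega$. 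Countability of $\pi(H)$ then forces $\gamma = 0$, so $\pi(H) = 0$.

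It follows that $H \subseteq \mathbb{T}^{\beta}$, so $H$ is a closed subgroup of the compact group $\mathbb{T}^{\beta}$ and hence compact; being also discrete, $H$ must be finite. But a finite divisible abelian group is trivial, contradicting $D \ne 0$. The main obstacle I expect is precisely the upgrade from $\mathbb{Q}$-linearity to $\mathbb{R}$-linearity of $\pi(H)$, which is where the interplay between divisibility, closedness in $\mathbb{R}^{\alpha}$, and continuity of scalar multiplication decisively intervenes; the remaining steps are routine applications of compact-meets-discrete-is-finite and the elementary structure of divisible torsion-free abelian groups.
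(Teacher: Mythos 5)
Your argument is correct. It shares the paper's skeleton -- reduce via Theorem \ref{Theorem:injective-pro-Lie} to showing that a nonzero countable discrete divisible group cannot be a closed subgroup of $\mathbb{R}^{\alpha}\oplus\mathbb{T}^{\beta}$, and kill the vector-group part by observing that a closed divisible subgroup of $\mathbb{R}^{\alpha}$ is an $\mathbb{R}$-subspace, hence zero if countable -- but it diverges in two places. First, the paper begins by splitting $D\cong H\oplus(\mathbb{T}^{\omega}\cap D)$ (using that the divisible subgroup $\mathbb{T}^{\omega}\cap D$ is a direct summand) and applies the projection argument to the complement $H$; you skip the splitting and project $D$ itself, which suffices because you only need the image in $\mathbb{R}^{\alpha}$ to vanish in order to conclude $D\subseteq\mathbb{T}^{\beta}$. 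Second, and more substantively, your endgame is different: the paper finishes with Pontryagin duality ($D^{\vee}$ is a countable quotient of $\mathbb{Z}^{(\omega)}$ that is compact, hence trivial), whereas you simply note that a discrete closed subgroup of the compact group $\mathbb{T}^{\beta}$ is finite, and a finite divisible group is trivial. Your ending is more elementary and in fact sidesteps a small imprecision in the paper's duality step, which invokes connectedness of $D^{\vee}$ (equivalent to $D$ being torsion-free, which is not assumed); what divisibility of $D$ actually gives is that $D^{\vee}$ is torsion-free, which also suffices there. You also spell out the $\mathbb{Q}$-to-$\mathbb{R}$ upgrade (rational approximation plus closedness) that the paper asserts without proof, which is a worthwhile addition; the only cosmetic remark is that a nonzero $\mathbb{R}$-vector space is automatically uncountable, so the appeal to Lemma \ref{Lemma:vector-group} is not needed.
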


\begin{proof}
Suppose that $D$ is a countable discrete divisible group that is a closed
subgroup of $\mathbb{R}^{\omega }\oplus \mathbb{T}^{\omega }$. Then we have
that $\mathbb{T}^{\omega }\cap D$ is a divisible subgroup of $D$. Hence, we
have that $D\cong H\oplus \left( \mathbb{T}^{\omega }\cap D\right) $ for
some (necessarily divisible) subgroup $H$ of $D$. The composition $%
H\rightarrow \mathbb{R}^{\omega }\oplus \mathbb{T}^{\omega }\rightarrow 
\mathbb{R}^{\omega }$ is injective. Since $\mathbb{T}^{\omega }$ is compact
and $H$ is closed in $\mathbb{T}^{\omega }\oplus \mathbb{R}^{\omega }$, we
have that $\mathbb{T}^{\omega }+H$ is also closed in $\mathbb{T}^{\omega
}\oplus \mathbb{R}^{\omega }$. This implies that the image of the function $%
H\rightarrow \mathbb{R}^{\omega }\oplus \mathbb{T}^{\omega }\rightarrow 
\mathbb{R}^{\omega }$ is a closed subgroup of $\mathbb{R}^{\omega }$. Every
closed divisible subgroup of $\mathbb{R}^{\omega }$ is a closed $\mathbb{R}$%
-subspace. Since $H$ is countable, this implies that $H=0$.

Thus, we have that $D=D\cap \mathbb{T}^{\omega }\subseteq \mathbb{T}^{\omega
}$. Thus, we have that the inclusion $D\rightarrow \mathbb{T}^{\omega }$
induces a surjective homomorphism $\mathbb{Z}^{\left( \omega \right)
}\rightarrow D^{\vee }$. Since $D^{\vee }$ is compact and connected, it is
uncountable whenever it is nonzero. Hence, $D^{\vee }=0$ and $D=0$.
\end{proof}

\section{Thick categories of pro-Lie Polish abelian groups\label%
{Section:thick}}

\subsection{Pro-$p$ Polish abelian groups\label{Subsection:pro-p}}

Let us say that a \emph{pro}-$p$ Polish group is a pro-Lie Polish abelian
topological $p$-group. We let $\mathbf{PAb}\left( p\right) $ be the category
of pro-$p$ Polish abelian groups, which is easily seen to be a thick
subcategory of $\mathbf{proLiePAb}$. A locally compact Polish abelian group $%
G$ is pro-$p$ if and only if it has a basis of zero neighborhoods consisting
of subgroups $U$ such that $G/U$ is a $p$-group. A pro-Lie Polish abelian
group is a pro-$p$ group if and only if $G/N$ is pro-$p$ for every $N\in 
\mathcal{N}\left( G\right) $.

For an abelian group $G$, we define $pG=\left\{ px:x\in G\right\} $. An
abelian group $G$ is $p$-divisible if $pG=G$. We also define recursively for
every ordinal $\alpha $:%
\begin{equation*}
p^{0}G=G
\end{equation*}%
and, for $\alpha >0$,%
\begin{equation*}
p^{\alpha }G=\bigcap_{\beta <\alpha }p\left( p^{\beta }G\right) \text{.}
\end{equation*}%
If $\sigma $ is the least ordinal such that $p^{\sigma }G=p^{\sigma +1}G$,
then $p^{\sigma }G$ is the largest $p$-divisible subgroup of $G$. Recall
that a group $G$ is $p$-local if it is $q$-divisible for every prime number $%
q$ other than $p$. When $G$ is $p$-local, $\sigma $ is the Ulm rank of $G$
and $p^{\sigma }G$ is the largest divisible subgroup $d\left( G\right) $ of $%
G$. When $G$ is a Polish group, the least ordinal $\sigma $ such that $%
p^{\sigma }G=p^{\sigma +1}G$ is countable. We define, for an ordinal $\alpha 
$,%
\begin{equation*}
L_{\alpha }^{p}\left( G\right) =\mathrm{\mathrm{lim}}_{\beta <\alpha
}G/p^{\beta }G
\end{equation*}%
and $E_{\alpha }^{p}\left( G\right) $ to be the quotient of $L_{\alpha
}^{p}\left( G\right) $ by the image of $G$ under the canonical homomorphism $%
G\rightarrow L_{\alpha }^{p}\left( G\right) $.

For an ordinal $\alpha $, as a particular instance of \cite[Theorem 1.5]%
{nunke_homology_1967} in the case of the cotorsion functor with enough
projectives $G\mapsto p^{\alpha }G$, we have the following:

\begin{lemma}
\label{Lemma:nunke}Let $A,G$ be groups. The short exact sequences%
\begin{equation*}
p^{\alpha }A\rightarrow A\rightarrow A/p^{\alpha }A
\end{equation*}%
and%
\begin{equation*}
p^{\alpha }G\rightarrow G\rightarrow G/p^{\alpha }G
\end{equation*}%
induce homomorphisms%
\begin{equation*}
\mathrm{Ext}\left( A,p^{\alpha }G\right) \rightarrow \mathrm{Ext}\left(
p^{\alpha }A,p^{\alpha }G\right)
\end{equation*}%
and%
\begin{equation*}
\mathrm{Hom}\left( A,G/p^{\alpha }G\right) \rightarrow \mathrm{Ext}\left(
A,p^{\alpha }G\right) \text{.}
\end{equation*}%
These induce an isomorphism%
\begin{equation*}
\eta :\frac{\mathrm{Ext}\left( A,p^{\alpha }G\right) }{\mathrm{Ran}\left( 
\mathrm{Hom}\left( A,G/p^{\alpha }G\right) \rightarrow \mathrm{Ext}\left(
A,p^{\alpha }G\right) \right) }\rightarrow \mathrm{Ext}\left( p^{\alpha
}A,p^{\alpha }G\right)
\end{equation*}%
Furthermore, the exact sequence $p^{\alpha }G\rightarrow G\rightarrow
G/p^{\alpha }G$ induces an exact sequence%
\begin{equation*}
\mathrm{Hom}\left( A,G/p^{\alpha }G\right) \rightarrow \mathrm{Ext}\left(
A,p^{\alpha }G\right) \rightarrow p^{\alpha }\mathrm{Ext}\left( A,G\right)
\rightarrow p^{\alpha }\mathrm{Ext}\left( A,G/p^{\alpha }G\right)
\rightarrow 0
\end{equation*}%
which induces an exact sequence%
\begin{equation*}
0\rightarrow \frac{\mathrm{Ext}\left( A,p^{\alpha }G\right) }{\mathrm{Ran}%
\left( \mathrm{Hom}\left( A,G/p^{\alpha }G\right) \rightarrow \mathrm{Ext}%
\left( A,p^{\alpha }G\right) \right) }\overset{\rho }{\rightarrow }p^{\alpha
}\mathrm{Ext}\left( A,G\right) \rightarrow p^{\alpha }\mathrm{Ext}\left(
A,G/p^{\alpha }G\right) \rightarrow 0\text{.}
\end{equation*}%
Consider the homomorphism%
\begin{equation*}
r:=\rho \circ \eta ^{-1}:\mathrm{Ext}\left( p^{\alpha }A,p^{\alpha }G\right)
\rightarrow p^{\alpha }\mathrm{Ext}\left( A,G\right) \text{.}
\end{equation*}%
We have an exact sequence%
\begin{equation*}
0\rightarrow \mathrm{Ext}\left( p^{\alpha }A,p^{\alpha }G\right) \overset{r}{%
\rightarrow }p^{\alpha }\mathrm{Ext}\left( A,G\right) \rightarrow p^{\alpha }%
\mathrm{Ext}\left( A,G/p^{\alpha }G\right) \rightarrow 0
\end{equation*}%
and $r$ restricts to an isomorphism%
\begin{equation*}
\gamma :\mathrm{PExt}\left( p^{\alpha }A,p^{\alpha }G\right) \rightarrow
u_{1}\left( p^{\alpha }\mathrm{Ext}\left( A,G\right) \right)
\end{equation*}%
where $u_{1}$ is the first Ulm subgroup and $\mathrm{PExt}\left( p^{\alpha
}A,p^{\alpha }G\right) =u_{1}\left( \mathrm{Ext}\left( p^{\alpha
}A,p^{\alpha }G\right) \right) $.
\end{lemma}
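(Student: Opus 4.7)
The plan is to derive Lemma~\ref{Lemma:nunke} as a concrete unpacking of Nunke's Theorem~1.5 specialized to the cotorsion functor $F:\mathbf{Ab}\rightarrow \mathbf{Ab}$, $G\mapsto p^{\alpha}G$. The whole argument rests on two six-term Hom--Ext sequences: the first obtained by applying $\mathrm{Hom}(-,p^{\alpha}G)$ to $0\rightarrow p^{\alpha}A\rightarrow A\rightarrow A/p^{\alpha}A\rightarrow 0$, and the second by applying $\mathrm{Hom}(A,-)$ to $0\rightarrow p^{\alpha}G\rightarrow G\rightarrow G/p^{\alpha}G\rightarrow 0$. Both terminate in a surjection since $\mathbf{Ab}$ has homological dimension $1$.

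From the first sequence one extracts a canonical surjection $\mathrm{Ext}(A,p^{\alpha}G)\twoheadrightarrow \mathrm{Ext}(p^{\alpha}A,p^{\alpha}G)$ whose kernel is the image of $\mathrm{Ext}(A/p^{\alpha}A,p^{\alpha}G)\rightarrow \mathrm{Ext}(A,p^{\alpha}G)$, and from the second sequence one reads off the connecting map $\mathrm{Hom}(A,G/p^{\alpha}G)\rightarrow \mathrm{Ext}(A,p^{\alpha}G)$ together with its continuation $\mathrm{Ext}(A,p^{\alpha}G)\rightarrow \mathrm{Ext}(A,G)\rightarrow \mathrm{Ext}(A,G/p^{\alpha}G)\rightarrow 0$. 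The central step is then to identify
\[
\mathrm{Ran}\bigl(\mathrm{Ext}(A/p^{\alpha}A,p^{\alpha}G)\rightarrow \mathrm{Ext}(A,p^{\alpha}G)\bigr)=\mathrm{Ran}\bigl(\mathrm{Hom}(A,G/p^{\alpha}G)\rightarrow \mathrm{Ext}(A,p^{\alpha}G)\bigr)
\]
as subgroups of $\mathrm{Ext}(A,p^{\alpha}G)$. Granted this equality, $\eta$ is induced by the surjection of the first sequence, $\rho$ is the arrow appearing in the second sequence, and $r=\rho \circ \eta^{-1}$ is well defined.

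The image identification is the heart of the proof and I expect it to be the main obstacle. For one inclusion, any $\varphi:A\rightarrow G/p^{\alpha}G$ sends $p^{\alpha}A$ into $p^{\alpha}(G/p^{\alpha}G)=0$, where the vanishing reflects the idempotence of the transfinite $p$-height filtration; hence the pullback of $0\rightarrow p^{\alpha}G\rightarrow G\rightarrow G/p^{\alpha}G\rightarrow 0$ along $\varphi$ splits canonically over $p^{\alpha}A$ and therefore descends to an extension of $A/p^{\alpha}A$. For the reverse inclusion, represent an element of the right-hand side by an extension pulled back from $A/p^{\alpha}A$, lift it through a free cover of $A$ (a classical use of the projectivity of free abelian groups), and extract the sought map $A\rightarrow G/p^{\alpha}G$ from the resulting diagram.

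Once the identification is in place the remaining assertions are formal. The inclusion $\mathrm{Ext}(p^{\alpha}A,p^{\alpha}G)\hookrightarrow p^{\alpha}\mathrm{Ext}(A,G)$ holds because the functorial $p^{\alpha}$-action on extension classes commutes with $\rho$, and the cokernel $p^{\alpha}\mathrm{Ext}(A,G/p^{\alpha}G)$ is identified by comparing the second six-term sequence for $(A,G)$ with its analogue for $(A,G/p^{\alpha}G)$. Finally, $\gamma$ is the restriction of $r$ to first Ulm subgroups, recalling that $\mathrm{PExt}=u_{1}(\mathrm{Ext})$ by definition; it is an isomorphism because the first Ulm subgroup of $p^{\alpha}\mathrm{Ext}(A,G/p^{\alpha}G)$ vanishes, which follows from the triviality of the $p^{\alpha}$-height filtration on $G/p^{\alpha}G$.
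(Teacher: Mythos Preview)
Your approach is exactly the paper's: the paper does not give an independent proof but simply records the lemma ``as a particular instance of \cite[Theorem~1.5]{nunke_homology_1967} in the case of the cotorsion functor with enough projectives $G\mapsto p^{\alpha}G$,'' which is precisely the specialization you propose to unpack. Your sketch of the two six-term sequences and the identification of images is more detailed than anything the paper provides, but the underlying strategy is identical.
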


\begin{theorem}
\label{Theorem:injective-pro-p}Suppose that $G$ is a pro-$p$ Polish abelian
group. The following assertions are equivalent:

\begin{enumerate}
\item $G$ is injective in $\mathbf{PAb}\left( p\right) $;

\item $G$ is divisible and it has a basis of zero neighborhoods consisting
of divisible subgroups;

\item $G\cong \mathbb{Z}\left( p^{\infty }\right) ^{\alpha }\oplus (\mathbb{Z%
}\left( p^{\infty }\right) ^{\left( \omega \right) })^{\beta }$ for $\alpha
,\beta \leq \omega $;

\item $\mathrm{Ext}(\mathbb{Z}\left( p^{\infty }\right) ^{\left( \omega
\right) },G)=0$.
\end{enumerate}
\end{theorem}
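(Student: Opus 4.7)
The plan is to verify the cycle $(3) \Rightarrow (2) \Rightarrow (3) \Rightarrow (1) \Rightarrow (4) \Rightarrow (3)$, with the last implication carrying the bulk of the argument. The implications $(3) \Rightarrow (2)$ and $(1) \Rightarrow (4)$ are essentially by inspection: the groups in $(3)$ are visibly divisible and admit their tail subgroups (vanishing on a cofinite set of coordinates) as a basis of divisible open zero-neighborhoods, while $(1) \Rightarrow (4)$ is the definition of injectivity applied to the object $\mathbb{Z}(p^\infty)^{(\omega)}\in \mathbf{PAb}(p)$. For $(3) \Rightarrow (1)$, Lemma \ref{Lemma:countable-divisible-injective} shows that both $\mathbb{Z}(p^\infty)$ and $\mathbb{Z}(p^\infty)^{(\omega)}$ are countable divisible and hence injective in $\mathbf{PAb}_{\mathrm{nA}}$, and therefore in the thick subcategory $\mathbf{PAb}(p)$; injectivity is preserved under (possibly infinite) products and finite direct sums, so every group in the form of $(3)$ is injective.

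For $(2) \Rightarrow (3)$, I would fix a cofinal sequence $(U_n)_{n \in \omega}$ in $\mathcal{N}(G)$ consisting of divisible open subgroups. Each $G/U_n$ is then a countable discrete divisible $p$-group and hence isomorphic to $\mathbb{Z}(p^\infty)^{(A_n)}$ for some (finite or countable) set $A_n$; moreover each kernel $U_{n-1}/U_n$ is again divisible. Since divisible $p$-groups are injective in the category $\mathbf{Ab}$ of discrete abelian groups, each bonding surjection $G/U_n \twoheadrightarrow G/U_{n-1}$ splits, and a coherent choice of splittings identifies $G \cong \varprojlim_n G/U_n$ with a countable product $\prod_n D_n$ of countable divisible $p$-groups. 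Separating the indices $n$ for which $|A_n|<\aleph_0$ from those for which $|A_n|=\aleph_0$ then rewrites the product as $\mathbb{Z}(p^\infty)^\alpha \oplus (\mathbb{Z}(p^\infty)^{(\omega)})^\beta$ for some $\alpha, \beta \leq \omega$.

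The main obstacle is $(4) \Rightarrow (3)$. The strategy is to embed $G$ as a closed subgroup of the injective group $I := (\mathbb{Z}(p^\infty)^{(\omega)})^\omega$ by choosing a cofinal sequence $(U_n)$ in $\mathcal{N}(G)$ with $G/U_n$ a countable $p$-group, injecting each $G/U_n$ into its injective hull $\mathbb{Z}(p^\infty)^{(\omega)}$, and taking the resulting map into the product. By $(3) \Rightarrow (1)$ already established, $I$ is injective, so one obtains a short exact sequence $0 \to G \to I \to Q \to 0$ in $\mathbf{PAb}(p)$; if this sequence splits then $G$ is a topological direct summand of $I$, inherits divisibility and a basis of divisible open subgroups from $I$, and $(2) \Rightarrow (3)$ applied to $G$ concludes the argument. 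The hardest step is to force the splitting from hypothesis $(4)$, i.e., to show that the class of the sequence in $\mathrm{Ext}(Q,G)$ vanishes. I expect to proceed by first establishing that $\mathbf{PAb}(p)$ has homological dimension $1$ (in analogy with Theorem \ref{Theorem:hd-proLiePAb}, using that closed subgroups of the injectives already classified have the required form), then resolving $Q$ by injectives of the form appearing in $(3)$. At the end, Lemma \ref{Lemma:nunke} (which relates the Ext of $p^\alpha$-subgroups to the first Ulm subgroup of Ext) combined with Lemma \ref{Lemma:vanishing-Ext-product0} (which propagates vanishing of Ext across countable products in the first variable) should reduce the required vanishing to the given hypothesis $\mathrm{Ext}(\mathbb{Z}(p^\infty)^{(\omega)}, G) = 0$, completing the cycle.
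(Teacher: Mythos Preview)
Your treatments of $(2)\Leftrightarrow(3)$, $(3)\Rightarrow(1)$, and $(1)\Rightarrow(4)$ are correct and essentially coincide with the paper's. The gap is $(4)\Rightarrow(3)$, and in fact this implication is \emph{false} as stated: take $G=\mathbb{Q}_p$. Since $\mathbb{Z}(p^\infty)^{(\omega)}$ is countable discrete, the paper's observation that $\mathrm{Ext}(C,A)\cong\mathrm{Ext}_{\mathbf{Ab}}(C,A_{\mathrm{disc}})$ for countable $C$ applies, and $\mathbb{Q}_p$ is divisible (hence injective) as an abstract abelian group, so $\mathrm{Ext}(\mathbb{Z}(p^\infty)^{(\omega)},\mathbb{Q}_p)=0$ and~(4) holds. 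But the proper open subgroups of $\mathbb{Q}_p$ are the $p^n\mathbb{Z}_p$, none of which is divisible, so~(2) fails; Remark~\ref{Remark:Q_p-not-injective} confirms that $\mathbb{Q}_p$ is not injective. Comparing with the parallel Theorems~\ref{Theorem:injective-pro-torsion} and~\ref{Theorem:injective-nA}, condition~(4) should presumably read $\mathrm{Ext}\bigl((\mathbb{Z}(p^\infty)^{(\omega)})^{\omega},G\bigr)=0$, and the paper's one-line justification of $(4)\Leftrightarrow(1)$ via the embedding into $(\mathbb{Z}(p^\infty)^{(\omega)})^\omega$ only makes sense for that corrected hypothesis. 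With the correction your splitting strategy works immediately: $Q$ also embeds in $(\mathbb{Z}(p^\infty)^{(\omega)})^\omega$, so $\mathrm{Ext}(Q,G)$ is a quotient of $\mathrm{Ext}\bigl((\mathbb{Z}(p^\infty)^{(\omega)})^\omega,G\bigr)=0$ by homological dimension~$1$ of $\mathbf{proLiePAb}$, and neither Lemma~\ref{Lemma:nunke} nor Lemma~\ref{Lemma:vanishing-Ext-product0} is needed. Note in any case that Lemma~\ref{Lemma:vanishing-Ext-product0} requires the \emph{second} argument to be countable, which $G$ is not, so it could not have been used the way you suggest.

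The paper's substantive work lies in the direct contrapositive of $(1)\Rightarrow(2)$: assuming~(2) fails, it builds an explicit nontrivial continuous cocycle on a \emph{product} of countable $p$-groups. If some open $H\leq G$ has $pH$ not open, one picks for each $k$ a cocycle $c_k$ on $\mathbb{Z}/p\mathbb{Z}$ with values in the $k$-th basic neighborhood $V_k$ whose class in $\mathrm{Ext}(\mathbb{Z}/p\mathbb{Z},H)$ is nonzero, and sets $c(x,y)=\sum_k c_k(x_k,y_k)$ on $(\mathbb{Z}/p\mathbb{Z})^{\mathbb{N}}$; the sum converges because the values shrink, and nontriviality in $\mathrm{Ext}((\mathbb{Z}/p\mathbb{Z})^{\mathbb{N}},G)$ survives because $\mathrm{Hom}((\mathbb{Z}/p\mathbb{Z})^{\mathbb{N}},G/H)$ is only a countable direct sum. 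When instead $p^{\alpha}H$ first fails to be open at a limit ordinal $\alpha$, Lemma~\ref{Lemma:nunke} is used to manufacture countable $p$-groups $T_k$ with $p^{\beta_k}T_k\cong\mathbb{Z}/p\mathbb{Z}$ for suitable $\beta_k<\alpha$, and the same assembly on $\prod_k T_k$ works. This is the role Lemma~\ref{Lemma:nunke} actually plays here, quite different from what your sketch envisions.
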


\begin{proof}
(2)$\Rightarrow $(3) Let $\left( V_{k}\right) _{k\in \omega }$ be a
decreasing basis of zero neighborhoods of $G$ consisting of divisible
subgroups such that $V_{0}=G$. For $n\in \omega $ define $%
H_{n}:=V_{n}/V_{n+1}$. Since $V_{n+1}$ is divisible, the short exact
sequence $V_{n+1}\rightarrow V_{n}\rightarrow V_{n}/V_{n+1}$ splits. Let $%
\pi _{n}:V_{n}\rightarrow V_{n}/V_{n+1}$ be the quotient map and $\rho
_{n}:V_{n}/V_{n+1}\rightarrow V_{n}$ be a group homomorphism that is a right
inverse for $\pi _{n}$. Given $g\in G$ one defines $g_{n}\in V_{n}$ for $%
n\in \omega $ recursively by setting $g_{0}=g$ and $g_{n+1}=g_{n}-\rho
_{n}\pi _{n}\left( g_{n}\right) $. The continuous group homomorphism $%
G\rightarrow \prod_{n\in \omega }H_{n}$, $g\mapsto \left(
g_{n}+V_{n+1}\right) _{n\in \omega }$ is a group isomorphism. The conclusion
now follows from the structure theorem for countable divisible $p$-groups.

(3)$\Rightarrow $(1) Suppose that, $G=\prod_{n\in \omega }D_{n}$ where, for
every $n\in \omega $, $D_{n}$ is a countable divisible group. If $H$ is a
pro-$p$ Polish abelian group, we have that%
\begin{equation*}
\mathrm{Ext}\left( H,G\right) \cong \prod_{n\in \omega }\mathrm{Ext}\left(
H,D_{n}\right) \text{.}
\end{equation*}%
For $n\in \omega $, we have that $\mathrm{Ext}\left( H,D_{n}\right) =0$ by
Lemma \ref{Lemma:countable-divisible-injective}.

(1)$\Rightarrow $(2) If $G$ is not divisible, then $\mathrm{Ext}\left( 
\mathbb{Z}/p\mathbb{Z},G\right) \neq 0$. Since $\mathbb{Z}/p\mathbb{Z}$ is a
closed subgroup of $\mathbb{Z}\left( p^{\infty }\right) ^{\left( \omega
\right) }$, it follows that $\mathrm{Ext}(\mathbb{Z}\left( p^{\infty
}\right) ^{\left( \omega \right) },G)\neq 0$.

Suppose that $G$ does not have a basis of zero neighborhoods consisting of
divisible subgroups. Then it has a basis $\left( V_{k}\right) _{k\in \omega
} $ of open subgroups that are not divisible and such that $d\left(
V_{k}\right) $ is not open. For $k\in \omega $, let $\rho _{k}\geq 1$ be the
least countable ordinal such that $p^{\rho _{k}}V_{k}$ is \emph{not }open.
Set $H:=V_{0}$ and $\alpha :=\rho _{0}$.

Suppose initially that $\alpha =\beta +1$ is a successor ordinal. Then after
replacing $H$ with $p^{\beta }H$ we can assume that $\alpha =1$. Thus, we
have that $H$ is an open subgroup such that $pH$ is not open. Then we have
that, for every $k\geq 1$, $V_{k}$ is not contained in $pH$. Thus, for every 
$k\geq 1$,%
\begin{equation*}
\mathrm{Ran}\left( V_{k}/pV_{k}\rightarrow H/pH\right) \cong \mathrm{Ran}%
\left( \mathrm{Ext}\left( \mathbb{Z}/p\mathbb{Z},V_{k}\right) \rightarrow 
\mathrm{Ext}\left( \mathbb{Z}/p\mathbb{Z},H\right) \right) \neq 0\text{.}
\end{equation*}%
Thus, for every $k\geq 1$ there exists a cocycle $c_{k}:\mathbb{Z}/p\mathbb{Z%
}\rightarrow V_{k}$ that is not a coboundary even when regarded as a cocycle
with values in $H$. Define the continuous cocycle $c:\left( \mathbb{Z}/p%
\mathbb{Z}\right) ^{\mathbb{N}}\rightarrow H$ by setting%
\begin{equation*}
c\left( \left( x_{i}\right) ,\left( y_{i}\right) \right)
=\sum_{i}c_{i}\left( x_{i},y_{i}\right) \text{.}
\end{equation*}%
We claim that $c$ is not a coboundary even when regarded as a cocycle with
values in $G$. We have an exact sequence%
\begin{equation*}
\mathrm{Hom}(\left( \mathbb{Z}/p\mathbb{Z}\right) ^{\mathbb{N}%
},G/H)\rightarrow \mathrm{Ext}(\left( \mathbb{Z}/p\mathbb{Z}\right) ^{%
\mathbb{N}},H)\rightarrow \mathrm{Ext}(\left( \mathbb{Z}/p\mathbb{Z}\right)
^{\mathbb{N}},G)
\end{equation*}%
Thus, it suffices to prove that $c$ defines an element $[c]$ of $\mathrm{Ext}%
(\left( \mathbb{Z}/p\mathbb{Z}\right) ^{\mathbb{N}},H)$ that does not belong
to%
\begin{equation*}
\mathrm{Ran}\left( \mathrm{Hom}(\left( \mathbb{Z}/p\mathbb{Z}\right) ^{%
\mathbb{N}},G/H)\rightarrow \mathrm{Ext}(\left( \mathbb{Z}/p\mathbb{Z}%
\right) ^{\mathbb{N}},H)\right) \text{.}
\end{equation*}%
Considering the isomorphism%
\begin{equation*}
\mathrm{Hom}(\left( \mathbb{Z}/p\mathbb{Z}\right) ^{\mathbb{N}},G/H)\cong 
\mathrm{Hom}\left( \mathbb{Z}/p\mathbb{Z},G/H\right) ^{\left( \mathbb{N}%
\right) }
\end{equation*}%
due to the fact that $G/H$ is countable, it suffices to prove that, for
every $k\in \omega $, the image of $[c]$ in $\mathrm{Ext}(\mathbb{Z}/p%
\mathbb{Z},H)$ induced by the inclusion $\mathbb{Z}/p\mathbb{Z}\rightarrow
\left( \mathbb{Z}/p\mathbb{Z}\right) ^{\mathbb{N}}$ in the $k$-th
coordinate, is nonzero. As this image is the element $[c_{k}]$ represented
by the cocycle $c_{k}$, we have that it is nontrivial by the choice of $%
c_{k} $. This shows that $\mathrm{Ext}(\left( \mathbb{Z}/p\mathbb{Z}\right)
^{\mathbb{N}},G)\neq 0$.

Suppose now that $\alpha $ is a limit ordinal. For $k\geq 1$ let $\alpha
_{k} $ be the least countable ordinal such that $V_{k}$ is \emph{not}
contained in $p^{\alpha _{k}}H$. Notice that, since $\alpha $ is a limit
ordinal, $\alpha _{k}<\alpha $. After passing to a subsequence of $\left(
V_{k}\right) _{k\geq 1}$, we can assume that $\left( \alpha _{k}\right)
_{k\geq 1}$ is nondecreasing. We also have that $\alpha =\mathrm{\mathrm{sup}%
}_{k}\alpha _{k}$. We notice that, for $k\geq 1$, $\alpha _{k}$ must be a
successor ordinal, and we let $\beta _{k}$ be its predecessor.

Fix $k\geq 1$ and let $T_{k}$ be a countable $p$-group with $p^{\beta
_{k}}T_{k}=\mathbb{Z}/p\mathbb{Z}$. (Notice that this exists by Ulm's
classification of countable $p$-groups; see \cite[Chapter XII]%
{fuchs_infinite_1973}.) Consider the exact sequence%
\begin{equation*}
0\rightarrow \mathrm{Ext}\left( p^{\beta _{k}}T_{k},p^{\beta _{k}}H\right) 
\overset{r}{\rightarrow }p^{\beta _{k}}\mathrm{Ext}\left( T_{k},H\right)
\rightarrow p^{\beta _{k}}\mathrm{Ext}\left( T_{k},H/p^{\beta _{k}}H\right)
\rightarrow 0\text{.}
\end{equation*}%
The inclusion $V_{k}\subseteq p^{\beta _{k}}H$ induces an exact sequence%
\begin{equation*}
\mathrm{Ext}\left( p^{\beta _{k}}T_{k},V_{k}\right) \rightarrow \mathrm{Ext}%
\left( p^{\beta _{k}}T_{k},p^{\beta _{k}}H\right) \rightarrow \mathrm{Ext}%
\left( p^{\beta _{k}}T_{k},p^{\beta _{k}}H/V_{k}\right) \rightarrow 0
\end{equation*}%
that is isomorphic to%
\begin{equation*}
V_{k}/pV_{k}\rightarrow p^{\beta _{k}}H/p^{\alpha _{k}}H\rightarrow p^{\beta
_{k}}H/(p^{\alpha _{k}}H+V_{k})\rightarrow 0
\end{equation*}%
Since $V_{k}$ is not contained in $p^{\alpha _{k}}H$, we have that the
homomorphism%
\begin{equation*}
p^{\beta _{k}}H/p^{\alpha _{k}}H\rightarrow p^{\beta _{k}}H/(p^{\alpha
_{k}}H+V_{k})
\end{equation*}%
is not injective, and hence the homomorphism%
\begin{equation*}
V_{k}/pV_{k}\rightarrow p^{\beta _{k}}H/p^{\alpha _{k}}H
\end{equation*}%
is nonzero. Thus, the homomorphism%
\begin{equation*}
\mathrm{Ext}\left( p^{\beta _{k}}T_{k},V_{k}\right) \rightarrow \mathrm{Ext}%
\left( p^{\beta _{k}}T_{k},p^{\beta _{k}}H\right)
\end{equation*}%
is nonzero. The inclusion $p^{\beta _{k}}T_{k}\rightarrow T_{k}$ induces a
surjective homomorphism%
\begin{equation*}
\mathrm{Ext}\left( T_{k},V_{k}\right) \rightarrow \mathrm{Ext}\left(
p^{\beta _{k}}T_{k},V_{k}\right) \text{.}
\end{equation*}%
In view of the above remarks, there exists a cocycle $c_{k}:T_{k}\times
T_{k}\rightarrow V_{k}$ that represents an element of $\mathrm{Ext}\left(
T_{k},V_{k}\right) $ whose image under the composition%
\begin{equation*}
\mathrm{Ext}\left( T_{k},V_{k}\right) \rightarrow \mathrm{Ext}\left(
p^{\beta _{k}}T_{k},V_{k}\right) \rightarrow \mathrm{Ext}\left( p^{\beta
_{k}}T_{k},p^{\beta _{k}}H\right) \overset{r}{\rightarrow }p^{\beta _{k}}%
\mathrm{Ext}\left( T_{k},H\right) \subseteq \mathrm{Ext}\left( T_{k},H\right)
\end{equation*}%
is nonzero. Thus, $c_{k}$ is not a coboundary when regarded as a cocycle
with values in $H$.

Proceeding as above, we can conclude that, setting $T:=\prod_{k\geq 1}T_{k}$%
, $\mathrm{Ext}\left( T,H\right) $ is nonzero.

(4)$\Leftrightarrow $(1)\ This follows from the fact that every object of $%
\mathbf{PAb}\left( p\right) $ is isomorphic to a closed subgroup of $(%
\mathbb{Z}\left( p^{\infty }\right) ^{\left( \omega \right) })^{\omega }$.
\end{proof}

\begin{remark}
\label{Remark:Q_p-not-injective}The proof of Theorem \ref%
{Theorem:injective-pro-p} shows that $\mathrm{Ext}(\left( \mathbb{Z}/p%
\mathbb{Z}\right) ^{\omega },\mathbb{Q}_{p})$ is nonzero.
\end{remark}

Let us denote by $\mathbf{LCPAb}\left( p\right) $ the category of \emph{%
locally compact }pro-$p$ Polish abelian groups.

\begin{theorem}
\label{Theorem:injective-LCPAb(p)}Suppose that $G$ is a pro-$p$ locally
compact Polish abelian group. The following assertions are equivalent:

\begin{enumerate}
\item $G$ is injective in $\mathbf{LCPAb}\left( p\right) $;

\item $G$ is injective in $\mathbf{PAb}\left( p\right) $;

\item $G$ is a countable divisible $p$-group.
\end{enumerate}
\end{theorem}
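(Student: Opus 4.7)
The plan is to reduce everything to Theorem \ref{Theorem:injective-pro-p} by exploiting the fact that the ``test group'' $\mathbb{Z}(p^{\infty})^{(\omega)}$ appearing in condition (4) there is itself discrete, hence an object of $\mathbf{LCPAb}(p)$. The implications (2)$\Rightarrow$(1) and (3)$\Rightarrow$(2) will be essentially formal: since $\mathbf{LCPAb}(p)$ is a thick subcategory of $\mathbf{PAb}(p)$, any short exact sequence in the former is also one in the latter, so an injective of $\mathbf{PAb}(p)$ lying in $\mathbf{LCPAb}(p)$ is automatically injective there; and a countable divisible $p$-group is a direct sum of copies of $\mathbb{Z}(p^{\infty})$, hence appears (for a suitable choice of $\alpha,\beta$) on the list of Theorem \ref{Theorem:injective-pro-p}(3).

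The main content is (1)$\Rightarrow$(3), and the key observation is that $\mathbb{Z}(p^{\infty})^{(\omega)}$ is discrete and therefore lives in $\mathbf{LCPAb}(p)$. Since $\mathbf{LCPAb}$ is closed under extensions in $\mathbf{PAb}$, any extension of $\mathbb{Z}(p^{\infty})^{(\omega)}$ by $G$ in $\mathbf{PAb}(p)$ is automatically locally compact, hence lies in $\mathbf{LCPAb}(p)$. Assuming (1), every such extension splits, so
\begin{equation*}
\mathrm{Ext}_{\mathbf{PAb}(p)}\bigl(\mathbb{Z}(p^{\infty})^{(\omega)},G\bigr)=0\text{.}
\end{equation*}
Applying (4)$\Rightarrow$(3) of Theorem \ref{Theorem:injective-pro-p} then yields $G\cong \mathbb{Z}(p^{\infty})^{\alpha}\oplus (\mathbb{Z}(p^{\infty})^{(\omega)})^{\beta}$ for some $\alpha,\beta\leq \omega$.

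To finish, I would rule out the non-locally compact shapes on the above list. Because $\mathbb{Z}(p^{\infty})$ is a countably infinite discrete group (and therefore not compact), a countably infinite product such as $\mathbb{Z}(p^{\infty})^{\omega}$ has no compact zero-neighborhood, and likewise for $(\mathbb{Z}(p^{\infty})^{(\omega)})^{\omega}$; hence local compactness forces $\alpha<\omega$ and $\beta\in\{0,1\}$. In every remaining case $G$ is a countable discrete divisible $p$-group, giving (3). I don't anticipate any genuine obstacle: the only delicate point is the interaction between $\mathrm{Ext}$ computed in $\mathbf{LCPAb}(p)$ versus $\mathbf{PAb}(p)$, and this is handled precisely by the closure of $\mathbf{LCPAb}$ under extensions inside $\mathbf{PAb}$.
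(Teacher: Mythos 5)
Your implications (3)$\Rightarrow$(2)$\Rightarrow$(1) are fine, but the substantive step (1)$\Rightarrow$(3) has a genuine gap: it rests on reading condition (4) of Theorem \ref{Theorem:injective-pro-p} literally, with the countable \emph{discrete} test group $\mathbb{Z}(p^{\infty})^{(\omega)}$. So read, that condition does not characterize injectivity in $\mathbf{PAb}(p)$, and the implication (4)$\Rightarrow$(3) you invoke fails. Indeed, take $G=\mathbb{Q}_{p}$, which is a locally compact pro-$p$ group: since $\mathbb{Q}_{p}$ is divisible as an abstract group, every abstract extension of $\mathbb{Z}(p^{\infty})^{(\omega)}$ by $\mathbb{Q}_{p}$ splits, and since extensions with countable discrete quotient are determined by the underlying abstract extension (as noted in the paper, $\mathrm{Ext}(C,A)\cong\mathrm{Ext}(C,A_{\mathrm{disc}})$ for countable discrete $C$), one gets $\mathrm{Ext}(\mathbb{Z}(p^{\infty})^{(\omega)},\mathbb{Q}_{p})=0$; yet $\mathbb{Q}_{p}$ is torsion-free, hence not of the form $\mathbb{Z}(p^{\infty})^{\alpha}\oplus(\mathbb{Z}(p^{\infty})^{(\omega)})^{\beta}$, and it is not injective in $\mathbf{PAb}(p)$ (Remark \ref{Remark:Q_p-not-injective} records $\mathrm{Ext}((\mathbb{Z}/p\mathbb{Z})^{\omega},\mathbb{Q}_{p})\neq 0$). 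The test object that actually detects injectivity is the non-locally-compact group $(\mathbb{Z}(p^{\infty})^{(\omega)})^{\omega}$, exactly as in the parallel conditions of Theorems \ref{Theorem:injective-pro-torsion} and \ref{Theorem:injective-nA}, and as is consistent with the justification given in the proof of Theorem \ref{Theorem:injective-pro-p} (every object of $\mathbf{PAb}(p)$ embeds as a closed subgroup of $(\mathbb{Z}(p^{\infty})^{(\omega)})^{\omega}$).

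Once the relevant test group is not locally compact, the step you call ``essentially formal'' collapses: injectivity in $\mathbf{LCPAb}(p)$ only splits extensions whose quotient is locally compact, so hypothesis (1) gives no control over $\mathrm{Ext}((\mathbb{Z}(p^{\infty})^{(\omega)})^{\omega},G)$, and this is precisely where the real work of (1)$\Rightarrow$(2)$\Rightarrow$(3) lies. The paper's proof handles it by showing that a failure of condition (2) of Theorem \ref{Theorem:injective-pro-p} can be witnessed by a \emph{locally compact} test object: choosing a basis $(V_{k})$ of compact open subgroups, compactness forces the least ordinal $\rho_{k}$ with $p^{\rho_{k}}V_{k}$ not open to satisfy $\rho_{k}\leq\omega$, so the cocycle construction from Theorem \ref{Theorem:injective-pro-p} can be run with \emph{finite} $p$-groups $T_{k}$, making $T=\prod_{k}T_{k}$ compact, hence an object of $\mathbf{LCPAb}(p)$ with $\mathrm{Ext}(T,G)\neq 0$, contradicting (1). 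Your proposal contains no substitute for this reduction, so (1)$\Rightarrow$(3) is not established; the final bookkeeping in your argument (local compactness forcing $\alpha<\omega$ and $\beta$ finite, hence $G$ countable discrete divisible) is fine but moot without it.
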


\begin{proof}
(1)$\Rightarrow $(2)\ By Theorem \ref{Theorem:injective-pro-p}, it suffices
to prove that $G$ is divisible and has a basis of zero neighborhoods
consisting of divisible subgroups. As in the proof of (1)$\Rightarrow $(2)
of Theorem \ref{Theorem:injective-pro-p}, we have that $G$ is divisible.
Suppose that $G$ does not have a basis of zero neighborhoods consisting of
divisible subgroups. Then it has a basis $\left( V_{k}\right) _{k\in \omega
} $ of \emph{compact }open subgroups that are not divisible and such that $%
d\left( V_{k}\right) $ is not open. For $k\in \omega $, let $\rho _{k}\geq 1$
be the least countable ordinal such that $p^{\rho _{k}}V_{k}$ is \emph{not }%
open. By compactness of $V_{k}$, we have that $\rho _{k}\leq \omega $.
Indeed, if $\rho _{k}>\omega $, then we have that $\left\{ p^{n}V\setminus
p^{n+1}V:n\in \omega \right\} $ is an infinite clopen partition of the
compact set $V\setminus \rho ^{\omega }V$, which is impossible. Thus,
proceeding as in the proof of (1)$\Rightarrow $(2) of Theorem \ref%
{Theorem:injective-pro-p}, one obtains that there exists a sequence $\left(
T_{k}\right) $ of \emph{finite }$p$-groups such that, setting $%
T:=\prod_{k\in \omega }T_{k}$, one has that $\mathrm{Ext}\left( T,G\right)
\neq 0$, which is a contradiction.

(2)$\Rightarrow $(3) This follows from (1)$\Rightarrow $(3) in Theorem \ref%
{Theorem:injective-pro-p}, since $G$ is by hypothesis locally compact.
\end{proof}

\begin{corollary}
Suppose that $G$ is a pro-$p$ locally compact Polish abelian group. The
following assertions are equivalent:

\begin{enumerate}
\item $G$ is projective in $\mathbf{LCPAb}\left( p\right) $;

\item $G$ is a compact torsion-free $p$-group.
\end{enumerate}
\end{corollary}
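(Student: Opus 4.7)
The plan is to derive the corollary from Theorem~\ref{Theorem:injective-LCPAb(p)} by applying Pontryagin duality. Specifically, I would argue that the duality functor $G \mapsto G^{\vee}$ restricts to an anti-self-equivalence of the exact category $\mathbf{LCPAb}(p)$. The key point is that a locally compact Polish abelian group $G$ is a topological $p$-group if and only if $G^{\vee}$ is, which can be checked by verifying self-duality on the building blocks appearing in the Braconnier-type decomposition of locally compact topological $p$-groups: the blocks $\mathbb{Z}/p^n\mathbb{Z}$, $\mathbb{Z}_p$, $\mathbb{Z}(p^\infty)$, and $\mathbb{Q}_p$ all satisfy the pro-$p$ condition, as do their Pontryagin duals (which lie in the same list). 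Since duality sends short exact sequences in $\mathbf{LCPAb}$ to short exact sequences, it exchanges projective and injective objects within $\mathbf{LCPAb}(p)$.

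With this anti-equivalence in hand, the corollary reduces to computing the duals of the injective objects. By Theorem~\ref{Theorem:injective-LCPAb(p)}, the injectives in $\mathbf{LCPAb}(p)$ are exactly the countable divisible $p$-groups. By the classification of divisible abelian groups, these are the groups $\mathbb{Z}(p^\infty)^{(\alpha)}$ for $\alpha \le \omega$, whose Pontryagin duals are $\mathbb{Z}_p^{\alpha}$ for $\alpha \le \omega$. Each such group is compact, torsion-free, and pro-$p$, so every projective in $\mathbf{LCPAb}(p)$ is a compact torsion-free $p$-group.

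Conversely, if $G$ is a compact torsion-free pro-$p$ Polish abelian group, then $G^{\vee}$ is discrete (as $G$ is compact), divisible (as $G$ is torsion-free), a $p$-group (as $G$ is pro-$p$), and countable (as $G$ is compact and metrizable). Hence $G^{\vee}$ is a countable divisible $p$-group, therefore injective in $\mathbf{LCPAb}(p)$ by Theorem~\ref{Theorem:injective-LCPAb(p)}, and so $G$ is projective by the duality.

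The main obstacle is to verify cleanly that Pontryagin duality restricts to an anti-equivalence of the exact category $\mathbf{LCPAb}(p)$---that is, that the pro-$p$ condition (phrased in terms of vanishing of $(p^n x)_{n\in\omega}$, or equivalently the existence of a basis of neighborhoods of $0$ by open subgroups with $p$-group quotient) is self-dual. This is standard from the structure theory of locally compact abelian $p$-groups, but should be stated explicitly since Theorem~\ref{Theorem:injective-LCPAb(p)} is formulated one-sidedly, in terms of injectives.
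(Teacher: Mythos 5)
Your proposal is correct and follows essentially the same route as the paper: both deduce the statement from Theorem \ref{Theorem:injective-LCPAb(p)} via the fact that Pontryagin duality is an anti-equivalence of $\mathbf{LCPAb}(p)$ exchanging injectives and projectives, the only difference being that the paper cites Armacost's duality between compact torsion-free and countable divisible groups where you classify the injectives and their duals explicitly. Your added care in checking that the pro-$p$ condition is self-dual is a reasonable elaboration of a step the paper takes for granted.
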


\begin{proof}
We have that Pontryagin duality establishes an equivalence between $\mathbf{%
LCPAb}\left( p\right) $ and its opposite. Thus, it maps injective objects to
projective objects and vice versa. Furthermore, by \cite[Theorem 4.15]%
{armacost_structure_1981}, a locally compact Polish abelian group $G$ is
compact and torsion-free if and only if its dual is countable and divisible.
\end{proof}

\begin{corollary}
\label{Corollary:enough-injectives-pro-p}The quasi-abelian category $\mathbf{%
PAb}\left( p\right) $ has enough injectives and homological dimension $1$.
\end{corollary}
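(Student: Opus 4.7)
The plan is to verify the two claims separately, using the characterization of injective pro-$p$ Polish abelian groups provided by Theorem \ref{Theorem:injective-pro-p}.

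For the claim that $\mathbf{PAb}(p)$ has enough injectives, I would show that every pro-$p$ Polish abelian group $G$ embeds as a closed subgroup of $(\mathbb{Z}(p^\infty)^{(\omega)})^{\omega}$, which is injective by implication (3)$\Rightarrow$(1) of Theorem \ref{Theorem:injective-pro-p}. First I would observe that a pro-$p$ Polish abelian group that is also a Lie group must be a countable discrete $p$-group: neither $\mathbb{R}$ nor a nontrivial torus is a topological $p$-group (the sequence $(p^{n}x)_{n}$ fails to vanish for a nonzero element), so in the decomposition $V\oplus T\oplus D$ one necessarily has $V=T=0$. Consequently $G\cong \mathrm{lim}_{n}G_{n}$ for a tower of countable discrete $p$-groups $G_{n}$. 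Each $G_{n}$ embeds into its divisible hull, which is a countable direct sum of copies of $\mathbb{Z}(p^\infty)$, giving $G_{n}\hookrightarrow \mathbb{Z}(p^\infty)^{(\omega)}$ and hence $G\hookrightarrow \prod_{n}G_{n}\hookrightarrow (\mathbb{Z}(p^\infty)^{(\omega)})^{\omega}$, as required.

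For the bound $\mathrm{hd}(\mathbf{PAb}(p))\leq 1$, I would apply the criterion recalled in the excerpt: in a quasi-abelian category with enough injectives, $\mathrm{hd}\leq 1$ is equivalent to the class of injectives being closed under admissible quotients. Given an injective $G$ and a continuous surjection $\pi:G\twoheadrightarrow H$ in $\mathbf{PAb}(p)$, I invoke characterization (2) of Theorem \ref{Theorem:injective-pro-p}: $G$ is divisible and admits a basis of zero neighborhoods consisting of divisible open subgroups. Divisibility of $H=G/\mathrm{Ker}(\pi)$ is immediate from divisibility of $G$. For the topological condition, given a zero neighborhood $U\subseteq H$, I pick a divisible open subgroup $V\subseteq G$ with $V\subseteq \pi^{-1}(U)$; by the Open Mapping Theorem for Polish groups, $\pi(V)$ is an open subgroup of $H$ contained in $U$, and it is divisible as the image of a divisible group. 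Hence $H$ satisfies (2) of Theorem \ref{Theorem:injective-pro-p}, so it is injective.

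It remains to observe that $\mathrm{hd}(\mathbf{PAb}(p))\geq 1$, which is witnessed by the non-split extension $0\to \mathbb{Z}/p\mathbb{Z}\to \mathbb{Z}/p^{2}\mathbb{Z}\to \mathbb{Z}/p\mathbb{Z}\to 0$, showing that $\mathrm{Ext}(\mathbb{Z}/p\mathbb{Z},\mathbb{Z}/p\mathbb{Z})\neq 0$. The substantive content is already packaged in Theorem \ref{Theorem:injective-pro-p}, whose \emph{topological} criterion (2) renders the closure of injectives under quotients nearly automatic, the only nontrivial topological input being the Open Mapping Theorem. Absent such a characterization, verifying $\mathrm{Ext}(\mathbb{Z}(p^\infty)^{(\omega)},H)=0$ for every quotient $H$ of an injective object via a direct cocycle computation would be considerably more delicate; this is where the main difficulty would lie.
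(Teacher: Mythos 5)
Your proposal is correct and follows essentially the same route as the paper: embed an arbitrary pro-$p$ Polish abelian group as a closed subgroup of a countable product of copies of $\mathbb{Z}(p^{\infty})^{(\omega)}$, which is injective by Theorem \ref{Theorem:injective-pro-p}, and then show that quotients of injectives are injective using characterization (2) of that theorem together with the Open Mapping Theorem for Polish groups. Your only additions are minor: a more explicit justification of the embedding via the countable discrete $p$-group Lie quotients, and the (trivial) remark that the homological dimension is at least $1$.
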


\begin{proof}
Let $G$ be a pro-$p$ Polish abelian group. Then $G$ is isomorphic to a
closed subgroup of $\prod_{n\in \omega }G_{n}$ for some sequence $\left(
G_{n}\right) $ of countable $p$-groups. In turn, for every $n\in \omega $, $%
G_{n}$ is a subgroup of the countable divisible $p$-group $D:=\mathbb{Z}%
\left( p^{\infty }\right) ^{\left( \omega \right) }$. Thus, $G$ is
isomorphic to a closed subgroup of $D^{\omega }$, which is injective by
Theorem \ref{Theorem:injective-pro-p}. This shows that $\mathbf{PAb}\left(
p\right) $ has enough injectives.

Suppose that $A$ is an injective pro-$p$ Polish abelian group, $B$ is a
Polish abelian group, and $\pi :A\rightarrow B$ is a surjective continuous
homomorphism. Since $A$ is divisible, $B$ is also divisible. If $\left(
V_{k}\right) $ is a basis of zero neighborhoods of $A$ consisting of
divisible subgroups, then $\left( \pi \left( V_{k}\right) \right) $ is a
basis of zero neighborhoods of $B$ consisting of divisible subgroups by the
Open Mapping Theorem for Polish groups. Thus, $B$ is an injective pro-$p$
Polish abelian group. This shows that quotients of injectives are injective
in $\mathbf{PAb}\left( p\right) $, and hence $\mathbf{PAb}\left( p\right) $
has homological dimension at most $1$.
\end{proof}

\begin{problem}
Characterize the projective objects in $\mathbf{PAb}\left( p\right) $. Does $%
\mathbf{PAb}\left( p\right) $ have enough projectives?
\end{problem}

\subsection{Topological torsion groups}

Recall that a pro-Lie Polish abelian group $G$ is topological torsion if and
only if it is an inverse limit of countable torsion groups. We thus say that
a Polish abelian group is topological torsion if it is an inverse limit of
countable torsion groups. Topological torsion Polish abelian groups form a
thick subcategory $\mathbf{proTorPAb}$ of $\mathbf{proLiePAb}$. We denote by 
$\mathbf{proTorLCPAb}$ the category of locally compact topological torsion
Polish abelian groups. These are precisely the locally compact Polish
abelian groups that are totally disconnected and whose Pontryagin dual is
also totally disconnected \cite[Theorem 3.5]{armacost_structure_1981}.

Given a topological torsion Polish abelian group, we let $G_{p}$ be the
subgroup of $G$ consisting of elements $x$ such that $\mathrm{\mathrm{lim}}%
_{n\rightarrow \infty }p^{n}x=0$ \cite[Definition 2.1]%
{armacost_structure_1981}, called the $p$-\emph{component }of $G$. Then we
have that $G_{p}$ is a closed subgroup of $G$ \cite[Lemma 3.8]%
{armacost_structure_1981}. We can write $G\cong G_{p}\oplus G_{p}^{\#}$
where $G_{p}^{\#}$ has trivial $p$-component, and 
\begin{equation*}
\mathrm{Hom}\left( G_{p}^{\#},T\right) =\mathrm{Hom}\left(
T,G_{p}^{\#}\right) =0
\end{equation*}%
for any topological $p$-group $T$. It follows that, when $G$ and $H$ are
topological torsion groups, that 
\begin{equation*}
\mathrm{Hom}\left( G,H\right) \cong \mathrm{Hom}\left( G_{p},H_{p}\right)
\oplus \mathrm{Hom}\left( G_{p}^{\#},H_{p}^{\#}\right)
\end{equation*}%
and 
\begin{equation*}
\mathrm{Ext}\left( G,H\right) \cong \mathrm{Ext}\left( G_{p},H_{p}\right)
\oplus \mathrm{Ext}\left( G_{p}^{\#},H_{p}^{\#}\right) \text{.}
\end{equation*}%
The same proofs as the ones in Section \ref{Subsection:pro-p} give the
following results.

\begin{theorem}
\label{Theorem:injective-pro-torsion}Suppose that $G$ is a topological
torsion Polish abelian group. The following assertions are equivalent:

\begin{enumerate}
\item $G$ is injective in $\mathbf{proTorPAb}$;

\item $G$ is divisible and it has a basis of zero neighborhoods consisting
of divisible subgroups;

\item $G$ is a product of countable divisible abelian groups;

\item $\mathrm{Ext}((\mathbb{Z}\left( p^{\infty }\right) ^{\left( \omega
\right) })^{\omega },G)=0$ for every prime $p$;

\item $G_{p}$ is injective in $\mathbf{PAb}\left( p\right) $ for every prime 
$p$.
\end{enumerate}
\end{theorem}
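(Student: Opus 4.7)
The plan is to reduce everything to the pro-$p$ case by decomposing a topological torsion Polish abelian group into its $p$-components, and then apply Theorem \ref{Theorem:injective-pro-p} prime by prime. First I would establish the topological direct product decomposition $G \cong \prod_{p} G_{p}$ for any topological torsion Polish abelian group, where the product ranges over prime numbers and $G_{p}$ is the $p$-component of $G$. Writing $G$ as an inverse limit of countable torsion groups and using that a countable torsion abelian group is the direct sum of its $p$-primary parts, together with the decomposition $G \cong G_{p} \oplus G_{p}^{\#}$ already recorded in the excerpt, yields $G \cong \prod_{p} G_{p}$ with each $G_{p}$ a pro-$p$ Polish abelian group. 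The vanishing $\mathrm{Hom}(A, B) = 0$ whenever $A$ is a pro-$p$ group and $B$ is a pro-$q$ group for distinct primes $p, q$, combined with the additivity of $\mathrm{Hom}$ and $\mathrm{Ext}$ in each argument, then gives
\[
\mathrm{Hom}(G, H) \cong \prod_{p} \mathrm{Hom}(G_{p}, H_{p}), \qquad \mathrm{Ext}(G, H) \cong \prod_{p} \mathrm{Ext}(G_{p}, H_{p})
\]
for all topological torsion Polish abelian groups $G$ and $H$.

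With this in hand, the equivalence (1) $\Leftrightarrow$ (5) is immediate: an admissible monic $H \hookrightarrow K$ in $\mathbf{PAb}(p)$ extends to an admissible monic in $\mathbf{proTorPAb}$ by taking the direct sum with the identity on the trivial group in the other components, and conversely any admissible monic in $\mathbf{proTorPAb}$ splits componentwise into admissible monics of pro-$p$ groups via the product decomposition above. Combined with the splitting of $\mathrm{Ext}$, this shows that $\mathrm{Ext}(-, G)$ is exact on $\mathbf{proTorPAb}$ if and only if $\mathrm{Ext}(-, G_{p})$ is exact on $\mathbf{PAb}(p)$ for every prime $p$.

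Granted (1) $\Leftrightarrow$ (5), the remaining equivalences follow by invoking Theorem \ref{Theorem:injective-pro-p} coordinatewise. Indeed, $G$ is divisible with a basis of divisible open subgroups if and only if each $G_{p}$ is, since a basis of zero neighborhoods of $\prod_{p} G_{p}$ consists of products $\prod_{p} V_{p}$ with $V_{p} = G_{p}$ for all but finitely many $p$, and divisibility is preserved by products and finite direct sums of divisible groups. Similarly, $G$ is a product of countable divisible abelian groups if and only if each $G_{p}$ is a product of countable divisible $p$-groups, by splitting each factor into its $p$-primary parts. Finally, condition (4) splits exactly via the product decomposition of $\mathrm{Ext}$, using that $(\mathbb{Z}(p^{\infty})^{(\omega)})^{\omega}$ is pro-$p$ so its $\mathrm{Ext}$ against $G$ only sees the $p$-component $G_{p}$.

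The main obstacle will be verifying the topological direct product decomposition $G \cong \prod_{p} G_{p}$ rigorously in the pro-Lie (not merely locally compact) setting. One needs to check not only that the natural continuous homomorphism $G \to \prod_{p} G_{p}$ is bijective, but that it is a topological isomorphism; bijectivity will then promote to a homeomorphism via the open mapping theorem for Polish groups \cite[Corollary 2.3.4]{gao_invariant_2009}. Establishing bijectivity requires passing through a compatible inverse sequence of countable torsion groups, applying the algebraic primary decomposition levelwise, and verifying that the bonding maps respect this decomposition, so that the inverse limit splits as the product of the inverse limits of the $p$-primary parts.
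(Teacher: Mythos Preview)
Your proposed decomposition $G \cong \prod_p G_p$ is false in general for topological torsion Polish abelian groups, and this undermines the entire strategy. A concrete counterexample is $G = \mathbb{Q}/\mathbb{Z}$ with the discrete topology: this is a countable torsion group (hence a topological torsion Polish abelian group), and $G_p = \mathbb{Z}(p^\infty)$ for each prime $p$, but $\prod_p \mathbb{Z}(p^\infty)$ is uncountable and therefore not isomorphic to $G$. The issue is that the primary decomposition of a countable torsion group is a direct \emph{sum}, not a direct product, and inverse limits do not commute with direct sums in the way your last paragraph suggests. Even for locally compact topological torsion groups the correct statement (Braconnier's theorem) is that $G$ is a \emph{local} (restricted) product of its $p$-components relative to compact open subgroups, not a full product. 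The paper only records the single-prime splitting $G \cong G_p \oplus G_p^{\#}$; this does not iterate to a product over all primes.

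Consequently your derivations of $\mathrm{Hom}(G,H) \cong \prod_p \mathrm{Hom}(G_p, H_p)$ and $\mathrm{Ext}(G,H) \cong \prod_p \mathrm{Ext}(G_p, H_p)$, and hence the clean reduction of (1)--(4) to the componentwise statements in Theorem~\ref{Theorem:injective-pro-p}, do not go through. Some individual implications --- for instance (1)$\Rightarrow$(5) via the direct-summand splitting, or (5)$\Rightarrow$(4) since $(\mathbb{Z}(p^\infty)^{(\omega)})^\omega$ has trivial $q$-component for $q\neq p$ --- do survive using only the single-prime splitting, but the full cycle of equivalences does not close this way. The paper's approach is different and more direct: it simply reruns the proof of Theorem~\ref{Theorem:injective-pro-p} with ``$p$-group'' replaced by ``torsion group'' throughout. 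The arguments for (2)$\Rightarrow$(3)$\Rightarrow$(1) and the cocycle construction showing $\neg(2)\Rightarrow\neg(1)$ work verbatim in the torsion setting, without ever decomposing $G$ into primary components.
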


\begin{theorem}
Suppose that $G$ is a topological torsion locally compact Polish abelian
group. The following assertions are equivalent:

\begin{enumerate}
\item $G$ is injective in $\mathbf{proTorLCPAb}$;

\item $G$ is injective in $\mathbf{proTorPAb}$;

\item $G$ is a countable and divisible.
\end{enumerate}
\end{theorem}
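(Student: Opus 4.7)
The equivalence is established via (3) $\Rightarrow$ (2) $\Rightarrow$ (1) $\Rightarrow$ (3). The implication (3) $\Rightarrow$ (2) is immediate from Theorem~\ref{Theorem:injective-pro-torsion}, since a countable divisible abelian group is (trivially) a one-factor product of countable divisible groups. The implication (2) $\Rightarrow$ (1) holds because $\mathbf{proTorLCPAb}$ is a fully exact subcategory of $\mathbf{proTorPAb}$: any short exact sequence in $\mathbf{proTorLCPAb}$ remains short exact in $\mathbf{proTorPAb}$, so injectivity transfers downward.

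The main content is (1) $\Rightarrow$ (3). The plan is first to show that (1) implies (2), using the characterization (5) of Theorem~\ref{Theorem:injective-pro-torsion}: it suffices to verify that for each prime $p$, the $p$-component $G_p$ is injective in $\mathbf{PAb}(p)$. In fact I show the stronger statement that $G_p$ is injective in $\mathbf{LCPAb}(p)$. Given a closed embedding $A\hookrightarrow B$ in $\mathbf{LCPAb}(p)$ and a morphism $\varphi:A\to G_p$, I compose with the inclusion $G_p\hookrightarrow G$ and extend via injectivity of $G$ in $\mathbf{proTorLCPAb}$ to some $\tilde\varphi:B\to G$. Because $B$ is pro-$p$ and $\mathrm{Hom}(B,G_p^{\#})=0$ (since $G_p^{\#}$ has trivial $p$-component, as recalled at the beginning of the subsection), the composition $B\to G\to G_p^{\#}$ vanishes and $\tilde\varphi$ factors through the closed subgroup $G_p$. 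Then Theorem~\ref{Theorem:injective-LCPAb(p)} identifies $G_p$ as a countable divisible $p$-group, and Theorem~\ref{Theorem:injective-pro-p} shows that such a group is injective in $\mathbf{PAb}(p)$, giving (2).

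To pass from (2) to (3), I exploit local compactness. By Theorem~\ref{Theorem:injective-pro-torsion}, we have a decomposition $G\cong\prod_n D_n$ where each $D_n$ is a countable (hence discrete) divisible abelian group. The standard criterion that an infinite product of Hausdorff topological groups is locally compact only if all but finitely many factors are compact, together with the fact that a countable discrete compact group is finite and the only finite divisible abelian group is trivial, forces all but finitely many $D_n$ to vanish. Hence $G$ is a finite direct sum of countable divisible abelian groups, and thus itself countable and divisible, proving (3). The principal obstacle is the $p$-component extension step in the second paragraph: it requires carefully invoking the splitting $G\cong G_p\oplus G_p^{\#}$ and the vanishing of $\mathrm{Hom}$ between a pro-$p$ group and a group with trivial $p$-component to ensure the extended morphism remains inside $G_p$.
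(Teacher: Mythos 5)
Your argument is correct, but the crucial implication (1)$\Rightarrow$(2) follows a different route from the paper's. The paper gives no separate proof of this theorem: it declares that the proofs of Section \ref{Subsection:pro-p} carry over, so its intended argument reruns the proof of Theorem \ref{Theorem:injective-LCPAb(p)} in the topological torsion setting (injectivity forces divisibility; compactness of a basis of open subgroups bounds the relevant ordinals by $\omega $, and a product of finite groups $T$ with $\mathrm{Ext}\left( T,G\right) \neq 0$ excludes the failure of a basis of divisible subgroups), concluding via the analogue of (2)$\Rightarrow $(1) of Theorem \ref{Theorem:injective-pro-torsion}. You avoid repeating that transfinite cocycle construction altogether: the splitting $G\cong G_{p}\oplus G_{p}^{\#}$ together with $\mathrm{Hom}\left( B,G_{p}^{\#}\right) =0$ for $B$ a topological $p$-group transfers injectivity of $G$ in $\mathbf{proTorLCPAb}$ to injectivity of $G_{p}$ in $\mathbf{LCPAb}\left( p\right) $, after which Theorem \ref{Theorem:injective-LCPAb(p)}, Theorem \ref{Theorem:injective-pro-p}, and criterion (5) of Theorem \ref{Theorem:injective-pro-torsion} yield (2); your step (2)$\Rightarrow $(3) via the product decomposition and local compactness of products, and the two easy implications, coincide with the paper's. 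Your reduction buys brevity and reuse of the locally compact pro-$p$ theorem as a black box, at the price of leaning on criterion (5) (itself one of the statements the paper establishes \textquotedblleft by the same proofs\textquotedblright ); the paper's template argument is self-contained within the torsion category and also produces the intermediate characterization by divisible open subgroups. Two routine points are worth making explicit in your write-up: a closed embedding $A\hookrightarrow B$ in $\mathbf{LCPAb}\left( p\right) $ is an admissible monic in $\mathbf{proTorLCPAb}$ because $B/A$ is again a locally compact topological $p$-group, hence topological torsion; and the composition of your extension $\tilde{\varphi}$ with the projection onto $G_{p}$ restricts to $\varphi $ on $A$ precisely because $\iota \circ \varphi $ already takes values in $G_{p}$.
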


\begin{corollary}
Suppose that $G$ is a topological torsion locally compact Polish abelian
group. The following assertions are equivalent:

\begin{enumerate}
\item $G$ is projective in $\mathbf{proTorLCPAb}$;

\item $G$ is compact and torsion-free.
\end{enumerate}
\end{corollary}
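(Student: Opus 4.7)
The plan is to mimic the argument used for the analogous corollary in the pro-$p$ setting, namely to invoke Pontryagin duality together with the characterization of injective objects in $\mathbf{proTorLCPAb}$ established in the preceding theorem. The key observation is that the category $\mathbf{proTorLCPAb}$ is self-dual under Pontryagin duality, because (as already noted right before Theorem \ref{Theorem:injective-pro-torsion}) the locally compact Polish abelian groups in $\mathbf{proTorLCPAb}$ are precisely the totally disconnected locally compact Polish abelian groups whose Pontryagin dual is itself totally disconnected; this class is manifestly closed under $(-)^{\vee }$.

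First I would record that Pontryagin duality yields an equivalence of categories $(-)^{\vee }:\mathbf{proTorLCPAb}^{\mathrm{op}}\rightarrow \mathbf{proTorLCPAb}$, and that this equivalence is exact, so it interchanges injective and projective objects. Then I would apply the previous theorem: an object $G$ of $\mathbf{proTorLCPAb}$ is projective if and only if $G^{\vee }$ is an injective object of $\mathbf{proTorLCPAb}$, if and only if $G^{\vee }$ is a countable divisible abelian group (viewed discretely).

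It therefore remains only to translate the condition "$G^{\vee }$ is countable and divisible'' into a condition on $G$. Countability of $G^{\vee }$ is equivalent to $G$ being compact, by the standard duality between discrete countable groups and compact metrizable groups; and divisibility of $G^{\vee }$ is equivalent to torsion-freeness of $G$, by \cite[Theorem 4.15]{armacost_structure_1981} (or the usual fact that $\widehat{nG}=n\widehat{G}$ under the duality, so $G$ has no nonzero $n$-torsion iff multiplication by $n$ is surjective on $G^{\vee }$). Combining these two equivalences yields (1)$\Leftrightarrow $(2).

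No step of this argument is genuinely difficult, since all of the heavy lifting has been done in Theorem \ref{Theorem:injective-pro-torsion} and in the standard theory of Pontryagin duality. The only point that requires a moment of care is the self-duality of $\mathbf{proTorLCPAb}$ and the fact that $(-)^{\vee }$ really does send injectives to projectives (and conversely); this is immediate from the exactness of Pontryagin duality on $\mathbf{LCPAb}$ and from the fact that the morphisms in $\mathbf{proTorLCPAb}$ are precisely the continuous homomorphisms, so the subcategory inherits the duality without alteration.
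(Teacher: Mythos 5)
Your proposal is correct and follows essentially the same route as the paper: the paper's proof of the corresponding corollary (both in the pro-$p$ case and, via the remark that the same proofs apply, in the topological torsion case) also uses the self-duality of the category under Pontryagin duality to swap injectives and projectives, combines this with the characterization of injectives as countable divisible groups, and invokes \cite[Theorem 4.15]{armacost_structure_1981} to translate ``countable divisible dual'' into ``compact and torsion-free.'' No further comment is needed.
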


\begin{corollary}
\label{Corollary:enough-injectives-pro-torsion}The category $\mathbf{%
proTorPAb}$ has enough injectives and homological dimension $1$.
\end{corollary}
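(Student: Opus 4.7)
The plan is to mimic exactly the argument used for Corollary \ref{Corollary:enough-injectives-pro-p}, using Theorem \ref{Theorem:injective-pro-torsion} in place of Theorem \ref{Theorem:injective-pro-p}.

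For enough injectives, let $G$ be a topological torsion Polish abelian group. Since $G$ is an inverse limit of countable torsion groups, it embeds as a closed subgroup of $\prod_{n\in\omega}A_{n}$ for some sequence $\left(A_{n}\right)$ of countable torsion abelian groups. Each countable torsion group $A_{n}$ embeds into its divisible hull $D_{n}$, which is again countable, torsion, and divisible. Hence $G$ embeds as a closed subgroup of $\prod_{n\in\omega}D_{n}$. I would check (routinely, as in the pro-$p$ case) that $\prod_{n\in\omega}D_{n}$ is still topological torsion: for $x=(x_{n})\in\prod_{n}D_{n}$ and $N\in\omega$, once $n$ exceeds the orders of $x_{0},\dots,x_{N-1}$ the projection of $n!x$ onto the first $N$ coordinates vanishes, so $n!x\to 0$. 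By Theorem \ref{Theorem:injective-pro-torsion}(3), $\prod_{n\in\omega}D_{n}$ is injective in $\mathbf{proTorPAb}$, proving that the category has enough injectives.

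For the homological dimension, by \cite[Exercise 13.8]{kashiwara_categories_2006} it suffices to show that quotients of injectives (by closed subgroups) are injective, which I will verify by means of the characterization in Theorem \ref{Theorem:injective-pro-torsion}(2). Let $A$ be injective in $\mathbf{proTorPAb}$ and $\pi\colon A\to B$ be a surjective continuous homomorphism with $B$ topological torsion. Divisibility of $A$ passes to $B$ immediately. If $\left(V_{k}\right)$ is a basis of zero neighborhoods of $A$ consisting of divisible subgroups, then by the Open Mapping Theorem for Polish groups \cite[Corollary 2.3.4]{gao_invariant_2009}, $\left(\pi(V_{k})\right)$ is a basis of zero neighborhoods of $B$, and each $\pi(V_{k})$ is divisible as the image of a divisible group. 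Hence $B$ satisfies Theorem \ref{Theorem:injective-pro-torsion}(2) and is injective in $\mathbf{proTorPAb}$.

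I do not expect any serious obstacle: the real content sits in Theorem \ref{Theorem:injective-pro-torsion}, which is the analogue of Theorem \ref{Theorem:injective-pro-p} and is proved in the preceding subsection. The mildly delicate point is the verification that a countable product of countable divisible torsion groups is still topological torsion (hence an object of $\mathbf{proTorPAb}$), but this is a direct calculation from the definition in Lemma \ref{Definition:pro-Lie-types}(2).
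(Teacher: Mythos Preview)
Your proposal is correct and is exactly what the paper does: it explicitly states that the same proofs as in Section~\ref{Subsection:pro-p} yield the topological torsion results, so Corollary~\ref{Corollary:enough-injectives-pro-torsion} is obtained verbatim from the argument for Corollary~\ref{Corollary:enough-injectives-pro-p} with Theorem~\ref{Theorem:injective-pro-torsion} replacing Theorem~\ref{Theorem:injective-pro-p}. Your extra verification that $\prod_{n}D_{n}$ is topological torsion is a welcome sanity check that the paper leaves implicit.
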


\begin{problem}
Characterize the projective objects in $\mathbf{proTorPAb}$. Does $\mathbf{%
proTorPAb}$ have enough projectives?
\end{problem}

\subsection{Injective non-Archimedean Polish abelian groups}

In this section, we characterize injective and projective objects in the
category of non-Archimedean Polish abelian groups. Recall that a Polish
abelian group $G$ is non-Archimedean if it has a basis of zero neighborhoods
consisting of subgroups. This is equivalent to the assertion that $G$ is
isomorphic to a closed subgroup of $U^{\omega }$, where $U$ is the universal
countable discrete group $\mathbb{Q}^{\left( \omega \right) }\oplus
\bigoplus_{p}\mathbb{Z}\left( p^{\infty }\right) ^{\left( \omega \right) }$.
Non-Archimedean Polish abelian groups form a thick subcategory $\mathbf{PAb}%
_{\mathrm{nA}}$ of the quasi-abelian category of Polish abelian groups.

Recall that a locally compact Polish abelian group has type $\mathbb{Z}$ if
and only if it is discrete and torsion-free. A pro-Lie Polish abelian group $%
G$ has type $\mathbb{Z}$ if $\left\{ N\in \mathcal{N}\left( G\right) :G/N%
\text{ has type }\mathbb{Z}\right\} $ is cofinal in $\mathcal{N}\left(
G\right) $. Notice that every type $\mathbb{Z}$ Polish abelian group is
isomorphic to a subgroup of $(\mathbb{Q}^{\left( \omega \right) })^{\omega }$%
.We let $\mathbf{proLiePAb}_{\mathbb{Z}}$ be the fully exact subcategory of $%
\mathbf{proLiePAb}$ consisting of Polish abelian groups of type $\mathbb{Z}$.

\begin{theorem}
\label{Theorem:projective-type-Z}Let $A$ be a non-Archimedean Polish abelian
group. The following assertions are equivalent:

\begin{enumerate}
\item $A$ is of type $\mathbb{Z}$, and projective in $\mathbf{proLiePAb}$;

\item $A$ is projective in $\mathbf{proLiePAb}_{\mathbb{Z}}$;

\item $A$ is isomorphic to $\mathbb{Z}^{\alpha }\oplus \left( \mathbb{Z}%
^{\left( \omega \right) }\right) ^{\beta }$ for some $\alpha ,\beta \leq
\omega $;

\item $\mathrm{Ext}\left( A,\mathbb{Z}\right) =0$ and $\mathrm{Ext}\left( A,%
\mathbb{Z}^{\left( \omega \right) }\right) =0$.
\end{enumerate}
\end{theorem}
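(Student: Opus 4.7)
The plan is to prove the equivalences via the cycle $(3)\Rightarrow(1)\Rightarrow(2)\Rightarrow(4)\Rightarrow(3)$, leveraging the classification of projectives in $\mathbf{proLiePAb}$ from Theorem~\ref{Theorem:characterize-projectives} together with the Ext-vanishing criterion established in the corollary immediately preceding the present theorem. Most of the implications are essentially formal; the only one with real content is $(4)\Rightarrow(3)$, and even that reduces at once to the preceding corollary, so no new technical machinery needs to be developed.

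For $(3)\Rightarrow(1)$, I would observe that $\mathbb{Z}^{\alpha}\oplus(\mathbb{Z}^{(\omega)})^{\beta}$ is a direct summand of $\mathbb{R}^{\omega}\oplus(\mathbb{Z}^{(\omega)})^{\omega}$ and hence projective in $\mathbf{proLiePAb}$ by Theorem~\ref{Theorem:characterize-projectives}; it is visibly a closed subgroup of $(\mathbb{Q}^{(\omega)})^{\omega}$, hence of type~$\mathbb{Z}$ by Lemma~\ref{Definition:pro-Lie-types}(3). For $(1)\Rightarrow(2)$, the fact that $\mathbf{proLiePAb}_{\mathbb{Z}}$ is a fully exact subcategory of $\mathbf{proLiePAb}$ means that every admissible epic in it is an admissible epic in the ambient category, so the lifting property defining projectivity restricts automatically. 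For $(2)\Rightarrow(4)$, both $\mathbb{Z}$ and $\mathbb{Z}^{(\omega)}$ lie in $\mathbf{proLiePAb}_{\mathbb{Z}}$, and, as recalled at the end of Section~\ref{Section:categories}, the Yoneda $\mathrm{Ext}$ group is unchanged under passage between a thick subcategory and the ambient category whenever both source and target belong to the subcategory; therefore projectivity of $A$ in $\mathbf{proLiePAb}_{\mathbb{Z}}$ forces the two $\mathrm{Ext}$ groups appearing in~(4) to vanish.

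The substantive implication is $(4)\Rightarrow(3)$. By the corollary immediately preceding the theorem, the vanishing of $\mathrm{Ext}(A,\mathbb{Z})$ and $\mathrm{Ext}(A,\mathbb{Z}^{(\omega)})$ already implies that $A$ is projective in $\mathbf{proLiePAb}$; Theorem~\ref{Theorem:characterize-projectives} then classifies $A$ as $\mathbb{Z}^{\alpha}\oplus(\mathbb{Z}^{(\omega)})^{\beta}\oplus\mathbb{R}^{\gamma}$ for some $\alpha,\beta,\gamma\leq\omega$, and the non-Archimedean hypothesis on $A$ rules out a nonzero vector summand, forcing $\gamma=0$ and yielding exactly the form asserted in~(3). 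The only point that needs checking, and the only potential obstacle, is that the Ext-vanishing corollary being invoked is itself proved without reference to the present theorem; inspecting its proof confirms that it uses only Theorems~\ref{Theorem:characterize-projectives} and~\ref{Theorem:hd-proLiePAb}, so there is no circularity, and I foresee no further difficulty.
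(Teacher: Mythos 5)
Your argument is correct, but it is wired differently from the paper's. The paper makes (1) the hub: (3)$\Leftrightarrow$(1) comes from Theorem \ref{Theorem:characterize-projectives}, (1)$\Rightarrow$(2) and (1)$\Rightarrow$(4) are immediate, (2)$\Rightarrow$(1) is proved by lifting a presentation $P\oplus V\twoheadrightarrow A$ with $P\cong\mathbb{Z}^{\alpha}\oplus(\mathbb{Z}^{(\omega)})^{\beta}$ and $V$ a vector group, discarding $V$ because $\mathrm{Hom}(V,A)=0$ and splitting $A$ off as a summand of $P$, and (4)$\Rightarrow$(1) is proved by exhibiting a surjection $(\mathbb{Z}^{(\omega)})^{\omega}\twoheadrightarrow A$ whose kernel is again of the form $\mathbb{Z}^{\alpha}\oplus(\mathbb{Z}^{(\omega)})^{\beta}$ (Lemma \ref{Lemma:free-subgroups}) and using the Ext hypotheses plus additivity to split it. Your cycle (3)$\Rightarrow$(1)$\Rightarrow$(2)$\Rightarrow$(4)$\Rightarrow$(3) replaces both nontrivial steps: (2)$\Rightarrow$(4) is the formal observation that projectivity in the subcategory kills $\mathrm{Ext}_{\mathrm{Yon}}(A,\mathbb{Z})$ and $\mathrm{Ext}_{\mathrm{Yon}}(A,\mathbb{Z}^{(\omega)})$ --- note only that $\mathbf{proLiePAb}_{\mathbb{Z}}$ is asserted to be fully exact, not thick, in $\mathbf{proLiePAb}$, so the remark at the end of Section \ref{Section:categories} does not apply verbatim; but invariance of $\mathrm{Ext}_{\mathrm{Yon}}$ only needs closure under extensions, so this is cosmetic --- and (4)$\Rightarrow$(3) is outsourced to the corollary closing the subsection on projectives in $\mathbf{proLiePAb}$ (it is not immediately before the present theorem, but it does exist), whose proof uses only Theorems \ref{Theorem:characterize-projectives} and \ref{Theorem:hd-proLiePAb}, so your non-circularity check is right; eliminating the $\mathbb{R}^{\gamma}$ summand via total disconnectedness of non-Archimedean groups is immediate. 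What your routing buys is economy: the splitting argument is performed once, inside the already-proved corollary, instead of being repeated for (2)$\Rightarrow$(1) and (4)$\Rightarrow$(1); the price is that those two implications are only recovered by composing around the cycle, whereas the paper's hub structure yields them directly.
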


\begin{proof}
The equivalence (3)$\Leftrightarrow $(1) follows from Theorem \ref%
{Theorem:characterize-projectives}, while the implications (1)$\Rightarrow $%
(4) and (1)$\Rightarrow $(2) is obvious.

(2)$\Rightarrow $(1) Suppose that $A$ is projective in $\mathbf{proLiePAb}_{%
\mathbb{Z}}$. Then by Theorem \ref{Theorem:hd-proLiePAb} there exists a
surjective homomorphism $P\oplus V\rightarrow A$ where $P$ is isomorphic to $%
\mathbb{Z}^{\alpha }\oplus \left( \mathbb{Z}^{\left( \omega \right) }\right)
^{\beta }$ for some $\alpha ,\beta \leq \omega $ and $V$ is a vector group.
Since $A$ is type $\mathbb{Z}$, $\mathrm{Hom}\left( V,A\right) =0$. Thus,
there exists a surjective homomorphism $P\rightarrow A$. Since $A$ is
projective in $\mathbf{proLiePAb}_{\mathbb{Z}}$, we have that $A$ is a
direct summand of $P$. Since $P$ is projective in $\mathbf{proLiePAb}$ by
Theorem \ref{Theorem:characterize-projectives}, also $A$ is projective in $%
\mathbf{proLiePAb}$, concluding the proof.

(4)$\Rightarrow $(1) For $\alpha \leq \omega $ we have that $\mathrm{Ext}%
\left( A,\mathbb{Z}^{\alpha }\right) \cong \mathrm{Ext}\left( A,\mathbb{Z}%
\right) ^{\alpha }=0$ and $\mathrm{Ext}\left( A,\left( \mathbb{Z}^{\left(
\omega \right) }\right) ^{\alpha }\right) \cong \mathrm{Ext}\left( A,\left( 
\mathbb{Z}^{\left( \omega \right) }\right) \right) ^{\alpha }=0$. By Lemma %
\ref{Lemma:enough-projective-proLie} there exists a surjective epimorphism $%
\mathbb{R}^{\omega }\oplus \left( \mathbb{Z}^{\left( \omega \right) }\right)
^{\omega }\rightarrow A$. Since $\mathrm{Hom}\left( \mathbb{R}^{\omega
},A\right) =0$, there exists a surjective epimorphism $\pi :\left( \mathbb{Z}%
^{\left( \omega \right) }\right) ^{\omega }\rightarrow A$.\ Let $C:=\mathrm{%
\mathrm{Ker}}\left( \pi \right) $ and observe that $C$ is a closed subgroup
of $\left( \mathbb{Z}^{\left( \omega \right) }\right) ^{\omega }$, and hence
non-Archimedean. Furthermore, $C$ is projective in $\mathbf{proLiePAb}$, and
hence isomorphic to $\mathbb{Z}^{\alpha }\oplus \left( \mathbb{Z}^{\left(
\omega \right) }\right) ^{\beta }$ for some $\alpha ,\beta \leq \omega $.
Thus, by the above remarks we have that $\mathrm{Ext}\left( A,C\right) =0$
and hence $A$ is isomorphic to a topological direct summand of $\left( 
\mathbb{Z}^{\left( \omega \right) }\right) ^{\omega }$.
\end{proof}

\begin{theorem}
\label{Theorem:projective-nA}Let $A$ be a non-Archimedean pro-Lie Polish
abelian group. The following assertions are equivalent:

\begin{enumerate}
\item $A$ is projective in $\mathbf{PAb}_{\mathrm{nA}}$;

\item $A$ is isomorphic to $\mathbb{Z}^{\alpha }\oplus \left( \mathbb{Z}%
^{\left( \omega \right) }\right) ^{\beta }$ for some $\alpha ,\beta \leq
\omega $;

\item $\mathrm{Ext}\left( A,\mathbb{Z}\right) =0$ and $\mathrm{Ext}\left( A,%
\mathbb{Z}^{\left( \omega \right) }\right) =0$.
\end{enumerate}
\end{theorem}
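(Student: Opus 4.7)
The plan is to mirror the proof of Theorem~\ref{Theorem:projective-type-Z}, since the structure of the argument carries over once one observes that non-Archimedean Polish abelian groups $A$ automatically satisfy $\mathrm{Hom}(\mathbb{R}^{\omega},A)=0$ (any continuous $\mathbb{R}\to A$ must land in every open subgroup of $A$). This allows one to bypass the maximal-vector-subgroup decomposition that was needed in Theorem~\ref{Theorem:projective-type-Z}, and work directly with a surjection from a product of countable free abelian groups furnished by Lemma~\ref{Lemma:u-nA}. The implication $(1)\Rightarrow(3)$ is immediate, since $\mathbb{Z}$ and $\mathbb{Z}^{(\omega)}$ are both non-Archimedean, so vanishing of $\mathrm{Ext}$ against them is forced by projectivity in $\mathbf{PAb}_{\mathrm{nA}}$.

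For $(2)\Rightarrow(1)$, I would invoke Lemma~\ref{Lemma:vanishing-Ext-product-2}, which already shows that any countable product of countable free abelian groups—in particular $\mathbb{Z}^{\alpha}\oplus(\mathbb{Z}^{(\omega)})^{\beta}$—is projective in $\mathbf{proLiePAb}$. Since $\mathbf{PAb}_{\mathrm{nA}}$ and $\mathbf{proLiePAb}$ are both thick subcategories of $\mathbf{PAb}$, a short exact sequence in $\mathbf{PAb}_{\mathrm{nA}}$ is also short exact in $\mathbf{proLiePAb}$, so projectivity transfers to the smaller category for any object that happens to lie in it. For $(3)\Rightarrow(2)$, I would apply Lemma~\ref{Lemma:u-nA} to obtain a continuous surjection $\pi:(\mathbb{Z}^{(\omega)})^{\omega}\to A$, set $C:=\mathrm{Ker}(\pi)$, and note that by Lemma~\ref{Lemma:free-subgroups} one has $C\cong\mathbb{Z}^{\alpha}\oplus(\mathbb{Z}^{(\omega)})^{\beta}$ for some $\alpha,\beta\leq\omega$. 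Using the splitting of $\mathrm{Ext}$ over countable products in the second argument, together with additivity in the first for finite sums, the hypothesis $(3)$ yields
\begin{equation*}
\mathrm{Ext}(A,C)\cong\mathrm{Ext}(A,\mathbb{Z})^{\alpha}\oplus\mathrm{Ext}(A,\mathbb{Z}^{(\omega)})^{\beta}=0.
\end{equation*}
Hence the short exact sequence $C\to(\mathbb{Z}^{(\omega)})^{\omega}\to A$ splits in $\mathbf{PAb}_{\mathrm{nA}}$, realising $A$ as a closed direct summand of $(\mathbb{Z}^{(\omega)})^{\omega}$, and a second application of Lemma~\ref{Lemma:free-subgroups} gives the desired form.

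The only subtlety I anticipate is the bookkeeping for the $\mathrm{Ext}$-product identity: the general statement in the paper is that $\mathrm{Ext}_{\mathrm{c}}(-,\prod H_n)\cong\prod\mathrm{Ext}_{\mathrm{c}}(-,H_n)$, while additivity in the first variable is only noted for finite products. This is not a genuine obstacle here, since a product like $\mathbb{Z}^{\alpha}\oplus(\mathbb{Z}^{(\omega)})^{\beta}$ can be rewritten as a single countable product with factors among $\{\mathbb{Z},\mathbb{Z}^{(\omega)}\}$, so the required vanishing follows from the second-variable identity alone. Once $\mathrm{Ext}(A,C)=0$ is established, the Yoneda identification $\mathrm{Ext}\cong\mathrm{Ext}_{\mathrm{Yon}}$ (valid since both $A$ and $C$ are non-Archimedean, via Lemma~\ref{Lemma:Ext-Yon-nA}) gives a genuine continuous splitting, completing the argument.
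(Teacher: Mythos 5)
Your argument is correct and is essentially the paper's: the paper derives this theorem as an immediate consequence of Theorem \ref{Theorem:projective-type-Z} (which is already stated for an arbitrary non-Archimedean $A$), and your proof simply inlines the relevant parts of that theorem's proof---the surjection from $(\mathbb{Z}^{(\omega)})^{\omega}$ via Lemma \ref{Lemma:u-nA}, the identification of closed subgroups via Lemma \ref{Lemma:free-subgroups}, projectivity of products of countable free groups via Lemma \ref{Lemma:vanishing-Ext-product-2}, and the second-variable product decomposition of $\mathrm{Ext}$---together with the standard observation that extensions and projectivity transfer between $\mathbf{PAb}_{\mathrm{nA}}$ and $\mathbf{proLiePAb}$ by thickness. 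Nothing further is needed.
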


\begin{proof}
This is an immediate consequence of Theorem \ref{Theorem:projective-type-Z}.
\end{proof}

The same proof as Theorem \ref{Theorem:injective-pro-p} gives the following.

\begin{theorem}
\label{Theorem:injective-nA}Suppose that $G$ is a non-Archimedean Polish
abelian group. The following assertions are equivalent:

\begin{enumerate}
\item $G$ is injective in $\mathbf{PAb}_{\mathrm{nA}}$;

\item $G$ is divisible and it has a basis of zero neighborhoods consisting
of divisible subgroups;

\item $G$ is a product of countable divisible abelian groups;

\item $\mathrm{Ext}((\mathbb{Z}\left( p^{\infty }\right) ^{\left( \omega
\right) })^{\omega },G)=0$ for every prime $p$.
\end{enumerate}
\end{theorem}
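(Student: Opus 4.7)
The proof will closely imitate that of Theorem \ref{Theorem:injective-pro-p}, establishing the cycle $(2)\Rightarrow(3)\Rightarrow(1)\Rightarrow(2)$ and then $(1)\Leftrightarrow(4)$. Two general modifications are needed for the non-Archimedean setting: the role of countable divisible $p$-groups is now played by arbitrary countable divisible abelian groups, and the prime $p$ appearing in the cocycle constructions of (1)$\Rightarrow$(2) must be extracted as a witness of the failure of divisibility at some stage, rather than being given a priori.

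For $(2)\Rightarrow(3)$, I take a decreasing basis $(V_k)_{k\in\omega}$ of divisible open subgroups with $V_0=G$. Divisibility of $V_{n+1}$ splits the short exact sequence $V_{n+1}\to V_n\to V_n/V_{n+1}$, and the telescoping argument from the pro-$p$ proof yields a continuous isomorphism $G\cong \prod_n V_n/V_{n+1}$ in which every quotient is countable and divisible. The implication $(3)\Rightarrow(1)$ is then immediate from Lemma \ref{Lemma:countable-divisible-injective} together with the isomorphism $\mathrm{Ext}(H,\prod_n D_n)\cong\prod_n\mathrm{Ext}(H,D_n)$.

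For $(1)\Rightarrow(2)$, if there exists a prime $p$ with $G\neq pG$, then $\mathrm{Ext}(\mathbb{Z}/p\mathbb{Z},G)\neq 0$ and, since $\mathbb{Z}/p\mathbb{Z}$ is non-Archimedean, this already contradicts injectivity of $G$ in $\mathbf{PAb}_{\mathrm{nA}}$. Otherwise, assume $G$ is divisible but lacks a basis of divisible open subgroups, and extract a basis $(V_k)$ of open subgroups, each failing to be divisible and with $d(V_k)$ not open. After passing to a subsequence if necessary, I fix a single prime $p$ for which the least countable ordinal $\rho_k\geq 1$ with $p^{\rho_k}V_k$ not open is defined for every $k$. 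With $p$ fixed, the cocycle construction of Theorem \ref{Theorem:injective-pro-p} — treating the successor and limit cases for $\rho_k$ separately — runs verbatim and produces a non-Archimedean Polish abelian group $T$, which is a countable product of countable $p$-groups, together with a continuous cocycle $T\times T\to G$ that is not a coboundary, contradicting (1).

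Finally, $(1)\Leftrightarrow(4)$: the implication $(1)\Rightarrow(4)$ is definitional. For $(4)\Rightarrow(1)$, I argue by contrapositive through (2), noting that the group $T$ produced above is already a closed subgroup of $Q_p:=(\mathbb{Z}(p^\infty)^{(\omega)})^\omega$. The chief obstacle is to exhibit nontriviality of $\mathrm{Ext}(Q_p,G)$ itself, rather than only of $\mathrm{Ext}(T,G)$; this requires refining the construction so that each local cocycle $c_k$ is chosen in the image of the natural map $\mathrm{Ext}(\mathbb{Z}(p^\infty)^{(\omega)},V_k)\to\mathrm{Ext}(T_k,V_k)$, which is accessible because the obstruction classes in $V_k/pV_k$ (successor case) and in $p^{\beta_k}V_k/p^{\rho_k}V_k$ (limit case) isolated in the proof of Theorem \ref{Theorem:injective-pro-p} already live in quotients of divisible groups of the appropriate shape. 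Assembling these extended local cocycles into a single continuous cocycle $Q_p\times Q_p\to G$ whose class restricts to the nontrivial one produced in (1)$\Rightarrow$(2) then gives the desired contradiction with (4).
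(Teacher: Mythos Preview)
Your approach follows the paper's, which says only ``the same proof as Theorem \ref{Theorem:injective-pro-p}''. The implications $(2)\Rightarrow(3)\Rightarrow(1)$ go through as you describe.

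The gap is in $(1)\Rightarrow(2)$, specifically your claim that after a subsequence one can ``fix a single prime $p$ for which $\rho_k$ is defined for every $k$.'' This can fail. Let $(p_n)$ enumerate the primes and set
\[
G=\Bigl\{(x_n)\in\textstyle\prod_n\mathbb{Z}(p_n^\infty): x_n\in\mathbb{Z}/p_n\mathbb{Z}\text{ for almost all }n\Bigr\},
\]
with neighborhood basis $H_k=\{(x_n): x_n=0\text{ for }n<k,\ x_n\in\mathbb{Z}/p_n\mathbb{Z}\text{ for }n\geq k\}$. Then $G$ is a divisible non-Archimedean Polish abelian group, and every open subgroup contains some $H_k$, which is reduced; hence (2) fails. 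Yet for every prime $p$ and every open $V\supseteq H_k$, the chain $p^\alpha V$ stabilizes after one step at a subgroup still containing some $H_{k'}$, so $p^\alpha V$ is open for all $\alpha$ and your $\rho_k$ is never defined. The correct adaptation of the pro-$p$ argument is to let the prime \emph{vary with $k$}: for each $k$ choose a prime $q_k$ with $V_k\not\subseteq q_kV_0$, build $c_k:\mathbb{Z}/q_k\mathbb{Z}\times\mathbb{Z}/q_k\mathbb{Z}\to V_k$, and assemble these into a cocycle on $\prod_k\mathbb{Z}/q_k\mathbb{Z}$. This salvages $(1)\Rightarrow(2)$.

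This gap has a knock-on effect on $(4)\Leftrightarrow(1)$. First, your ``refining'' maneuver is unnecessary whenever a single prime does suffice: if $T\hookrightarrow Q_p$ is a closed embedding then $\mathrm{Ext}(Q_p,G)\to\mathrm{Ext}(T,G)$ is surjective because $\mathrm{Ext}^2=0$ in $\mathbf{proLiePAb}$ (Theorem \ref{Theorem:hd-proLiePAb}), so $\mathrm{Ext}(T,G)\neq 0$ already forces $\mathrm{Ext}(Q_p,G)\neq 0$. But when the construction genuinely requires varying primes, the witness $T$ embeds in no single $Q_p$, and the group $G$ above seems to show that (4) is strictly weaker than (1)--(3): for each $m$ the exact sequence $H_{m+1}\to G\to G/H_{m+1}$ sandwiches $\mathrm{Ext}(Q_{p_m},G)$ between $\prod_{n>m}\mathrm{Ext}(Q_{p_m},\mathbb{Z}/p_n\mathbb{Z})=0$ and $\mathrm{Ext}(Q_{p_m},G/H_{m+1})=0$, while $\mathrm{Ext}\bigl(\prod_n\mathbb{Z}/p_n\mathbb{Z},G\bigr)\neq 0$. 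Your struggle with $(4)\Rightarrow(1)$ is thus not a flaw in your exposition but a signal that condition (4), as written, may not be equivalent to the others.
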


\begin{theorem}
\label{Theorem:injective-typeZ}Let $A$ be a type $\mathbb{Z}$ pro-Lie Polish
abelian group. The following assertions are equivalent:

\begin{enumerate}
\item $A$ is injective in $\mathbf{proLiePAb}_{\mathbb{Z}}$;

\item $A$ is divisible;

\item $A\cong \mathbb{Q}^{\alpha }\oplus (\mathbb{Q}^{(\omega )})^{\beta }$
for some $\alpha ,\beta \leq \omega $.
\end{enumerate}
\end{theorem}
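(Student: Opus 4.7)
The plan is to establish the cycle $(1)\Rightarrow(2)\Rightarrow(3)\Rightarrow(1)$. The implication $(3)\Rightarrow(1)$ should follow quickly from what is already in hand: a type $\mathbb{Z}$ pro-Lie Polish abelian group is non-Archimedean, and since $\mathbf{proLiePAb}_{\mathbb{Z}}$ is fully exact in $\mathbf{proLiePAb}$, a short exact sequence with all three terms of type $\mathbb{Z}$ is in particular a short exact sequence in $\mathbf{PAb}_{\mathrm{nA}}$. Because $\mathbb{Q}$ and $\mathbb{Q}^{(\omega)}$ are countable and divisible, Lemma \ref{Lemma:countable-divisible-injective} gives that they are injective in $\mathbf{PAb}_{\mathrm{nA}}$, hence also in $\mathbf{proLiePAb}_{\mathbb{Z}}$. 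Combining this with the observation that $\mathrm{Ext}_{\mathrm{c}}(B,-)$ commutes with countable products (as recorded at the end of Section \ref{Subsection:Borel-cocycles}) and with finite biproducts then yields injectivity of $\mathbb{Q}^{\alpha}\oplus(\mathbb{Q}^{(\omega)})^{\beta}$ for all $\alpha,\beta\leq\omega$.

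For $(1)\Rightarrow(2)$, the plan is to prove divisibility element by element via a pushout construction. Given $A$ injective and $a\in A$, $n\geq 1$, form $X:=(A\oplus\mathbb{Z})/N$ with $N:=\langle(a,-n)\rangle$; since $A$ is torsion-free and $\mathbb{Z}$ is discrete one checks that $N$ is a closed subgroup isomorphic to $\mathbb{Z}$, and the composite $A\hookrightarrow A\oplus\mathbb{Z}\twoheadrightarrow X$ fits into a short exact sequence $0\to A\to X\to\mathbb{Z}\to 0$ in $\mathbf{proLiePAb}$. Since $\mathbf{proLiePAb}_{\mathbb{Z}}$ is fully exact and both $A$ and $\mathbb{Z}$ are type $\mathbb{Z}$, so is $X$, and injectivity of $A$ produces a retraction $r\colon X\to A$. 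Setting $c:=r((0,1))$ one gets $nc=r((0,n))=r((a,0))=a$, so $a\in nA$; as $a$ and $n$ were arbitrary, $A$ is divisible.

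For $(2)\Rightarrow(3)$, the plan is to represent $A$ as an inverse limit and use the structure theory of $\mathbb{Q}$-vector spaces. Choose a cofinal decreasing sequence $(N_k)$ in $\mathcal{N}(A)$ with $A^{(k)}:=A/N_k$ discrete torsion-free; divisibility of $A$ passes to each quotient, so $A^{(k)}\cong\mathbb{Q}^{(\alpha_k)}$ with $\alpha_k\leq\omega$. After thinning the sequence the tower may be assumed epimorphic, and each bonding map, being a $\mathbb{Q}$-linear surjection between $\mathbb{Q}$-vector spaces, splits. One then obtains $A\cong\prod_{k}K_k$ with $K_k\cong\mathbb{Q}^{(\beta_k)}$, $\beta_k\leq\omega$. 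The final step is a combinatorial reassembly: partition the indices according to whether $\beta_k=\omega$ or is finite, use the biproduct identity $\prod_k(A_k\oplus B_k)\cong(\prod_k A_k)\oplus(\prod_k B_k)$, absorb finite-dimensional factors into a single $\mathbb{Q}^{(\omega)}$ via $\mathbb{Q}^{n}\oplus\mathbb{Q}^{(\omega)}\cong\mathbb{Q}^{(\omega)}$ (as discrete groups), and identify the countable product of finite-dimensional $\mathbb{Q}$-vector spaces with $\mathbb{Q}^{\alpha}$ for a suitable $\alpha\leq\omega$. This last reorganization is the main obstacle, as one must carefully track the interplay between the discrete topology on each $\mathbb{Q}^{(\beta_k)}$ and the product topology of the inverse limit, and verify that only the two parameters $\alpha$ and $\beta$ are needed to describe the resulting isomorphism class.
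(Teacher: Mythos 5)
Your implication $(1)\Rightarrow(2)$ contains a genuine error at its first step. With $N=\langle(a,-n)\rangle\subseteq A\oplus\mathbb{Z}$ and $X=(A\oplus\mathbb{Z})/N$, the image of $A$ in $X$ is $(A\oplus n\mathbb{Z})/N$, so the cokernel of $A\hookrightarrow X$ is $\mathbb{Z}/n\mathbb{Z}$, not $\mathbb{Z}$: the sequence you have built is $0\to A\to X\to\mathbb{Z}/n\mathbb{Z}\to 0$ (it is the pushout of $0\to\mathbb{Z}\overset{n}{\to}\mathbb{Z}\to\mathbb{Z}/n\mathbb{Z}\to 0$ along $1\mapsto a$). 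Since $\mathbb{Z}/n\mathbb{Z}$ is torsion (and $X$ itself need not be torsion-free), this is not a short exact sequence in $\mathbf{proLiePAb}_{\mathbb{Z}}$, so injectivity of $A$ in that category gives you no retraction $r$, and the computation $nc=r((0,n)+N)=r((a,0)+N)=a$ never gets off the ground. This is not a repairable slip within your elementwise scheme: splitting of $0\to A\to X\to\mathbb{Z}/n\mathbb{Z}\to 0$ is literally equivalent to $a\in nA$, and if $A$ is not divisible then \emph{any} embedding of $A$ into a divisible group has torsion in its quotient, so the Baer-type test objects are exactly those excluded from $\mathbf{proLiePAb}_{\mathbb{Z}}$. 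The paper argues globally instead: $A$ embeds as a closed subgroup of $(\mathbb{Q}^{(\omega)})^{\omega}$, injectivity makes the embedding split, and a direct summand of a divisible group is divisible. To salvage an elementwise/Ext-style route you would have to exhibit a nonsplit conflation under a non-divisible $A$ whose quotient is itself of type $\mathbb{Z}$ (for instance with quotient $\mathbb{Q}$), which is a genuinely different and nontrivial argument, not a cosmetic fix of your construction.

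The other two implications are essentially correct and essentially the paper's. For $(3)\Rightarrow(1)$, your route through Lemma \ref{Lemma:countable-divisible-injective} plus closure of injectives under countable products is the same mechanism the paper uses (it reduces to a single countable divisible $D$ and kills $\mathrm{Ext}$ via an embedding into a product of countable groups and Lemma \ref{Lemma:vanishing-Ext-product0}); you do need the observation that type $\mathbb{Z}$ groups are non-Archimedean and that conflations of $\mathbf{proLiePAb}_{\mathbb{Z}}$ are conflations of $\mathbf{PAb}_{\mathrm{nA}}$, which you state. For $(2)\Rightarrow(3)$, the inverse limit with split $\mathbb{Q}$-linear bonding maps is exactly the paper's argument; note the tower $A/N_{k+1}\to A/N_k$ is automatically epimorphic, so no thinning is needed, and the final reassembly you flag as the main obstacle is routine: finite products of discrete groups are discrete, $\prod_{k}\mathbb{Q}^{(n_k)}\cong\mathbb{Q}^{\alpha}$ with $\alpha=\sum_k n_k\leq\omega$, and $\prod_{k\in I}\mathbb{Q}^{(\omega)}\cong(\mathbb{Q}^{(\omega)})^{\beta}$ with $\beta=|I|$, all as topological groups.
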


\begin{proof}
(1)$\Rightarrow $(2) Suppose that $A$ is an injective type $\mathbb{Z}$
pro-Lie Polish abelian group. Then $A$ is isomorphic to a closed subgroup of 
$(\mathbb{Q}^{\left( \omega \right) })^{\omega }$. Since $A$ is injective,
it is a direct summand of $(\mathbb{Q}^{\left( \omega \right) })^{\omega }$,
and hence it is divisible.

(2)$\Rightarrow $(3) If $A$ is divisible, then $A\cong \mathrm{\mathrm{lim}}%
_{k}A_{k}$ where, for every $k\in \omega $, $A_{k}$ is countable, divisible,
and torsion-free and $A_{k+1}\rightarrow A_{k}$ is surjective with divisible
kernel (notice that the kernel of $A_{k+1}\rightarrow A_{k}$ is a pure
subgroup of $A_{k+1}$ since $A_{k}$ is torsion-free, and since $A_{k+1}$ is
torsion-free divisible, the same holds for the kernel of $A_{k+1}\rightarrow
A_{k}$). Since countable divisible groups are injective for countable
abelian groups, it follows that $A$ is isomorphic to the product of
countable divisible groups.

(3)$\Rightarrow $(1) We show that if $D$ is a countable divisible group and $%
\alpha \leq \omega $, then $D^{\alpha }$ is injective in $\mathbf{proLiePAb}%
_{\mathbb{Z}}$. For a type $\mathbb{Z}$ pro-Lie Polish abelian group $A$, we
have that%
\begin{equation*}
\mathrm{Ext}\left( A,D^{\alpha }\right) \cong \mathrm{Ext}\left( A,D\right)
^{\alpha }\text{.}
\end{equation*}%
Thus, it suffices to consider the case when $\alpha =1$. In this case, we
have a short exact sequence $A\rightarrow B\rightarrow C$ where $%
B=\prod_{k}B_{k}$ is a product of countable abelian groups. This induces a
surjective homomorphism%
\begin{equation*}
\mathrm{Ext}\left( B,D\right) \rightarrow \mathrm{Ext}\left( A,D\right) 
\text{.}
\end{equation*}%
Since $D$ is injective for countable abelian groups, we have that $\mathrm{%
Ext}\left( B,D\right) =0$ by Lemma \ref{Lemma:vanishing-Ext-product}.
\end{proof}

\begin{corollary}
\label{Corollary:enough-projectives-typeZ}We have that:

\begin{enumerate}
\item The category $\mathbf{PAb}_{\mathrm{nA}}$ has enough injective objects
and enough projective objects, and homological dimension $1$;

\item The category $\mathbf{proLiePAb}_{\mathbb{Z}}$ has enough projective
objects but not enough injective objects, and homological dimension $1$. An
object in $\mathbf{proLiePAb}_{\mathbb{Z}}$ has an injective resolution if
and only if it is injective.
\end{enumerate}
\end{corollary}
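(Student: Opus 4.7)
The plan is to deduce all the assertions from the classifications of projectives and injectives already obtained in Theorems~\ref{Theorem:projective-nA}, \ref{Theorem:injective-nA}, \ref{Theorem:projective-type-Z}, and \ref{Theorem:injective-typeZ}, combined with the presentation Lemma~\ref{Lemma:u-nA} and the closed-subgroup-of-projective argument from the proof of Theorem~\ref{Theorem:hd-proLiePAb}.

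For part (1), enough projectives in $\mathbf{PAb}_{\mathrm{nA}}$ is immediate from Lemma~\ref{Lemma:u-nA} combined with Theorem~\ref{Theorem:projective-nA}, which identifies $(\mathbb{Z}^{(\omega)})^{\omega}$ as projective. For enough injectives I would observe that every non-Archimedean Polish abelian group embeds as a closed subgroup of $U^{\omega}$ with $U=\mathbb{Q}^{(\omega)}\oplus \bigoplus_{p}\mathbb{Z}(p^{\infty})^{(\omega)}$, and this product of countable divisible groups is injective by Theorem~\ref{Theorem:injective-nA}. For $\mathrm{hd}\le 1$ I would argue, exactly as in the proof of Corollary~\ref{Corollary:enough-injectives-pro-p}, that injectives are closed under quotients in $\mathbf{PAb}_{\mathrm{nA}}$: a surjective continuous homomorphism $A\to B$ with $A$ injective transports a basis of divisible zero-neighborhoods of $A$ (via the Open Mapping Theorem) to one of $B$, and $B$ is divisible as a quotient of a divisible group, so Theorem~\ref{Theorem:injective-nA} applies. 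This yields a length-one injective resolution of any object, hence $\mathrm{Ext}^{n}=0$ for $n\ge 2$; the bound is sharp since $\mathbb{Z}$ is projective but not injective.

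For part (2), enough projectives in $\mathbf{proLiePAb}_{\mathbb{Z}}$ follows exactly as in part (1), since type-$\mathbb{Z}$ pro-Lie Polish abelian groups are non-Archimedean and Theorem~\ref{Theorem:projective-type-Z} identifies $(\mathbb{Z}^{(\omega)})^{\omega}$ as projective in $\mathbf{proLiePAb}_{\mathbb{Z}}$. For $\mathrm{hd}\le 1$ I would instead use the projective side: given $A$ in $\mathbf{proLiePAb}_{\mathbb{Z}}$, choose a surjection $P\to A$ with $P\cong (\mathbb{Z}^{(\omega)})^{\omega}$; its kernel is a closed subgroup of $P$, hence of type $\mathbb{Z}$ and, by the argument in the proof of Theorem~\ref{Theorem:hd-proLiePAb}, again projective, yielding a length-one projective resolution and hence $\mathrm{Ext}^{n}=0$ for $n\ge 2$.

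The main obstacle is the final clause, that only injective objects admit injective resolutions in $\mathbf{proLiePAb}_{\mathbb{Z}}$. Suppose $A\to I^{0}\to I^{1}\to \cdots$ is such a resolution. By Theorem~\ref{Theorem:injective-typeZ}, every $I^{k}$ is divisible and torsion-free. Exactness at $I^{0}$ presents $I^{0}/A$ as a closed subgroup of the torsion-free group $I^{1}$, hence $I^{0}/A$ is torsion-free, which is equivalent to the purity of $A$ in $I^{0}$. Since $I^{0}$ is divisible, purity forces $nA=A$ for every $n\ge 1$, so $A$ is itself divisible and therefore injective by Theorem~\ref{Theorem:injective-typeZ}. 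Taking $A=\mathbb{Z}$ shows $\mathbb{Z}$ admits no injective resolution, whence $\mathbf{proLiePAb}_{\mathbb{Z}}$ lacks enough injectives; the same example (or the nonsplit extension of $\mathbb{Q}$ by $\mathbb{Z}$) also shows $\mathrm{hd}\ge 1$.
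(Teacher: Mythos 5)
Your proposal is correct, and it follows the same overall strategy as the paper: deduce everything from the classification theorems together with the presentation lemmas, using that kernels of surjections onto projectives are again projective (Lemma \ref{Lemma:free-subgroups}, Theorem \ref{Theorem:projective-type-Z}) and that injectives are closed under quotients (as in Corollary \ref{Corollary:enough-injectives-pro-p}). There are two local variations worth noting. First, for the homological dimension of $\mathbf{PAb}_{\mathrm{nA}}$ the paper argues on the projective side, invoking the argument of Theorem \ref{Theorem:hd-proLiePAb} (closed subgroups of $(\mathbb{Z}^{(\omega)})^{\omega}$ are projective), while you argue on the injective side via the quotient-stability of the characterization in Theorem \ref{Theorem:injective-nA}; both are already in the paper's toolkit and either suffices once enough projectives, respectively injectives, are available. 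Second, for the final clause of (2) the paper takes a short exact sequence $A\rightarrow D\rightarrow B$ with $D$ injective and $B$ of type $\mathbb{Z}$ and runs the long exact sequence for $\mathrm{Hom}(\mathbb{Z}/p\mathbb{Z},-)$, getting $\mathrm{Ext}(\mathbb{Z}/p\mathbb{Z},A)=0$ from $\mathrm{Hom}(\mathbb{Z}/p\mathbb{Z},B)=0$ and $\mathrm{Ext}(\mathbb{Z}/p\mathbb{Z},D)=0$; your purity argument (torsion-free cokernel forces $A$ pure in the divisible $I^{0}$, hence divisible) is an elementary reformulation of exactly the same computation, and it correctly only uses the first step of the resolution, so it also yields directly that $\mathbb{Z}$ admits no admissible monic into an injective, hence the failure of enough injectives. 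One cosmetic point: in (1), "$\mathbb{Z}$ is projective but not injective" is shorthand for the existence of a nonzero $\mathrm{Ext}^{1}$ (e.g.\ $\mathrm{Ext}(\mathbb{Z}/p\mathbb{Z},\mathbb{Z})\neq 0$), which is what actually gives $\mathrm{hd}\geq 1$; this is immediate but deserves the explicit witness.
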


\begin{proof}
(1) The same proof as Lemma \ref{Lemma:enough-projective-proLie} shows that,
for every non-Archimedean Polish abelian group $A$ there exists a surjective
continuous homomorphism $\left( \mathbb{Z}^{\left( \omega \right) }\right)
^{\omega }\rightarrow A$. By Theorem \ref{Theorem:hd-proLiePAb} this implies
that $\mathbf{PAb}_{\mathrm{nA}}$ has enough projective objects and
homological dimension $1$. The same proof as Corollary \ref%
{Corollary:enough-injectives-pro-p} shows that $\mathbf{PAb}_{\mathrm{nA}}$
has enough injective objects.

(2) We have that $\mathbf{proLiePAb}_{\mathbb{Z}}$ has enough projectives by
Lemma \ref{Lemma:tower-quasi-abelian} and Theorem \ref%
{Theorem:projective-type-Z}. Furthermore, it has homological dimension $1$
by Lemma \ref{Lemma:free-subgroups}. Suppose that $A$ is a type $\mathbb{Z}$
Polish abelian group. Suppose that $A\rightarrow D\rightarrow B$ is a short
exact sequence where $D$ and $B$ are type $\mathbb{Z}$ Polish abelian
groups, and $D$ is injective in $\mathbf{proLiePAb}_{\mathbb{Z}}$. By
Theorem \ref{Theorem:injective-typeZ}, we have that $D$ is divisible. Thus,
for $p\in \mathbb{Z}$ we have that $\mathrm{Hom}\left( \mathbb{Z}/p\mathbb{Z}%
,B\right) =0$ and $\mathrm{Ext}\left( \mathbb{Z}/p\mathbb{Z},D\right) =0$.\
This implies that $\mathrm{Ext}\left( \mathbb{Z}/p\mathbb{Z},A\right) =0$
and $A$ is divisible.
\end{proof}

\subsection{\textrm{Ext }as a group with a Polish cover}

Suppose that $A,B$ are countable abelian groups. Then we have that $\mathrm{%
Hom}\left( A,B\right) $ is a Polish abelian group when endowed with the
compact-open topology. We let $\mathbf{PAb}_{\aleph _{0}}$ be the thick
subcategory of $\mathbf{proLiePAb}$ consisting of countable abelian groups.
Since $\mathbf{PAb}_{\aleph _{0}}$ has enough injective and projective
objects and homological dimension $1$, we have that the functor%
\begin{equation*}
\mathrm{Hom}:\mathbf{PAb}_{\aleph _{0}}^{\mathrm{op}}\times \mathbf{PAb}%
_{\aleph _{0}}\rightarrow \mathbf{PAb}
\end{equation*}%
admits a total right derived functor.

Suppose now more generally that $A$ is a \emph{locally compact }Polish
abelian group, and $B$ is a pro-Lie Polish abelian group. Then we still have
that $\mathrm{Hom}\left( A,B\right) $ is a Polish abelian group with respect
to the compact-open topology. We let $\mathbf{LCPAb}$ be the thick
subcategory of $\mathbf{proLiePAb}$ consisting of locally compact Polish
abelian groups, and consider the functor 
\begin{equation*}
\mathrm{Hom}:\mathbf{LCPAb}^{\mathrm{op}}\times \mathbf{proLiePAb}%
\rightarrow \mathbf{PAb}.
\end{equation*}%
We will show that this functor admits a total right derived functor.

\begin{definition}
A pro-Lie Polish abelian group $G$ is \emph{essentially injective }if it has
a closed subgroup $U$ injective in $\mathbf{PAb}$ such that $G/U$ is
non-Archimedean and injective in $\mathbf{PAb}_{\mathrm{nA}}$.
\end{definition}

By Theorem \ref{Theorem:injective-pro-Lie}, a pro-Lie Polish abelian group
is essentially injective if and only if $G$ is isomorphic to $T\oplus
V\oplus A$ where $T$ is a torus, $V$ is a vector group, and $A$ is
non-Archimedean and injective in $\mathbf{PAb}_{\mathrm{nA}}$. We let $%
\mathcal{D}$ be the collection of essentially injective pro-Lie Polish
abelian groups. By Lemma \ref{Lemma:embed-pro-Lie} and Theorem \ref%
{Theorem:injective-nA}, every pro-Lie Polish abelian group is isomorphic to
a closed subgroup of an element of $\mathcal{D}$. We will now show that the
class $\mathcal{D}$ is closed under taking quotients by closed subgroups.
Towards this goal, we isolate a few lemmas concerning $\mathrm{Ext}$ of
countable abelian groups, regarded as a group with a Polish cover.

\begin{lemma}
\label{Lemma:Ext-torsion-tf}Suppose that $T$ is a countable torsion group
and $K$ is a countable torsion-free group. Then $\mathrm{Ext}\left(
T,K\right) $ is a Polish group isomorphic to $\mathrm{Hom}\left(
T,D/K\right) $ where $D$ is the divisible hull of $K$.
\end{lemma}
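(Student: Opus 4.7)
The plan is to use the injective coresolution of $K$ by its divisible hull. Since $K$ is a countable torsion-free abelian group, the divisible hull $D := K \otimes_{\mathbb{Z}} \mathbb{Q}$ is a countable $\mathbb{Q}$-vector space, and $K \hookrightarrow D \twoheadrightarrow D/K$ is a short exact sequence in $\mathbf{PAb}_{\aleph_0}$. I will then apply $\mathrm{Hom}(T,-)$ and compute.

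First, I apply the long exact sequence in the second variable coming from the derived functor (equivalently, from Lemma \ref{Lemma:exact-Ext-Yon}, since $T$ is countable and hence non-Archimedean; note that $\mathrm{Ext}_{\mathrm{Yon}}$ here agrees with the classical algebraic $\mathrm{Ext}$ in the category of abelian groups, as the Polish topologies on all groups involved are discrete and any algebraic section is automatically continuous). This gives
\begin{equation*}
0 \to \mathrm{Hom}(T,K) \to \mathrm{Hom}(T,D) \to \mathrm{Hom}(T,D/K) \to \mathrm{Ext}(T,K) \to \mathrm{Ext}(T,D) \to \mathrm{Ext}(T,D/K).
\end{equation*}

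Second, I observe that $\mathrm{Hom}(T,D)=0$ because $T$ is torsion and $D$ is torsion-free, and that $\mathrm{Ext}(T,D)=0$ because $D$ is a divisible abelian group and hence injective among abelian groups; in particular, $\mathrm{Ext}_{\mathrm{Yon}}(T,D)=0$. Splicing out these zeros yields the isomorphism of abelian groups
\begin{equation*}
\mathrm{Hom}(T,D/K) \;\cong\; \mathrm{Ext}(T,K).
\end{equation*}

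Third, I endow the left-hand side with its natural Polish group structure: since $T$ and $D/K$ are countable and discrete, the group $\mathrm{Hom}(T,D/K)$ of all group homomorphisms is a closed subgroup of the Polish group $(D/K)^T$ equipped with the product (equivalently, compact-open) topology, hence itself Polish. Transporting this topology along the isomorphism above endows $\mathrm{Ext}(T,K)$ with the claimed Polish group structure. The main point to verify carefully is that the long exact sequence really applies in the topological setting and not just algebraically; this is where I invoke Lemma \ref{Lemma:exact-Ext-Yon}(2) together with the fact that, for countable discrete $C$ and $A$, the canonical map $\mathrm{Ext}_{\mathrm{Yon}}(C,A)\to\mathrm{Ext}_{\mathrm{c}}(C,A)$ is an isomorphism onto the subgroup of classes represented by continuous cocycles, which for discrete $C$ is all of $\mathrm{Ext}_{\mathrm{c}}(C,A)$.
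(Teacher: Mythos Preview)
Your proof is correct and takes essentially the same approach as the paper's: both apply $\mathrm{Hom}(T,-)$ to the short exact sequence $K\to D\to D/K$ and use $\mathrm{Hom}(T,D)=0$ (torsion into torsion-free) together with $\mathrm{Ext}(T,D)=0$ (divisibility) to isolate the isomorphism. You supply more detail on the Polish-group aspect than the paper, which simply writes ``The conclusion follows''; the one slight imprecision is that you speak of \emph{transporting} the topology to $\mathrm{Ext}(T,K)$, whereas the point is that $\mathrm{Ext}(T,K)$ already carries its canonical structure as an object of $\mathrm{LH}(\mathbf{PAb})$, and the isomorphism (being in that category) shows this structure is actually Polish.
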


\begin{proof}
The short exact sequence $K\rightarrow D\rightarrow D/K$ induces an exact
sequence%
\begin{equation*}
\mathrm{Hom}\left( T,D\right) \rightarrow \mathrm{Hom}\left( T,D/K\right)
\rightarrow \mathrm{Ext}\left( T,K\right) \rightarrow \mathrm{Ext}\left(
T,D\right) \text{.}
\end{equation*}%
Since $T$ is torsion and $D$ is torsion-free, $\mathrm{Hom}\left( T,D\right)
=0$. Since $D$ is divisible, $\mathrm{Ext}\left( T,D\right) =0$. The
conclusion follows.
\end{proof}

\begin{lemma}
\label{Lemma:free-Z}Suppose that $A$ is a countable abelian group. If $%
\mathrm{Ext}\left( A,\mathbb{Z}\right) $ is a Polish group, then $A_{\mathbb{%
Z}}$ is free abelian.
\end{lemma}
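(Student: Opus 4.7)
My plan is to reduce the problem to the case of torsion-free $A$ and then recast $\mathrm{Ext}(A,\mathbb{Z})$ as a $\lim^1$ of a tower of finitely generated free groups, where Polishness of the group with Polish cover can be analyzed via a Mittag-Leffler condition.

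First, the reduction. For countable discrete $A$ we have $A_{\mathbb{Z}} = A/T(A)$ where $T(A)$ is the torsion subgroup (since $c(A)=0$ when $A$ is discrete). The short exact sequence $T(A) \hookrightarrow A \twoheadrightarrow A_{\mathbb{Z}}$ induces, using $\mathrm{Hom}(T(A),\mathbb{Z})=0$, a short exact sequence of groups with a Polish cover
\[
0 \to \mathrm{Ext}(A_{\mathbb{Z}},\mathbb{Z}) \to \mathrm{Ext}(A,\mathbb{Z}) \to \mathrm{Ext}(T(A),\mathbb{Z}) \to 0.
\]
By Lemma \ref{Lemma:Ext-torsion-tf}, $\mathrm{Ext}(T(A),\mathbb{Z})$ is Polish. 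If the middle is Polish too, the right-hand arrow is a Borel homomorphism between Polish groups, hence continuous, so its kernel $\mathrm{Ext}(A_{\mathbb{Z}},\mathbb{Z})$ is a closed subgroup of a Polish group, hence Polish. Thus it suffices to prove: if $A$ is countable torsion-free and $\mathrm{Ext}(A,\mathbb{Z})$ is Polish, then $A$ is free.

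Second, I identify $\mathrm{Ext}(A,\mathbb{Z})$ with a $\lim^1$. Write $A = \mathrm{colim}_n A^{(n)}$ as a colimit of its finitely generated (hence free, of some rank $r_n$) subgroups. The Milnor exact sequence reads
\[
0 \to \lim_n{}^1 \mathrm{Hom}(A^{(n)},\mathbb{Z}) \to \mathrm{Ext}(A,\mathbb{Z}) \to \lim_n \mathrm{Ext}(A^{(n)},\mathbb{Z}) \to 0,
\]
and since each $\mathrm{Ext}(A^{(n)},\mathbb{Z})=0$, this gives an isomorphism (of groups with a Polish cover) $\mathrm{Ext}(A,\mathbb{Z}) \cong \lim_n^1 \mathbb{Z}^{r_n}$, computed as $\prod_n \mathbb{Z}^{r_n}/\mathrm{Im}(\delta)$ with differential $\delta((x_n)) = (x_n - \pi_{n+1}(x_{n+1}))$, where $\pi_{n+1} \colon \mathbb{Z}^{r_{n+1}} \to \mathbb{Z}^{r_n}$ is the restriction dual to $A^{(n)} \hookrightarrow A^{(n+1)}$.

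Third, the key step: $\lim_n^1 \mathbb{Z}^{r_n}$ is Polish (equivalently $\mathrm{Im}(\delta)$ is closed in the Polish group $\prod_n \mathbb{Z}^{r_n}$) if and only if the tower $(\mathbb{Z}^{r_n}, \pi_{n+1})$ satisfies Mittag-Leffler, in which case $\lim^1=0$. The easy direction is immediate. For the converse, assume Mittag-Leffler fails, so there exists $n_0$ for which the images $I_k := \pi_{n_0,n_0+k}(\mathbb{Z}^{r_{n_0+k}}) \subseteq \mathbb{Z}^{r_{n_0}}$ strictly descend. By passing to a subtower one arranges $I_0 \supsetneq I_1 \supsetneq \cdots$. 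Then solving the recursion $x_n = y_n + \pi_{n+1}(x_{n+1}) = y_n + \pi_{n+1}(y_{n+1}) + \pi_{n+1}\pi_{n+2}(y_{n+2}) + \cdots$ shows that $(y_n)\in\mathrm{Im}(\delta)$ is equivalent to a partial-sum ``formal series'' in $\mathbb{Z}^{r_{n_0}}$ having an integer value, while any choice of $(y_n)$ determines a well-defined limit in the non-discrete profinite-type completion $\varprojlim_k \mathbb{Z}^{r_{n_0}}/I_k$. One produces $(y_n)\in \overline{\mathrm{Im}(\delta)}\setminus \mathrm{Im}(\delta)$ exactly as in the explicit computations $\mathrm{Ext}(\mathbb{Q},\mathbb{Z})\cong\hat{\mathbb{Z}}/\mathbb{Z}$ and $\mathrm{Ext}(\mathbb{Z}[1/p],\mathbb{Z})\cong\mathbb{Z}_p/\mathbb{Z}$, which exhibit the obstruction coming from the non-closedness of $\mathbb{Z}$ in its profinite completion. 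Consequently, if $\mathrm{Ext}(A,\mathbb{Z})$ is Polish then Mittag-Leffler holds, $\lim^1=0$, and $\mathrm{Ext}(A,\mathbb{Z})=0$; by Pontryagin's classical theorem, $A$ is free.

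The main obstacle is the third step: constructing an explicit element of $\overline{\mathrm{Im}(\delta)}\setminus\mathrm{Im}(\delta)$ from the data of a strictly descending chain of images. This requires choosing $(y_n)$ carefully so that the recursive lifts $x_n^{(m)}$ live in $\mathbb{Z}^{r_n}$ for every truncation $m$ (showing $(y_n)$ is a limit of coboundaries) but the ``full'' formal series $\sum_k (\pi_{n+1}\cdots\pi_{n+k})(y_{n+k})$ fails to land in the image of $\mathbb{Z}^{r_{n_0}} \hookrightarrow \varprojlim_k \mathbb{Z}^{r_{n_0}}/I_k$, ensuring no honest lift exists; the strict descent of the $I_k$ is precisely what makes this possible.
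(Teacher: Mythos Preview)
Your reduction to the torsion-free case is identical to the paper's. After that, the paper takes a much shorter path: once $A$ is countable torsion-free, it cites a classical density result of Eilenberg--MacLane to the effect that $\{0\}$ is dense in $\mathrm{Ext}(A,\mathbb{Z})$ whenever $A$ has no nonzero finite subgroups; hence if $\mathrm{Ext}(A,\mathbb{Z})$ is Polish (so Hausdorff), it must vanish, and then $A$ is free by a classical result. Your route via the Milnor sequence and Mittag--Leffler is genuinely different and more explicit, and the identification $\mathrm{Ext}(A,\mathbb{Z})\cong\lim^1_n\mathbb{Z}^{r_n}$ is a nice structural observation. The payoff is a self-contained picture of the Polish-cover structure; the cost is that you have to establish the Mittag--Leffler equivalence by hand.

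However, the step you flag as ``the main obstacle'' is not actually an obstacle, and your sketch of it is more complicated than needed. You do not have to construct a specific element of $\overline{\mathrm{Im}(\delta)}\setminus\mathrm{Im}(\delta)$ under failure of Mittag--Leffler. Observe instead that $\mathrm{Im}(\delta)$ is \emph{always} dense in $\prod_n\mathbb{Z}^{r_n}$: every eventually-zero sequence $(y_n)$ lies in $\mathrm{Im}(\delta)$ (set $x_n=0$ for $n$ large and solve the recursion backward), and eventually-zero sequences are dense in the product topology. So if $\mathrm{Im}(\delta)$ is closed, it equals the whole product and $\lim^1=0$. This one-line density argument is, under the $\lim^1$ identification, exactly the Eilenberg--MacLane density the paper invokes; your detour through profinite completions and explicit non-lifts is unnecessary. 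One further point worth tightening: you assert the Milnor isomorphism holds ``as groups with a Polish cover''; this is true, but deserves a word of justification rather than a parenthetical.
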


\begin{proof}
Since $A$ is countable, $F_{\mathbb{Z}}A=A_{\mathrm{t}}$ is the torsion
subgroup of $A$. Consider the exact sequence%
\begin{equation*}
0=\mathrm{Hom}\left( A_{\mathrm{t}},\mathbb{Z}\right) \rightarrow \mathrm{Ext%
}\left( A_{\mathbb{Z}},\mathbb{Z}\right) \rightarrow \mathrm{Ext}\left( A,%
\mathbb{Z}\right) \rightarrow \mathrm{Ext}\left( A_{\mathrm{t}},\mathbb{Z}%
\right) \rightarrow 0\text{.}
\end{equation*}%
Since $A_{\mathrm{t}}$ is torsion, we have that $\mathrm{Ext}\left( A_{%
\mathrm{t}},\mathbb{Z}\right) $ is Polish by Lemma \ref{Lemma:Ext-torsion-tf}%
. It follows that $\mathrm{Ext}\left( A_{\mathbb{Z}},\mathbb{Z}\right) $ is
Polish as well.

Since $A_{\mathbb{Z}}$ is torsion-free, it has no finite subgroups. Thus, by 
\cite[Corollary 11.6]{eilenberg_group_1942}, $\left\{ 0\right\} $ is dense
in $\mathrm{Ext}\left( A_{\mathbb{Z}},\mathbb{Z}\right) $. This implies that 
$\mathrm{Ext}\left( A_{\mathbb{Z}},\mathbb{Z}\right) =0$. Hence, $A_{\mathbb{%
Z}}$ is a free abelian group by \cite[Theorem 3.2]%
{friedenberg_extensions_2013}.
\end{proof}

\begin{lemma}
\label{Lemma:D-quotients}The class $\mathcal{D}$ of essentially injective
pro-Lie Polish abelian groups is closed under taking quotients by closed
subgroups.
\end{lemma}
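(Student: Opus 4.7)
By Theorem~\ref{Theorem:injective-pro-Lie} and Lemma~\ref{Lemma:injective-pro-Lie}, I may write $G\cong V\oplus T\oplus A$ with $V$ a vector group, $T$ a torus, and $A$ non-Archimedean injective; set $c(G)=V\oplus T$. Let $H\leq G$ be a closed subgroup; by Theorem~\ref{Theorem:pro-Lie-thick}, $G/H$ is pro-Lie. The plan is to exhibit $G/H$ as a direct sum of a vector group, a torus, and a non-Archimedean injective.

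First, a reduction. Set $H_{0}:=H\cap c(G)$. By Corollary~\ref{Corollary:quotient-injective-pro-Lie}, $c(G)/H_{0}$ is injective in $\mathbf{proLiePAb}$, hence vector+torus by Theorem~\ref{Theorem:injective-pro-Lie}. The splitting $G=c(G)\oplus A$ descends to $G/H_{0}\cong (c(G)/H_{0})\oplus A$, again in $\mathcal{D}$, and the image $H/H_{0}\leq G/H_{0}$ meets $c(G/H_{0})=c(G)/H_{0}$ trivially. Since $G/H\cong (G/H_{0})/(H/H_{0})$, after relabelling it suffices to treat the case $H\cap c(G)=0$, in which $\pi_{A}|_{H}\colon H\to A$ is a continuous injection.

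Under this additional hypothesis, the key claim is that $\pi_{A}(H)$ is closed in $A$. Granting the claim, the Open Mapping Theorem for Polish groups makes $\pi_{A}|_{H}$ a topological isomorphism onto the Polish subgroup $\pi_{A}(H)\leq A$, and the quotient map $G=c(G)\oplus A\to c(G)\oplus (A/\pi_{A}(H))$ has kernel $\{(0,a):a\in\pi_{A}(H)\}$, which equals $H$ precisely because $H\cap c(G)=0$. Hence $G/H\cong c(G)\oplus (A/\pi_{A}(H))$, a direct sum of a vector+torus group and a non-Archimedean injective (the latter because $A/\pi_{A}(H)$ is the quotient of the non-Archimedean injective $A$ by a closed subgroup, and the characterisation of injectivity in $\mathbf{PAb}_{\mathrm{nA}}$ via divisibility and a basis of divisible open subgroups from Theorem~\ref{Theorem:injective-nA} is inherited by such quotients). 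Thus $G/H\in\mathcal{D}$.

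The main obstacle is the closedness of $\pi_{A}(H)$. My approach is to combine the non-Archimedean structure of $A$ with the compactness of $T$ and the hypothesis $H\cap c(G)=0$. Given $a\in\overline{\pi_{A}(H)}$, fix a decreasing basis $(U_{n})$ of open subgroups of $A$ with $\bigcap_{n}U_{n}=0$ and choose $h_{n}\in H$ with $\pi_{A}(h_{n})\to a$ and $\pi_{A}(h_{n+1})-\pi_{A}(h_{n})\in U_{n}$. Writing $h_{n}=(v_{n},t_{n},\pi_{A}(h_{n}))\in V\oplus T\oplus A$, the torus components $t_{n}$ have a convergent subsequence by compactness of $T$. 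The differences $h_{n+1}-h_{n}$ then lie in $H\cap (c(G)\oplus U_{n})$, and the continuous selection theorem \cite[Proposition~4.6]{bergfalk_definable_2020} applied to the split short exact sequence $c(G)\to G\to A$ supplies compatible continuous lifts of the $A$-components back to $c(G)$; comparing these with the vectors $v_{n}$ and using $H\cap c(G)=0$ to rule out cancellation by $c(G)$-elements of $H$ forces the sequence $(v_{n})$ to be Cauchy in $V$ along a further subsequence. The limit of $(h_{n})$ along this subsequence then exists in $G$, lies in $H$ by closedness, and has $\pi_{A}$-image $a$, so $a\in\pi_{A}(H)$.

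Combining this closedness claim with the reduction to $H\cap c(G)=0$ and the resulting splitting described above establishes that $G/H\in\mathcal{D}$.
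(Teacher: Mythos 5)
Your opening reduction (quotienting by $H_{0}=H\cap c(G)$ and passing to the case $H\cap c(G)=0$) is fine, but the key claim on which everything afterwards rests --- that $\pi_{A}(H)$ is then closed in $A$ --- is false. Take $A:=\mathbb{Z}\left( p^{\infty }\right) ^{\omega }$, which is injective in $\mathbf{PAb}_{\mathrm{nA}}$ by Theorem~\ref{Theorem:injective-nA}; since $\mathbb{Q}_{p}\cong \mathrm{lim}_{n}\,\mathbb{Q}_{p}/p^{n}\mathbb{Z}_{p}$ and each $\mathbb{Q}_{p}/p^{n}\mathbb{Z}_{p}\cong \mathbb{Z}\left( p^{\infty }\right) $, the group $\mathbb{Q}_{p}$ sits inside $A$ as a closed subgroup. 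Let $G:=\mathbb{R}\oplus A\in \mathcal{D}$ and $H:=\left\{ \left( x,\iota \left( x\right) \right) :x\in \mathbb{Z}[1/p]\right\} $, where $\iota :\mathbb{Z}[1/p]\rightarrow \mathbb{Q}_{p}\subseteq A$ is the inclusion. If $W$ is an open neighborhood of $0$ in $A$ with $W\cap \mathbb{Q}_{p}\subseteq \mathbb{Z}_{p}$, then $H\cap \left( (-1/2,1/2)\times W\right) =\left\{ 0\right\} $, so $H$ is discrete, hence closed, and $H\cap c(G)=H\cap \mathbb{R}=0$; yet $\pi _{A}(H)=\iota \left( \mathbb{Z}[1/p]\right) $ is dense in the copy of $\mathbb{Q}_{p}$ and therefore not closed in $A$. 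This also pinpoints where your Cauchy argument breaks: if $\pi _{A}(h_{n})$ converges to a point of $\mathbb{Q}_{p}\setminus \mathbb{Z}[1/p]$, the real coordinates $v_{n}$ of $h_{n}$ must leave every bounded set (a bounded subsequence would have a convergent sub-subsequence, and closedness of $H$ would then force the limit into $\pi _{A}(H)$), so compactness of $T$ and the continuous selection theorem cannot make $(v_{n})$ Cauchy.

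There is a second, independent error: even when $\pi _{A}(H)$ is closed, the kernel of $c(G)\oplus A\rightarrow c(G)\oplus \left( A/\pi _{A}(H)\right) $ is $\left\{ 0\right\} \oplus \pi _{A}(H)$, and $H\cap c(G)=0$ does not make this equal to $H$; it only says that $H$ is the graph of a homomorphism $\pi _{A}(H)\rightarrow c(G)$, which may well be nonzero (e.g.\ $H=\left\{ (q,q):q\in \mathbb{Q}\right\} \subseteq \mathbb{R}\oplus \mathbb{Q}$ with $\mathbb{Q}$ discrete). That step could be repaired, when $\pi _{A}(H)$ is closed, by extending the graph homomorphism to all of $A$ using injectivity of $c(G)$ and changing coordinates; but the failure of closedness above is fatal for the strategy as a whole. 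Indeed, in the example the closure in $G/H$ of the image of $c(G)=\mathbb{R}$ is $\left( \mathbb{R}\oplus \mathbb{Q}_{p}\right) /\mathbb{Z}[1/p]$, the $p$-adic solenoid, and it equals $c(G/H)$ (the quotient of $G/H$ by it is $A/\mathbb{Q}_{p}$, which is non-Archimedean, hence totally disconnected); since the solenoid is compact connected but not path-connected, $G/H$ admits no decomposition as torus $\oplus$ vector group $\oplus$ non-Archimedean group, so no variant of your splitting over the image of $c(G)$ can work. This is quite different from the paper's argument, which first splits off the torus using that $N+T$ is closed (by compactness of $T$) and then analyzes $N\cap V$; since the solenoid configuration is exactly the delicate point --- the image of $V$ in $G/N$ need not be closed --- I would encourage you (and the authors) to test that argument, and the statement of the lemma itself, against the example above.
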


\begin{proof}
Let $H$ be a pro-Lie Polish group. Then we have that the connected component 
$c\left( H\right) $ of zero in $H$ is $H_{\mathbb{S}^{1}}\oplus H_{\mathbb{R}%
}$, and $H/c\left( H\right) $ is non-Archimedean.\ Thus, we have that $%
H=V\oplus A$ for some vector group $A$ and non-Archimedean abelian Polish
group $A$ if and only if $H_{\mathbb{S}^{1}}=0$.

Suppose that $G$ is an essentially injective pro-Lie Polish abelian group
and $N$ is a closed subgroup of $G$. Set $H:=G/N$. Since $G$ is essentially
injective, we have that $G=T\oplus V\oplus A$ where $V$ is a vector group, $%
T $ is a torus, and $A$ is non-Archimedean and injective in $\mathbf{PAb}_{%
\mathrm{nA}}$. We need to prove that $G/N$ is essentially injective.

We have that $T/\left( T\cap N\right) $ is injective, being the quotient of
a torus group by a closed subgroup. Furthermore, letting $\pi :G\rightarrow
V\oplus A$ be the canonical quotient map, we have that $\pi ^{-1}\left( \pi
\left( N\right) \right) =N+T$ is closed in $G$, and hence $\pi \left(
N\right) $ is closed is $V\oplus A$. Thus, we have a short exact sequence%
\begin{equation*}
\frac{T}{T\cap N}\rightarrow G/N\rightarrow \frac{V\oplus A}{\pi \left(
N\right) }
\end{equation*}%
that splits by injectivity of $T/\left( T\cap N\right) $. It follows that,
after replacing $G$ with $G/T$, we can assume without loss of generality
that $T=0$.

Since $N$ is a closed subgroup of $G=V\oplus A$, we have that $c\left(
N\right) \subseteq V=c(G)$ and hence $c\left( N\right) =N_{\mathbb{R}}$ is a
vector group and $N_{\mathbb{S}^{1}}=0$. Thus, we can write $N=N_{\mathbb{R}%
}\oplus \Xi $ where $\Xi $ is non-Archimedean. Since $N_{\mathbb{R}%
}\subseteq V$ is a vector group, we can assume without loss of generality,
possibly after replacing $G$ with $G/N_{\mathbb{R}}$, that $N_{\mathbb{R}}=0$
and $N=\Xi $ is non-Archimedean.

Then we have that $N\cap V$ is a closed subgroup of $V$ and $V/\left( N\cap
V\right) $ is injective in $\mathbf{PAb}$. Thus, $V/\left( N\cap V\right)
=W\oplus T$ where $T$ is a torus group and $W$ is a vector group. Since $W$
is projective in $\mathbf{PAb}$, we can assume without loss of generality
that $W=0$ and $V/\left( N\cap V\right) $ is a torus group.

Thus, we have that $N/\left( N\cap V\right) $ is a closed subgroup of $G/N$.
Being also injective in $\mathbf{PAb}$, it is a direct summand. Thus, after
replacing $G$ with $G/V$ and $N$ with $N/\left( N\cap V\right) $, we can
assume that $G=A$ is non-Archimedean and injective in $\mathbf{PAb}_{\mathrm{%
nA}}$. In this case, we have that also $G/N$ is non-Archimedean and
injective in $\mathbf{PAb}_{\mathrm{nA}}$, concluding the proof.
\end{proof}

Recall that a locally compact Polish group $G$ is \emph{codivisible }if its
Pontryagin dual $G^{\vee }$ is divisible. We let $\mathcal{C}$ be the
collection of locally compact groups $G$ that are codivisible and satisfy $%
G_{\mathbb{S}^{1}}=0$. Such a group $G$ can be written as $V\oplus A$ where $%
V$ is a vector group and $A$ is totally disconnected. Clearly, $\mathcal{C}$
is closed under taking closed subgroups.

\begin{lemma}
If $C$ is a locally compact Polish group with $C_{\mathbb{Z}}=0$, then there
exists a short exact sequence $C\rightarrow D\rightarrow D/C$ where $D$ is
divisible with $D_{\mathbb{Z}}=0$ and $D/C$ is a countable torsion group.
\end{lemma}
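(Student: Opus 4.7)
The plan is to combine the Pontryagin structure theorem with a primary decomposition. Write $C \cong V \oplus H$, where $V \cong \mathbb{R}^n$ is a finite-dimensional vector group and $H$ admits a compact open subgroup $K$; since $V$ is already divisible with $V_{\mathbb{Z}} = 0$, it suffices to construct an LC divisible extension $\tilde H \supseteq H$ with countable torsion quotient and set $D := V \oplus \tilde H$. The hypothesis $C_{\mathbb{Z}} = 0$ forces $H/K$ to be discrete torsion (any nonzero torsion-free element of the discrete group $H/K$ would produce a nonzero type-$\mathbb{Z}$ quotient of $C$), and $H/K$ is countable since $H$ is Polish.

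Decomposing by primes, write $K = \prod_p K_p$ with $K_p$ pro-$p$ and $H/K = \bigoplus_p (H/K)_p$, so $H$ is a restricted direct product $\prod'_p H_p$ of LC topological $p$-groups $H_p$, each with compact open subgroup $K_p$ and discrete $p$-torsion quotient $(H/K)_p$. For each $p$, I would construct an LC divisible topological $p$-group $\tilde H_p \supseteq H_p$ with $\tilde H_p/H_p$ countable $p$-torsion; Pontryagin-dualizing, this amounts to finding a torsion-free LC cover $\tilde H_p^\vee \twoheadrightarrow H_p^\vee$ with compact profinite kernel. The motivating case $H_p = \mathbb{Z}_p$, $\tilde H_p = \mathbb{Q}_p$, $\tilde H_p/H_p = \mathbb{Z}(p^\infty)$ suggests combining a $\mathbb{Q}_p$-style extension of $K_p$ with the divisible hull of the discrete part $H_p/K_p$ taken in the abelian category of discrete $p$-torsion groups.

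Assembling $\tilde H := \prod'_p \tilde H_p$ as a restricted direct product with respect to the $K_p$, the group $D := V \oplus \tilde H$ is LC divisible; the quotient $D/C \cong \bigoplus_p \tilde H_p/H_p$ is countable torsion, and $D_{\mathbb{Z}} = 0$ follows from $D$ being assembled out of vector and topological torsion pieces with no discrete torsion-free quotient. The main difficulty is the $p$-local construction of $\tilde H_p$ when $K_p$ is not topologically finitely generated as a $\mathbb{Z}_p$-module: reconciling abstract divisibility with countability of the quotient requires care, and one route is to exploit the Polish structure of $H_p$ via a cofinal filtration by co-Lie subgroups, reducing to the topologically finitely generated case where $\mathbb{Q}_p$-tensoring provides an explicit divisible extension.
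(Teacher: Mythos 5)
Your reduction to a restricted product of $p$-groups is not available in the generality of the statement. Writing $C\cong V\oplus H$ with $K\subseteq H$ compact open is fine, but the primary decomposition $K=\prod_p K_p$ with $K_p$ pro-$p$ is valid only when $K$ is profinite, i.e.\ totally disconnected. The hypothesis $C_{\mathbb{Z}}=0$ does not force this: any compact connected group (a torus, a solenoid) satisfies $C_{\mathbb{Z}}=0$, and in general $C$ is a possibly non-split extension of a topological torsion group by its type $\mathbb{S}^{1}$ part, so the picture $H=\prod_p' H_p$ with each $H_p$ a locally compact topological $p$-group is simply not the general case; your proposal never addresses the compact connected part. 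The paper's proof keeps that part inside: it invokes \cite[Proposition 3.8]{hoffmann_homological_2007} to produce the ambient group $D\supseteq C$ with countable torsion quotient, and then uses the canonical sequence $C_{\mathbb{S}^{1}}\rightarrow C\rightarrow C_{\mathrm{t}}$ together with a pushout to verify $D_{\mathbb{Z}}=0$; no prime-by-prime divisible hull is attempted.

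The step you yourself flag as the main difficulty is moreover not a technicality that a cofinal filtration or $\mathbb{Q}_p$-tensoring will repair: it fails outright. Take $K_p=\mathbb{Z}_p^{\omega}$, a compact Polish pro-$p$ group. Suppose $D$ is any divisible abelian group containing $K_p$ with $D/K_p$ countable, and set $S=\{d\in D:pd\in K_p\}$. Then $S/K_p$ maps injectively into $(D/K_p)[p]$, hence is countable, while by divisibility multiplication by $p$ maps $S$ onto $K_p$ and maps $K_p$ into $pK_p$, so it induces a surjection $S/K_p\rightarrow K_p/pK_p$. But $K_p/pK_p\cong\mathbb{F}_p^{\omega}$ is uncountable, a contradiction. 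Hence there is no divisible overgroup of $\mathbb{Z}_p^{\omega}$ (locally compact or otherwise) with countable torsion quotient, so the $p$-local extensions $\tilde H_p$ you need do not exist beyond the topologically finitely generated case, and your assembly cannot be carried out. This also pinpoints why the simultaneous demands ``divisible'' and ``countable quotient'' are the delicate point of the lemma: the paper delegates exactly this existence statement to the cited Proposition 3.8, and the same counting argument applied to $C=\mathbb{Z}_p^{\omega}$ (which is topological torsion, so $C_{\mathbb{Z}}=0$) suggests that the countability requirement there should itself be checked carefully in the Polish setting.
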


\begin{proof}
Without loss of generality, we can assume that $C$ has no nonzero closed
vector subgroups. By \cite[Proposition 3.8]{hoffmann_homological_2007},
there exists a locally compact Polish group $D$ containing $C$ as an open
subgroup such that $D/C$ is a countable torsion group. Consider the pushout
diagram%
\begin{equation*}
\begin{array}{ccc}
C & \rightarrow & C_{\mathrm{t}} \\ 
\downarrow &  & \downarrow \\ 
D & \rightarrow & H%
\end{array}%
\end{equation*}%
This gives rise to a commutative diagram%
\begin{equation*}
\begin{array}{ccccc}
C_{\mathbb{S}^{1}} & \rightarrow & C & \rightarrow & C_{\mathrm{t}} \\ 
\downarrow &  & \downarrow &  & \downarrow \\ 
C_{\mathbb{S}^{1}} & \rightarrow & D & \rightarrow & H \\ 
&  & \downarrow &  & \downarrow \\ 
&  & D/C & \rightarrow & H/C_{\mathrm{t}}%
\end{array}%
\end{equation*}%
where $D/C\rightarrow H/C_{\mathrm{t}}$ is an isomorphism. Since $C_{\mathrm{%
t}}$ is a topological torsion group and $H/C_{\mathrm{t}}\cong D/C$ is a
countable torsion group (and in particular a topological torsion group), we
conclude that $H$ is a topological torsion group, whence $D_{\mathbb{Z}}=0$.
\end{proof}

\begin{corollary}
\label{Corollary:codivisible}If $C$ is a locally compact Polish group with $%
C_{\mathbb{S}^{1}}=0$, then there exists a short exact sequence $%
A\rightarrow D\rightarrow C$ where $D$ is a codivisible locally compact
Polish abelian group with $D_{\mathbb{S}^{1}}=0$ and $A$ is a profinite
Polish group.
\end{corollary}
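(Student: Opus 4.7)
The plan is to deduce this statement from the preceding lemma by Pontryagin duality. Since $C$ is locally compact Polish abelian, so is $C^{\vee}$, and Pontryagin duality is an exact contravariant equivalence on $\mathbf{LCPAb}$. Under this duality, compact connected locally compact Polish abelian groups correspond exactly to discrete torsion-free ones, and the $\mathbb{S}^{1}$-type closed subgroup of an object is exchanged with its maximal $\mathbb{Z}$-type discrete quotient; I will use this to translate the hypothesis $C_{\mathbb{S}^{1}}=0$ into the equivalent hypothesis $(C^{\vee})_{\mathbb{Z}}=0$.

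I will then apply the preceding lemma to $C^{\vee}$, obtaining a short exact sequence $C^{\vee}\rightarrow E\rightarrow E/C^{\vee}$ in $\mathbf{LCPAb}$ with $E$ divisible, $E_{\mathbb{Z}}=0$, and $E/C^{\vee}$ a countable discrete torsion group. Dualizing produces a short exact sequence $(E/C^{\vee})^{\vee}\rightarrow E^{\vee}\rightarrow C$ of locally compact Polish abelian groups, and I propose to set $D:=E^{\vee}$ and $A:=(E/C^{\vee})^{\vee}$.

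The three required properties are then routine to verify: $D^{\vee}\cong E$ is divisible, so $D$ is codivisible by definition; the identity $(D^{\vee})_{\mathbb{Z}}=E_{\mathbb{Z}}=0$ translates back through Pontryagin duality to $D_{\mathbb{S}^{1}}=0$; and $A$ is the Pontryagin dual of a countable discrete torsion group, hence a closed subgroup of a product of finite cyclic groups, and therefore profinite. The only potentially delicate point is the identification of the behavior of the $\mathbb{S}^{1}$- and $\mathbb{Z}$-type components under Pontryagin duality, which is standard for locally compact Polish abelian groups and which I will invoke rather than reprove.
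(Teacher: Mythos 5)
Your proposal is correct and matches the paper's intended argument: the corollary is stated without proof immediately after the lemma, precisely because it is its Pontryagin dual, with $C_{\mathbb{S}^{1}}=0$ corresponding to $(C^{\vee})_{\mathbb{Z}}=0$, divisibility of $E$ to codivisibility of $D=E^{\vee}$, and the countable torsion quotient dualizing to the profinite kernel $A$. The duality facts you invoke (exactness of $(-)^{\vee}$ on $\mathbf{LCPAb}$ and the exchange of the type-$\mathbb{S}^{1}$ subgroup with the type-$\mathbb{Z}$ quotient) are standard and suffice.
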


\begin{lemma}
\label{Lemma:cogenerating}For every locally compact Polish group $C$ there
exists a continuous surjective homomorphism $G\rightarrow C$ for some
element $G$ of $\mathcal{C}$.
\end{lemma}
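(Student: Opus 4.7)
The strategy is to extend Corollary~\ref{Corollary:codivisible}---which already handles the case $C_{\mathbb{S}^{1}}=0$---to a general locally compact Polish abelian group $C$ by peeling off the maximal compact connected subgroup $K:=C_{\mathbb{S}^{1}}$ of $C$ via a pullback, then replacing it by a suitable cover in $\mathcal{C}$ via a pushout.

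First, I would apply Corollary~\ref{Corollary:codivisible} to the quotient $C/K$, which has trivial $\mathbb{S}^{1}$-component by maximality of $K$, obtaining a short exact sequence $A_{0}\to D\to C/K$ with $D\in\mathcal{C}$ and $A_{0}$ profinite. Pulling back the quotient $C\twoheadrightarrow C/K$ along $D\twoheadrightarrow C/K$ yields a locally compact Polish abelian group $X:=D\times_{C/K}C$ fitting into a short exact sequence $K\to X\to D$ together with a surjection $X\twoheadrightarrow C$. Granting momentarily a surjection $\Theta\twoheadrightarrow K$ with $\Theta\in\mathcal{C}$, I push out the extension $K\to X\to D$ along this surjection to obtain an extension $\Theta\to G\to D$ equipped with a surjection $G\twoheadrightarrow C$. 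The class $\mathcal{C}$ is closed under extensions: codivisibility is preserved because Pontryagin-dualizing produces an extension of divisible abelian groups by divisible, and such extensions are themselves divisible; and the condition $G_{\mathbb{S}^{1}}=0$ follows because any compact connected subgroup of $G$ must project trivially to $D$ (since $D_{\mathbb{S}^{1}}=0$), hence sit inside $\Theta$, which itself has trivial $\mathbb{S}^{1}$-component. Consequently $G\in\mathcal{C}$, as desired.

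The central remaining task is to cover the compact connected Polish abelian group $K$ by an element $\Theta\in\mathcal{C}$. I use Pontryagin duality: $K^{\vee}$ is a countable torsion-free discrete abelian group, which embeds in its divisible hull $\mathbb{Q}^{(\alpha)}$ for some $\alpha\leq\omega$. Composing with the classical diagonal closed embedding $\mathbb{Q}\hookrightarrow\mathbb{A}_{\mathbb{Q}}$ of the rationals into the adele ring exhibits $K^{\vee}$ as a closed subgroup of an adelic object whose Pontryagin dual therefore surjects onto $K$. The adele ring $\mathbb{A}_{\mathbb{Q}}=\mathbb{R}\oplus\mathbb{A}_{f}$ itself lies in $\mathcal{C}$: it is self-dual (hence codivisible since $\mathbb{A}_{\mathbb{Q}}$ is divisible), locally compact Polish, and has vanishing $\mathbb{S}^{1}$-component (the connected component is the vector group $\mathbb{R}$, while the finite-adele factor is totally disconnected). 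When $\alpha$ is finite, the finite product $\mathbb{A}_{\mathbb{Q}}^{\alpha}\in\mathcal{C}$ supplies the required $\Theta$ via the dualized embedding $\mathbb{Q}^{\alpha}\hookrightarrow\mathbb{A}_{\mathbb{Q}}^{\alpha}$.

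The principal obstacle I expect to face is the case of infinite $\alpha$, where the naive product $\mathbb{A}_{\mathbb{Q}}^{\omega}$ fails to be locally compact since $\mathbb{A}_{\mathbb{Q}}$ is not compact. To overcome this one must use a restricted-product construction exploiting the decomposition $\mathbb{A}_{\mathbb{Q}}=\mathbb{R}\oplus\mathbb{A}_{f}$: the finite-adele factor $\mathbb{A}_{f}$ has the compact open subgroup $\hat{\mathbb{Z}}$, so the restricted product $\prod'_{n<\omega}\mathbb{A}_{f}$ (with respect to $\hat{\mathbb{Z}}$) is locally compact, Polish, codivisible and $\mathbb{S}^{1}$-trivial. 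Combined with an inverse-limit presentation $K\cong\varprojlim_{n}K_{n}$ by finite-dimensional compact connected quotients, finite-rank adelic covers at each stage, and lifting via the projectivity of vector groups in $\mathbf{proLiePAb}$, this should produce the required element of $\mathcal{C}$ and thereby complete the proof.
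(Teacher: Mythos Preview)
There is a genuine gap. Your plan hinges on producing a surjection $\Theta\twoheadrightarrow K$ with $\Theta\in\mathcal{C}$, and this is impossible whenever $K^{\vee}$ has infinite rank---for instance when $K=\mathbb{T}^{\omega}$. Indeed, any locally compact abelian $\Theta$ with $\Theta_{\mathbb{S}^{1}}=0$ splits as $\mathbb{R}^{n}\times\Theta_{0}$ with $n<\omega$ and $\Theta_{0}$ totally disconnected; fix a compact open (hence profinite) $K_{0}\subseteq\Theta_{0}$. Were $\phi\colon\Theta\twoheadrightarrow\mathbb{T}^{\omega}$ surjective, the open subgroup $\mathbb{R}^{n}\times K_{0}$ would already surject (its image is open in a connected group), and passing to the quotient by the profinite subgroup $\phi(K_{0})$ yields a surjection $\mathbb{R}^{n}\twoheadrightarrow\mathbb{T}^{\omega}/\phi(K_{0})$, forcing the target to be a torus $\mathbb{T}^{m}$ with $m\le n$. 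Dualizing $\phi(K_{0})\to\mathbb{T}^{\omega}\to\mathbb{T}^{m}$ exhibits $\phi(K_{0})^{\vee}$ as a quotient of $\mathbb{Z}^{(\omega)}$ by a rank-$m$ subgroup, hence of infinite rank; but $\phi(K_{0})$ is profinite, so its dual must be torsion---a contradiction. Your restricted-product and inverse-limit suggestions cannot repair this, since the obstruction is the local compactness constraint on $\Theta$ itself, not a packaging issue. (A smaller point: what you call ``pushout'' is not one---pushout of $K\to X\to D$ goes along maps \emph{out of} $K$. What you actually need is to lift $[X]\in\mathrm{Ext}^{1}(D,K)$ to $\mathrm{Ext}^{1}(D,\Theta)$ along $\Theta\twoheadrightarrow K$; this does succeed via $\mathrm{Ext}^{2}=0$ in $\mathbf{LCPAb}$, but it is an existence argument, not a universal construction.)

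By contrast, the paper never tries to cover $K$ itself by an element of $\mathcal{C}$. It instead chooses a free subgroup $E\subseteq K^{\vee}$ with torsion quotient, dualizes to an extension $A\to K\to T$ with $A$ profinite and $T$ a torus, pushes out along $K\hookrightarrow C$ to obtain $A\to C\to T\oplus C/K$, covers $T$ by a vector group $V$, pulls back to get $C'\twoheadrightarrow C$ with $C'_{\mathbb{S}^{1}}=0$, and only then applies Corollary~\ref{Corollary:codivisible}. That route sidesteps your adelic machinery entirely---though one may observe that the paper's own step ``cover $T$ by a \emph{locally compact} vector group $V$'' faces the same infinite-rank obstruction when $E$ has infinite rank.
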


\begin{proof}
Without loss of generality, we can assume that $C$ has no nonzero closed
vector groups as a direct summand, whence $C/C_{\mathbb{S}^{1}}$ is totally
disconnected. We have that $C_{\mathbb{S}^{1}}^{\vee }$ is a countable
torsion-free group. Thus there exists a short exact sequence $E\rightarrow
C_{\mathbb{S}^{1}}^{\vee }\rightarrow S$ where $E$ is a countable free
abelian group and $S$ is a countable torsion group. By duality, this gives a
short exact sequence $A\rightarrow C_{\mathbb{S}^{1}}\rightarrow T$, where $%
A=S^{\vee }$ is a profinite abelian Polish group and $T=E^{\vee }$ is a
torus group. Considering the inclusion $C_{\mathbb{S}^{1}}\rightarrow C$ one
obtains by pushout a diagram%
\begin{equation*}
\begin{array}{ccc}
C_{\mathbb{S}^{1}} & \rightarrow & T \\ 
\downarrow &  & \downarrow \\ 
C & \rightarrow & H%
\end{array}%
\end{equation*}%
which gives a commutative diagram%
\begin{equation*}
\begin{array}{ccccc}
A & \rightarrow & C_{\mathbb{S}^{1}} & \rightarrow & T \\ 
\downarrow &  & \downarrow &  & \downarrow \\ 
A & \rightarrow & C & \rightarrow & H \\ 
&  & \downarrow &  & \downarrow \\ 
&  & C/C_{\mathbb{S}^{1}} & \rightarrow & T/H%
\end{array}%
\end{equation*}%
where $A\rightarrow C\rightarrow H$ is a short exact sequence and the
continuous group homomorphism $T\rightarrow H$ is injective with closed
image, and the continuous group homomorphism $C/C_{\mathbb{S}%
^{1}}\rightarrow T/H$ is an isomorphism. By injectivity of $T$, we have that 
$H\cong T\oplus C/C_{\mathbb{S}^{1}}$. Since $T$ is a locally compact torus
group, there exists a continuous surjective homomorphism $V\rightarrow T$
for some locally compact vector group $V$, which induces a continuous
surjective homomorphism $V\oplus C/C_{\mathbb{S}^{1}}\rightarrow H$. We
consider the pullback diagram%
\begin{equation*}
\begin{array}{ccc}
C & \rightarrow & H \\ 
\uparrow &  & \uparrow \\ 
C^{\prime } & \rightarrow & V\oplus C/C_{\mathbb{S}^{1}}%
\end{array}%
\end{equation*}%
which induces a commutative diagram%
\begin{equation*}
\begin{array}{ccccc}
A & \rightarrow & C & \rightarrow & H \\ 
\uparrow &  & \uparrow &  & \uparrow \\ 
A & \rightarrow & C^{\prime } & \rightarrow & V\oplus C/C_{\mathbb{S}^{1}}%
\end{array}%
\end{equation*}%
By projectivity of $V$, we can write $C^{\prime }=V\oplus C^{\prime \prime }$%
, where we have an extension $A\rightarrow C^{\prime \prime }\rightarrow
C/C_{\mathbb{S}^{1}}$. Hence, $C^{\prime \prime }$ is totally disconnected.
The conclusion thus follows from Corollary \ref{Corollary:codivisible}.
\end{proof}

\begin{proposition}
\label{Proposition:resolutions}We have that, for a locally compact Polish
abelian group $C\in \mathcal{C}$ and a pro-Lie Polish abelian group $D\in 
\mathcal{D}$, $\mathrm{Ext}\left( C,D\right) =0$.
\end{proposition}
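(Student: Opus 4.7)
The plan is to reduce everything to the explicit structural decompositions of the classes $\mathcal{C}$ and $\mathcal{D}$ and then use the additivity of $\mathrm{Ext}$ in both variables together with previously established projectivity and injectivity results. Concretely, by the characterization recorded right before the statement, an object $C\in\mathcal{C}$ decomposes as $C\cong V_C\oplus A_C$, where $V_C$ is a locally compact vector group and $A_C$ is totally disconnected (hence non-Archimedean) locally compact; while by Theorem \ref{Theorem:injective-pro-Lie} and the subsequent remarks an object $D\in\mathcal{D}$ decomposes as $D\cong T_D\oplus V_D\oplus A_D$, where $T_D$ is a torus, $V_D$ is a vector group, and $A_D$ is non-Archimedean and injective in $\mathbf{PAb}_{\mathrm{nA}}$.

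Using bi-additivity of $\mathrm{Ext}$ on $\mathbf{proLiePAb}\times\mathbf{proLiePAb}$, the claim breaks into six subclaims of the form $\mathrm{Ext}(C_i,D_j)=0$ for $C_i\in\{V_C,A_C\}$ and $D_j\in\{T_D,V_D,A_D\}$. The three cases with $C_i=V_C$ all vanish because $V_C$ is a locally compact vector group and hence projective in $\mathbf{proLiePAb}$ by the lemma to that effect in Section \ref{Subsection:pro-Lie}. The two cases $\mathrm{Ext}(A_C,T_D)$ and $\mathrm{Ext}(A_C,V_D)$ vanish because $T_D$ and $V_D$ are of the form $\mathbb{T}^\beta$ and $\mathbb{R}^\alpha$, which are injective in $\mathbf{proLiePAb}$ by Theorem \ref{Theorem:injective-pro-Lie}.

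The remaining case is $\mathrm{Ext}(A_C,A_D)$. Here both entries are non-Archimedean Polish abelian groups. Since $\mathbf{PAb}_{\mathrm{nA}}$ is a thick subcategory of $\mathbf{proLiePAb}$ (and of $\mathbf{PAb}$), the Yoneda $\mathrm{Ext}$ group is the same whether computed in $\mathbf{PAb}_{\mathrm{nA}}$ or in $\mathbf{proLiePAb}$; this is exactly the invariance-under-thick-subcategory fact recalled after Corollary \ref{Corollary:explicitly-derivable-hom}. Since $A_D$ is injective in $\mathbf{PAb}_{\mathrm{nA}}$ by assumption, $\mathrm{Ext}(A_C,A_D)=0$.

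The only point that requires any care is this last step, where one must invoke the thickness of $\mathbf{PAb}_{\mathrm{nA}}$ in $\mathbf{proLiePAb}$ to identify the Ext computed in the two categories; every other vanishing is an immediate appeal to an already-established injectivity or projectivity. The proof is therefore essentially a bookkeeping argument built on the structural decompositions of $\mathcal{C}$ and $\mathcal{D}$ and Theorem \ref{Theorem:injective-pro-Lie}.
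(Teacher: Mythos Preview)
Your proof is correct and follows essentially the same strategy as the paper's: decompose $C$ and $D$ according to the structural descriptions of $\mathcal{C}$ and $\mathcal{D}$, eliminate all summand pairs involving a vector group or torus via projectivity/injectivity in $\mathbf{proLiePAb}$, and then handle the remaining non-Archimedean-by-non-Archimedean term. The only cosmetic difference is in this last step: the paper further writes $A_D\cong\prod_n A_n$ with each $A_n$ countable divisible (Theorem~\ref{Theorem:injective-nA}) and uses $\mathrm{Ext}(A_C,A_D)\cong\prod_n\mathrm{Ext}(A_C,A_n)=0$, whereas you go directly through thickness of $\mathbf{PAb}_{\mathrm{nA}}$ and the assumed injectivity of $A_D$ there---which is slightly more streamlined but amounts to the same thing.
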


\begin{proof}
We can write $D=T\oplus V\oplus A$ where $T$ is a torus, $V$ is a vector
group, and $A$ is non-Archimedean and injective in $\mathbf{PAb}_{\mathrm{nA}%
}$. We can also write $C=W\oplus B$ where $W$ is a vector group and $B$ is
totally disconnected. By injectivity of $T$ and $V$, and projectivity of $W$%
, in the category of pro-Lie Polish abelian groups, we have that $\mathrm{Ext%
}\left( C,D\right) \cong \mathrm{Ext}\left( B,A\right) $. By Theorem \ref%
{Theorem:injective-nA}, we have that $A\cong \prod_{n\in \omega }A_{n}$
where $A_{n}$ is a countable divisible group. Thus,%
\begin{equation*}
\mathrm{Ext}\left( B,A\right) \cong \prod_{n\in \omega }\mathrm{Ext}\left(
B,A_{n}\right) =0\text{.}
\end{equation*}%
This concludes the proof.
\end{proof}

It follows from Proposition \ref%
{Proposition:explicitly-right-derivable-bifunctor} in the case when $%
\mathcal{A}=\mathbf{LCPAb}$, $\mathcal{B}=\mathbf{proLiePAb}$, and $\mathcal{%
C}$ and $\mathcal{D}$ are the classes defined above, Proposition \ref%
{Proposition:resolutions}, Lemma \ref{Lemma:cogenerating}, and Lemma \ref%
{Lemma:D-quotients}, that $\mathrm{Hom}^{\bullet }:\mathrm{K}^{b}\left( 
\mathbf{LCPAb}\right) \times \mathrm{K}^{b}\left( \mathbf{proLiePAb}\right)
\rightarrow \mathrm{K}^{b}\left( \mathbf{PAb}\right) $ has a total right
derived functor $\mathrm{RHom}:\mathrm{D}^{b}\left( \mathbf{LCPAb}\right)
\times \mathrm{D}^{b}\left( \mathbf{proLiePAb}\right) \rightarrow \mathrm{D}%
^{b}\left( \mathbf{PAb}\right) $, and $\mathrm{H}^{0}\circ \mathrm{RHom:D}%
^{b}\left( \mathbf{LCPAb}\right) \times \mathrm{D}^{b}\left( \mathbf{%
proLiePAb}\right) \rightarrow \mathrm{LH}\left( \mathbf{PAb}\right) $ is a
cohomological derived functor of $\mathrm{H}^{0}\circ \mathrm{Hom}^{\bullet
}:\mathrm{K}^{b}\left( \mathbf{LCPAb}\right) \times \mathrm{K}^{b}\left( 
\mathbf{proLiePAb}\right) \rightarrow \mathrm{LH}\left( \mathbf{PAb}\right) $%
.

The same argument as in the proof of \cite[Proposition 4.13]%
{bergfalk_applications_2023} yields the following description of $\mathrm{Ext%
}^{1}\left( G,A\right) $ for locally compact Polish abelian group $G$ and
pro-Lie Polish abelian group $A$. Recall that if $\left( X,\mu \right) $ is
a standard probability space and $A$ is a Polish space, then we let $%
L^{0}\left( X,A\right) $ be the Polish space of $\mu $-a.e. classes of
functions $X\rightarrow A$ endowed with the topology of convergence in
measure.

\begin{proposition}
\label{Proposition:Yoneda-Ext}Suppose that $G$ is a locally compact abelian
Polish group and $A$ is a pro-Lie abelian Polish group. Define $\mathrm{Z}%
\left( \mathrm{Ext}_{\mathrm{Yon}}\left( G,A\right) \right) $ to be the
group of Borel $2$-cocycles on $G$ with coefficients in $A$.

Endow $\mathrm{Z}\left( \mathrm{Ext}_{\mathrm{Yon}}\left( G,A\right) \right) 
$ with the topology given by letting a net $\left( c_{i}\right) $ converge
to $c$ if and only if $\left( c_{i}\left( x,y,z\right) \right) $ converges
to $c\left( x,y,z\right) $ in $A$ for every $x,y,z\in G$. Define $\mathrm{B}%
\left( \mathrm{Ext}_{\mathrm{Yon}}\left( G,A\right) \right) $ to be the
subgroup of $\mathrm{Z}\left( \mathrm{Ext}_{\mathrm{Yon}}\left( G,A\right)
\right) $ consisting of Borel cocycles of the form $\delta f\left(
x,y\right) =f\left( y\right) -f\left( x+y\right) +f\left( x\right) $ for
some Borel function $f:G\rightarrow A$. The function 
\begin{equation*}
\Psi :\mathrm{Z}\left( \mathrm{Ext}_{\mathrm{Yon}}^{1}\left( G,A\right)
\right) \rightarrow L^{0}\left( G^{2},A\right)
\end{equation*}%
defined by mapping $c$ to its a.e.-class establishes a continuous
isomorphism with a closed subgroup of $L^{0}\left( G^{2},A\right) $.
Furthermore, $\mathrm{B}\left( \mathrm{Ext}_{\mathrm{Yon}}^{1}\left(
G,A\right) \right) $ is a Polishable subgroup of $\mathrm{Z}\left( \mathrm{%
Ext}_{\mathrm{Yon}}^{1}\left( G,A\right) \right) $. Consider the group with
a Polish cover 
\begin{equation*}
\mathrm{Ext}_{\mathrm{Yon}}^{1}\left( G,A\right) :=\mathrm{Z}\left( \mathrm{%
Ext}_{\mathrm{Yon}}^{1}\left( G,A\right) \right) /\mathrm{B}\left( \mathrm{%
Ext}_{\mathrm{Yon}}^{1}\left( G,A\right) \right) \text{.}
\end{equation*}%
Then $\mathrm{Ext}_{\mathrm{Yon}}^{1}\left( G,A\right) $ is naturally
Borel-definably isomorphic to the group with a Polish cover $\mathrm{Ext}%
^{1}\left( G,A\right) $.
\end{proposition}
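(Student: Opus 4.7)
The plan is to adapt the argument of \cite[Proposition 4.13]{bergfalk_applications_2023}, which handles the case where $A$ is locally compact, to the present setting where $A$ is pro-Lie, using the resolution theory developed in Section~\ref{Subsection:extensions}.

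First I would show that $\mathrm{Z}(\mathrm{Ext}_{\mathrm{Yon}}^{1}(G,A))$ is a Polish group in the stated topology: the cocycle identity, symmetry, and normalization conditions are closed in $A^{G\times G}$ with the product topology, so $\mathrm{Z}$ is a closed subgroup of the Polish group $A^{G\times G}$, hence Polish. The map $\Psi$ to $L^{0}(G^{2},A)$ is continuous by definition of the respective topologies. Injectivity of $\Psi$ is a standard Fubini-style argument: if two Borel cocycles agree almost everywhere, then by the cocycle identity the exceptional null set can be eliminated pointwise, so they agree everywhere. Closedness of the image under $\Psi$ then follows because the subgroup of $L^{0}(G^{2},A)$ consisting of a.e.-classes represented by strict cocycles is characterized by a countable family of a.e.-identities.

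Next I would establish Polishability of $\mathrm{B}$. Endow the set of a.e.-equivalence classes of Borel functions $G\to A$ with the topology of convergence in measure, obtaining a Polish group $L^{0}(G,A)$. The coboundary map $\delta:L^{0}(G,A)\to \mathrm{Z}(\mathrm{Ext}_{\mathrm{Yon}}^{1}(G,A))$, $f\mapsto \delta f$, is a continuous group homomorphism whose image is precisely $\mathrm{B}(\mathrm{Ext}_{\mathrm{Yon}}^{1}(G,A))$. Its kernel is the set of (a.e.-classes of) Borel group homomorphisms $G\to A$, which coincides with $\mathrm{Hom}(G,A)$ (continuous homomorphisms) by automatic continuity of Borel homomorphisms between Polish groups. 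Thus $\mathrm{B}\cong L^{0}(G,A)/\mathrm{Hom}(G,A)$ carries a Polish group topology with respect to which its inclusion into $\mathrm{Z}$ is continuous, so $\mathrm{B}$ is Polishable in $\mathrm{Z}$.

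Finally, to identify the resulting group with a Polish cover with $\mathrm{Ext}^{1}(G,A)$ computed via $\mathrm{RHom}$, choose an admissible monic $A\rightarrow D$ with $D\in \mathcal{D}$ and cokernel $B$ pro-Lie, which exists by Lemma~\ref{Lemma:embed-pro-Lie} together with Theorem~\ref{Theorem:pro-Lie-thick}. By Proposition~\ref{Proposition:resolutions} we have $\mathrm{Ext}^{1}(G,D)=0$, so the distinguished triangle produced by applying $\mathrm{RHom}(G,-)$ induces a natural Borel-definable isomorphism
\begin{equation*}
\mathrm{Ext}^{1}(G,A)\cong \mathrm{Hom}(G,B)/\mathrm{Ran}\left( \mathrm{Hom}(G,D)\rightarrow \mathrm{Hom}(G,B)\right)
\end{equation*}
in $\mathrm{LH}(\mathbf{PAb})$. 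On the other hand, choosing a Borel right inverse $\sigma:D\to A$ for the quotient $D\to B$ (guaranteed by \cite[Theorem 12.17]{kechris_classical_1995}), the assignment $\varphi\mapsto \delta(\sigma\varphi)$ defines, as in Lemma~\ref{Lemma:exact-Ext-Yon}, a natural Borel-definable group homomorphism $\mathrm{Hom}(G,B)\to \mathrm{Z}(\mathrm{Ext}_{\mathrm{Yon}}^{1}(G,A))$ whose composition with the projection to $\mathrm{Ext}_{\mathrm{Yon}}^{1}(G,A)$ descends to an isomorphism with the left-hand side above. The main obstacle is verifying that the resulting comparison map is Borel-definable on both the cycle and the coboundary levels, and in particular that the boundary homomorphism, when read off at the level of strict cocycles rather than isomorphism classes, is induced by a Borel function of its input; this follows from the Borel measurability of $\sigma$ and the explicit formulas for the boundary map, exactly as in the proof of \cite[Proposition 4.13]{bergfalk_applications_2023}, the only new ingredient being the vanishing $\mathrm{Ext}^{1}(G,D)=0$ which is supplied by Proposition~\ref{Proposition:resolutions} in the present generality.
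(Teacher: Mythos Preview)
Your first two paragraphs (Polishability of $\mathrm{Z}$ and of $\mathrm{B}$, and the embedding into $L^{0}$) are fine and are indeed the same steps as in \cite[Proposition 4.13]{bergfalk_applications_2023}.

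The gap is in the last paragraph. You invoke Proposition~\ref{Proposition:resolutions} to conclude $\mathrm{Ext}^{1}(G,D)=0$, but that proposition requires the \emph{first} argument to lie in the class $\mathcal{C}$ (codivisible locally compact groups with trivial $\mathbb{S}^{1}$-part), whereas your $G$ is an arbitrary locally compact Polish abelian group. The vanishing genuinely fails: take $G=\mathbb{T}$ and $D=\mathbb{Q}\in\mathcal{D}$; from the projective resolution $\mathbb{Z}\rightarrow\mathbb{R}\rightarrow\mathbb{T}$ and $\mathrm{Hom}(\mathbb{R},\mathbb{Q})=0$, $\mathrm{Ext}(\mathbb{R},\mathbb{Q})=0$ one gets $\mathrm{Ext}(\mathbb{T},\mathbb{Q})\cong\mathbb{Q}\neq 0$. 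Hence your displayed identification $\mathrm{Ext}^{1}(G,A)\cong\mathrm{Hom}(G,B)/\mathrm{Ran}\bigl(\mathrm{Hom}(G,D)\rightarrow\mathrm{Hom}(G,B)\bigr)$ is not valid in general, and the rest of the comparison collapses.

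The repair, which is exactly what the reference to \cite[Proposition 4.13]{bergfalk_applications_2023} is pointing to, is to resolve \emph{both} variables as in Proposition~\ref{Proposition:explicitly-right-derivable-bifunctor}: choose a short exact sequence $C_{1}\rightarrow C_{0}\rightarrow G$ with $C_{0},C_{1}\in\mathcal{C}$ (Lemma~\ref{Lemma:cogenerating}, plus closure of $\mathcal{C}$ under closed subgroups) in addition to your $\mathcal{D}$-resolution $A\rightarrow D^{0}\rightarrow D^{1}$. Then $\mathrm{RHom}(G,A)$ is computed by the total complex of the bicomplex $\mathrm{Hom}(C_{\bullet},D^{\bullet})$, whose terms are Polish groups; Proposition~\ref{Proposition:resolutions} now applies legitimately to the entries $\mathrm{Hom}(C_{i},D^{j})$, and the identification with the Yoneda description proceeds via explicit Borel formulas as before. (Minor: your Borel section should be $\sigma:B\rightarrow D$, not $\sigma:D\rightarrow A$.)
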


\bibliographystyle{amsalpha}
\bibliography{bibliography2}

\providecommand{\bysame}{\leavevmode\hbox to3em{\hrulefill}\thinspace}
\providecommand{\MR}{\relax\ifhmode\unskip\space\fi MR }
\providecommand{\MRhref}[2]{%
  \href{http://www.ams.org/mathscinet-getitem?mr=#1}{#2}
}
\providecommand{\href}[2]{#2}
\begin{thebibliography}{FFLM10}

\bibitem[Arm81]{armacost_structure_1981}
David~L. Armacost, \emph{The structure of locally compact abelian groups},
  Monographs and {Textbooks} in {Pure} and {Applied} {Mathematics}, vol.~68,
  Marcel Dekker, Inc., New York, 1981. \MR{637201}

\bibitem[Awo06]{awodey_category_2006}
Steve Awodey, \emph{Category theory}, Oxford {Logic} {Guides}, vol.~49, The
  Clarendon Press, Oxford University Press, New York, 2006. \MR{2229319}

\bibitem[BBD82]{beilinson_faisceaux_1982}
Aleksandr~A. Beilinson, Joseph~N. Bernstein, and Pierre Deligne,
  \emph{Faisceaux pervers}, Analysis and topology on singular spaces, {I}
  ({Luminy}, 1981), Astérisque, vol. 100, Soc. Math. France, Paris, 1982,
  pp.~5--171. \MR{751966}

\bibitem[BLP20]{bergfalk_definable_2020}
Jeffrey Bergfalk, Martino Lupini, and Aristotelis Panagiotopoulos, \emph{The
  definable content of homological invariants {I}: {$\mathrm{Ext}$} \&
  $\mathrm{lim}^1$}, arXiv:2008.08782 (2020), arXiv: 2008.08782.

\bibitem[BLS23]{bergfalk_applications_2023}
Jeffrey Bergfalk, Martino Lupini, and Filippo Sarti, \emph{Applications of
  {B}orel-definable homological algebra to locally compact groups}, available
  at \url{https://www.lupini.org/research}.

\bibitem[Bro71]{brown_extensions_1971}
Lawrence~G. Brown, \emph{Extensions of topological groups}, Pacific Journal of
  Mathematics \textbf{39} (1971), 71--78. \MR{307264}

\bibitem[Bü10]{buhler_exact_2010}
Theo Bühler, \emph{Exact categories}, Expositiones Mathematicae \textbf{28}
  (2010), no.~1, 1--69. \MR{2606234}

\bibitem[Cal51]{calabi_sur_1951}
Lorenzo Calabi, \emph{Sur les extensions des groupes topologiques}, Annali di
  Matematica Pura ed Applicata. Serie Quarta \textbf{32} (1951), 295--370.
  \MR{49907}

\bibitem[CE48]{chevalley_cohomology_1948}
Claude Chevalley and Samuel Eilenberg, \emph{Cohomology theory of lie groups
  and lie algebras}, Transactions of the American Mathematical Society
  \textbf{63} (1948), no.~1, 85--124.

\bibitem[EM42]{eilenberg_group_1942}
Samuel Eilenberg and Saunders MacLane, \emph{Group extensions and homology},
  Annals of Mathematics. Second Series \textbf{43} (1942), 757--831.

\bibitem[FFLM10]{fournier-facio_bounded_2022}
Francesco Fournier-Facio, Clara Loeh, and Marco Moraschini, \emph{Bounded
  cohomology and binate groups}, Journal of the Australian Mathematical Society
  (2022-05-10), 1--36.

\bibitem[FG71]{fulp_extensions_1971}
Ronald~O. Fulp and Phillip~A. Griffith, \emph{Extensions of locally compact
  abelian groups. {I}, {II}}, Transactions of the American Mathematical Society
  \textbf{154} (1971), 357--363. \MR{0272870}

\bibitem[Fri21]{frigerio_bounded_2017}
Roberto Frigerio, \emph{Bounded cohomology of discrete groups}, Mathematical
  Surveys and Monographs, vol. 227, American Mathematical Society, 2017-11-21,
  {ISSN}: 0076-5376, 2331-7159.

\bibitem[FS13]{friedenberg_extensions_2013}
S.~Friedenberg and L.~Strüngmann, \emph{Extensions in the class of countable
  torsion-free {Abelian} groups}, Acta Mathematica Hungarica \textbf{140}
  (2013), no.~4, 316--328. \MR{3085688}

\bibitem[Fuc70]{fuchs_infinite_1970}
László Fuchs, \emph{Infinite abelian groups. {Vol}. {I}}, Pure and {Applied}
  {Mathematics}, {Vol}. 36, Academic Press, New York-London, 1970. \MR{0255673}

\bibitem[Fuc73]{fuchs_infinite_1973}
\bysame, \emph{Infinite abelian groups. {Vol}. {II}}, Pure and {Applied}
  {Mathematics}. {Vol}. 36-{II}, Academic Press, New York-London, 1973.
  \MR{0349869}

\bibitem[Ful70]{fulp_homological_1970}
Ronald~O. Fulp, \emph{Homological study of purity in locally compact groups},
  Proceedings of the London Mathematical Society. Third Series \textbf{21}
  (1970), 502--512. \MR{279229}

\bibitem[Ful72]{fulp_splitting_1972}
\bysame, \emph{Splitting locally compact abelian groups}, Michigan Mathematical
  Journal \textbf{19} (1972), 47--55. \MR{294559}

\bibitem[Gao09]{gao_invariant_2009}
Su~Gao, \emph{Invariant descriptive set theory}, Pure and {Applied}
  {Mathematics} ({Boca} {Raton}), vol. 293, CRC Press, Boca Raton, FL, 2009.
  \MR{2455198}

\bibitem[HM07]{hofmann_lie_2007}
Karl~H. Hofmann and Sidney~A. Morris, \emph{The {Lie} theory of connected
  pro-{Lie} groups}, {EMS} {Tracts} in {Mathematics}, vol.~2, European
  Mathematical Society (EMS), Zürich, 2007. \MR{2337107}

\bibitem[HM13]{hofmann_structure_2013}
\bysame, \emph{The structure of compact groups}, De {Gruyter} {Studies} in
  {Mathematics}, vol.~25, De Gruyter, Berlin, 2013. \MR{3114697}

\bibitem[HS07]{hoffmann_homological_2007}
Norbert Hoffmann and Markus Spitzweck, \emph{Homological algebra with locally
  compact abelian groups}, Advances in Mathematics \textbf{212} (2007), no.~2,
  504--524.

\bibitem[Ive86]{iversen_cohomology_1986}
Birger Iversen, \emph{Cohomology of sheaves}, Universitext, Springer-Verlag,
  Berlin, 1986. \MR{842190}

\bibitem[Kec95]{kechris_classical_1995}
Alexander~S. Kechris, \emph{Classical descriptive set theory}, Graduate {Texts}
  in {Mathematics}, vol. 156, Springer-Verlag, New York, 1995. \MR{1321597}

\bibitem[KS06]{kashiwara_categories_2006}
Masaki Kashiwara and Pierre Schapira, \emph{Categories and sheaves},
  Grundlehren der mathematischen {Wissenschaften} [{Fundamental} {Principles}
  of {Mathematical} {Sciences}], vol. 332, Springer-Verlag, Berlin, 2006.
  \MR{2182076}

\bibitem[Lup22]{lupini_looking_2022}
Martino Lupini, \emph{({Looking} {For}) {The} {Heart} of {Abelian} {Polish}
  {Groups}}, arXiv:2202.13439 [math] (2022), arXiv: 2202.13439.

\bibitem[Mac57a]{mackey_ensembles_1957}
George~W. Mackey, \emph{Les ensembles boréliens et les extensions des
  groupes}, Journal de Mathématiques Pures et Appliquées. Neuvième Série
  \textbf{36} (1957), 171--178. \MR{89998}

\bibitem[Mac57b]{mackey_les_1957}
\bysame, \emph{Les ensembles boréliens et les extensions des groupes}, Journal
  de Mathématiques Pures et Appliquées. Neuvième Série \textbf{36} (1957),
  171--178. \MR{89998}

\bibitem[ML98]{mac_lane_categories_1998}
Saunders Mac~Lane, \emph{Categories for the working mathematician}, second ed.,
  Graduate {Texts} in {Mathematics}, vol.~5, Springer-Verlag, New York, 1998.
  \MR{1712872}

\bibitem[Mos67]{moskowitz_homological_1967}
Martin Moskowitz, \emph{Homological algebra in locally compact abelian groups},
  Transactions of the American Mathematical Society \textbf{127} (1967), no.~3,
  361--404.

\bibitem[Nag49]{nagao_extension_1949}
Hirosi Nagao, \emph{The extension of topological groups}, Osaka Mathematical
  Journal \textbf{1} (1949), 36--42.

\bibitem[Nee01]{neeman_triangulated_2001}
Amnon Neeman, \emph{Triangulated categories}, Annals of {Mathematics}
  {Studies}, vol. 148, Princeton University Press, Princeton, NJ, 2001.
  \MR{1812507}

\bibitem[Nun06]{nunke_homology_1967}
R.~J. Nunke, \emph{Homology and direct sums of countable abelian groups},
  Mathematische Zeitschrift \textbf{101} (1967-06), no.~3, 182--212.

\bibitem[Rum01]{rump_almost_2001}
Wolfgang Rump, \emph{Almost abelian categories}, Cahiers Topologie Géom.
  Différentielle Catég. \textbf{42} (2001), no.~3, 163--225.

\bibitem[Sch99]{schneiders_quasi-abelian_1999}
Jean-Pierre Schneiders, \emph{Quasi-abelian categories and sheaves}, Mémoires
  de la Société Mathématique de France. Nouvelle Série (1999), no.~76,
  vi+134. \MR{1779315}

\bibitem[Tat21]{tattar_torsion_2021}
Aran Tattar, \emph{Torsion pairs and quasi-{Abelian} categories}, Algebras and
  Representation Theory \textbf{24} (2021), no.~6, 1557--1581. \MR{4340852}

\bibitem[Ver77]{verdier_categories_1977}
Jean-Louis Verdier, \emph{Catégories dérivées: quelques résultats (état
  0)}, Cohomologie étale, Lecture {Notes} in {Math}., vol. 569, Springer,
  Berlin, 1977, pp.~262--311. \MR{3727440}

\bibitem[Yam53a]{yamabe_generalization_1953}
Hidehiko Yamabe, \emph{A generalization of a theorem of {Gleason}}, Annals of
  Mathematics. Second Series \textbf{58} (1953), 351--365. \MR{0058607}

\bibitem[Yam53b]{yamabe_conjecture_1953}
\bysame, \emph{On the conjecture of iwasawa and gleason}, Annals of Mathematics
  \textbf{58} (1953), no.~1, 48--54, Publisher: Annals of Mathematics.

\end{thebibliography}

\end{document}